\numberwithin{equation}{section}
\newcommand{\thdef}[2]{
	\newaliascnt{#1}{theorem}  
	\newtheorem{#1}[#1]{#2}
	\aliascntresetthe{#1}  
	\newtheorem*{#1*}{#2}
	
    \expandafter\newcommand\expandafter{\csname #1autorefname\endcsname}{#2}
    
}
\newcommand{\edge}{\,
	\begin{tikzpicture}
		\draw (0,.5ex) -- (2ex,.5ex);
		\draw[fill] (0,.5ex) circle (.2ex);
		\draw[fill] (2ex,.5ex) circle (.2ex);
		\draw[fill] (0,0) circle (0);
	\end{tikzpicture}\,
}
\def \RR {{\mathbb R}}         
\def \CC {{\mathbb C}}
\def \ZZ {{\mathbb Z}}
\def \TT {{\mathbb T}}
\def \QQ {{\mathbb Q}}
\def \calC  {{\mathcal{C}}}
\def \calE  {{\mathcal{E}}}
\def \calG  {{\mathcal{G}}}
\def \calO  {{\mathcal{O}}}
\def \calH  {{\mathcal{H}}}
\def \calQ  {{\mathcal{Q}}}
\def \calS {{\mathcal{S}}}
\def \calK  {{\mathcal{K}}} 
\def \calW  {{\mathcal{W}}} 
\def \sL {{\scriptscriptstyle L}}
\def \g  {\mathfrak{g}}   
\def \h  {\mathfrak{h}}
\def \n  {\mathfrak{n}}
\def \m  {\mathfrak{m}}
\def \t  {\mathfrak{t}}
\def \l  {\mathfrak{l}}
\def \hs {\hspace{.2in}}
\def \hand {\hs \mbox{and} \hs}
\def \btheta {{\boldsymbol{\theta}}}
\def \bbeta {{\boldsymbol{\beta}}}
\def \blambda {\boldsymbol{\lambda}}
\def \TT {{\mathbb{T}}}
\def \mX {\mathsf{X}}
\def \mfX {\mathfrak{X}}
\def \CCsn {{(\CC^\times)^n}}
\def \bfv {{\bf v}}
\def \bfw {{\bf w}}
\def \bfh {{\bf h}}
\def \dpi {{\mathsf{d}_{\pi_0}}}
\def \XX {\mfX(\mX)}
\def \Jw {J_{\bfw}}
\def \ep {\varepsilon}
\def \bfu {{\bf u}}
\def \mH {{\mathsf{H}}}
\def \lara {\langle \,, \, \rangle}
\def \bnu {\boldsymbol{\nu}}
\def \bmu {\boldsymbol{\mu}}
\def \bsigma {{\boldsymbol{\sigma}}}
\def \bgamma {{\boldsymbol{\gamma}}}
\def \la {\langle}
\def \ra {\rangle}
\def \pist {\pi_{\rm st}}
\def \calOs {\calO^{(s_{i_1}, \ldots, s_{i_n})}}
\def \bsigma {{\boldsymbol{\sigma}}}
\def \bmu {{\boldsymbol{\nu}}}
\def \blam {{\boldsymbol{\lambda}}}
\def\calJ {{\mathcal{J}}}
\def \bbmu {[\![\mu]\!]}
\def \bbc {[\![c]\!]}
\def \mW {\mathsf{W}}
\def \Jt {J_{\btheta}}
\def \bth {{\btheta}}
\def \calM {\mathcal{M}}
\def \lb {{\rm lb}}
\def \rb {{\rm rb}}
\def \ZZone {(\ZZ_{\geq -1})^n}
\newtheorem{theorem}{Theorem}[subsection]
\theoremstyle{definition}
\title{Deformations of $\TT$-log-symplectic log-canonical Poisson structures and symmetric Poisson CGL extensions}
\author{Jiang-Hua Lu \, and \, Mykola Matviichuk}
\date{}
\begin{document}

\maketitle
\begin{abstract}
For a complex algebraic torus $\TT$, we study $\TT$-invariant Poisson deformations of a $\TT$-log-symplectic log-canonical Poisson structure $\pi_0$ on $\CC^n$. 
We show that every $\TT$-invariant first-order deformation of $\pi_0$ with linearly independent
$(\mathbb{C}^\times)^n$-weights is unobstructed.
For a special class of $\pi_0$ defined by the so-called 
{\it symmetric $\TT$-action data}, 
we show that $\pi_0$ can be canonically deformed to  symmetric 
$\TT$-Poisson CGL extensions (of $\CC$) as defined by K. Goodearl and M. Yakimov. 
As a consequence, we classify all symmetric $\TT$-Poisson CGL extensions 
in terms of their log-canonical terms
$\pi_0$  and the second $\TT$-invariant Poisson cohomology of $\pi_0$. 
We further characterize, among all symmetric Poisson CGL extensions, those {\it of Cartan type}, i.e., those 
associated to sequences of simple roots in the root systems of 
symmetrizable generalized Cartan matrices. In particular, we prove that the standard Poisson structures on Bott-Samelson cells and generalized Schubert cells for semi-simple complex Lie groups
are the (uniquely determined) maximal normalized admissible 
deformations of their log-canonical terms.
Finally, for any symmetric $\TT$-Poisson CGL extension $\pi$ with log-canonical term $\pi_0$, 
we present an explicit formula expressing the initial mutation matrix in the Goodearl-Yakimov theory on cluster algebras associated to $\pi$ in terms of the $\CCsn$-weights of the second $\TT$-invariant Poisson cohomology of  $\pi_0$.

\end{abstract}

\tableofcontents
\addtocontents{toc}{\protect\setcounter{tocdepth}{1}}

\section{Introduction and main results}

\subsection{Introduction}
For a log-canonical Poisson structure $\pi_0$ on $\CC^n$ given in the standard linear coordinates 
$(x_1, \ldots, x_n)$ by
\begin{equation}\label{eq:logcan}
\{x_j,\, x_k\}_{\pi_0} = \lambda_{j, k} x_j x_k, \hs j,k \in [1, n],  
\end{equation}
where $\lambda_{j,k} \in \CC$ for $j,k \in [1, n]$, one natural problem is to 
describe all of  its deformations into algebraic Poisson structures on $\CC^n$.
To expect a reasonable answer to this question, one clearly needs to impose some conditions on the skew-symmetric 
Poisson coefficient matrix $\blambda=(\lambda_{j,k})$. For instance, if $\blambda=0$, describing all 
the Poisson deformations of 
$\pi_0=0$ amounts to describing \textit{all} algebraic Poisson structures on $\CC^n$. One way to make the deformation problem tractable is to require that $\det(\blambda)\not=0$, in which case  $\pi_0$ is 
said to be \textit{log-symplectic}. Formal Poisson deformations of log-symplectic
log-canonical Poisson structures have been thoroughly studied in \cite{Matviichuk2020}. In 
the current paper, we relax the log-symplecticity condition on $\pi_0$ to the so-called {\it $\TT$-log-symplecticity}, where $\TT$ is a complex algebraic torus, acting on $\CC^n$ by rescaling the coordinates $(x_1, \ldots, x_n)$. 
As we show in this paper, allowing $\TT$-log-symplecticity opens a pathway to much wider classes of examples, especially those  from Lie theory.
Furthermore, while we prove an unobstructedness theorem for $\TT$-invariant 
formal Poisson deformations of such a  $\pi_0$, 
our main results are on $\TT$-invariant {\it algebraic Poisson deformations} of $\pi_0$, and the tori $\TT$ in our
main applications are non-trivial.

More specifically, let $\TT$ be a complex algebraic torus acting on $\CC^n$ by rescaling the
standard linear coordinates $(x_1, \ldots, x_n)$ on $\CC^n$. We say that
a log-canonical Poisson structure $\pi_0$
as in \eqref{eq:logcan} is {\it $\TT$-log-symplectic} if $(\CC^\times)^n \subset \CC^n$ is a $\TT$-leaf of $\pi_0$,
i.e., the union of all the $\TT$-translations of a single symplectic leaf of $\pi_0$
(see $\S$\ref{ss:intro-1} or \autoref{de:T-log-symp} for an algebraic definition via the Poisson coefficient 
matrix $\blambda$ and the $\TT$-weights of the 
coordinates $(x_1, \ldots, x_n)$). When the torus $\TT$ is trivial, the notion of $\TT$-log-symplecticity agrees with that of log-symplecticity. 

For any $\TT$-log-symplectic log-canonical Poisson structure $\pi_0$ on 
$\CC^n$, by  a {\it first order $\TT$-invariant Poisson deformation of $\pi_0$} we mean an algebraic
bi-vector field on $\CC^n$ of the form  $\pi_0 + \pi_1$,
where $\pi_1$ represents a class in the (algebraic) $\TT$-invariant
second Poisson cohomology $\mH^2_{\pi_0}(\CC^n)^\TT$ of $\pi_0$. 
The first part of the paper consists of $\S$\ref{s:forml-deform} and $\S$\ref{s:alg-deform}: in  
$\S$\ref{s:forml-deform} we establish some basic properties of 
$\mH^2_{\pi_0}(\CC^n)^\TT$, and in $\S$\ref{s:alg-deform} we prove 
existence and uniqueness of {\it admissible} 
$\TT$-invariant formal, as well as algebraic, Poisson deformations 
of $\pi_0$ along certain $\pi_1$.

In the second part of the paper ($\S$\ref{s:action-data} - $\S$\ref{s:Cartan})
we apply the general theory to a special class of $\TT$-log-symplectic log-canonical Poisson structures 
$\pi_0$, namely those  {\it of $\TT$-action type}, for which $\pi_0$ is defined using the $\TT$-action 
(see \autoref{def:action-data-0}). Examples of such log-canonical Poisson structures are abundant, 
and their admissible Poisson deformations are all algebraic. When $\pi_0$ is {\it 
of symmetric $\TT$-action type} (see \autoref{def:sym-action-data}), 
we show that the admissible Poisson deformations of $\pi_0$ 
are precisely all the symmetric $\TT$-Poisson CGL extensions, as defined by  
K. Goodearl and M. Yakimov in \cite{GY:Poi-CGL}, that have  $\pi_0$ as their log-canonical term. As a consequence, 
we prove a classification 
theorem on symmetric $\TT$-Poisson CGL extensions in terms of their log-canonical term $\pi_0$ and 
the Poisson
cohomology space $\mH^2_{\pi_0}(\CC^n)^\TT$.

 The theory of symmetric Poisson CGL extensions
was developed by K. Goodearl and M. Yakimov \cite{GY:PNAS, GY:quantum-CGL, GY:Poi-CGL, Y:maximalGreen} as a systematic framework
for studying cluster structures on varieties coming from Lie theory, such as double Bruhat cells 
and Schubert cells \cite{FZ:double, GLS:partial, GY:JEMS, GY:integral}. 
Symmetric Poisson CGL extensions 
associated to double Bruhat cells and Schubert cells
of symmetrizable Kac-Moody groups, equipped with the so-called {\it standard Poisson 
structures} \cite{GY09, Harold-W:Kac-Moody},
are implied by the results in \cite{GY:JEMS, GY:integral} on the corresponding (quantum) CGL extensions.
For a complex semi-simple Lie group $G$,  using the Poisson geometry of Bott-Samelson varieties and the theory of Poisson Lie groups, it is proved in \cite{EL:BS} 
that the standard Poisson structure on every 
{\it generalized Schubert cell} of $G$ is a symmetric Poisson CGL extension in the naturally defined 
{\it Bott-Samelson coordinates} (see $\S$\ref{ss:BS-cells} for detail). 
With the viewpoint of our classification result on all symmetric Poisson CGL extensions, 
we prove in \autoref{thm:BS-cells} that the standard Poisson structure on any
generalized Schubert cell of a complex semi-simple Lie group 
is the unique {\it maximal normalized admissible} Poisson deformation of its log-canonical term
(again presented in the Bott-Samelson coordinates).

Let $A$ be any $r \times r$ symmetrizable generalized Cartan  with a given symmetrizer. Then every sequence
$\alpha_{{\bf i}} = (\alpha_{i_1},\ldots, \alpha_{i_n})$ of simple roots associated to $A$ gives rise to a
log-canonical Poisson structure $\pi_0^{(A, {\bf i})}$ of symmetric action type  (see \eqref{eq:pi0-alpha}). 
In particular, we obtain the unique maximal normalized admissible Poisson deformation $\pi^{(A, {\bf i})}$ of 
$\pi_0^{(A, {\bf i})}$  (see \autoref{nota:pi-bfi}) as a symmetric Poisson CGL extension. 
When $A$ is of finite type, we prove that the Poisson structure
$\pi^{(A, {\bf i})}$ recovers the standard Poisson structure on the generalized Schubert cell 
defined by $\alpha_{\bf i}$ studied in 
\cite{EL:BS}. 
We remark that the deformation theoretical approach in this paper is very 
elementary and relies neither on the theory of Kac–Moody groups  nor on that of Poisson–Lie groups.

Motivated by the Lie theoretical examples, we define a {\it symmetric Poisson CGL extension  
of Cartan type}  to be any (not necessarily maximal or normalized) 
admissible algebraic Poisson deformation of the $\pi_0^{(A, {\bf i})}$
for any  symmetrizable generalized Cartan matrix $A$ and 
any sequence  $\alpha_{{\bf i}} = (\alpha_{i_1},\ldots, \alpha_{i_n})$ of simple roots associated to $A$
(see $\S$\ref{ss:Cartan-intro} and $\S$\ref{s:Cartan}). 
In \autoref{pr:Cartan-type},  we present a criterion for testing
whether or not a given
symmetric Poisson CGL extension is  of Cartan type.

Recall that the main result of the Goodearl-Yakimov 
theory \cite[Theorem 11.1]{GY:Poi-CGL} says that  a (normalized) symmetric
$\TT$-Poisson CGL extension $\pi$ on $\CC^n$ 
gives rise to a cluster structure on $\CC[x_1 \ldots, x_n]$  compatible with both the
$\TT$-action and the Poisson structure $\pi$, in the sense \cite{GSV:book} that every extended cluster consists of $\TT$-weight vectors and is log-canonical with respect to $\pi$. For a symmetric $\TT$-Poisson CGL extension $\pi$ on $\CC^n$ with log-canonical term $\pi_0$, we show in this paper that there is a collection of
non-negative integers, which we call {\it Cartan integers} associated to $\pi$, that are
extracted from the non-zero $\CCsn$-weights in the Poisson cohomology classes in $\mH^2_{\pi_0}(\CC^n)^\TT$ that appear in $\pi$.  
We then give an explicit formula expressing the initial mutation matrix $M$ in the Goodearl-Yakimov theory in terms of such Cartan integers. 
 For $\pi = \pi^{(A, {\bf i})}$ of Cartan type,
the Goodearl-Yakimov initial 
mutation matrix recovers  the one constructed by L. Shen and D. Weng in \cite{ShenWeng:DBS} using string diagrams.

In separate works, we will further explore the (somewhat mysterious) connections between Poisson 
cohomology and cluster structures through deformations of Poisson structures, $g$-vectors of cluster variables, and principal coefficient extensions.

We now set up some notation in $\S$\ref{ss:nota-intro}  which will be used throughout the paper, and we give  more details on our main results in 
$\S$\ref{ss:intro-1} - $\S$\ref{ss:app-intro}.

\subsection{Notation}\label{ss:nota-intro}
Throughout the paper, for integers $j,k\in \ZZ$, we write $[j,k]$ for the set of $i\in \ZZ$ such that 
$j\leq i \leq k$.
Elements in $\ZZ^n \subset \CC^n$, for $n
\geq 1$, 
are understood to be column vectors unless otherwise specified, 
and the standard basis of $\CC^n$ is denoted by $(e_1, \ldots, e_n)$.
If an element in $\ZZ^n \subset \CC^n$ is denoted by a letter, say $\bfw$,  the 
components of $\bfw$ will be denoted by $\bfw_i$, $i \in [1,n]$, unless otherwise indicated.  
For $m \in \ZZ$, we write $(\ZZ_{\geq m})^n = \{\bfw \in \ZZ^n: \bfw_i \geq m\, \forall i \in [1, n]\}$. 
The transpose of a matrix $M$ is denoted by $M^t$.

We identify  the character lattice of the complex torus $\CCsn$ with $\ZZ^n$ in such a way that the
group homomorphism $\CCsn \to \CC^\times$  corresponding to 
$\bfw  \in \ZZ^n$ is  given by 
$\lambda \mapsto \lambda^\bfw :=\lambda_1^{\bfw_1} \cdots \lambda_n^{\bfw_n} \in \CC^\times$.
If $V$ is a complex vector space with a  $\CCsn$-action by linear isomorphisms, for $\bfw \in \ZZ^n$ we set 
\[
V^\bfw = \{v \in V: \lambda \cdot v = \lambda^\bfw v, \, \forall \, \lambda \in \CCsn\}
\]
and call $\bfw$ a $\CCsn$-weight in $V$ if $V^{\bfw} \neq 0$.
For  $\mW \subset \ZZ^n$,  set $V^\mW = 0$ if $\mW = \emptyset$, and otherwise
\begin{equation}\label{eq:VW}
V^\mW = \sum_{\bfw \in \mW} V^{\bfw}.
\end{equation}
For any algebraic torus $\TT$ acting on $V$ by linear isomorphisms, let $V^\TT$ be the $\TT$-invariant subspace of $V$.

\subsection{Poisson cohomology and formal Poisson deformations}\label{ss:intro-1}
Throughout the paper, let $n \geq 2$ and set $\mX = \CC^n$ with the standard linear coordinates 
$(x_1, \ldots, x_n)$ and
\[
\mfX(\mX) = \bigoplus_{p=0}^n \mfX^p(\mX),
\]
where  $\mfX^p(\mX)$ for $p \in [0, n]$ is the space of all algebraic $p$-vector fields on $\mX$ (see $\S$\ref{ss:cohom-log}). One of our main tools for studying formal and algebraic Poisson structures
on $\mX$ is the $\CCsn$-action on $\mX$ via
\[
\lambda \cdot (x_1, \ldots, x_n) = (\lambda_1 x_1, \ldots, \lambda_n x_n), \hs \lambda = (\lambda_1, \ldots, \lambda_n)^t \in \CCsn,
\]
and the induced decomposition of $\mfX(\mX)$ into $\CCsn$-weight spaces. It is easy to see that
for $\bfw \in \ZZ^n$ one has $\mfX(\mX)^\bfw \neq 0$ if and only if $\bfw \in \ZZone$.
Observe also that elements in $\mfX^2(\mX)$ with $\CCsn$-weight $0 \in \ZZ^n$ correspond to 
log-canonical Poisson structures on $\mX$. Let $[\, , \, ]_{\rm Sch}$ be the Schouten bracket on $\mfX(\mX)$.

Let $\TT$ be a complex algebraic torus with character lattice $X^*(\TT)$, and let $\TT$ act on
$\mX = \CC^n$ via
\begin{equation}\label{eq:t-action-intro}
t \cdot (x_1, \ldots, x_n) = (t^{\beta_1}x_1, \, \ldots, \, t^{\beta_n}x_n),
\end{equation}
where $\bbeta = (\beta_1, \ldots, \beta_n) \in X^*(\TT)^n$ 
(see $\S$\ref{ss:T-cohom-log} for notation). We also identify $X^*(\TT)$ as a lattice in $\t^*$, where 
$\t$ is the Lie algebra of $\TT$
(see again $\S$\ref{ss:T-cohom-log} for detail). 
A log-canonical Poisson structure $\pi_0$ on $\mX$, with Poisson coefficient matrix 
$\blambda = (\lambda_{j, k})$ as in \eqref{eq:logcan}, is said to be 
{\it $\TT$-log-symplectic} if the equations 
\[
\blambda \bfw = 0\in \CC^n \hs \mbox{and} \hs \bbeta \bfw = 0 \in \t^*
\]
for (a column vector) $\bfw \in \CC^n$ have only the zero solution (see \autoref{de:T-log-symp}).

Given a $\TT$-log-symplectic log-canonical Poisson structure $\pi_0$ on $\mX = \CC^n$ 
as in \eqref{eq:logcan},  
we consider in this paper  both formal and algebraic Poisson structures 
$\pi$ on $\mX$ of the form 
\[
\pi = \pi_0 + \pi_{\rm tail},
\]
where $\pi_{\rm tail}$ is $\TT$-invariant and contains no log-canonical terms. 
The first step in constructing such a $\pi$ is to consider all $\pi_0 + \pi_1 \in \mfX^2(\mX)$, where
$\pi_1\in \mfX^2(\mX)^\TT$ contains no log-canonical terms and satisfies $[\pi_0, \pi_1]_{\rm Sch} = 0$.
In other words, we consider 
$\mH_{\pi_0}^2(\mX)^\TT$, the $\TT$-invariant part of the second cohomology of  co-chain complex
$(\mfX(\mX), [\pi_0, -])$ (see \eqref{eq:H-X}) and the induced $\CCsn$-weight space decomposition
\[
\mH_{\pi_0}^2(\mX)^\TT = \mH_{\pi_0}^2(\mX)^0 + \bigoplus_{\bth \in \calS(\pi_0)} \mH_{\pi_0}^2(\mX)^\bth,
\]
where $\calS(\pi_0) \subset \ZZone\backslash\{0\}$ is defined by
\begin{equation}\label{eq:S-pi0-intro}
\calS(\pi_0) = \mbox{the set of all the non-zero} \; (\CC^\times)^n\mbox{-weights in}\; \mH_{\pi_0}^2(\mX)^\TT.
\end{equation}
 We first prove the following results  on $\calS(\pi_0)$ in \autoref{lm:HP2decomp} and \autoref{lm:ij-unique}.

\medskip
\noindent
{\bf Proposition.} 
{\it Let $\pi_0$ be any $\TT$-log-symplectic log-canonical
Poisson structure  on $\mX=\CC^n$.

1) The subset $\calS(\pi_0)$ of $(\ZZ_{\geq -1})^n$ is finite, and each $\bth \in \calS(\pi_0)$ has exactly two entries of $-1$;

2) For each $\bth  \in {\calS}(\pi_0)$ the space
$\mH_{\pi_0}^2(\mX)^\bth$ is $1$-dimensional with a basis vector uniquely represented by 
\[
V_\bth = \left(\prod_{i \neq j,k} x_i^{\bth_i}\right) \frac{\partial}{\partial x_j} \wedge \frac{\partial}{\partial x_k}
\in \mfX^2(\mX)^{\bth},
\]
where $\bth=(\bth_1, \ldots, \bth_n)^t$ and 
$1\leq j < k\leq n $ are such that $\bth_j = \bth_k = -1$.
}

\begin{notation}\label{nota:Wm}
{\rm For a finite subset $\mW$ of $(\ZZ_{\geq -1})^n$ and $m \geq 1$, let $\mW_{m}$ denote the set of all elements in $\ZZ^n$ that  can be written
as sums of $m$  elements in $\mW$, and let 
\[
\mW_{\geq m} = \bigcup_{m'\geq m} \mW_{m'}.
\]
}
\end{notation}

We prove the following result on formal Poisson deformations of $\pi_0$.

\medskip
\noindent
{\bf Theorem A} (\autoref{thm:mainDef}.) {\it Let $\pi_0$ be any $\TT$-log-symplectic log-canonical Poisson structure on $\mX = \CC^n$, and assume that 
 $\calS\subset \calS(\pi_0)$ is $\CC$-linearly independent (see 
\autoref{ld:indep} for a number of equivalent assumptions).
Let $c = (c_{\bth})_{\bth \in \calS}$ be a set of formal commuting parameters,
and let $\pi_1^\calS(c) = \sum_{\bth \in \calS} c_{\bth} V_{\bth}$. Then there exists 
$\pi_{\geq 2}^\calS(c) \in \mfX^2(\mX)^{\calS_{\geq 2}}\bbc$ (where $\calS_{\ge2}$ is the set of sums of two or more elements of $\calS$)
such that
\begin{equation}\label{eq:pic-power}
\pi^\calS(c) = \pi_0 + \pi_1^\calS(c) + \pi_{\geq 2}^\calS(c) \in \mfX^2(\mX)\bbc
\end{equation}
satisfies $[\pi^\calS(c), \pi^\calS(c)]_{\rm Sch} = 0$. Moreover, all such $\pi^\calS(c)$'s are gauge equivalent.
}

\medskip
For any linearly independent subset $\calS$ of $\calS(\pi_0)$, 
we call $\pi^\calS(c)$ in Theorem A an {\it $\calS$-admissible formal Poisson deformation}
of $\pi_0$. When $\calS(\pi_0)$ is itself $\CC$-linearly independent, we show that 
each $\pi^{\calS(\pi_0)}(c)$ in Theorem A is {\it versal} in a sense explained in 
\autoref{thm:maximal-formal}.
We remark that the miracle that Poisson deformations of $\pi_0$ along $\pi_1^\calS(c)$ as in Theorem A are unobstructed 
has been observed in \cite{Matviichuk2020} in the context of complex log-symplectic manifolds. While the methods of \cite{Matviichuk2020} heavily use techniques from homotopy algebra such $L_\infty$-algebras, our tools in
proving Theorem A are elementary and our 
strategies are adopted from \cite{MPS:quantum} where deformations of $q$-symmetric algebras are studied. 

\subsection{Algebraic Poisson deformations under Properties \eqref{eq:W0} and \eqref{eq:W1}}\label{ss:intro-2}
Turning to algebraic Poisson deformations of $\pi_0$, for
$\calS \subset \calS(\pi_0)$ and $\pi_1 \in \mfX^2(\mX)^\calS$ let 
\[
{\rm Poi}^\calS(\pi_0; \pi_1) \subset \mfX^2(\mX)^\TT
\]
be the set of all 
algebraic Poisson structures $\pi$ on $\mX = \CC^n$ of the form 
\[
\pi = \pi_0 +\pi_1 + \cdots + \pi_N,
\]
where $N \geq 1$ and $\pi_m \in \mfX^2(\mX)^{\calS_{m}}$ 
for every $m \in [1, N]$. We call any  $\pi \in {\rm Poi}^\calS(\pi_0; \pi_1)$ 
 an {\it $\calS$-admissible algebraic Poisson deformation of $\pi_0$ along $\pi_1$}. 
We also say that $\pi \in {\rm Poi}^\calS(\pi_0; \pi_1)$ is  {\it maximal} if 
$\pi_1 =\sum_{\bth \in \calS} c_\bth V_\bth$ with $c_\bth \in \CC^\times$ for  every $\bth \in \calS$.
To give sufficient conditions for the existence and uniqueness of admissible deformations of $\pi_0$, we further make the following definitions.

1) A finite subset $\mW$ of $\ZZone$ is said to have  Property \eqref{eq:W0} if 
$\mfX^0(\mX)^{\mW_{\geq 1}}=0$, equivalently, if every
element in $\mW_{\geq 1}\cap \ZZone$ has at least one $-1$ entry
(see also \autoref{def:W0} and \autoref{lm:three-conditions});

2) A finite subset $\mW$ of $\ZZone$ is said to have  Property \eqref{eq:W1} if 
$\mfX^1(\mX)^{\mW_{\geq 1}}=0$, equivalently, if every element in $\mW_{\geq 1}\cap \ZZone$ has at least two $-1$ entries.

3) An algebraic Poisson structure $\pi$ on $\mX = \CC^n$ is said to have Property \eqref{eq:W0}, respectively Property \eqref{eq:W1}, if
$\mW(\pi)$ has Property \eqref{eq:W0}, respectively Property \eqref{eq:W1}, where 
$\mW(\pi)\subset \ZZone$ denotes the set of all
non-zero $\CCsn$-weights of the monomial terms in $\pi$.

Denote by ${\rm Aut}(\mX)^\TT$ the group of all $\TT$-equivariant bi-regular automorphisms of $\mX = \CC^n$.
Our main general results on algebraic Poisson deformations of $\pi_0$ are summarized as follows.

\medskip
\noindent
{\bf Theorem B.} {\it Let $\pi_0$ be any $\TT$-log-symplectic log-canonical Poisson structure on $\mX = \CC^n$.

1) (\autoref{thm:4.8-W0} and \autoref{thm:4.9-W0})  If $\calS \subset \calS(\pi_0)$
has  Property \eqref{eq:W0}, then for any $\pi_1 \in \mfX^2(\mX)^\calS$, the set 
${\rm Poi}^\calS(\pi_0; \pi_1)$ is non-empty and is a single orbit 
under the action of a finite dimensional unipotent subgroup of ${\rm Aut}(\mX)^\TT$. 
Moreover, every $\TT$-invariant algebraic Poisson structure on $\mX$ with log-canonical term $\pi_0$ and  
Property \eqref{eq:W0} is isomorphic, via  an element in 
${\rm Aut}^\TT(\mX)$, to an element in ${\rm Poi}^\calS(\pi_0; \pi_1)$ for some 
$\calS \subset \calS(\pi_0)$ with Property \eqref{eq:W0} and some $\pi_1 \in \mfX^2(\mX)^\calS$.

2) (\autoref{thm:4.8-W1} and \autoref{thm:4.9-W1}) If $\calS \subset \calS(\pi_0)$
has  Property \eqref{eq:W1}, then  ${\rm Poi}^\calS(\pi_0; \pi_1)$ 
for each $\pi_1 \in \mfX^2(\mX)^\calS$ has exactly one element. Moreover, for any 
$\TT$-invariant algebraic Poisson structure $\pi$
on $\mX$ with log-canonical term $\pi_0$ and 
Property \eqref{eq:W1}, the set $\calS_\pi = \mW(\pi)\backslash \mW(\pi)_2$ is a subset of $\calS(\pi_0)$ with
Property \eqref{eq:W1}, and $\pi$ is the unique element in ${\rm Poi}^{\calS_\pi}(\pi_0; \pi_1)$, where 
$\pi_1$ is the $\mfX^2(\mX)^{\calS_\pi}$-component of $\pi \in \mfX^2(\mX)$ in the decomposition of $\mfX^2(\mX)$ into $\CCsn$-weight spaces.
}

\medskip

\medskip
When $\calS(\pi_0)$ has Property \eqref{eq:W1}, by writing an 
arbitrary element in $\mfX^2(\mX)^{\calS(\pi_0)}$ as
\[
\pi_1^{\calS(\pi_0)}(c) = \sum_{\bth \in \calS(\pi_0)} c_\bth V_\bth, \hs
\hs c = (c_\bth)_{\bth \in \calS(\pi_0)} \in \CC^{\calS(\pi_0)},
\]
we obtain (see \autoref{thm:max-family-W1}) the 
{\it maximal family of admissible algebraic Poisson 
deformations}
\begin{equation}\label{eq:pic-intro}
\pi^{\calS(\pi_0)}(c) = \pi_0 + \pi_1^{\calS(\pi_0)}(c) + \cdots + \pi_N^{\calS(\pi_0)}(c), 
\hs
\hs c = (c_\bth)_{\bth \in \calS(\pi_0)} \in \CC^{\calS(\pi_0)},
\end{equation}
of $\pi_0$, where $N \geq 1$ and for $m \in [2, N]$,  
$\pi_m^{\calS(\pi_0)}(c) \in \mfX^2(\mX)^{\calS(\pi_0)_m}$ and is 
homogeneous polynomial in $c$ of homogeneous degree $m$.
We further show in \autoref{cor:member-summand}
that for any $\calS \subset \calS(\pi_0)$, every $\calS$-admissible
algebraic Poisson deformation of $\pi_0$ is equal to $\pi^{\calS(\pi_0)}(c)$ for some
$c \in \CC^{\calS(\pi_0)}$ and is also a {\it direct summand} of $\pi^{\calS(\pi_0)}(c')$ for some $c' \in (\CC^\times)^{\calS(\pi_0)}$.

\subsection{Applications to symmetric Poisson CGL extensions}\label{ss:app-intro}
In the second part of the paper, we apply Theorem B to special classes of examples.
Let again $\TT$ be any complex algebraic torus with Lie algebra $\t$ and character lattice 
$X^*(\TT)\subset \t^*$, acting  on $\mX = \CC^n$ via 
$\bbeta = (\beta_1,  \ldots,  \beta_n) \in X^*(\TT)^n$ as in \eqref{eq:t-action-intro}. 

\subsubsection{Classifications of symmetric Poisson CGL extensions}\label{ss:intro-3}
We say that
a log-canonical Poisson structure $\pi_0$ on $\CC^n$ is  {\it
of $\TT$-action type} if\footnote{While 
there is no harm in removing the minus sign in the definition of $\pi_0$ and adjusting the results in the paper accordingly,
the minus sign is there to align with the special example of the standard Poisson structure on generalized Schubert cells which 
has appeared in a series of papers \cite{LM:mixed, LMi:Kostant, EL:BS} and has been implemented into a computer program.} 
\begin{equation}\label{eq:pi0-intro-0}
\pi_0 =-\sum_{j < k} \beta_j(h_k) x_jx_k \frac{\partial}{\partial x_j} \wedge \frac{\partial}{\partial x_k}
\end{equation}
for some  ${\bf h} = (h_1, \ldots, h_n) \in \t^n$ such that 
$\beta_j(h_j) \neq 0$ for every $j \in [1, n]$. 
We prove in \autoref{cor:action-pi0-log-sym} that every $\pi_0$ of $\TT$-action type
is $\TT$-log-symplectic, and we provide a method for computing $\calS(\pi_0)$ in \autoref{pr:Spi0-action}. 
In particular, it follows from \autoref{pr:Spi0-action} that $\calS(\pi_0)$ has Property
\eqref{eq:W0}. Applying 1) of Theorem B, we obtain a large class  of $\TT$-invariant
algebraic Poisson structures with log-canonical terms defined by $\TT$-action data (see \autoref{rk:new-class}).

We say that a log-canonical Poisson structure $\pi_0$ on $\mX =\CC^n$ is of {\it symmetric $\TT$-action type} if, in
addition to be given as in \eqref{eq:pi0-intro-0}, 
it also takes the form
\[
\pi_0 =\sum_{j < k} \beta_k(h_j^\prime) x_jx_k \frac{\partial}{\partial x_j} \wedge \frac{\partial}{\partial x_k}
\]
for some  ${\bf h}^\prime = (h_1^\prime, \ldots, h_n^\prime) \in \t^n$ such that 
$\beta_j(h_j^\prime) \neq 0$ for every $j \in [1, n]$. We prove in
\autoref{pr:Spi0-action-symmetric} that when $\pi_0$ is of symmetric $\TT$-action type, 
every $\bth \in \calS(\pi_0)$ is {\it negatively bordered on both sides}
in the sense that 
\begin{equation}\label{eq:bth-intro-0}
\bth =(0, \ldots, 0, -1, \bth_{j+1}, \ldots, \, \bth_{k-1}, -1, 0, \ldots, 0)^t
\end{equation}
for a unique pair $j < k$ and with $\bth_i \in \ZZ_{\geq 0}$ for all $i \in [j+1,k-1]$. 
Consequently, 
$\calS(\pi_0)$ has Property \eqref{eq:W1}.
 By
2) of Theorem B, for every $\calS \subset \calS(\pi_0)$ and every $\pi_1 \in 
\mfX^2(\mX)^{\calS}$ we have a unique $\calS$-admissible algebraic Poisson deformation of $\pi_0$ along 
$\pi_1$. The set of all such algebraic Poisson deformations of $\pi_0$ is the maximal family in
\eqref{eq:pic-intro}.

By \cite[Definition 6.1]{GY:Poi-CGL} (recalled in  \autoref{def:Poi-CGL}),  a
symmetric $\TT$-Poisson CGL extension of dimension $n$ is a special type of $\TT$-invariant 
algebraic Poisson structure on $\CC^n$ whose log-canonical term is of symmetric $\TT$-action type. 
As a main application of Theorem B, we 
prove the following classification result of symmetric Poisson CGL extensions.

\medskip
\noindent
{\bf Theorem C.} (\autoref{thm:sym-CGL} and \autoref{cor:classification-CGL})
{\it Let $\TT$ be any algebraic torus over $\CC$.

1) For any log-canonical Poisson structure $\pi_0$ on $\mX =\CC^n$ of 
symmetric $\TT$-action type and for every
$\calS \subset \calS(\pi_0)$, all $\calS$-admissible algebraic Poisson deformations of
$\pi_0$  are symmetric $\TT$-Poisson CGL extensions;

2) Every symmetric $\TT$-Poisson CGL extension $\pi$ of dimension $n$ is the unique  $\calS_\pi$-admissible algebraic Poisson deformation of
its log-canonical term 
$\pi_0$ along $\pi_1 \in \mfX^2(\mX)^{\calS_\pi}$, where
\[
\calS_\pi = \mW(\pi)\backslash \mW(\pi)_{2} \subset \calS(\pi_0), 
\]
and $\pi_1$ is the $\mfX^2(\mX)^{\calS_\pi}$-component of $\pi$ with respect to the
$\CCsn$-weight space decomposition of $\mfX^2(\mX)$;

3) Up to rescaling of the CGL generators, the assignment
$\pi \mapsto \calS_\pi$
is a one-to-one correspondence between 
symmetric $\TT$-Poisson CGL extensions  with log-canonical term
$\pi_0$ and subsets of $\calS(\pi_0)$.
}

\subsubsection{Mutation matrix and Poisson cohomology}
Let $\pi_0$ be a log-canonical Poisson structure on $\CC^n$ of symmetric $\TT$-action type. 
For  $\bth \in \calS(\pi_0)$ as \eqref{eq:bth-intro-0}, 
 write $j = {\rm lb}(\bth)$ and $k = {\rm rb}(\bth)$ and call them 
the {\it left border} and the {\it right border} of $\bth$, respectively. We define the 
{\it oriented smoothing graph} $\Gamma^+(\pi_0)$ of $\pi_0$ to be the oriented graph with vertex set $[1, n]$ and an
oriented edge $j \rightarrow k$ for each pair $(j, k)$ such that $j = {\rm lb}(\bth)$ and
$k = {\rm rb}(\bth)$ for some $\bth \in \calS(\pi_0)$ (see \autoref{de:Delta-n}). The vertex set of a connected component of $\Gamma^+(\pi_0)$
is called a {\it level set} in $[1, n]$ (associated to $\pi_0$). 
We show in \autoref{lm:bth-Cartan-integers} that for each $\bth \in \calS(\pi_0)$ as in \eqref{eq:bth-intro-0}
and every $i \in [j+1, \, k-1]$, the  integer $\bth_i\geq 0$ depends 
only on the level set $L(i)$ of $i$ and the level set $L(j)=L(k)$ of $j$ and $k$, and we call 
$-\bth_i$ the 
{\it Cartan integer} associated to the {\it interlacing levels} $L(i)$ and $L(j)=L(k)$
 (see \autoref{def:a-LL}).

Let now $\pi$ be a symmetric $\TT$-Poisson CGL extension on $\CC^n$ with log-canonical term $\pi_0$, and let $\calS_\pi \subset \calS(\pi_0)$ be as in 
Theorem C. 
 We prove in \autoref{lm:s-p-same} that the {\it successor map $s_\pi$} associated to $\pi$
defined in \cite{GY:Poi-CGL} is given by $s_\pi({\rm lb}(\bth)) = {\rm rb}(\bth)$ for $\bth \in \calS_\pi$ 
and $s_\pi(j) =+\infty$ if $j \notin {\rm lb}(\calS_\pi)$. Introducing an $n \times n$ lower triangular matrix $E_\pi$ using the 
successor map $s_\pi$ (see \eqref{eq:E-pi} for detail), and setting
$\calH_\pi$ be the integer matrix whose columns are 
the elements $\bth \in \calS_\pi$, we prove in \autoref{thm:main-M}  that the mutation matrix $M$ 
 (denoted as $\widetilde{B}_\tau$ for $\tau = {\rm id}$ in \cite[Theorem 11.1]{GY:Poi-CGL}) in the Goodearl-Yakimov initial seed 
 $({\bf y}, M)$ in $\CC(x_1, \ldots, x_n)$ associated to $\pi$ is given by
\[
M = (E_\pi)^t \calH_\pi,
\]
where $(E_\pi)^t$ is the transpose of  $E_\pi$. In particular,  entries of
$M$ are $0, \pm 1$ or $\pm 1$ times  Cartan integers associated to interlacing level sets in $[1, n]$ associated to $\pi_0$ (see  \autoref{thm:main-M} for detail).

\subsubsection{Symmetric Poisson CGL extensions of Cartan type}\label{ss:Cartan-intro}
With again $\TT$ acting on $\mX = \CC^n$ via $\bbeta = (\beta_1, \ldots, \beta_n) \in X^*(\TT)^n 
\subset (\t^*)^n$ as in \eqref{eq:t-action-intro}, 
in the last part of the paper we consider log-canonical Poisson structures $\pi_0$ 
on $\mX = \CC^n$ of the form
\begin{equation}\label{eq:pi0-pair-intro}
\pi_0 =-\sum_{j < k} \la \beta_j, \beta_k\ra  x_jx_k \frac{\partial}{\partial x_j} \wedge \frac{\partial}{\partial x_k},
\end{equation}
where $\lara$ is a symmetric bilinear form on $\t^*$ such that 
$\la \beta_j, \beta_j \ra \neq 0$ for every $j \in [1, n]$. Such a 
$\pi_0$, automatically of symmetric $\TT$-action type,  is said to be of {\it strongly symmetric $\TT$-action type}, and we call 
$(\lara, \bbeta)$ an $n$-dimensional {\it $\TT$-action pair}. In such a setting, for
$\beta \in \t^*$ with $\la \beta, \beta\ra \neq 0$ we have the 
reflection operator $s_\beta$ on $\t^*$ defined by
\[
s_\beta(\xi) = \xi -\frac{2\la \beta, \xi\ra}{\la \beta, \beta\ra}\beta = \xi -a_{{}_{\beta, \xi}} \beta, \hs \hs \xi \in \t^*,
\]
and we call $a_{{}_{\beta, \xi}} = \frac{2\la \beta, \xi\ra}{\la \beta, \beta\ra} \in \CC$ the 
{\it Cartan number} between
$\beta$ and $\xi$.
Using the sequence $\bbeta = (\beta_1,\ldots, \beta_n)$, we construct the sequence 
$\bgamma = (\gamma_1, \ldots, \gamma_n)$ in $\t^*$ by
\begin{equation}\label{eq:gammaj-intro}
\gamma_j = s_{\beta_1} \cdots s_{\beta_{{j-1}}} \beta_j \in \t^*, \hs \hs j \in [1, n].
\end{equation}
We show in \autoref{pr:lara-beta} that the set  $\calS(\pi_0)$ for $\pi_0$ in \eqref{eq:pi0-pair-intro} 
 can be entirely read off 
from the sequence $\bgamma$ via the pattern in which distinct elements appear in $\bgamma$ and the Cartan numbers between them. 

We say a symmetric $\TT$-Poisson CGL extension $\pi$ is {\it strongly symmetric} if
its log-canonical term $\pi_0$ is of the form in \eqref{eq:pi0-pair-intro}. Starting from any
$\TT$-action pair $(\lara, \bbeta)$ and defining $\pi_0$ as in \eqref{eq:pi0-pair-intro}, we then get the maximal family $\pi^{\calS(\pi_0)}(c)$, parametrized by $c \in \CC^{\calS(\pi_0)}$, of strongly symmetric $\TT$-Poisson CGL extensions.

Let now 
$A = (a_{i, j})$ be a $r \times r$ symmetrizable generalized Cartan matrix with a given symmetrizer
$(d_i)_{i \in [1, r]}$ (see $\S$\ref{ss:pair-Cartan}), and let $\{\alpha_1, \ldots, \alpha_r\}$ be
the associated simple roots. 
One then has a natural 
$r$-dimensional complex algebraic torus $\TT_A$ and a symmetric bilinear form $\lara_A$ on $\t_A^*$ 
such that $\la \alpha_i, \alpha_i\ra = 2d_i$ for each $i \in [1, r]$, where $\t_A$ is the Lie algebra o $\TT_A$.
Set $s_i = s_{\alpha_i}$ for $i \in [1, r]$.
Let ${\bf i} = (i_1, i_2, \ldots, i_n)$ be any sequence in $[1, r]$, and define 
\[
\beta_j({\bf i}) = s_{i_1}s_{i_2} \cdots s_{i_{j-1}}\alpha_{i_j}, \hs\hs j \in [1, n].
\]
Then $(\lara_A, \bbeta({\bf i}))$ is a $\TT_A$-action pair, and $\pi_0$ defined by
$(\lara_A, \bbeta({\bf i}))$ via \eqref{eq:pi0-pair-intro} is now given by
\[
\pi^{(A,{\bf i})}_0 = -\sum_{1\leq j < k \leq n} \langle \beta_j({\bf i}), \, \beta_k({\bf i})\rangle_A 
x_jx_k\frac{\partial}{\partial x_j} \wedge \frac{\partial}{\partial x_k}.
\]
The sequence 
$\bgamma$ in this case turns out to be given by 
$(\gamma_1, \, \gamma_2, \, \ldots, \, \gamma_n) = (\alpha_{i_1}, \, \alpha_{i_2}, \, \ldots, \, \alpha_{i_n})$. 
Consequently, the set $\calS(\pi_0)$ can be directly read off from
the sequence ${\bf i}$. More specifically, for $j \in [1, n]$, define $j^+ =+\infty$ if $i_{j'} \neq i_j$ for all 
$j' \in [j+1,k]$, and otherwise let
$j^+ ={\rm min}\{j' \in [j+1, k]: i_{j'} = i_j\}$. Set
\[
\calJ = \{j \in [1, n]: j^+\neq +\infty\}.
\]
We prove in $\S$\ref{ss:Poi-Cartan} that $\calS(\pi^{(A, {\bf i})}_0) = \{\bth^{(j, j^+)}: j 
\in \calJ\}$, where  for $j \in \calJ$, 
\[
\bth^{(j, j^+)}= (0, \,\ldots,\, 0, \,-1,\, a_{i_{j+1}, i_j}, \,\ldots,\, a_{i_{j^+-1}, i_j},
\, -1, \, 0, \,\ldots,\, 0)^t.
\]
Identifying $\calS(\pi^{(A, {\bf i})}_0)$ with $\calJ$  and 
$\CC^{\calS(\pi^{(A, {\bf i})}_0)}$ with $\CC^{\calJ} =\{c = (c_j)_{j \in \calJ}\}$, we then have the maximal family
\[
\pi^{(A, {\bf i})}(c) := \pi^{\calS(\pi^{(A, {\bf i})}_0)}(c), \hs c \in \CC^{\calJ},
\]
of admissible algebraic Poisson deformations of $\pi^{(A, {\bf i})}_0$, which is also the set of all
symmetric $\TT_A$-Poisson CGL extensions with log-canonical term $\pi^{(A, {\bf i})}_0$.

For an arbitrary algebraic torus $\TT$, we say that a symmetric $\TT$-Poisson CGL extension is
{\it of Cartan type} if 
the action of $\TT$ on $\CC^n$ factors through a surjective morphism $\rho: \TT \to \TT_A$ of algebraic tori
for some $r \times r$ symmetrizable generalized Cartan matrix $A$, and if 
\[
\pi = \pi^{(A, {\bf i})}(c)
\]
for some sequence ${\bf i}=
(i_1, \ldots, i_n)$ in $[1, r]$ and some $c  \in \CC^{\calJ}$. 
We prove in \autoref{pr:Cartan-type} that 
a symmetric $\TT$-Poisson CGL extension $(\CC^n, \pi)$ is  of Cartan type if and only if it log-canonical
term $\pi_0$ is defined by a distinguished (\autoref{def:disting}) and integral (\autoref{def:int})
$\TT$-action pair
$(\lara, \bbeta)$.

\subsubsection{The standard Poisson structure on Bott-Samelson cells}
Assume now that $A$ is of finite type with rank $r$, 
and let $G$ be a connected and simply connected Lie group of the same Cartan-Killing
type as $A$. Choose a maximal torus $T$ of $G$,  a Borel subgroup $B$ of $G$ containing $T$, and a labeling of the
corresponding set of simple roots such that the associated Cartan matrix is $A$. The choice of the pair $(T, B)$, together with
that of a symmetrizer $(d_i)_{i \in [1, r]}$, then give rise to a so-called {\rm standard multiplicative Poisson structure}
$\pi_{\rm st}$ on $G$, making $(G, \pi_{\rm st})$ into a Poisson Lie group. On the other hand, any sequence 
${\bf i} = (i_1, \ldots, i_n)$ in $[1, r]$ defines a so-called Bott-Samelson cell $\calO^{(s_{i_1}, \ldots, s_{i_n})}$
associated to $G$, which is isomorphic to $\CC^n$ via the {\it Bott-Samelson coordinates} $(x_1, \ldots, x_n)$
and carries the {\it standard Poisson structure} coming from the Poisson Lie group $(G, \pi_{\rm st})$
(see $\S$\ref{ss:BS-cells} for detail and for the relation between generalized Schubert cells and Bott-Samelson cells). We prove

\medskip
\noindent
{\bf Theorem D.} (\autoref{thm:BS-cells}) {\it Let $A$ be of finite type of rank $r$. For any sequence
${\bf i} = (i_1, \ldots, i_n)$ in $[1, r]$ and under the identification 
$\calO^{(s_{i_1}, \ldots, s_{i_n})} \cong \CC^n$ via Bott-Samelson coordinates, the standard Poisson structure
on $\calO^{(s_{i_1}, \ldots, s_{i_n})}$ coincides with the Poisson structure $\pi^{(A, {\bf i})}(c)$
for $c = (-\la \alpha_{i_j}, \alpha_{i_j}\ra)_{j \in [1, n], j^+ \leq n}$.}

\medskip
For any symmetrizable generalized Cartan matrix $A$, 
not necessarily of finite type, and for any sequence ${\bf i} = (i_1, \ldots, i_n)$,
the choice of $c = (-\la \alpha_{i_j}, \alpha_{i_j}\ra)_{j \in [1, n], j^+ \leq n}$ 
ensures that the symmetric Poisson CGL extension 
$(\CC^n, \pi^{(A, {\bf i})}(c))$ is normal in the sense of 
\cite[Chapter 9]{GY:Poi-CGL} and thus gives rise to a 
cluster structure on $\CC[x_1, \ldots, x_n]$ compatible with both the $\TT$-action and the Poisson structure
by \cite[Theorem 11.1]{GY:Poi-CGL}.
Applying \autoref{thm:main-M}, we show in \autoref{cor:M-Shen-Weng} that the Goodearl-Yakimov initial mutation matrix coincides with the one 
defined by L. Shen and D. Weng in \cite{ShenWeng:DBS} using string diagrams.

\subsection{Acknowledgments} J.-H. Lu's research has been partially supported by the 
Research Grants Council (RGC) of the Hong Kong SAR, 
China (GRF 17306724). M. Matviichuk thanks Zheng Hua and Hong Kong University for the hospitality during his visits. For a part of the project, M. Matviichuk was supported by the start-up fund at CUHK. We 
are grateful to Zheng Hua, Zihang Liu, Brent Pym, Travis Schedler, and Milen Yakimov for helpful and stimulating discussions.

\section{Poisson cohomology and formal Poisson deformations}\label{s:forml-deform}
\subsection{Poisson cohomology of log-canonical Poisson structures}\label{ss:cohom-log}
Throughout the paper, let 
$\mX = \CC^n$ with linear coordinates $(x_1, \ldots, x_n)$ defined by the standard basis 
$(e_1, \ldots, e_n)$ of $\CC^n$. 
A Poisson structure $\pi_0$ on $\mX$ is said to be  log-canonical in $(x_1, \ldots, x_n)$ if
 \begin{equation}\label{eq:pi0}
\pi_0 = \sum_{1\le j<k\le n} \lambda_{j, k} x_jx_k \frac{\partial}{\partial x_j} \wedge \frac{\partial}{\partial x_k},\hs \lambda_{j, k} \in \CC \; \;\mbox{for} \;\; 1 \leq j < k \leq n.
\end{equation}
 Correspondingly we
have the Poisson bracket $\{\, , \, \}_{\pi_0}$ on $\CC[x_1, \ldots, x_n]$ with
$$
\{x_j,\, x_k\}_{\pi_0} = \lambda_{j, k} x_j x_k, \hs 1\leq j < k \leq n. 
$$
 Setting $\lambda_{k, j} = -\lambda_{j, k}$
 for $1 \leq j < k \leq n$ and $\lambda_{j, j} = 0$ for $j \in [1, n]$,
the skew-symmetric matrix 
\[
\blambda = (\lambda_{j, k})_{j, k \in [1, n]}
\]
is called the \textit{coefficient matrix} of $\pi_0$.
Writing $x^f = x_1^{f_1} \cdots x_n^{f_n}$ for $f = (f_1, \ldots, f_n) \in (\ZZ_{\geq 0})^n$, 
then
\begin{equation}\label{eq:bra-f-g}
\{x^f, \; x^g\}_{\pi_0} =( f^t \blambda g) \, x^{f+g}, \hs f, \, g \in \ZZ^n.
\end{equation}
Let $\mfX(\mX)=\bigoplus_{p=0}^n \mfX^p(\mX)$ be the space of algebraic polyvector fields on $\mathsf{X}$, i.e.,
$\mfX^0(\mX) = \CC[x_1, \ldots, x_n]$ and
\[
\mfX^p(\mX) = \left\{\sum_{1\leq j_1 < \ldots < j_p \leq n} a_{j_1, \ldots, j_p} 
\frac{\partial}{\partial x_{j_1}} \wedge \cdots \wedge \frac{\partial}{\partial x_{j_p}}: \; a_{j_1, \ldots, j_p} 
\in \CC[x_1, \ldots, x_n], 
\; \forall \; 1\leq j_1 < \ldots < j_p \leq n\right\}
\]
for $p \in [0, n]$. Recall then  (see \cite[Chapter 4]{Laurent-Gengoux2013}) that 
the (algebraic)  Poisson cohomology $\mathsf{H}_{\pi_0}(\mathsf{X})$ of $(\mX, \pi_0)$
is defined as 
 the cohomology of the cochain complex
\[
\xymatrix{
0 \ar[r]&\mfX^0(\mX) \ar[r]^-{\mathsf{d}_{\pi_0}} &\mfX^1(\mX) 
\ar[r]^-{\mathsf{d}_{\pi_0}} & \mfX^2(\mX)  \ar[r]^-{\mathsf{d}_{\pi_0}} &\; \cdots\; \ar[r]^-{\mathsf{d}_{\pi_0}}& 
\mfX^n(\mX) \ar[r] & 0,
}
\]
where $\mathsf{d}_{\pi_0}(-) = [\pi_0,-]_{\rm Sch}$ and $[~,~]_{\rm Sch}$ is the Schouten bracket on 
$\mfX(\mX)$.

Consider the $(\CC^\times)^n$-action on $\mX = \CC^n$, given in the coordinates
$(x_1, \ldots, x_n)$ by
\begin{equation}\label{eq:CCsn-CCn}
\lambda \cdot x_j =\lambda_jx_j, \hs
\lambda \in \CCsn, \; j \in [1, n],
\end{equation}
and the corresponding $(\CC^\times)^n$-action on $\mfX(\mX)$. As $\pi_0$ is $(\CC^\times)^n$-invariant,
we have an induced $(\CC^\times)^n$ action on the Poisson cohomology $\mathsf{H}_{\pi_0}(\mathsf{X})$,
which we use as a tool for studying $\mathsf{H}_{\pi_0}(\mathsf{X})$. 
Consider first the decomposition of $\XX$ into $(\CC^\times)^n$-weight spaces. For $j \in [1, n]$, set
\[
\partial_j = x_j\frac{\partial}{\partial x_j} \in \mfX^1(\mX)^{0}, 
\]
where $0 = (0, \ldots, 0)^t \in \ZZ^n$, and for $J = \{j_1, \ldots, j_p\} \subset [1, n]$ with $j_1 < \cdots < j_p$, set
\[
\partial_J =\partial_{j_1} \wedge \cdots \wedge \partial_{j_p} \in \mfX^p(\mX)^{0}.
\]
For $\bfw = (\bfw_1, \ldots, \bfw_n)^t \in \ZZ^n$, let $x^{\bfw} = x_1^{\bfw_1}\cdots 
\cdots x_n^{\bfw_n}$, a Laurent monomial in $(x_1, \ldots, x_n)$. The proof of the following 
\autoref{lm:X-w} is straightforward.

\begin{lemma-definition}\label{lm:X-w}
For $\bfw \in \ZZ^n$, one has $\mfX(\mX)^\bfw \neq 0$ if and only if $\bfw \in (\ZZ_{\geq -1})^n$, and setting
\begin{equation}\label{eq:Jw}
J_\bfw = \{j \in [1, n]: \bfw_j = -1\},
\end{equation}
then for 
$p \in [0, n]$ one has 
$\mfX^p(\mX)^\bfw \neq 0$ if and only if 
$|J_\bfw|\leq p$, and in such a case
\begin{equation}\label{eq:Xw}
\mfX^p(\mX)^\bfw= {\rm Span}_{\CC}\{\,x^\bfw \partial_J: \; J_{\bf w} \subset J \subset [1, n], \; |J| = p\}
\hs \mbox{and} \hs \dim \left(\mfX^p(\mX)^\bfw\right) = \left(\!\begin{array}{c} n-|\Jw|\\ p-|\Jw|\end{array}\!\right).
\end{equation}
In particular, $\dim \mfX^p(\mX)^\bfw = 1$ if $p = |J_\bfw|$. 
For $p \in [0, n]$, we also set
\begin{equation}\label{eq:calW-p}
\calW_p= \{\bfw \in \ZZone: |J_\bfw| \leq p\} = \{\bfw \in \ZZ^n: \, \mfX^p(\mX)^\bfw\neq 0\}.
\end{equation}
Note that $\calW_0 = (\ZZ_{\geq 0})^n$ an $\calW_p \subset \calW_{p'}$ if $p \leq p'$.
We call $x^\bfw \partial_J \in \mfX^p(\mX)$, where $\bfw \in \calW_p$,  
$J_{\bf w} \subset J$, and $|J| = p$, a {\it monomial $p$-vector field} on $\mX$.
\end{lemma-definition}

We also extend the Schouten bracket to the space $\mfX_{\rm Laurent}(\mX)$ of {\it Laurent poly-vector fields} on $\mX$, i.e., finite sums of Laurent monomial
poly-vector fields of the form
$x^\bfw \partial_J$, where $\bfw \in \ZZ^n$ and $J \subset [1, n]$.
For $\bfw \in \ZZ^n$ and $K = \{k_1, \ldots, k_q\} \subset [1, n]$ with $k_1 < \cdots < k_q$, introduce
\[
\bfw \cdot \partial_K = \sum_{i=1}^q (-1)^{i+1} \bfw_{k_i} \partial_{k_1} \wedge \cdots \wedge \widehat{\partial}_{k_i} \wedge \cdots \wedge \partial_{k_q} \in \mfX^{q-1}(\mX)^{(\CC^\times)^n},
\]
where $\widehat{\partial}_{k_i}$ means that ${\partial}_{k_i}$ is omitted in the exterior product.

\begin{lemma}\label{lm:wvJK}
For $\bfu, \bfw \in \ZZ^n$ and $J, K \subset [1, n]$, one has 
\[
[x^{\bfu}\partial_{K}, \; x^\bfw \partial_J]_{\rm Sch} =  x^{\bfu +\bfw }\left((-1)^{|K|+1}(\bfw \cdot \partial_K) \wedge \partial_{J}
-\partial_K \wedge (\bfu \cdot \partial_{J}\right) \in \mfX_{\rm Laurent}(\mX)^{\bfu + \bfw}.
\]
\end{lemma}

\begin{proof}
The statement is
proved by a direct calculation.
\end{proof}

For $\bfw \in \ZZ^n$, let 
$u_{\bfw} = \sum_{j=1}^n \left(\sum_{k=1}^n  \lambda_{j,k} \bfw_k\right) \partial_j = (\partial_1, \ldots, \partial_n) \blambda \bfw \in \mfX^1(\mX)^{0}$.

\begin{lemma}\label{lm:dpi-0}
For $\bfw \in \ZZ^n$ and $V \in 
\mfX_{\rm Laurent}(\mX)^\bfw$, one has 
$[\pi_0, \, V]_{\rm Sch} = u_{\bfw} \wedge V \in \mfX_{\rm Laurent}(\mX)^\bfw$.
\end{lemma}

\begin{proof} By \autoref{lm:wvJK}, we have
\[
[\pi_0, \, V]_{\rm Sch} = \frac{1}{2}\sum_{j, k \in [1, n]} \lambda_{j,k} [\partial_j \wedge \partial_k, \, V]_{\rm Sch} 
=\frac{1}{2}\sum_{j,k \in [1, n]} \lambda_{j,k} (\bfw_k \partial_j - \bfw_j \partial_k) \wedge V
= u_{\bfw}\wedge V.
\]
\end{proof}

Turning to the Poisson cohomology $\mH_{\pi_0}(\mX)$, we have 
\begin{equation}\label{eq:H-X}
\mH_{\pi_0}(\mX) = \bigoplus_{\bfw \in (\ZZ_{\geq -1})^n} \mH_{\pi_0}(\mX)^\bfw
=\bigoplus_{\bfw \in (\ZZ_{\geq -1})^n} \frac{{\rm Ker}(\dpi|_{\mfX(\mX)^\bfw}: \mfX(\mX)^\bfw \longrightarrow
\mfX(\mX)^\bfw)}{{\rm Im}(\dpi|_{\mfX(\mX)^\bfw}: \mfX(\mX)^\bfw \longrightarrow
\mfX(\mX)^\bfw)}.
\end{equation}

\begin{notation}\label{nota:CC-I}
{\rm 
With $(e_1, \ldots, e_n)$ as the standard (column vector) basis of $\CC^n$, for  $J \subset [1, n]$, let
\begin{equation}\label{eq:CC-I}
\CC^J = \sum_{j \in J} \CC e_j \hs \mbox{and} \hs (\CC^J)_0 = \left\{\sum_{j \in J} a_je_j \in
\CC^J: \;\sum_{j \in J} a_j = 0\right\}.
\end{equation}
}
\end{notation}
For the convenience of the reader we provide a proof of the following \cite[Lemma 3.2]{MPS:quantum}.

\begin{proposition}\label{pr:Hw-0}
For  $\bfw \in (\ZZ_{\geq -1})^n$, one has  $\mH_{\pi_0}(\mX)^\bfw\neq 0$ if and only if 
\begin{equation}\label{eq:contributingLinAlg}
\blambda \bfw \in \CC^{\Jw}, \hs \mbox{equivalently}, \hs \blambda \bfw \in  (\CC^{\Jw})_0,
\end{equation}
and in such a case, $\dpi|_{\mfX(\mX)^\bfw} = 0$ and $\mH_{\pi_0}(\mX)^\bfw \cong \mfX(\mX)^\bfw$.
\end{proposition}

\begin{proof} Let $\bfw \in (\ZZ_{\geq -1})^n$. We have $\mfX(\mX)^\bfw \neq 0$ by \autoref{lm:X-w}.
Write $\blambda \bfw = (u_1, \ldots, u_n)^t \in \CC^n$, so that
$u_\bfw = \sum_{j=1}^n u_j \partial_j$.
Note first that if $\blambda \bfw \in \CC^{\Jw}$, then 
$\sum_{j \in \Jw} u_j = -\bfw^t \blambda \bfw = 0$, so 
$\blambda \bfw \in \CC^{\Jw}$ if and only if $\blambda \bfw \in (\CC^{\Jw})_0$.

Assume first that \eqref{eq:contributingLinAlg} holds. 
Then $u_\bfw \in {\rm Span}_{\CC}\{\partial_j: j \in \Jw\}$.
By \eqref{eq:Xw}, 
$u_\bfw \wedge V = 0$ for every $V \in \mfX(\mX)^\bfw$. Thus 
$\dpi|_{\mfX(\mX)^\bfw} = 0$, so $\mH_{\pi_0}(\mX)^\bfw \cong \mfX(\mX)^\bfw \neq 0$.

Suppose now that \eqref{eq:contributingLinAlg} does not hold. We prove that ${\rm Ker}(\dpi|_{\mfX(\mX)^\bfw})
= {\rm Im} (\dpi|_{\mfX(\mX)^\bfw})$. To this end, choose any $i \in [1, n] \backslash \Jw$ such that
$u_i \neq 0$, and let 
$\alpha = u_i^{-1}dx_i$. 
Then $(\alpha, u_\bfw) = x_i$, and 
\begin{equation}\label{eq:alpha-V}
\iota_\alpha (u_\bfw \wedge V) = x_i V - u_\bfw \wedge \iota_\alpha V
\end{equation}
for $V \in \mfX(\mX)^\bfw$, 
where $\iota_\alpha: \mfX^p(\mX) \to \mfX^{p-1}(\mX)$ is the contraction operator by $\alpha$. 
Suppose that
$V \in \mfX(\mX)^\bfw$ and $\dpi(V) = 0$, so that $u_\bfw \wedge V = 0$ by \autoref{lm:dpi-0}. Then \eqref{eq:alpha-V}  gives
\[
x_i V  = u_\bfw \wedge \iota_\alpha V.
\]
We claim that $\iota_{\alpha} V \in x_i \mfX^{p-1}(\mX)$, so by writing $\iota_{\alpha} V = x_iV^\prime$
with 
$V' \in \mfX^{p-1}(\mX)$, we have 
\[
V' \in \mfX^{p-1}(\mX)^\bfw \hs \mbox{and} \hs 
V = u_\bfw \wedge  V'=\dpi (V').
\]
Indeed, by \eqref{eq:Xw}, $V$ is a finite $\CC$-linear combination of terms of the form $V_K =x^\bfw \partial_{\Jw}\wedge
\partial_K$ for a collection $\calK$ of $K \subset [1,n]$ such that $\Jw \cap K =\emptyset$. For every 
$K \in \calK$  such that 
 $\iota_\alpha (V_K) \neq 0$, we have $i \in K$ and $\bfw_i \geq 0$, so 
 $\iota_\alpha (V_K) = x_i V^\prime_K$, where 
 $V^\prime_K = \pm u_i^{-1} x^\bfw \partial_{\Jw}\wedge \partial_{K\backslash \{i\}})$, so the claim follows.
 \end{proof}

\begin{remark}\label{rmk:H-X}
{\rm
For $\bfw \in (\ZZ_{\geq -1})^n$ satisfying \eqref{eq:contributingLinAlg} and $V \in \XX^\bfw$, 
we also denote the image of $V$ in $\mH_{\pi_0}(\mX)^\bfw$ by $V$, and we write
$\mH_{\pi_0}(\mX)^\bfw = \XX^\bfw$.
\hfill $\diamond$
}
\end{remark}

The following elementary observation will be used several times in the paper.

\begin{lemma}\label{lm:wj-wk}
1) Suppose that  $\bfw = (\bfw_1, \ldots, \bfw_n)^t \in \CC^n$ and  $j, k \in [1, n]$, $j \neq k$, are
such that $\blam \bfw = a (e_j-e_k)$ for some $a \in \CC^\times$. Then $\bfw_j = \bfw_k$;

2) More generally, let $\bfw$ be as in 1), and suppose that 
$\bfw' = (\bfw_1^\prime, \ldots, \bfw_n^\prime)^t \in \CC^n$ and  $j', k' \in [1, n]$, $j' \neq k'$, are
such that $\blam \bfw' = a' (e_{j'}-e_{k'})$ for some $a' \in \CC^\times$. Then
$a (\bfw^\prime_j-\bfw^\prime_k) + a' (\bfw_{j'}-\bfw_{k'})=0$.
\end{lemma}

\begin{proof}
1) It follows from $0 = \bfw^t \blam \bfw = a\bfw^t (e_j-e_k)=a(\bfw_j-\bfw_k)$ that $\bfw_j = \bfw_k$.
Similarly, 2) follows from $(\bfw')^t \blambda \bfw = -\bfw^t \blambda \bfw^\prime$.
\end{proof}

\subsection{Poisson cohomology of $\TT$-log-symplectic log-canonical Poisson structures}\label{ss:T-cohom-log}
 Assume now that $\TT$ is a complex  algebraic torus, and 
 let $X^*(\TT)$ be the character lattice. For $\beta \in 
 X^*(\TT)$, we write the corresponding group homomorphism $\TT\to \CC^\times$ as $t \mapsto t^\beta$. 
 Let $\t$ be the Lie algebra of $\TT$. 
 Identifying $\beta \in X^*(\TT)$ with its differential $d_e \beta \in \t^*$ at the identity element $e \in \TT$, we also regard $X^*(\TT)$ as a lattice in $\t^*$. Fix now
$\bbeta = (\beta_1, \beta_2, \ldots, \beta_n) \in X^*(\TT)^n \subset (\t^*)^n$ 
 and let $\TT$ act on $\mX = \CC^n$ by
 \begin{equation}\label{eq:t-CCn}
 t\cdot  (x_1, x_2, \ldots, x_n)=(t^{\beta_1} x_1, \, t^{\beta_2}x_2, \,\ldots, \, t^{\beta_n}x_n).
 \end{equation}
Recall the following definition already stated in $\S$\ref{ss:intro-1}.

 \begin{definition}\label{de:T-log-symp}
A log-canonical Poisson structure $\pi_0$ on $\mX=\CC^n$ with coefficient matrix $\blambda$ is said to be {\it $\TT$-log-symplectic} if the 
system of linear equations 
\begin{equation}\label{eq:open}
\begin{cases}
    \blambda \bfw = 0 \in \CC^n, \\ 
    \boldsymbol{\beta} \bfw = 0 \in \t^*
\end{cases}
\end{equation}
for $\bfw \in \CC^n$ has only the zero solution. 
 \end{definition}
 
 \begin{remark}\label{rk:open-T-leaf}
 {\rm
Property \eqref{eq:open} is equivalent to requiring that at any $x \in \CCsn\subset \CC^n = \mX$, the tangent spaces at $x$ of the symplectic leaf of $\pi_0$
through $x$ and of the $\TT$-orbit through $x$ span $T_x\mX$. Defining a \textit{$\TT$-leaf} of $\pi_0$ to be
the union of all $\TT$-translations of a symplectic leaf of $\pi_0$, condition \eqref{eq:open} is then equivalent to 
$(\CC^\times)^n$ being an (open) $\TT$-leaf of $\pi_0$ in $\CC^n$ (see \cite{LM:T-leaves}). 
 \hfill $\diamond$
 }
 \end{remark}

\begin{remark}\label{rk:not-effective}
 {\rm
In \autoref{de:T-log-symp} the $\TT$-action on $\mX = \CC^n$ is not required to be effective. 
If $\TT^\prime$ is another algebraic torus and
if $\rho: \TT \to \TT^\prime$ is a surjective morphism of algebraic tori, then 
$\rho^*: (\t^\prime)^* \to \t^*$ is injective. By letting $\TT$ act on $\mX$ through 
$\rho$ and the $\TT^\prime$-action, a $\TT^\prime$-log-symplectic log-canonical Poisson structure $\pi_0$ 
on $\mX$ is then also $\TT$-log symplectic.
 \hfill $\diamond$
 }
 \end{remark}
 
For the rest of the section, we fix a $\TT$-action on $\mX = \CC^n$ as in \eqref{eq:t-CCn} and
 a $\TT$-log-symplectic log-canonical Poisson structure $\pi_0$ 
on $\mX$ with Poisson
coefficient matrix $\blambda$. 

Since we are only interested in $\TT$-invariant Poisson deformations of $\pi_0$,  
we consider the space $\XX^\TT$ of $\TT$-invariant algebraic polyvector fields on $\mX$.
As the $\TT$-weight for every non-zero element in $\XX^\bfw$ is $\bbeta \bfw$ for every $\bfw \in
(\ZZ_{\geq -1})^n$, we have 
\[
\mfX(\mX)^\TT = \bigoplus_{\bfw \in (\ZZ_{\geq -1})^n \,\cap \, \ker \bbeta} \XX^\bfw,
\]
where, by abusing notation,  $\bbeta$ also denotes the lattice map
\begin{equation}\label{eq:bbeta-map}
\bbeta: \;\; \ZZ^n \longrightarrow X^*(\TT), \;\; \bfw \longmapsto \bbeta \bfw,
\end{equation}
and the corresponding linear map $\CC^n \to \t^*$. In view of \autoref{pr:Hw-0}, we set 
\begin{equation}\label{eq:calW}
\calW_{\blambda, \bbeta} = \{\bfw \in (\ZZ_{\geq -1})^n: \, \blambda \bfw \in (\CC^{\Jw})_0, \; \bbeta \bfw =0\}.
\end{equation}
For $p \in [0, n]$, let $\mH_{\pi_0}^p(\mX)^\TT \subset \mH_{\pi_0}^p(\mX)$ be the space of  $\TT$-fixed vectors in
$\mH_{\pi_0}^p(\mX)$.

\begin{lemma}\label{lm:Hk-T} For every $p \in [0, n]$, the decomposition of 
$\mH_{\pi_0}^p(\mX)^\TT$ into $(\CC^\times)^n$-weight spaces is given by
\begin{equation}\label{eq:Hk-T}
\mH_{\pi_0}^p(\mX)^\TT =  \bigoplus_{\bfw \in \calW_{\blambda, \bbeta}, \; |\Jw| \leq p} \mH_{\pi_0}^p(\mX)^\bfw =
\mH_{\pi_0}^p(\mX)^{0} \oplus \; \bigoplus_{\bfw \in \calW_{\blambda, \bbeta}, \; 2\leq |\Jw| \leq p}\mH_{\pi_0}^p(\mX)^\bfw,
\end{equation}
and for every $\bfw \in \calW_{\blambda, \bbeta}$ such that $|\Jw| \leq p$, one has 
$\dim \mH^p_{\pi_0}(\mX)^\bfw = \left(\begin{array}{c} n-|\Jw|\\ p-|\Jw|\end{array}\right)$.
\end{lemma}

\begin{proof}
The first identity in \eqref{eq:Hk-T} follows from  \autoref{pr:Hw-0}. 
If $\bfw \in \calW_{\blambda, \bbeta}$ is such that $|\Jw| =0$ or $1$, then $(\CC^{\Jw})_0 =0$, and the  $\TT$-log-symplectic condition on $\pi_0$ implies that $\bfw = 0$. We thus have the second identity of \eqref{eq:Hk-T}. 
The rest of the statement follows from  \autoref{pr:Hw-0} and \eqref{eq:Xw}.
\end{proof}

Note that for $p \in [0, n]$, we have
\[
\mH_{\pi_0}^p(\mX)^{0}  = {\rm Span}_{\CC} \left\{\partial_J: J \subset [1, n], \, |J|=p\right\}.
\]
In particular, by \autoref{lm:Hk-T}, we have 
\begin{align}\label{eq:H0-T}
\mH_{\pi_0}^0 (\mX)^\TT & =\mH_{\pi_0}^0 (\mX)^{0} =\CC,\\
\label{eq:H1-T}
\mH_{\pi_0}^1 (\mX)^\TT &= \mH_{\pi_0}^1 (\mX)^{0}={\rm Span}_\CC \{\partial_j: j \in [1, n]\},\\
\label{eq:H2-T}
\mH_{\pi_0}^2 (\mX)^\TT &= \mH_{\pi_0}^2 (\mX)^{0} \;\oplus \;\bigoplus_{\bfw \in \calW_{\blambda, \bbeta}, \; |\Jw|=2} \mH_{\pi_0}^2(\mX)^\bfw.
\end{align}
As we are only interested in non-log-canonical deformations of $\pi_0$, 
we now consider only the non-zero $(\CC^\times)^n$-weights in  $\mH_{\pi_0}^2(\mX)^\TT$. 
In view of \eqref{eq:H2-T}, 
we set\footnote{The set $\calS(\pi_0)$ depends not only on $\pi_0$ but also on the $\TT$-action on 
$\CC^n$ given by $\bbeta$, but for notational simplicity we omit $\bbeta$ from the notation.} 
\begin{align}\nonumber
\calS(\pi_0) & =\mbox{the set of all the non-zero} \; (\CC^\times)^n\mbox{-weights in}\; \mH_{\pi_0}^2(\mX)^\TT\\
\label{eq:calS-pi0}
& =\{\bth \in \calW_{\blambda, \bbeta}: \,|J_\bth| = 2\} = \{\bth \in (\ZZ_{\geq -1})^n:   \,|J_{\bth}|=2, \, \blambda \bth \in (\CC^{J_{\bth}})_0, \, \bbeta 
\bth = 0\}.
\end{align}
For $\bth = (\bth_1, \ldots, \bth_n)^t \in \calS(\pi_0)$ with $\Jt = \{j, k\}$ and $j < k$,  let 
\begin{equation}\label{eq:V-theta}
V_{\bth} = x^{\btheta} \partial_{J_{\btheta}}=
\left(\prod_{i \neq j,k} x_i^{\bth_i}\right) \frac{\partial}{\partial x_j} \wedge \frac{\partial}{\partial x_k}
\in \mfX^2(\mX)^{\bth},
\end{equation}
which is uniquely defined by $\bth$. 
We now have the following reformulation of \autoref{lm:Hk-T} and \eqref{eq:H2-T}
(recall the notational convention from \autoref{rmk:H-X}).

\begin{lemma}\label{lm:HP2decomp} For any $\bth \in \calS(\pi_0)$, one has $\mH_{\pi_0}^2(\mX)^\bth = \CC V_\bth$. Moreover,
\[
\mathsf{H}^2_{\pi_0}(\mathsf{X})^\TT = \mathsf{H}^2_{\pi_0}(\mathsf{X})^{0} 
\oplus \bigoplus_{\bth \in \calS(\pi_0)} \mH_{\pi_0}^2(\mX)^\bth.
\]
\end{lemma}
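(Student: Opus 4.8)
The statement is essentially a bookkeeping consequence of Lemma \ref{lem:Hk-T}, the explicit description \eqref{eq:Xw} of the $\CCsn$-weight spaces of $\XX$, and the definition \eqref{eq:calS-pi0} of $\calS(\pi_0)$; I do not expect a genuine obstacle. The only step requiring a (very short) explicit computation is the identification of the one-dimensional cohomology space $\mH_{\pi_0}^2(\mX)^\bth$ with $\CC V_\bth$.

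First I would fix $\bth \in \calS(\pi_0)$ and write $\Jt = \{j,k\}$ with $j < k$, so that $|\Jt| = 2$ by \eqref{eq:calS-pi0}. Since $\bth \in \calW_{\blambda,\bbeta}$, applying Lemma \ref{lem:Hk-T} with $p = 2$ and $\bfw = \bth$ gives $\dim \mH_{\pi_0}^2(\mX)^\bth = \binom{n-2}{0} = 1$. Moreover $\bth$ satisfies \eqref{eq:contributingLinAlg}, so by Remark \ref{rmk:H-X} we may identify $\mH_{\pi_0}^2(\mX)^\bth$ with $\mfX^2(\mX)^\bth$, and by \eqref{eq:Xw} the latter is spanned by the vectors $x^\bth \partial_J$ with $\Jt \subset J \subset [1,n]$ and $|J| = 2$. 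As $|\Jt| = 2$, the only such $J$ is $\Jt$ itself, whence $\mfX^2(\mX)^\bth = \CC\, x^\bth \partial_{\Jt}$.

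Next I would carry out the routine computation identifying $x^\bth \partial_{\Jt}$ with $V_\bth$. Using $\partial_i = x_i\,\partial/\partial x_i$ together with $\bth_j = \bth_k = -1$ (the two entries equal to $-1$ being precisely those indexed by $\Jt$), one gets $x^\bth \partial_j \wedge \partial_k = \bigl(\prod_{i} x_i^{\bth_i}\bigr) x_j x_k \frac{\partial}{\partial x_j} \wedge \frac{\partial}{\partial x_k} = \bigl(\prod_{i \neq j,k} x_i^{\bth_i}\bigr)\frac{\partial}{\partial x_j} \wedge \frac{\partial}{\partial x_k} = V_\bth$, which is exactly the bi-vector field in \eqref{eq:rho-theta}. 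Since $V_\bth \neq 0$, this yields $\mH_{\pi_0}^2(\mX)^\bth = \CC V_\bth$, consistent with the dimension count.

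Finally, for the direct sum decomposition I would simply invoke \eqref{eq:H2-T}, namely $\mH_{\pi_0}^2(\mX)^\TT = \mH_{\pi_0}^2(\mX)^{\CCsn} \oplus \bigoplus_{\bfw \in \calW_{\blambda,\bbeta},\, |\Jw| = 2} \mH_{\pi_0}^2(\mX)^\bfw$, and then observe via \eqref{eq:calS-pi0} that the index set $\{\bfw \in \calW_{\blambda,\bbeta} : |\Jw| = 2\}$ equals $\calS(\pi_0)$. Substituting gives the asserted formula. The sum is direct because the summands $\mH_{\pi_0}^2(\mX)^\bth$, $\bth \in \calS(\pi_0)$, lie in pairwise distinct nonzero $\CCsn$-weight spaces and $\mH_{\pi_0}^2(\mX)^{\CCsn}$ is the zero-weight part, which is built into the weight-space decomposition $\mH_{\pi_0}(\mX) = \bigoplus_{\bfw} \mH_{\pi_0}(\mX)^\bfw$.
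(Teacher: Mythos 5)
Your proposal is correct and follows exactly the route the paper intends: the lemma is stated there as a direct reformulation of Lemma \ref{lem:Hk-T} and \eqref{eq:H2-T}, and your dimension count via \eqref{eq:Xw} together with the explicit identification $x^\bth\partial_{\Jt}=V_\bth$ fills in precisely the bookkeeping the paper leaves implicit. Nothing is missing.
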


\begin{lemma}\label{lm:ij-unique}
For any $j, k\in [1, n]$ and $j \neq k$, there exists at most one $\bth \in \calS(\pi_0)$ such that
$\Jt = \{j, k\}$. In particular, $\calS(\pi_0)$ is a finite subset of $(\ZZ_{\geq -1})^n$.
\end{lemma}

\begin{proof}
Let $\bth, \bth' \in \calS(\pi_0)$ be such that $\Jt = J_{\bth'}=\{j, k\}$, and write 
$\blambda \bth = a (e_j-e_k)$ and $\blambda \bth^\prime = a^\prime (e_j-e_k)$, where 
$a, a' \in \CC$. Then $\blambda (a' \bth - a \bth^\prime) = 0$ and $\bbeta (a' \bth - a \bth^\prime) =0$. The $\TT$-log-symplecticity of $\pi_0$ implies that $a' \bth - a \bth^\prime = 0$. 
It follows from $\btheta^\prime_j = \btheta_j = -1$ that $a = a'$. The
$\TT$-log-symplecticity of $\pi_0$ also implies that $a \neq 0$. Thus $\bth' = \bth$.
\end{proof}

Following a technique introduced in \cite{Matviichuk2020}, 
we now encode the set $\calS(\pi_0)$ 
using the {\it smoothing diagram} $\Delta_n(\pi_0)$ of $\pi_0$ defined as follows.
Let $\Delta_n$ be a complete graph with $n$ vertices. 
For easy visualization, we place the $n$ vertices of $\Delta_n$ on a circle and label them by $i \in [1, n]$. 

 \begin{definition}\label{de:Delta-n}
{\rm 1) Starting from the complete graph $\Delta_n$, color an edge $\ep: j\edge k$ of $\Delta_n$, 
and call it {\it smoothable} (with respect to $\pi_0$)
if there is (necessarily unique)  $\bth^\ep =(\bth^\ep_1, \ldots, \bth^\ep_n)^t \in \calS(\pi_0)$ with $J_{\bth^\ep} =\{j,k\}$. 
Additionally, if $\ep: j\edge k$ is smoothable, for each $i \notin \{j, k\}$
such that $\bth^{\ep}_{i} >0$ add $\bth^{\ep}_{i}$ 
arcs to the angle $j\edge i\edge k$ and say that 
edge $\ep$ has $\bth^{\ep}_i$ arcs at the vertex $i$. The complete graph $\Delta_n$, 
 with the smoothable (i.e. colored)  edges and the
 arcs at the vertices, is called the {\it smoothing diagram of $\pi_0$} and is denoted by $\Delta_n(\pi_0)$.

2) Denote by $\calE(\pi_0)$ the set of all smoothable edges of $\Delta_n(\pi_0)$.  
One then has the bijection
\begin{equation}\label{eq:E-S}
\calE(\pi_0) \longrightarrow \calS(\pi_0), \;\;  \ep \longmapsto \bth^\ep,\hs \mbox{and its inverse}\hs
\calS(\pi_0) \longrightarrow \calE(\pi_0), \;\; \bth \longmapsto \ep^\bth, 
\end{equation}
where $\ep^\bth: j \edge k$ if $\bth \in \calS(\pi_0)$ and $J_\bth = \{j, k\}$. 
The subgraph of $\Delta_n(\pi_0)$ consisting of vertices $\{1, 2, \ldots, n\}$ and 
the smoothable edges of $\Delta_n(\pi_0)$ is  denoted by
$\Gamma(\pi_0)$ and called the {\it smoothing graph} of $\pi_0$.

3) Introduce $\calE^+(\pi_0) = 
\{(j, k): j < k \; \mbox{and}\;  j \edge k \; \mbox{is in} \; \calE(\pi_0)\}$,
and for $(j, k) \in \calE^+(\pi_0)$, let $\bth^{(j, k)} \in \calS(\pi_0)$ be such that 
$J_{\bth^{(j, k)}} = \{j, k\}$. We thus have a bijection
\begin{equation}\label{eq:E-plus-S}
\calE^+(\pi_0) \longrightarrow \calS(\pi_0), \;\; (j, k) \longmapsto \bth^{(j, k)}.
\end{equation}
Assigning the arrow $j \rightarrow k$ for 
each $(j, k) \in \calE^+(\pi_0)$, we denote the resulted oriented graph by
$\Gamma^+(\pi_0)$ and call it the {\it oriented smoothing graph}
of $\pi_0$.
}
\end{definition}

\begin{remark}\label{rmk:smoothable-1}
{\rm 
Following \cite{MPS:hilbert}, elements in $\calS(\pi_0)$ are also called {\it smoothable weights} for $\pi_0$. 
The reason for the  use of the term {\it smoothable} will explained in \autoref{rmk:smoothable}.\hfill $\diamond$
}
\end{remark}

\begin{lemma}\label{lm:rational}
Suppose that $\ep: i \edge j$ and $\ep': j \edge k$ are two smoothable edges, where $i \neq k$,  and let
\[
\blambda \bth^{\ep} = a (e_i-e_j) \hand 
\blambda \bth^{\ep'} = a' (e_j-e_k),
\]
where $a, a' \in \CC^\times$. Then $a(1+\bth^{\ep'}_i) = a' (1+\bth^{\ep}_k)$. Consequently,
$a/a' \in \QQ_{>0}$. 
\end{lemma}

\begin{proof}
The statement follows from  \autoref{lm:wj-wk} and the facts that $\bth^{\ep'}_j = \bth^\ep_j = -1$,
$\bth^{\ep}_k \in \ZZ_{\geq 0}$, and $\bth^{\ep'}_i \in \ZZ_{\geq 0}$.
\end{proof}

\begin{lemma}\label{lm:ijk}
1) For every vertex $i \in [1, n]$, there are at most two smoothable edges containing $i$;

2) The smoothing graph of $\pi_0$ is a disjoint union of chains and cycles.

\end{lemma}

\begin{proof}
To prove 1), suppose that $\ep_s: i\edge j_s$, $s = 1, 2, 3$,  are three distinct smoothable edges, and let
$\blambda \,\bth^{\ep_s} = a_s (e_i - e_{j_s})$ for $s = 1, 2, 3$. Then 
by \autoref{lm:rational},   $\alpha_{1}/\alpha_2, \alpha_{2}/\alpha_3, \alpha_3/\alpha_1 \in \QQ_{<0}$, 
which is a contradiction. 2) is a direct consequence of 1).
\end{proof}

\begin{remark}\label{rmk:S2}
{\rm
If $\bth \in \calS(\pi_0)$, then $-\bth \notin \calS(\pi_0)$. Indeed, if $J_\bth = \{j, k\}$, then 
$\blambda \bth = a(e_j-e_k)$ for some $a \in \CC^\times$. If $-\bth \in \calS(\pi_0)$ with $J_{-\bth} = \{j', k'\}$,
then $-a(e_j-e_k) = a' (e_{j'}-e_{k'})$ for some $a' \in \CC^\times$, so $\{j, k\} = \{j', k'\}$, contradicting the fact that $J_\btheta$ and $J_{-\btheta}$ are clearly disjoint.
\hfill $\diamond$
}
\end{remark}

We now use the smoothing graph $\Gamma(\pi_0)$ to give a criterion on linear independence of subsets of $\calS(\pi_0)$. 
For a non-empty subset $\calS$ of $\calS(\pi_0)$, recall from  \autoref{nota:Wm} that
$\calS_{\geq 2}$ denotes the set of all elements in $\ZZ^n$ that can be written as sums of two or more elements of $\calS$. For a subset $\calE$ of $\calE(\pi_0)$, we say that 
$\calE$ {\it contains no cycles} if the graph $\Gamma(\pi_0)$ does not have cycles with all edges in $\calE$. 

\begin{lemma-definition}\label{ld:indep}
Let $\calS\subset \calS(\pi_0)$ and let $\calE \subset \calE(\pi_0)$ be its image under the bijection
$\calS(\pi_0) \to \calE(\pi_0)$ in \eqref{eq:E-S}. The following statements are equivalent:

1) $\calS \subset \CC^n$ is $\CC$-linearly independent;

2) $\calS \subset \ZZ^n$ is $\ZZ$-linearly independent;

3) $0 \notin \calS_{\geq 1}$;

4) $\calE$ contains no cycles.

\noindent
We say $\calS \subset\calS(\pi_0)$ is linearly independent if $\calS$ satisfies one of the above four equivalent conditions.
\end{lemma-definition}

\begin{proof}Clearly 1) implies 2) and 2) implies 3).
Note  that $0 \notin \calS$ and by \autoref{rmk:S2},  
3) is equivalent to $0 \notin \calS_{\geq 3}$.
Assume 3) and suppose that  $\calE$ contains a cycle 
\[
j_1 \edge j_2 \edge \cdots \edge j_p \edge j_{p+1}=j_1,
\]
where $p \geq 3$ and $j_1,\ldots, j_p$
are pairwise distinct.
Let $\ep_s: j_s \edge j_{s+1}$ for $s\in [1, p]$, and let 
    $$    \boldsymbol{\lambda}\boldsymbol{\theta}^{\ep_s} = a_s (e_{j_s} - e_{j_{s+1}}), 
    \hs a_s\in\mathbb{C}^\times, \; s\in [1, p].
    $$
    Then $\boldsymbol{\lambda}\sum_{s=1}^p a_s^{-1} \boldsymbol{\theta}^{\ep_s} = 0$
    and $\boldsymbol{\beta}\sum_{s=1}^p 
    a_s^{-1} \boldsymbol{\theta}^{\ep_s} = 0$,
    so $\sum_{s=1}^p a_s^{-1} \boldsymbol{\theta}^{\ep_s} = 0$
    by the $\TT$-log-symplecticity of $\pi_0$.
    Applying \autoref{lm:rational} repeatedly, we see
    that $a_s$, for $s \in [1, p]$, are positive rational multiples of each other. Thus $\sum_{s=1}^p{r_s} \boldsymbol{\theta}^{\ep_s} = 0$
    for some $r_s \in \QQ_{>0}$, $s \in [1, p]$. By clearing out the denominators, we may assume that $r_s \in \ZZ_{>0}$ 
    for each $s \in [1, p]$, and hence $0 \in \calS_{\geq 3}$, a contradiction.  Thus 3) implies 4).

Assume 4).  By \autoref{lm:ijk},  after permuting indices we may assume that each  edge in $\calE$
is of the form $j\edge j+1$ for some $j \in [1, n-1]$. We can thus identify $\calE$ with a subset $J$ of $[1, n-1]$ 
such that the edges in $\calE$ are precisely all 
$\ep_j: j\edge j+1$ for $j \in J$. For $j \in J$, let $a_j \in \CC^\times$ be such that
\[
\blambda \bth^{\ep_j} = a_j (e_j-e_{j+1}).
\]
Note also that  $\{e_1-e_2, e_2-e_3, \ldots, e_{n-1}-e_n\}$ is a basis of $(\CC^n)_0 = 
\{(\gamma_1, \ldots, \gamma_n)^t
\in \CC^n: \sum_{j=1}^n \gamma_j = 0\}$.
 Suppose that 
$\sum_{j \in J} \mu_j \bth^{\ep_j} = 0$, where $\mu_j \in \CC$ for  $j \in J$. Then 
$\sum_{j \in J} \mu_ja_j (e_j-e_{j+1}) = 0$, from which it follows that $\mu_j = 0$ for every $j \in J$. Thus 
$\calS$ is linearly independent over $\CC$. Hence 4) implies 1).
\end{proof}

\begin{example}\label{ex:elliptic-0}
{\rm
Let $\CC^\times$ act on $\CC^3$ given in the coordinates by 
$\lambda\cdot (x_1,x_2, x_3) = (\lambda x_1, \lambda x_2, \lambda x_3)$
and let
\[
\pi_0 = x_1x_2 \frac{\partial}{\partial x_1} \wedge \frac{\partial}{\partial x_2}
+ x_2x_3 \frac{\partial}{\partial x_2} \wedge \frac{\partial}{\partial x_3} 
+x_3x_1 \frac{\partial}{\partial x_3} \wedge \frac{\partial}{\partial x_1}.
\]
Then $\pi_0$ is $\CC^\times$-log-symplectic, and a direct calculation gives
\[
\calS(\pi_0) = \{\bth^{(1, 2)} =(-1, -1, 2)^t, \;\; \bth^{(1, 3)} =(-1, 2, -1)^t, \;\;
\bth^{(2, 3)} =(2, -1, -1)^t\}.
\]
The set $\calS(\pi_0)$ is encoded in the smoothing diagram below.

\begin{figure}[h]
    \centering
\pgfdeclarelayer{background layer}
\pgfdeclarelayer{foreground layer}
\pgfsetlayers{background layer,main,foreground layer}
\begin{tikzpicture}[baseline=-1ex,
rotate=360/2/3,
scale=0.9,line join = round 
    ] 
    
    \begin{pgfonlayer}{main}
\foreach \n in {1,...,3}
{
    \coordinate (v\n) at ({-90+((\n-1)*360/3)}:1.5);
}
\node[below right] at (v1) {$1$};
\node[above] at (v2) {$2$};
\node[below left] at (v3) {$3$};

\foreach \n in {1,...,3}
{
	\foreach \m in {1,...,3}
	\draw (v\n) -- (v\m);
}
\end{pgfonlayer}

\draw[thick,red,-] (v1) ++({0.4*cos(60)},{0.4*sin(60)})  arc[start angle=60,end angle=120,radius=0.4];
\draw[thick,red,-] (v1) ++({0.55*cos(60)},{0.55*sin(60)})  arc[start angle=60,end angle=120,radius=0.55];
\draw[thick,red,-] (v2) ++({0.4*cos(180)},{0.4*sin(180)})  arc[start angle=180,end angle=240,radius=0.4];
\draw[thick,red,-] (v2) ++({0.55*cos(180)},{0.55*sin(180)})  arc[start angle=180,end angle=240,radius=0.55];
\draw[thick,red,-] (v3) ++({0.4*cos(300)},{0.4*sin(300)})  arc[start angle=300,end angle=360,radius=0.4];
\draw[thick,red,-] (v3) ++({0.55*cos(300)},{0.55*sin(300)})  arc[start angle=300,end angle=360,radius=0.55];

\draw[very thick, blue] (v1)--(v2);
\draw[very thick, blue] (v2)--(v3);
\draw[very thick, blue] (v3)--(v1);

\begin{pgfonlayer}{foreground layer}
\foreach \n in {1,...,3}
{
	\draw[fill] (v\n) circle (0.03);
}
\end{pgfonlayer}
 \end{tikzpicture}
    \caption{Smoothing diagram appearing in \autoref{ex:elliptic-0}}
    \label{fig:SmDiagCycle}
\end{figure}

In this example, $\bth^{(1, 2)} + \bth^{(1, 3)}+\bth^{(2,3)}=0$, and the three edges in $\calE(\pi_0)$ form a 
triangle.
\hfill $\diamond$
}
\end{example}

By the notation set up in \eqref{eq:VW},  for $p \in [0, n]$ we have
\[
\mathsf{H}_{\pi_0}^p(\mathsf{X})^{\calS_{\geq 2}} := 
\bigoplus_{\bfw \in \calS_{\geq 2}} \mathsf{H}_{\pi_0}^p(\mathsf{X})^{\bfw}.
\]
The following \autoref{prop:indep-unobstr}  plays a key role in proving our main results 
in $\S$\ref{ss:formal-deform} and $\S$\ref{s:alg-deform} on
the existence and uniqueness of Poisson deformations of $\pi_0$. 

\begin{proposition}\label{prop:indep-unobstr}
For any $\TT$-log-symplectic log-canonical Poisson structure $\pi_0$ on $\mX = \CC^n$, and 
for any non-empty linearly independent  subset $\calS$ of $\calS(\pi_0)$, one has 
\begin{equation}\label{eq:H-2-3}
\mathsf{H}_{\pi_0}^2(\mathsf{X})^{\calS_{\geq 2}} = \mathsf{H}_{\pi_0}^3(\mathsf{X})^{\calS_{\geq 2}} =0.
\end{equation}
\end{proposition}

\begin{proof}
Let $p \in \{2, 3\}$.
As $\mathsf{H}_{\pi_0}^p(\mathsf{X})^{\calS_{\geq 2}} \subset \mathsf{H}_{\pi_0}^p(\mathsf{X})^{\TT}$ 
and $0 \notin\calS_{\geq 2}$ by  \autoref{ld:indep}, it follows from  \autoref{lm:Hk-T} that
\eqref{eq:H-2-3} is equivalent to 
\begin{equation}\label{eq:p-2-3}
\calS_{\geq 2} \cap \{\bfw \in \calW_{\blambda, \bbeta}: |J_\bfw| \in \{2, 3\}\} = \emptyset,
\end{equation}
where recall that $J_\bfw = \{j \in [1, n]: \bfw_j = -1\}$. Let again $\calE \subset \calE(\pi_0)$ be such that 
$\calS = \{\bth^\ep: \ep \in \calE\}$. By 
\autoref{ld:indep}, $\calE$ has no cycles, 
so we may assume that $\calE$ consists of all $\ep_j: j \edge j+1$ for $j$ in a subset $J$ of $[1, n-1]$. 
To prove \eqref{eq:p-2-3}, suppose  to the contrary that there exists 
$\bfw =(\bfw_1, \ldots, \bfw_n)^t \in \calS_{\geq 2} \cap\calW_{\blambda, \bbeta}$ 
with $|J_\bfw| = 2$ or $3$. Replacing $\calS$ by 
a subset of $\calS$ if necessary, we may assume that $\bfw = \sum_{j \in J} b_j\bth^{\ep_j}$, where 
$b_j \in \ZZ_{\geq 1}$ for every $j \in J$. The assumption that $\bfw \in \calS_{\geq 2}$ then implies that
 $\sum_{j \in J} b_j \geq 2$.
Note that  $|J|\geq 2$, for otherwise $J = \{j\}$ and $b_j \geq 2$, which would give
$\bfw_j = -b_j \leq -2$.
 For $j \in J$, let $a_j \in \CC^\times$ be such that
$\blambda \bth^{\ep_j} = a_j(e_j-e_{j+1})$. Let $i = {\rm min} (J_\bfw)$ and 
$l = {\rm max} (J_\bfw)$. 

{\bf Case 1.} $J_\bfw = \{i, l\}$. Then $\blambda \bfw = \sum_{j \in J}a_j b_j (e_j-e_{j+1})$ on the 
one hand, and
\[
\blambda \bfw = a (e_i-e_l) = a(e_i - e_{i+1}) + a(e_{i+1}-e_{i+2}) + \cdots + a(e_{l-1}-e_l)
\]
for some $a \in \CC^\times$ on the other. By the linear independence of the set 
$\{e_1-e_2, \ldots, e_{n-1}-e_n\}$, we have $J = \{i,i+1, \ldots, l-1\}$ and $a_jb_j = a$ for every $j \in J$.
Let
\[
{\bf u} = b_i \bth^{\ep_i} =(\bfu_1, \ldots, \bfu_n)^t
\hs \mbox{and} \hs {\bf v}=b_{i+1}\bth^{\ep_{i+1}} + \cdots 
b_{l-1} \bth^{\ep_{l-1}}=(\bfv_1, \ldots, \bfv_n)^t.
\]
Then ${\bf u} + \bfv = \bfw$ and
$\blambda \bfv = a (e_{i+1}-e_l)$. By \autoref{lm:wj-wk}, $\bfv_{i+1} = \bfv_l$. It now follows from
$-1 = \bfw_l = \bfu_l + \bfv_l$ that 
$\bfw_{i+1} = \bfu_{i+1} + \bfv_{i+1}  =-b_i + \bfv_l = -b_i -1-\bfu_l$.
As $l \geq i+2$, we have $\bfu_l \geq 0$, which gives $\bfw_{i+1} \leq -b_i-1 \leq -2$, 
contradicting the assumption that $\bfw \in (\ZZ_{\geq -1})^n$. 

{\bf Case 2.} $J_\bfw = \{i, k, l\}$ with $i < k < l$. 
Again  $\blambda \bfw = \sum_{j \in J}a_j b_j (e_j-e_{j+1})$ on the 
one hand, and
\begin{align*}
\blambda \bfw &= a(e_i-e_k) + b(e_k-e_l) \\
&=a(e_i-e_{i+1}) + \cdots + a (e_{k-1}-e_k) +b(e_k-e_{k+1}) + \cdots +b (e_{l-1} - e_l)
\end{align*}
for some $a, b \in \CC$ on the other. It follows that $J = \{i, i+1, \ldots, l-1\}$ again and that 
$a_jb_j = a$ for all $i \leq j \leq k-1$ and $a_jb_j = b$ for all $k \leq j \leq l-1$.
Let
$\bfu = \sum_{j=i}^{k-1} b_j \bth^{\ep_j}$ and $\bfv = \sum_{j=k}^{l-1} b_j \bth^{\ep_j}$.
Then $\bfw = \bfu + \bfv$,
$\blambda \bfu = a(e_i-e_k)$ and  $\blambda \bfv = b (e_k-e_l)$.
By \autoref{lm:wj-wk},  $\bfu_i = \bfu_k$ and 
$\bfv_k = \bfv_l$. On the other hand, we have $\bfu_l =\sum_{j=i}^{k-1} b_j \bth^{\ep_j}_l \geq 0$ and 
$\bfv_i =\sum_{j=k}^{l-1} b_j \bth^{\ep_j}_i\geq 0$. As $\bfu_i + \bfv_i = \bfw_i = -1$ and
$\bfu_l + \bfv_l = \bfw_l = -1$, one has
$\bfw_k = \bfu_k + \bfv_k = \bfu_i + \bfv_l = -1-\bfv_i -1-\bfu_l \leq -2$,
contradicting the assumption that $\bfw \in (\ZZ_{\geq -1})^n$. This proves \eqref{eq:p-2-3}.
\end{proof}

\subsection{Formal Poisson deformations of $\pi_0$}\label{ss:formal-deform} 

Let again $\mX = \CC^n$.  We first recall the notion of formal Poisson structures on $\mX$.

For a set $\mu = (\mu_1, \ldots, \mu_l)$ of 
formal commuting parameters, where $l \in \ZZ_{\geq 1}$, let
\begin{equation}\label{eq:mu-m}
\mfX(\mX)\bbmu = \bigoplus_{p=0}^n \mfX^p(\mX)\bbmu = \bigoplus_{p=0}^n \bigoplus_{m=0}^{\infty} \mu^m\mfX^p(\mX),
\end{equation}
where for  $m \in \ZZ_{\geq 0}$ and any subspace $\mfX \subset \mfX(\mX)$, $\mu^m\mfX$ consists of finite sums 
of the form 
\begin{equation}\label{eq:V}
\sum_{1\leq i_1\leq i_2 \leq \cdots \leq i_m\leq l} \mu_{i_1}\mu_{i_2}\cdots \mu_{i_m} V_{i_1, i_2, \ldots, i_m}
\end{equation}
with $V_{i_1, i_2, \ldots, i_m}\in \mfX$ for $1\leq i_1\leq i_2 \leq \cdots \leq i_m\leq l$.
Denote also by $[~,~]_{\rm Sch}$ the $\CC[\![\mu]\!]$-linear extension of the Schouten bracket $[~,~]_{\rm Sch}$   from $\mfX(\mX)$ 
to $\mfX(\mX)\bbmu$. One then has
\begin{equation}\label{eq:ppmm}
[\mu^m\mfX^p(\mX), \; \mu^{m'}\mfX^{p'}(\mX)]_{\rm Sch} \subset \mu^{m+m'}\mfX^{p+p'-1}(\mX),
\hs m, m' \geq 0, \, p, p' \in [0, n].
\end{equation}
By a \emph{gauge equivalence} of $\XX\bbmu$ we mean a
$\mathbb{C}[\![\mu]\!]$-algebra automorphism of $\XX\bbmu$ induced by 
a $\mathbb{C}[\![\mu]\!]$-algebra automorphism of $\mfX^0(\mX)\bbmu$ which gives
the identity automorphism on $\mfX^0(\mX) = \mfX^0(\mX)\bbmu/(\mu)$.

\begin{remark}\label{rmk:gauge}
{\rm
For $v \in \mu\mfX^1(\mX)\bbmu$, define ${\rm ad}_v$ and $\exp(v): \mfX(\mX)\bbmu \rightarrow \mfX(\mX)\bbmu$ 
 by 
\[
{\rm ad}_v(V) = [v, \, V]_{\rm Sch} \hs \mbox{and} \hs \exp(v)(V)=
\sum_{i=0}^\infty \frac{1}{i!} ({\rm ad}_v)^i (V).
\]
Then $\exp(v)$ is a gauge equivalence of $\XX\bbmu$. Composition of finitely many gauge equivalences 
of $\XX\bbmu$ is again a gauge equivalence of $\XX\bbmu$. Moreover, for a sequence 
$v_m\in \mu^m \mfX^1(\mathsf{X})[\![\mu]\!]$, $m\ge 1$, the sequence of gauge equivalences $T_m =  \exp(v_m) ... \exp(v_2) \exp(v_1)$ converges in the $(\mu)$-adic topology. Indeed, for any 
$f \in \mfX^0(\mX)\bbmu$ and 
$k \geq 1$, the $\mu^k$-term of $T_m(f)$ is independent of $m$ for all $m\ge k$. The limit of $T_m$ is thus again a gauge equivalence of $\XX\bbmu$. 
\hfill $\diamond$
}
\end{remark}
\begin{definition}\label{def:Poi-family}
{\rm By a {\it formal Poisson structure on $\mX$ with multi-parameter $\mu = (\mu_1, \ldots, \mu_l)$} we mean any element in 
\begin{equation}\label{eq:Poi-mu}
{\rm Poi}(\mX)\bbmu := \{\pi(\mu) \in \mfX^2(\mX)[\![\mu]\!]: [\pi(\mu),\, \pi(\mu)]_{\rm Sch} = 0\}.
\end{equation}
}
\end{definition}

\begin{remark}
{\rm 
Our use of ${\rm Poi}(\mX)\bbmu$ here involves an abuse of notation: we  denote by ${\rm Poi}(\mX)$
the set of all algebraic Poisson structures on $\mX$, but 
 by our definition in 
\eqref{eq:Poi-mu},
elements in ${\rm Poi}(\mX)\bbmu$ are not necessarily formal power series in $\mu$ of elements in ${\rm Poi}(\mX)$.
\hfill $\diamond$
}
\end{remark}

Let now $\pi_0$ be a $\TT$-log-symplectic log-canonical Poisson structure  on $\mX = \CC^n$ as in 
$\S$\ref{ss:T-cohom-log}, 
and recall that $\calS(\pi_0)$ is the set of all non-zero $(\CC^\times)^n$-weights in $\mH^2_{\pi_0}(\mX)^\TT$. Recall also that for each $\bth \in \calS(\pi_0)$ we have 
$V_{\bth} \in \mfX^2(\mX)^{\bth}$ defined in \eqref{eq:V-theta}. For $\calS \subset \calS(\pi_0)$ and integer $m \geq 1$, recall  from  \autoref{nota:Wm} the definitions of 
$\calS_m \subset \ZZ^n$ and $\calS_{\geq m} \subset \ZZ^n$.
Note that if $\calS$ is linearly independent, then $\calS_m \cap \calS_{m'} =\emptyset$ if $m \neq m'$.
By the notation set up in \eqref{eq:VW}, for $\mW\subset (\ZZ_{\geq -1})^n$ and $p \in [0, n]$ we have
\[
\mfX^p(\mX)^\mW := \sum_{\bfw \in \mW} \mfX^p(\mX)^{\bfw}.
\]
We now prove our results on formal Poisson deformations of  $\pi_0$.

\begin{theorem}\label{thm:mainDef}
Let $\pi_0$ be a $\TT$-log-symplectic log-canonical Poisson structure  on $\mX = \CC^n$, and assume that
$\mathcal{S}\subset\mathcal{S}(\pi_0)$ is linearly independent.
Let $c=(c_{\bth})_{\bth \in \mathcal{S}}$ be a set of  formal commuting parameters, and let
$\pi_1^\calS(c) = \sum_{\bth \in \mathcal{S}} c_{\bth} V_{\bth}$.
Then there exists a formal Poisson deformation of $\pi_0$ of the form
\begin{equation}\label{eq:defPowerSeries}
\pi^\calS(c) = \pi_0 +  \pi_1^\calS(c) + \pi_2^\calS(c) + \pi_3^\calS(c) + \cdots, 
\end{equation}
with $\pi_m^\calS(c)\in c^m \mfX^2(\mathsf{X})^{\calS_m}$ for $m\ge 2$.
Moreover, such a $\pi^\calS(c) \in {\rm Poi}(\mX)\bbc$ is unique up to a gauge equivalence of $\XX\bbc$.
\end{theorem}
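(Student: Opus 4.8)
The plan is to construct $\pi^\calS(c)$ order by order in the degree $m$ with respect to the parameters $c$, using Proposition \ref{prop:indep-unobstr} to kill the obstructions at each step. Write $\pi^\calS(c) = \pi_0 + \pi_1^\calS(c) + \sum_{m\ge 2}\pi_m^\calS(c)$ with $\pi_m^\calS(c)\in c^m\mfX^2(\mX)^{\calS_m}$ to be determined. The Jacobi identity $[\pi^\calS(c),\pi^\calS(c)]_{\rm Sch}=0$ unwinds into the tower of equations: $[\pi_0,\pi_0]=0$ (given), $[\pi_0,\pi_1^\calS(c)]=0$ (which holds since each $V_\bth$ represents a class in $\mH^2_{\pi_0}(\mX)^\bth$, so by Lemma \ref{lem:Hw-0} and Remark \ref{rmk:H-X} actually $\dpi V_\bth=0$), and for $m\ge 2$ the recursion
\[
2\,\dpi\bigl(\pi_m^\calS(c)\bigr) = -\sum_{k=1}^{m-1}\bigl[\pi_k^\calS(c),\,\pi_{m-k}^\calS(c)\bigr]_{\rm Sch} =: \Omega_m .
\]
First I would check the bookkeeping: by \eqref{eq:ppmm} and the fact that $\pi_k^\calS(c)\in c^k\mfX^2(\mX)^{\calS_k}$, the right-hand side $\Omega_m$ lies in $c^m\mfX^3(\mX)^{\calS_m'}$ where $\calS_m' \subseteq \calS_k + \calS_{m-k}\subseteq\calS_m$; more importantly each weight appearing is a sum of $m\ge 2$ elements of $\calS$, so $\Omega_m\in c^m\mfX^3(\mX)^{\calS_{\ge 2}}$.

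The key step is then: $\Omega_m$ is $\dpi$-closed, and by Proposition \ref{prop:indep-unobstr} (applied with $p=3$, which is why the hypothesis only needs to be checked for $p\le 3$) we have $\mH^3_{\pi_0}(\mX)^{\calS_{\ge 2}}=0$, hence $\Omega_m$ is $\dpi$-exact, so we may solve for $\pi_m^\calS(c)$. Closedness of $\Omega_m$ is the standard graded-Jacobi computation: $\dpi\Omega_m = [\pi_0,[\pi^{<m},\pi^{<m}]] = $ (by the graded Jacobi identity for $[\,,\,]_{\rm Sch}$) a sum of terms $[[\pi_0,\pi_k],\pi_{m-k}]$, each of which vanishes modulo degree $m$ by the lower-order equations already solved; I would carry this out cleanly by an induction where the inductive hypothesis is ``$[\pi^{\le m-1},\pi^{\le m-1}]_{\rm Sch}\equiv 0 \pmod{c^m}$''. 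To pin down $\pi_m^\calS(c)$ uniquely within $c^m\mfX^2(\mX)^{\calS_m}$, note $\dpi$ restricted to $\mfX^2(\mX)^{\calS_m}$ need not be injective, but we may simply make an arbitrary choice of preimage at each stage; the resulting $\pi^\calS(c)$ is the one asserted to exist, and uniqueness is only claimed up to gauge.

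For the uniqueness up to gauge equivalence, suppose $\widetilde\pi^\calS(c) = \pi_0 + \pi_1^\calS(c) + \sum_{m\ge 2}\widetilde\pi_m^\calS(c)$ is another solution with the same first-order term $\pi_1^\calS(c)$ and with $\widetilde\pi_m^\calS(c)\in c^m\mfX^2(\mX)^{\calS_m}$. I would build a gauge equivalence $T = \lim_m \exp(v_m)\cdots\exp(v_1)$ (convergent in the $(c)$-adic topology by Remark \ref{rmk:gauge}), with $v_m\in c^m\mfX^1(\mX)^{\calS_m}$, such that $T\cdot\widetilde\pi^\calS(c) = \pi^\calS(c)$, again by induction on $m$: if the two structures agree modulo $c^m$, their difference at order $m$ is $\pi_m^\calS(c) - \widetilde\pi_m^\calS(c) \in c^m\mfX^2(\mX)^{\calS_m}$ and is $\dpi$-closed (subtract the order-$m$ Jacobi equations, whose nonlinear parts coincide since the lower orders agree), hence $\dpi$-exact by Proposition \ref{prop:indep-unobstr} with $p=2$ — here I would need $\mH^2_{\pi_0}(\mX)^{\calS_{\ge 2}}=0$, which the proposition gives since $\calS_m\subseteq\calS_{\ge 2}$ for $m\ge 2$ — say equal to $\dpi(v_m) = [\pi_0,v_m]_{\rm Sch}$ with $v_m\in c^m\mfX^1(\mX)^{\calS_m}$; then $\exp(-v_m)$ applied to $\widetilde\pi$ (composed with the previously constructed gauge) removes the discrepancy at order $m$ without disturbing lower orders, since $\exp(-v_m)\pi = \pi - [\pi_0,v_m] + (\text{higher order in }c)$.

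The main obstacle I anticipate is not any single hard computation but rather organizing the two nested inductions so the degree-counting is airtight: one must verify that every polyvector field produced (obstruction cocycles, chosen primitives, gauge generators $v_m$) genuinely lives in the weight subspace $\mfX^\bullet(\mX)^{\calS_m}$ with all weights being sums of the right number of elements of $\calS$ — this is what channels everything into the range $p\le 3$ and $\calS_{\ge 2}$ where Proposition \ref{prop:indep-unobstr} applies. The linear independence of $\calS$ is used exactly to ensure the grading by $m$ on $\calS_{\ge 1}$ is well-defined (the disjointness noted after \eqref{eq:calS-m}), so that ``order $m$ in $c$'' and ``weight in $\calS_m$'' are compatible gradings and the recursion closes up.
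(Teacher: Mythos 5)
Your proposal is correct and follows essentially the same route as the paper's proof: an order-by-order construction in the $c$-grading where the obstruction $\sum_{k=1}^{m-1}[\pi_k,\pi_{m-k}]_{\rm Sch}$ is killed using $\mH^3_{\pi_0}(\mX)^{\calS_{\geq 2}}=0$ from Proposition \ref{prop:indep-unobstr}, and uniqueness up to gauge obtained by producing $v_m\in c^m\mfX^1(\mX)^{\calS_m}$ from the vanishing of $\mH^2_{\pi_0}(\mX)^{\calS_{\geq 2}}$ and composing the $\exp(v_m)$ via Remark \ref{rmk:gauge}. No substantive differences.
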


\begin{proof} There is nothing to prove if $\calS =\emptyset$, so assume that $\calS \neq \emptyset$. 
With $\pi_0(c) = \pi_0$ and 
$\pi_1(c) :=\pi_1^\calS(c)$, 
we prove that there  exist $\pi_k(c)\in c^k \mfX^2(\mathsf{X})^{\calS_k}$, $k \geq 2$,  such that
\begin{equation}\label{eq:solvingDefPowerSeries}
   \left[\sum_{k=0}^{m} \pi_k(c),\;\,\sum_{k=0}^{m} \pi_k(c)\right]_{\rm Sch} = 0 
   \text{ mod } c^{m+1}\mfX^3(\mX)\bbc,\hs \forall\; m \geq 1.
\end{equation}
By looking at the $c^{m'} \mfX^3(\mX)$-component 
for each $1\leq m' \leq m$, we see that \eqref{eq:solvingDefPowerSeries} is equivalent to 
\begin{equation}\label{eq:pipi-m}
2[\pi(0), \, \pi_{m'}(c)]_{\rm Sch} + \sum_{k=1}^{m'-1} [\pi_k(c), \, \pi_{m'-k}(c)]_{\rm Sch} = 0 \in
c^{m'} \mfX^3(\mX), \hs \forall \;\; 1\leq m' \leq m. 
\end{equation}
Note that \eqref{eq:solvingDefPowerSeries} holds for $m = 1$ as $[\pi_0, \pi_1(c)]_{\rm Sch} = 0$.
Let $m \geq 2$ and assume that 
$\pi_{m'}(c)\in c^{m'}\mfX^2(\mX)^{\calS_{m'}}$ have been solved  from 
\eqref{eq:pipi-m} for every $1\leq m' \leq m-1$. We need to solve for $\pi_m(c) \in c^m\mfX^2(\mX)^{\calS_m}$ from
\begin{equation}\label{eq:pim-inductive}
2[\pi(0), \, \pi_{m}(c)]_{\rm Sch} + \sum_{k=1}^{m-1} [\pi_k(c), \, \pi_{m-k}(c)]_{\rm Sch} = 0 \in
c^{m} \mfX^3(\mX).
\end{equation}
Set
${\rm Obs}_m = \sum_{k=1}^{m-1} [\pi_k(c), \, \pi_{m-k}(c)]_{\rm Sch} \in
c^m \mfX^3(\mX)^{\calS_m}$, 
and note that 
$$
\left[\sum_{k=0}^{m-1} \pi_k(c),\,\;\sum_{k=0}^{m-1} \pi_k(c)\right]_{\rm Sch} = {\rm Obs}_m \text{ mod } c^{m+1}\mfX^3(\mX)\bbc.
$$
It is a standard fact that $\dpi({\rm Obs}_m)=0$. Indeed,
$$
\dpi({\rm Obs}_m) = \left[\pi_0,{\rm Obs}_m\right]=\left[\sum_{k=0}^{m-1} \pi_k(c), \left[\sum_{k=0}^{m-1} \pi_k(c),\,\;\sum_{k=0}^{m-1} \pi_k(c)\right]_{\rm Sch} \right]_{\rm Sch} ~~~\text{ mod } c^{m+1}\mfX^3(\mX)\bbc,
$$
and the right-hand-side expression is equal to zero by the graded Jacobi identity for the Schouten 
bracket\footnote{See  \cite[Page 369]{Laurent-Gengoux2013} for another proof that 
$\dpi({\rm Obs}_m)=0$ using the expression 
${\rm Obs}_m = \sum_{k=1}^{m-1} [\pi_k(c), \, \pi_{m-k}(c)]_{\rm Sch}$}. 
By \autoref{prop:indep-unobstr}, $\mH^3_{\pi_0}(\mX)^{\calS_{\geq 2}} = 0$. Thus there exists $\pi_m(c)\in c^m\mfX^2(\mathsf{X})^{\calS_m}$ such that 
\begin{equation}\label{eq:pi-m}
2[\pi_0,\,\pi_m(c)]_{\rm Sch} +{\rm Obs}_m = 0.
\end{equation}
 Setting
$\pi_m^\calS(c) = \pi_m(c)$ for $m \geq 2$, then 
$\pi^\calS(c) = \sum_{m=0}^{\infty} \pi_m^\calS(c)  \in {\rm Poi}(\mX)\bbc$.

To prove uniqueness, suppose that 
\[
\pi'(c) = \sum_{m=0}^\infty \pi'_m(c) \in {\rm Poi}(\mX)\bbc
\]
is such that $\pi'_0(c)=\pi_0$, 
$\pi'_1(c)=\pi_1(c)$, and $\pi'_m(c)\in c^m\mfX^2(\mathsf{X})^{\calS_m}$
for all $m \geq 2$. Let $m \geq 2$ be the smallest such that $\pi'_m(c)\not=\pi_m(c)$. Then
$[\pi_0, \, \pi_m(c)]_{\rm Sch} = -\frac{1}{2} {\rm Obs}_m = [\pi_0, \, \pi_m^\prime(c)]_{\rm Sch}$.
As $\mH^2_{\pi_0}(\mX)^{\calS_{\geq 2}} = 0$ by \autoref{prop:indep-unobstr}, there exists $v_m\in c^m\mfX^1(\mathsf{X})^{\calS_m}$ such that 
$-[v_m,\pi_0]_{\rm Sch} = \pi'_m(c) -\pi_m(c)$. Thus
\begin{align*}
\exp(v_m)(\pi'(c)) & = \exp( v_m)(\pi_0 + \pi'_1(c)+ \cdots +  \pi'_m(c)  \;\;\;{\rm mod}\;\;c^{m+1}\mfX^2(\mX)\\
& = \pi_0 +   [v_m, \pi_0] +\pi_1(c) + \cdots + \pi_{m-1} (c)+  \pi'_m(c) \;\;\;{\rm mod}\;\;c^{m+1}\mfX^2(\mX)\\
& = \pi_0 + \pi_1(c) + \cdots + \pi_{m-1}(c) +  {\pi}_m(c) \;\;\;{\rm mod}\;\;c^{m+1}\mfX^2(\mX).
\end{align*}
Continuing this process for all $m$, we obtain a gauge equivalence bringing $\pi'(c)$ to $\pi(c)$.
\end{proof}

\begin{definition}\label{def:admissible-formal}
For a linearly independent subset $\calS$ of $\calS(\pi_0)$, we call any $\pi^\calS(c) \in {\rm Poi}[\mX]\bbc$ in 
\autoref{thm:mainDef} an {\it $\calS$-admissible formal Poisson deformation of $\pi_0$}.
\end{definition}

When $\calS(\pi_0)$ is linearly independent, we now show that any $\calS(\pi_0)$-admissible 
formal Poisson deformation $\pi^{\calS(\pi_0)}(c)$ is  {\it versal}  in the following sense.

\begin{theorem}\label{thm:maximal-formal}
Let $\pi_0$ be a $\TT$-log-symplectic log-canonical Poisson structure  on $\mX = \CC^n$, and assume that $\calS(\pi_0)$ is linearly independent. Let $\mu$ be any set of formal commuting parameters, 
and let $\pi(\mu)$ be any formal Poisson deformation of
$\pi_0$ of the form
\[
\pi(\mu) = \pi_0 + \pi_1(\mu) +  \pi_2(\mu) + \pi_3(\mu) + \cdots \in {\rm Poi}(\mathsf{X})[\![\mu]\!],
\]
where for every $m \geq 1$,  $\pi_m(\mu)\in \mu^m \mfX^2(\mathsf{X})^\TT$ and contains no log-canonical terms.
Then up to a gauge equivalence of $\XX[\![\mu]\!]$, we have
$\pi(\mu)=\pi^{\calS(\pi_0)}(c(\mu))$ for some  
$c(\mu) = (c_\bth(\mu)\in \mu \mathbb{C}[\![\mu]\!])_{\bth \in \calS(\pi_0)}$.
\end{theorem}

\begin{proof}
For each $\bth \in \calS(\pi_0)$, the component of $\pi(\mu)$ with $(\CC^\times)^n$-weight $\boldsymbol{\theta}$ is of the form $c_\bth(\mu) V_{\boldsymbol{\theta}}$ for some $c_\bth(\mu)\in \mu\mathbb{C}[\![\mu]\!]$. Set 
$c(\mu) = (c_\bth(\mu))_{\bth\in\calS(\pi_0)}$.
Set
\[
\pi'(\mu) := \pi^{\calS(\pi_0)}(c(\mu)) \in {\rm Poi}(\mX)\bbmu.
\]
Let $\pi^{\prime\prime}(\mu) =\pi(\mu)-\pi'(\mu) \in \mfX(\mX)\bbmu$ and write
\[
\pi'(\mu) = \sum_{m=0}^\infty \pi'_m(\mu) \hs \mbox{and} \hs 
\pi^{\prime\prime}(\mu) = \sum_{m=1}^\infty \pi''_m(\mu),
\]
where $\pi'_0(\mu)=\pi_0$, and  $\pi'_m(\mu) \in \mu^m \mfX^2(\mathsf{X})^{ \calS(\pi_0)_{\ge1}}$ and 
$\pi''_m(\mu)\in\mu^m \mfX^2(\mathsf{X})^{\ker \bbeta\setminus \calS(\pi_0)}$
for $m \geq 1$. By the assumption on $\pi(\mu)$, 
we have $\pi''_m(\mu)\in\mu^m \mfX^2(\mX)^{\ker \bbeta  \setminus (\calS(\pi_0) \cup \{0)\}}$
for every $m \geq 1$.  
We now prove by induction on $m$ that we can apply a gauge equivalence to $\pi(\mu)$ so that new $\pi''_k(\mu)=\pi_k(\mu)-\pi'_k(\mu)$ satisfies $\pi''_k(\mu)=0$ for all $k\le m$. Indeed, 
since $\pi(\mu)$ and $\pi'(\mu)$ are Poisson, assuming $\pi''_k(\mu)=0$ for all $k<m$, we then have 
\begin{align*}
0& = \sum_{k=0}^m [\pi_k(\mu),\; \pi_{m-k}(\mu)]_{\rm Sch} = 
\sum_{k=1}^{m-1} [\pi_k(\mu),\; \pi_{m-k}(\mu)]_{\rm Sch} + 2[\pi_0, \pi_m(\mu)]_{\rm Sch} \\
& =\sum_{k=1}^{m-1} [\pi_k^\prime(\mu),\; \pi_{m-k}^\prime(\mu)]_{\rm Sch} + 
2[\pi_0, \pi_m^\prime(\mu)]_{\rm Sch}+2[\pi_0, \pi_m^{\prime\prime}(\mu)]_{\rm Sch}\\
&= \sum_{k=0}^{m} [\pi'_k(\mu),\;\pi'_{m-k}(\mu)]_{\rm Sch} + 
2 [\pi_0,\;\pi''_m(\mu)]_{\rm Sch}= 2 [\pi_0,\;\pi''_m(\mu)]_{\rm Sch}.
\end{align*}
By \autoref{lm:HP2decomp}, the term $\pi''_m(\mu)$ is $\mathsf{d}_{\pi_0}$-exact. After applying the gauge equivalence 
$\exp(v_m)$ to $\pi(\mu)$, where $\mathsf{d}_{\pi_0}(v_m)=-\pi''_m(\mu)$, we obtain $\pi''_m(\mu)=0$.
\end{proof}

\section{Algebraic Poisson deformations}\label{s:alg-deform}
Let again $\TT$ be any complex algebraic torus.
We now turn to $\TT$-invariant algebraic Poisson deformations of 
any $\TT$-log-symplectic log-canonical Poisson structure $\pi_0$ on $\mX=\CC^n$.  
More precisely, after some preparation in $\S$\ref{ss:WWW}, 
we give in $\S$\ref{ss:existence-alg} a sufficient condition, called Property \eqref{eq:W0}, 
for the formal Poisson deformations of $\pi_0$ in \autoref{thm:mainDef} to be algebraic, and we prove the analog of \autoref{thm:maximal-formal} on the uniqueness of such algebraic deformations up to
algebraic changes of variables. In $\S$\ref{ss:W1} we show uniqueness
(without algebraic changes of variables) of 
$\TT$-invariant algebraic Poisson deformations of $\pi_0$ under the stronger  Property \eqref{eq:W1}. 

\subsection{Notation and some preparatory lemmas}\label{ss:WWW}
Let again $\mX = \CC^n$ with linear coordinates $(x_1, \ldots, x_n)$ and the 
$\CCsn$-action given in \eqref{eq:CCsn-CCn}. 
Recall that ${\rm Poi}(\mX)$ denotes the set of all algebraic Poisson structures on $\mX = \CC^n$.  

\begin{notation}\label{nota:pi-pi0}
{\rm
Let $\pi \in {\rm Poi}(\mX)$ be arbitrary. Each monomial term 
(see \autoref{lm:X-w}) in $\pi$ has a $\CCsn$-weight $\bfw \in \ZZone$.
We write $\pi$ uniquely as  
\begin{equation}\label{eq:pi-tail}
\pi=\pi_0 + \pi_{\rm tail},
\end{equation}
where $\pi_0 \in \mfX^2(\mX)$ is the sum of all the monomial terms in $\pi$ with $\CCsn$-weight $0$, and 
$\pi_{\rm tail} \in \mfX^2(\mX)$ is the sum of all the monomial terms in $\pi$ 
with non-zero $\CCsn$-weights. 
We set
\begin{align}\label{eq:mW-pi}
 \mW(\pi) &= \mbox{the set of all the non-zero} \;\CCsn\mbox{-weights of the monomial terms in}\; \pi\\
 \nonumber
& = \mbox{the set of all the} \;\CCsn\mbox{-weights of the monomial terms in}\; \pi_{\rm tail}.
\end{align}
We call $\pi_0$ the log-canonical term of $\pi$.
\hfill $\diamond$
}
\end{notation}

Let ${\rm Aut}(\mX)$ be the group of all bi-regular isomorphisms from $\mX = \CC^n$ to itself.

\begin{definition}\label{def:varphi}
{\rm
For $\varphi \in {\rm Aut}(\mX)$
and $\pi \in {\rm Poi}(\mX)$, let $\varphi_* \pi \in {\rm Poi}(\mX)$ be such that
$\varphi: (\mX,  \pi) \rightarrow (\mX, \, \varphi_*\pi)$
is an isomorphism of Poisson varieties, so that
\begin{equation}\label{eq:Aut-action}
{\rm Aut}(\mX) \times {\rm Poi}(\mX) \longrightarrow {\rm Poi}(\mX), \;\; (\varphi, \pi) \longmapsto \varphi_* \pi,
\end{equation}
is a left action of ${\rm Aut}(\mX)$ on ${\rm Poi}(\mX)$. 
If $\pi, \pi' \in {\rm Poi}(\mX)$ are such that
$\pi' = \varphi_* \pi$ for some $\varphi \in {\rm Aut}(\mX)$, we also say that $\pi$ and $\pi'$ are related by an algebraic change of variables of $\mX$. 
}
\end{definition}

We will be concerned with some special algebraic changes of variables on $\mX = \CC^n$.

Recall (\autoref{lm:X-w}) that 
$\calW_p$, for $p \in [0, n]$, denotes the set of all $(\CC^\times)^n$-weights in $\mfX^p(\mX)$, i.e.,
$\calW_p = \{\bfw \in (\ZZ_{\geq -1})^n: |\Jw| \leq p\}$.
Note that
\[
(\ZZ_{\geq 0})^n = \calW_0 \subset \calW_1 \subset \calW_2 \subset \cdots \subset \calW_n.
\]
In particular, $\bfw \in \calW_1 \backslash \calW_0$ if and only if $\bfw \in \ZZone$ and has
exactly one $-1$ entry.

\begin{lemma}\label{lm:t-flow}
For every $\bfw =(\bfw_1, \ldots, \bfw_n)^t\in \calW_1\backslash \calW_0$ the space
$\mfX^1(\mX)^\bfw$ is $1$-dimensional, and if $k \in [1, n]$ is such that $\bfw_k = -1$, then  a basis vector of $\mfX^1(\mX)^\bfw$ is given  by the vector field
\begin{equation}\label{eq:v-w}
v_\bfw =\left(\prod_{j \neq k} x_j^{\bfw_j}\right) \frac{\partial}{\partial x_k}.
\end{equation}
For $t \in \CC$, the time $t$-flow $\varphi_\bfw^t: \mX \to \mX$ of $v_\bfw$ is 
 given by
\begin{equation}\label{eq:t-flow}
(\varphi_\bfw^t)^*(x_1, \ldots, x_n) = 
(x_1, \;\ldots,\; x_{k-1}, \;x_k+t\prod_{j \neq k} x_j^{\bfw_j}, \;
x_{k+1}, \; \ldots, \;x_n). 
\end{equation}
Moreover, for any $\pi \in {\rm Poi}(\mX)$ and any $t \in \CC$, one has
$(\varphi^{t}_\bfw)_* \pi \in {\rm Poi}(\mX)$ given by
\[
(\varphi^{t}_\bfw)_* \pi = \exp(-tv_\bfw) \pi =\pi - t[v_\bfw, \pi] +
\frac{1}{2} t^2[v_\bfw, [v_\bfw, \pi]] + \cdots \in {\rm Poi}(\mX).
\]
\end{lemma}

\begin{proof}
The fact that $\mfX^1(\mX)^\bfw = \CC v_\bfw$ is a special case of \autoref{lm:X-w}. The proof of the remaining statements is standard.
\end{proof}

For a finite subset $\mW$ of $\ZZ^n$ recall from 
\autoref{nota:Wm} the definitions of $\mW_m \subset \mW_{\geq m} \subset \ZZ^n$ for $m \in \ZZ_{\geq 1}$. 

\begin{lemma}\label{lm:expv}
Let $\pi \in {\rm Poi}(\mX)$ with log-canonical term  $\pi_0$ and suppose that $0 \notin 
\mW(\pi)_{\geq 1}$. Then for any
\[
\bfw \in (\mW(\pi)_{\geq 1} \cap \calW_1) \backslash \calW_0
\]
and  any $t \in \CC$,  the algebraic Poisson structure $\pi' := (\varphi^{-t}_\bfw)_* \pi=\exp(tv_\bfw)\pi$ 
also has log-canonical term $\pi_0$, 
and one has  $\mW(\pi') \subset \mW(\pi)_{\geq 1}$.
\end{lemma}

\begin{proof} 
Let $\pi = \pi_0 + \pi_{\rm tail}$, where $\pi_{\rm tail} \in \mfX^2(\mX)^{\mW(\pi)}$. Then
\[
\pi' = \exp(tv_\bfw) \pi =\pi_0 + \pi_{\rm tail} + t[v_\bfw, \pi_0] + t[v_\bfw, \pi_{\rm tail}] + 
\frac{1}{2}t^2[v_\bfw, [v_\bfw, \pi_0]] + \frac{1}{2}t^2[v_\bfw, [v_\bfw, \pi_{\rm tail}]]+\cdots.
\]
It follows from the assumptions that $\bfw \in \mW(\pi)_{\geq 1}$ and $0 \notin \mW(\pi)_{\geq 1}$
that  $\pi'$ again has $\pi_0$ as its log-canonical term, 
 and $\mW(\pi') \subset \mW(\pi)_{\geq 1}$.
\end{proof}

In the rest of this subsection, we prepare several combinatorial lemmas on subsets of $\ZZ^n$.

\begin{lemma}\label{lm:generator-W0}
Let $\mW$ be  any finite subset of $\ZZ^n$ such that
$0\notin \mW_{\geq 1}$. Then 
$\mW \backslash \mW_{\geq 2}$
is the one and only  minimal set of 
generators of $\mW_{\geq 1}$ under addition. In particular 
\begin{equation}\label{eq:WWW}
\mW_{\geq 1} = 
(\mW \backslash \mW_{\geq 2})_{\geq 1}.
\end{equation}
\end{lemma}

\begin{proof}
As $\mW$ is finite, minimal sets of generators of $\mW_{\geq 1}$ under addition exist. 
Let $\calS$ be any minimal set of generators of $\mW_{\geq 1}$ under addition. 
We first prove that $\calS \subset \mW \backslash \mW_{\geq 2}$.
Let $\bfw \in \calS$. Suppose that $\bfw \in \mW_{\geq 2}$.
Then $\bfw = \bfw^{(1)} + \cdots + \bfw^{(m)}$, where $m \geq 2$ and 
$\bfw^{(j)} \in \mW$ for each $j \in [1, m]$. By  writing
$\bfw^{(j)} = \bfw^{(j, 1)} + \cdots +\bfw^{(j, n_j)}$, where $\bfw^{(j, k)} \in \calS$ for each 
$j \in [1, m]$ and $k \in [1, n_j]$, we get
\[
\bfw= \bfw^{(1, 1)} + \cdots +\bfw^{(1, n_1)} + \cdots + \bfw^{(m, 1)} + \cdots +\bfw^{(m, n_m)},
\]
where we note that $\sum_{j=1}^m n_j \geq m \geq 2$. As $0 \notin \mW_{\geq 1}$, 
we must have $\bfw \notin \{\bfw^{(j, k)}:j \in [1, m], k \in [1, n_j]\}$, 
so $\calS\backslash \{\bfw\}$ generates $\mW_{\geq 1}$, contradicting the minimality of $\calS$ as a set of generators of $\mW_{\geq 1}$. Thus $\bfw \notin
\mW_{\geq 2}$. As $\bfw \in \calS \subset \mW_{\geq 1}$, we have $\bfw \in \mW$.
Thus $\calS \subset \mW \backslash \mW_{\geq 2}$.
On the other hand, for any $\bfw \in \mW \backslash \mW_{\geq 2}$, by writing $\bfw$ as a sum of elements in 
$\calS$ and using the fact that $\bfw \notin \mW_{\geq 2}$, we see that $\bfw \in \calS$. 
Thus $\calS= \mW\backslash \mW_{\geq 2}$.
The rest of \autoref{lm:generator-W0} follows from the definitions.
\end{proof}

\begin{remark}\label{rmk:no-W0}
{\rm
The statement of \autoref{lm:generator-W0} on the uniqueness of minimal generating sets of $\mW$
may not hold without 
the assumption that $0 \notin \mW_{\geq 1}$. Indeed, consider $\mW =\{\bfw^{(1)}, \bfw^{(2)}, \bfw^{(3)}\}
\subset \ZZ^3$, where
\[
\bfw^{(1)} =(-1, -1, 2)^t, \hs \bfw^{(2)} =(-1, 2, -1)^t, \hs
\bfw^{(3)}=(2, -1, -1)^t.
\]
We have $\bfw^{(1)} + \bfw^{(2)}+\bfw^{(2)} = 0$, and  both $\mW$ and $-\mW = \{-\bfw^{(1)}, -\bfw^{(2)}, -\bfw^{(3)}\}$ are minimal sets of
generators of $\mW_{\geq 1}$ under addition.
\hfill $\diamond$
}
\end{remark}

\begin{definition}\label{def:W0}
A finite subset $\mW$ of $\ZZone$ is said to have Property \eqref{eq:W0} if 
\begin{equation}\label{eq:W0}
\mW_{\geq 1} \cap (\ZZ_{\geq 0})^n = \emptyset, \hs \mbox{equivalently}\hs \mfX^0(\mX)^{\mW_{\geq 1}} = 0\tag{W0},
\end{equation}
which is further equivalent to every element
of $\mW_{\geq 1} \cap \ZZone$ having at least one $-1$ entry.
\end{definition}
 
To give other equivalent characterizations of Property \eqref{eq:W0} on $\mW \subset \ZZ^n$, we first prove a lemma.

\begin{lemma}\label{lm:increasingSubsequenceNn} 
    Let $\ell \geq 1$ and 
     ${\bf v}^{(1)}$, ${\bf v}^{(2)}$, ...  be any sequence in 
    $(\mathbb{Z}_{\ge -1})^\ell$. Replacing by a subsequence if necessary,  we can ensure that  ${\bf v}^{(m)} \le {\bf v}^{(m+1)}$ coordinate-wise for each $m\ge1$.
\end{lemma}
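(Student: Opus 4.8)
The plan is to prove this by a standard iterated-extraction argument, invoking Dickson's lemma (the finiteness of antichains in $(\ZZ_{\geq 0})^\ell$) in a form adapted to $(\ZZ_{\geq -1})^\ell$. First I would reduce to the case of sequences in $(\ZZ_{\geq 0})^\ell$: the shift map $\bfv \mapsto \bfv + (1,\ldots,1)$ is a coordinate-wise order isomorphism from $(\ZZ_{\geq -1})^\ell$ onto $(\ZZ_{\geq 0})^\ell$, so it suffices to treat a sequence $\bfv^{(1)}, \bfv^{(2)}, \ldots$ in $(\ZZ_{\geq 0})^\ell$ and extract a coordinate-wise nondecreasing subsequence.

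For the core argument I would proceed by induction on the number of coordinates $\ell$. When $\ell = 1$: a sequence of nonnegative integers either has an infinite constant subsequence or an infinite strictly increasing subsequence (one picks indices greedily, always choosing a later term that is $\geq$ the current one, which is possible since the sequence is bounded below; if this greedy process were to stall it would mean only finitely many terms are $\geq$ some value, forcing infinitely many terms below it, hence an infinite subsequence with values in a finite set, hence an infinite constant subsequence). Either way one gets a nondecreasing subsequence. For the inductive step with $\ell \geq 2$: apply the $\ell=1$ case to the first coordinates $(\bfv^{(m)}_1)_m$ to extract an infinite index set $\calM_1$ along which the first coordinate is nondecreasing; then apply the induction hypothesis to the sequence $(\bfv^{(m)}_2, \ldots, \bfv^{(m)}_\ell)_{m \in \calM_1}$ in $(\ZZ_{\geq 0})^{\ell-1}$ to extract a further infinite index set $\calM \subseteq \calM_1$ along which coordinates $2$ through $\ell$ are nondecreasing. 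Along $\calM$ all coordinates are nondecreasing, and re-indexing $\calM$ as $m = 1, 2, \ldots$ gives the desired subsequence.

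Alternatively, and perhaps more cleanly for a write-up, I would invoke the well-known fact that a product of well-quasi-orders is a well-quasi-order (equivalently, $(\ZZ_{\geq 0})^\ell$ with the coordinate-wise order is a well-quasi-order by Dickson's lemma): every infinite sequence in a well-quasi-order contains an infinite nondecreasing subsequence. This is one of the standard equivalent characterizations of well-quasi-orders (an order is a wqo iff it is well-founded and has no infinite antichain iff every infinite sequence has an infinite nondecreasing subsequence). Applying this to $(\ZZ_{\geq 0})^\ell$ and transporting back along the shift isomorphism finishes the proof.

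I do not expect any serious obstacle here; the statement is essentially Dickson's lemma packaged in the ``infinite nondecreasing subsequence'' form. The only minor point requiring care is the passage between $(\ZZ_{\geq -1})^\ell$ and $(\ZZ_{\geq 0})^\ell$, which is handled by the order-preserving bijection given by adding $(1, \ldots, 1)$, and the bookkeeping of nested infinite index sets in the induction, which is routine. If a self-contained proof is preferred over citing wqo theory, the induction on $\ell$ sketched above is completely elementary and short.
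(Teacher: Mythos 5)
Your proof is correct and is essentially the paper's own argument: the paper likewise extracts a nondecreasing subsequence one coordinate at a time, using that $\mathbb{Z}_{\ge -1}$ is well-ordered (bounded below), and iterates over the $\ell$ coordinates. Your shift to $(\mathbb{Z}_{\ge 0})^\ell$ and the optional appeal to Dickson's lemma are only cosmetic variations on the same idea.
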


\begin{proof}
    Since the set $\mathbb{Z}_{\ge -1}$ is well-ordered, by passing to a subsequence, we can assume that the first coordinate of ${\bf v}^{(m)}$ is less than or equal to the first coordinate of ${\bf v}^{(m+1)}$ for every $m\ge1$. Applying the same argument to the second, third coordinate and so on, we obtain the claim.
\end{proof}

\begin{lemma}\label{lm:three-conditions}
For any finite subset $\mW$ of $\ZZone$, the following statements are equivalent:

1) $\mW$ has Property \eqref{eq:W0};

2)  $\mW_{\geq N+1} \cap \calW_2 =\emptyset$ for some integer $N \geq 1$;

3) $0 \notin \mW_{\geq 1}$ and $\mW_{\geq 1} \cap \calW_2$ is a finite set; 
\end{lemma}

\begin{proof}
Assuming 1), we prove the stronger claim that 
$\mW_{\geq N+1} \cap (\ZZ_{\geq -1})^n =\emptyset$ for some $N \geq 1$.
Suppose on the contrary that  there is an infinite subset $\calM$ of $\ZZ_{\geq 1}$ such that $\mW_{m}\cap(\ZZ_{\geq -1})^n\neq \emptyset$
for every $m \in \calM$. Choose
$\bfw^{(m)} \in \mW_{m}\cap(\ZZ_{\geq -1})^n$ for each $m \in \calM$ and write
\[
\bfw^{(m)} = \sum_{\bfw \in \mW}\alpha_{\bfw}^{(m)} \bfw,
\]
where $\alpha_{\bfw}^{(m)} \in \ZZ_{\geq 0}$ 
for every $m \in \calM$ and $\bfw \in \mW$, and
$\sum_{\bfw \in \mW} \alpha_{\bfw}^{(m)} = m$ for every $m \in \calM$. Applying
\autoref{lm:increasingSubsequenceNn} to $\{(\alpha^{(m)}_{\bfw})_{\bfw \in \mW}: m \in \calM\}$ and 
replacing $\calM$ by an infinite subset if necessary, we may assume that $\alpha_{\bfw}^{(m)}\leq \alpha_{\bfw}^{(m')}$
for all $\bfw \in \mW$ and all $m, m' \in \calM$ such that $m \leq m'$. Applying 
\autoref{lm:increasingSubsequenceNn} to $\{\bfw^{(m)}: m \in \calM\}\subset (\ZZ_{\geq -1})^n$ and replacing $\calM$ by an infinite subset if necessary,
we may assume that $\bfw^{(m)} \leq \bfw^{(m')}$ coordinate-wise for all 
$m, m' \in \calM$ such that $m \leq m'$.
Choose any  $m, m' \in \calM$ such that $m' \geq m+1$. Then, as  
$\sum_{\bfw \in \mW} (\alpha^{(m')}_\bfw-\alpha^{(m)}_\bfw) = m' - m \geq 1$, we have
\[
{\bf w}^{(m')} - {\bf w}^{(m)} = 
\sum_{\bfw \in \mW}(\alpha^{(m')}_\bfw-\alpha^{(m)}_\bfw) \bfw\in \mW_{\ge 1}\cap (\ZZ_{\geq 0})^n,
\]
contradicting the assumption that $\mW_{\ge 1}\cap (\ZZ_{\geq 0})^n =\emptyset$. Thus 1) implies 2).

Assume 2). As $\mW$ is finite, the set $\mW_m$ is finite for every $m \geq 1$, so 2) implies that
\[
\mW_{\geq 1} \cap \calW_2 = \bigcup_{m=1}^{N} \mW_m \cap \calW_2
\]
is finite. If $0 \in \mW_{\geq 1}$, then $0 \in \mW_m$ for some $m \geq 1$, so $0 = p\cdot 0 
\in \mW_{pm} \cap \calW_2$ for every $p \geq 1$, contradicting 2). Thus 2) implies 3).

Assume 3).
If $\mW$ does not have Property \eqref{eq:W0}, then there exist
$\bfw \in \mW_{\geq 1} \cap (\ZZ_{\geq 0})^n$ and $\bfw \neq 0$. Then $\{m\bfw: m \in \ZZ_{\geq 1}\}$
is an infinite subset of $\mW_{\geq 1} \cap \calW_2$, contradicting 3). Thus 3) implies 1).
\end{proof}

\begin{remark}\label{rk:W0-more}
{\rm
It follows from the proof of \autoref{lm:three-conditions} that the set $\calW_2$ in
statement 2) and 3) in \autoref{lm:three-conditions} can be replaced by any set $\calW$ satisfying
$(\ZZ_{\geq 0})^n \subset \calW \subset \ZZone$.
\hfill $\diamond$
}
\end{remark}

\subsection{Algebraic Poisson deformations with Property \eqref{eq:W0}}\label{ss:existence-alg} 
Let $\pi_0$ be a
$\TT$-log-symplectic log-canonical Poisson structure on $\mX=\CC^n$ 
as in $\S$\ref{ss:T-cohom-log}. 
Recall that 
$\calS(\pi_0)$ is the (finite) set of all non-zero weights in $\mH_{\pi_0}^2(\mX)^\TT$.
By \autoref{ld:indep}, a non-empty $\calS \subset \calS(\pi_0)$ is linearly independent if and only if $0 \notin \calS_{\geq 1}$. 
Note also that when $\calS \subset \calS(\pi_0)$ is linearly independent, we have $\calS_m\cap \calS_{m'}
=\emptyset$ for $m, m' \geq 1$ and  $m \neq m'$, so we have the direct sum decomposition
\[
\mfX^2(\mX)^{\calS_{\geq 1}} = \bigoplus_{m=1}^{\infty} \mfX^2(\mX)^{\calS_{m}}.
\]
By convention when $\calS = \emptyset$ we have $\calS_{\geq 1}=\emptyset$ and $\mfX^2(\mX)^{\calS_{\geq 1}}=0$.
Recall again that ${\rm Poi}(\mX)\subset \mfX^2(\mX)$ denotes the set of all algebraic Poisson structures on $\mX = \CC^n$.

\begin{definition}\label{def:calS-admi} 
Assume that $\calS \subset \calS(\pi_0)$ is linearly independent. 

1) By an {\it $\calS$-admissible algebraic Poisson deformation of $\pi_0$} we mean any 
$\pi \in {\rm Poi}(\mX)^{\calS_{\geq 0}}$  such that 
\[
\pi = \pi_0 \;\;\;{\rm mod}\;\;\mfX^2(\mX)^{\calS_{\geq 1}};
\]

2) For a given $\pi_1 \in \mfX^2(\mX)^\calS$, if  $\pi \in {\rm Poi}(\mX)^{\calS_{\geq 0}}$ is such that 
\[
\pi = \pi_0 + \pi_1 \;\;\;{\rm mod}\;\;\mfX^2(\mX)^{\calS_{\geq 2}},
\]
we say that $\pi$ is 
an {\it $\calS$-admissible algebraic Poisson deformation of $\pi_0$ along $\pi_1$}. 

3) An $\calS$-admissible algebraic Poisson deformation of $\pi_0$ along some 
$\pi_1 =\sum_{\bth \in \calS} c_\bth V_\bth$ such that $c_\bth \in \CC^\times$ for every $\bth \in \calS$ 
is also called a {\it maximal} $\calS$-admissible algebraic Poisson deformation of $\pi_0$.
\end{definition}

For an arbitrary subset $\calS$ of $\calS(\pi_0)$, we now show that Property \eqref{eq:W0} of 
$\calS$  is a sufficient condition for the existence of 
$\calS$-admissible algebraic Poisson deformations of $\pi_0$. 
By \autoref{lm:three-conditions} the following statements are equivalent for $\calS \subset \calS(\pi_0)$:

1) $\calS$ has Property \eqref{eq:W0}, i.e., $\calS_{\geq 1} \cap (\ZZ_{\geq 0})^n = \emptyset$;

2) $\calS_{\geq N+1} \cap \calW_2 = \emptyset$ for some $N \geq 1$;

3) $\calS$ is linearly independent and  $\calS_{\geq 1} \cap \calW_2$ is finite.

\begin{remark}\label{rk:pi-pim}
{\rm 
Assume that $\calS \subset \calS(\pi_0)$ has Property \eqref{eq:W0}, and let 
$N \geq 1$ be such that 
$\calS_{\geq N+1} \cap \calW_2 = \emptyset$. Let
$\pi_1 \in \mfX^2(\mX)^\calS$. Then an $\calS$-admissible algebraic Poisson deformation of $\pi_0$ along $\pi_1$ is the same as a bi-vector field 
$\pi = \pi_0 + \pi_1 + \pi_2 + \cdots+ \pi_N$, where
$\pi_m \in \mfX^2(\mX)^{\calS_m}$ for $m \in [2, N]$, such that 
\begin{equation}\label{eq:pi-pim-1}
2[\pi_0, \, \pi_m]_{\rm Sch} + \sum_{k=1}^{m-1} [\pi_k,\pi_{m-k}]_{\rm Sch} =0 \in \mfX^3(\mathsf{X})^{\calS_m}, \hs 
m \in [2, 2N].
\end{equation}
\hfill $\diamond$
} 
\end{remark}

For $\bfw \in \calW_1\backslash \calW_0$, recall the vector field $v_\bfw \in \mfX^1(\mX)^\bfw$
in \eqref{eq:v-w}. 

\begin{lemma-notation}\label{lm-nota:calG-S}
If $\calS \subset \calS(\pi_0)$ has Property \eqref{eq:W0}, then the vector subspace
\[
{\mathfrak{\g}}_{{}_{\calS_{\geq 2} \cap \calW_1}} := \sum_{\bfw \in \calS_{\geq 2} \cap \calW_1} \CC v_\bfw
\]
of $\mfX^1(\mX)$ is a finite dimensional nilpotent Lie algebra with respect to $[-\, , \, -]_{\rm Sch}$
on $\mfX^1(\mX)$. We denote by
$\calG_{\calS_{\geq 2} \cap \calW_1}$ the unipotent Lie group with Lie algebra 
${\mathfrak{\g}}_{{}_{\calS_{\geq 2} \cap \calW_1}}$, and let 
$\calG_{\calS_{\geq 2} \cap \calW_1}$ act on $\mX$ via the induced group homomorphism
$\calG_{\calS_{\geq 2} \cap \calW_1} \to {\rm Aut}(\mX)$.
\end{lemma-notation}

\begin{proof}
As $\calS$ has Property \eqref{eq:W0}, the set $\calS_{\geq 2} \cap \calW_1$, being a subset of
$\calS_{\geq 1} \cap \calW_2$, is finite, so ${\mathfrak{\g}}_{{}_{\calS_{\geq 2} \cap \calW_1}}$ is 
finite dimensional. Suppose that  $\bfw^{(1)}, \ldots, \bfw^{(m)} \in 
\calS_{\geq 2} \cap \calW_1$ are such that 
\[
v :=[v_{\bfw^{(m)}}, \cdots, [v_{\bfw^{(2)}}, v_{\bfw^{(1)}}]_{\rm Sch}]_{\rm Sch} \neq 0.
\]
Then every monomial term of $v$ has $\CCsn$-weight in 
$\calS_{\geq m} \cap \calW_1\subset \calS_{\geq m} \cap \calW_2$. As 
$\calS_{\geq m} \cap \calW_2 = \emptyset$ for $m$ large enough, 
$({\mathfrak{\g}}_{{}_{\calS_{\geq 2} \cap \calW_1}}, [-\, , \, -]_{\rm Sch})$ is a nilpotent Lie algebra.
\end{proof}

\begin{theorem}\label{thm:4.8-W0} Let $\pi_0$ be any
$\TT$-log-symplectic log-canonical Poisson structure on $\mX=\CC^n$. If $\calS \subset \calS(\pi_0)$
has  Property \eqref{eq:W0}, then for any 
$\pi_1 \in \mfX^2(\mX)^{\calS}$, the set of all 
$\calS$-admissible algebraic Poisson deformations of $\pi_0$ along $\pi_1$ is non-empty and is a single 
$\calG_{\calS_{\geq 2} \cap \calW_1}$-orbit in ${\rm Poi}(\mX)$. 
\end{theorem}

\begin{proof} As Property \eqref{eq:W0} of $\calS$ implies that $\calS$ is linearly independent,
by \autoref{thm:mainDef} we have an $\calS$-admissible 
formal Poisson structure  
$\pi^\calS(c) \in {\rm Poi}(\mX)\bbc$ in the formal parameters $c = (c_\btheta)_{\btheta \in \calS}$. 
As $\calS_{\geq N+1} \cap \calW_2 = \emptyset$ for some $N \geq 1$, 
$\pi^\calS(c) \in {\rm Poi}(\mX)\bbc$ is polynomial in $c$. 
Write the given $\pi_1 \in \mfX^2(\mX)^\calS$ as 
$\pi_1 = \sum_{\bth \in \calS} c^\prime_\bth V_{\bth}\in 
\mfX^2(\mX)^{\calS}$, where $c^\prime_\btheta \in \CC$ for each $\btheta \in \calS$.
By evaluating $\pi^\calS(c)$
at $c = (c^\prime_\bth\in \CC)_{\bth \in \calS}$, we get an $\calS$-admissible algebraic Poisson
deformation of $\pi_0$ along $\pi_1$. 

Let $\pi$ be any
$\calS$-admissible algebraic Poisson deformation of $\pi_0$ along $\pi_1$, and write
 $\pi = \pi_0 + \pi_1 + \pi_{\!{}_{\geq 2}}$, where  
$\pi_{\!{}_{\geq 2}} \in \mfX^2(\mX)^{\calS_{\geq 2}}$. 
Let $\bfw \in \calS_{\geq 2} \cap \calW_1 \subset \calW_1 \backslash \calW_0$, and let
$v_\bfw \in \mfX^1(\mX)^\bfw$ as in \eqref{eq:v-w}; let
$\{\varphi_\bfw^t: t \in \CC\}$ be its flow.  For 
 $t \in \CC$, by \autoref{lm:t-flow} one has
\begin{align*}
(\varphi_\bfw^{t})_*\pi& = \exp(-tv_\bfw) \pi =\exp(-tv_\bfw) \pi_0 +  \exp(-tv_\bfw)\pi_1 + \exp(-tv_\bfw)\pi_{\!{}_{\geq 2}}\\
&= \pi_0  +\pi_1 -  t[v_\bfw, \pi_0]\;\;\; \mbox{mod}\;\; \mfX^2(\mX)^{\calS_{\geq 2}}.
\end{align*}
As $\bfw \in  \calS_{\geq 2}$, we have
$[v_\bfw, \pi_0] \in \mfX^2(\mX)^\bfw \subset 
\mfX^2(\mX)^{\calS_{\geq 2}}$. Thus $(\varphi_\bfw^{t})_*\pi$ is again 
an $\calS$-admissible
algebraic Poisson deformation of $\pi_0$ along $\pi_1$. This shows that all elements in the orbit
$\calG_{\calS_{\geq 2} \cap \calW_1}\cdot\pi$ are  $\calS$-admissible
algebraic Poisson deformation of $\pi_0$ along $\pi_1$.

Conversely, let $N \geq 1$ be such that $\mfX^2(\mX)^{\calS_{\geq N+1}} = 0$, and let 
\[
\pi = \pi_0 + \pi_1 + \pi_2 + \cdots + \pi_{N} \hs \mbox{and} \hs 
\pi^\prime = \pi_0 + \pi_1 + \pi_2^\prime + \cdots + \pi^\prime_{N}
\]
be two $\calS$-admissible algebraic Poisson deformations of $\pi_0$ along $\pi_1$, with
$\pi_m, \pi_m^\prime  \in \mfX^2(\mX)^{\calS_m}$ for all $m \in [2, N]$.  If $N = 1$ we have $\pi = \pi'$, so we 
assume that  $N \geq 2$.
Suppose that $\pi \neq \pi'$, and let $m \in [2, N]$ be the smallest index such that $\pi'_m\not=\pi_m$. 
Applying \eqref{eq:pi-pim-1} to both $\pi$ and $\pi'$,  one has
$\mathsf{d}_{\pi_0}(\pi_m)  = \mathsf{d}_{\pi_0}(\pi'_m)$.
As $\calS$ is linearly independent, $\mH^2_{\pi_0}(\mX)^{\calS_{\geq 2}} = 0$
by \autoref{prop:indep-unobstr}.
Thus there exists $v_m\in \mfX^1(\mathsf{X})$ such that 
$\dpi(v_m)  = \pi'_m -\pi_m$. As the $\CCsn$-weight of every monomial term in $\pi'_m - \pi_m$ lie
in $\calS_m \cap \calW_1$, we have $v_m =\sum_{{\bf w}\in \calS_m \cap \calW_1}  t_\bfw v_{\bf w}$, 
where $t_\bfw \in \CC$ and  $v_\bfw$ is given in \eqref{eq:v-w}. 
For each $\bfw \in \calS_m \cap \calW_1$, 
$$
\exp(t_\bfw v_{\bf w}) (\pi') = \pi' + [t_\bfw v_{\bf w},\pi'] \mod \mfX^2(\mathsf{X})^{\calS_{\geq m+1}} 
= \pi' + [t_\bfw v_{\bf w},\pi_0] \mod \mfX^2(\mathsf{X})^{\calS_{\geq m+1}}.
$$
By applying $\prod_{{\bf w}\in\calS_m \cap \calW_1}\exp(t_\bfw v_{\bf w})$ 
(in any order for the product), we obtain
$$
\pi^{\prime\prime}:=\prod_{{\bf w}\in\calS_m \cap \calW_1}\exp(t_\bfw v_{\bf w}) (\pi') = \pi' + [v_m, \pi_0] 
\mod \mfX^2(\mathsf{X})^{\calS_{\geq m+1}} = \pi \mod \mfX^2(\mathsf{X})^{\calS_{\geq m+1}}.
$$
If $\pi^{\prime \prime} = \pi$ we are done.
If not, we continue the process which will stop as $N$ is finite, and we 
obtain an element $\varphi \in \calG_{\calS_{\geq 2} \cap \calW_1}$ such that $\varphi_* \pi' = \pi$.
\end{proof}

\begin{remark}[Method of undetermined coefficients]\label{rmk:in-practice} 
{\rm 
Suppose that $\calS \subset \calS(\pi_0)$ is non-empty and has Property \eqref{eq:W0}. 
Then $\calS_{\geq 2} \cap \calW_2$ is a finite set, and every $\bfw \in \calS_{\geq 2}\cap 
\calW_2$ has either one or two $-1$ entries.  By \autoref{lm:X-w}, 
for every $\bfw \in \calS_{\geq 2}\cap 
\calW_2$, we have 
\[
\dim \mfX^2(\mX)^\bfw = \begin{cases} n-1, & \hs \mbox{if} \;\; |J_\bfw| = 1,\\
1, & \hs \mbox{if} \;\; |J_\bfw| = 2,\end{cases}
\]
and $\mfX^2(\mX)^\bfw$ has a $\CC$-basis consisting of monomial bi-vector fields of the form
\begin{equation}\label{eq:basis}
x^\bfw \partial_j \wedge \partial_k, \hs \mbox{where}\;\; 1 \leq j < k \leq n \;\; \mbox{and}\;\;
J_\bfw \subset \{j, k\}.
\end{equation}
Let $N \geq 1$ be the smallest integer such that 
$\calS_{m} \cap \calW_2= \emptyset$ for all $m \geq N+1$.
Given $\pi_1=\sum_{\bth \in \calS} c_\bth V_\bth \in \mfX^2(\mX)^\calS$, where
$c_\bth \in \CC$ for each $\bth \in \CC$, we can solve for all $\calS$-admissible algebraic 
Poisson deformations $\pi$ of $\pi_0$ along $\pi_1$ by setting 
$\pi = \pi_0 + \pi_1 + \pi_2 + \cdots + \pi_N$, expressing  $\pi_m \in \mfX^2(\mX)^{\calS_m}$ 
for each $m \in [2, N]$ as a linear combination of the bi-vector fields in \eqref{eq:basis},
 and solving for 
the coefficients in $\pi_m$ recursively from \eqref{eq:pi-pim-1}
for all $m \in [2, N]$. This {\it method of undetermined coefficients}, when applied either by hand or by 
a computer program, can be facilitated by
the formulas in \autoref{lm:wvJK} and \autoref{lm:dpi-0}.  
\hfill $\diamond$
}
\end{remark}

The following example demonstrates that the $\calG_{\calS_{\geq 2} \cap \calW_1}$-orbit of $\calS$-admissible Poisson deformations of $\pi_0$ can indeed contain more than one element.

\begin{example}\label{ex:11}
{\rm
Let $\TT = (\CC^\times)^2$ and write elements in $X^*(\TT)$ as integral column vectors.
Let $\TT$ act  on $\CC^4$ via $\bbeta = (\beta_1, \beta_2, \beta_3, \beta_4)$, where
$\beta_1 = (-1,0)^t$, $\beta_2=(0,1)^t$, $\beta_3=\beta_4 =(-1,-1)^t$, and let 
\[
\boldsymbol{\lambda} = 
\begin{pmatrix}
    0 & 1 & -1 & 0 \\
    -1&0&0&1\\
    1&0&0&-1\\
    0&-1&1&0.
\end{pmatrix}
\]
One checks directly (but see also \autoref{ex:111})  that $\pi_0$ is $\TT$-log-symplectic, and that, in the
notation in \autoref{de:Delta-n}, one has $\calE^+(\pi_0) =\{(2, 3), (1, 4)\}$ and
$\calS(\pi_0) = \{\bth^{(2,3)}, \bth^{(1,4)}\}$, where
\[
\bth^{(2,3)} = (1,-1,-1,0)^t \hs \mbox{and} \hs \bth^{(1,4)} = (-1,1,2,-1)^t.
\]
The set $\calS(\pi_0)$ is encoded by the smoothing diagram in \autoref{fig:SmDiag11} below.

\begin{figure}[h]
    \centering
\pgfdeclarelayer{background layer}
\pgfdeclarelayer{foreground layer}
\pgfsetlayers{background layer,main,foreground layer}
\begin{tikzpicture}[baseline=-1ex,
rotate=360/2/4,
scale=0.9,line join = round 
    ] 
    
    \begin{pgfonlayer}{main}
\foreach \n in {1,...,4}
{
    \coordinate (v\n) at ({-90+((\n-1)*360/4)}:1.5);
}
\node[below right] at (v1) {$1$};
\node[above right] at (v2) {$2$};
\node[above left] at (v3) {$3$};
\node[below left] at (v4) {$4$};

\foreach \n in {1,...,4}
{
	\foreach \m in {1,...,4}
	\draw (v\n) -- (v\m);
}
\end{pgfonlayer}

\draw[thick,red,-] (v1) ++({0.4*cos(45)},{0.4*sin(45)})  arc[start angle=45,end angle=90,radius=0.4];
\draw[thick,red,-] (v2) ++({0.4*cos(180)},{0.4*sin(180)})  arc[start angle=180,end angle=225,radius=0.4];
\draw[thick,red,-] (v3) ++({0.4*cos(225)},{0.4*sin(225)})  arc[start angle=225,end angle=270,radius=0.4];
\draw[thick,red,-] (v3) ++({0.55*cos(225)},{0.55*sin(225)})  arc[start angle=225,end angle=270,radius=0.55];

\draw[very thick, blue] (v1)--(v4);
\draw[very thick, blue] (v2)--(v3);

\begin{pgfonlayer}{foreground layer}
\foreach \n in {1,...,4}
{
	\draw[fill] (v\n) circle (0.03);
}
\end{pgfonlayer}
 \end{tikzpicture}
    \caption{Smoothing diagram appearing in \autoref{ex:11}}
    \label{fig:SmDiag11}
\end{figure}

One checks directly that 
\[
\calS(\pi_0)_2\cap \calW_2  = \{\bth^{(2,3)}+ \bth^{(1,4)}= (0, 0, 1, -1)^t\}, \hs
\calS(\pi_0)_3 \cap \calW_2= \{2\bth^{(2,3)}+ \bth^{(1,4)}= (1, -1, 0, -1)^t\},
\]
and $\calS(\pi_0)_{\geq 4} \cap \calW_2 = \emptyset$. Thus $\calS(\pi_0)$ has Property 
\eqref{eq:W0}. Moreover, 
\begin{align*}
V_{\bth^{(2,3)}} &= x_1 \frac{\partial}{\partial x_2} \wedge \frac{\partial}{\partial x_3} = 
x_1x_2^{-1}x_3^{-1}\partial_2 \wedge \partial_3,\\
V_{\bth^{(1,4)}} &= x_2x_3^2 \frac{\partial}{\partial x_1} \wedge \frac{\partial}{\partial x_4}
=x_1^{-1}x_2x_3^2 x_4^{-1} \partial_1 \wedge \partial_4,\\
\mfX^2(\mX)^{\bth^{(2,3)}+ \bth^{(1,4)}} &= {\rm Span}_{\CC} 
\left\{x_1x_3 \frac{\partial}{\partial x_1} \wedge \frac{\partial}{\partial x_4}, \; 
x_2x_3 \frac{\partial}{\partial x_2} \wedge \frac{\partial}{\partial x_4}, \; 
x_3^2 \frac{\partial}{\partial x_3} \wedge \frac{\partial}{\partial x_4}\right\}\\
& = {\rm Span}_{\CC} \{x_3x_4^{-1} \partial_1 \wedge \partial_4, \; 
x_3x_4^{-1} \partial_2 \wedge \partial_4, \; x_3x_4^{-1} \partial_3 \wedge \partial_4\},\\
\mfX^2(\mX)^{2\bth^{(2,3)}+ \bth^{(1,4)}} &= {\rm Span}_{\CC} \left\{
x_1 \frac{\partial}{\partial x_2} \wedge \frac{\partial}{\partial x_4}\right\}
={\rm Span}_{\CC} \{x_1x_2^{-1}x_4^{-1} \partial_2 \wedge \partial_4\}.
\end{align*}
Let $\pi_1 = c_1V_{\bth^{(2,3)}} + c_2V_{\bth^{(1,4)}}$, where $c_1, c_2 \in \CC$,
and let
$\pi = \pi_0+\pi_1+\pi_2+\pi_3$, where
\begin{align*}
\pi_2 &=
Ax_1x_3 \frac{\partial}{\partial x_1} \wedge \frac{\partial}{\partial x_4}+ 
Bx_2x_3 \frac{\partial}{\partial x_2} \wedge \frac{\partial}{\partial x_4}+ 
Cx_3^2 \frac{\partial}{\partial x_3} \wedge \frac{\partial}{\partial x_4} \\
& =x_3x_4^{-1} \left(A\partial_1 \wedge\partial_4 + B\partial_2 \wedge \partial_4 + 
C\partial_3 \wedge \partial_4\right)\\
\pi_3 &=Dx_1 \frac{\partial}{\partial x_2} \wedge \frac{\partial}{\partial x_4} = D x_1x_2^{-1}x_4^{-1} \partial_2 \wedge \partial_4,
\end{align*}
and $A, B, C, D \in \CC$ are undetermined coefficients. Solving for $A, B, C, D$ from
\[
2[\pi_0, \pi_2]_{\rm Sch} + [\pi_1, \pi_1]_{\rm Sch}=0 \hs \mbox{and} \hs
2[\pi_0, \pi_3]_{\rm Sch} + 2[\pi_1, \pi_2]_{\rm Sch}=0,
\]
we get $B = A-2c_1c_2, C = c_1c_2-A$, and $D = -c_1A$, where $A \in \CC$ is arbitrary. We thus conclude that
for given $c_1, c_2 \in \CC$, the $\calS(\pi_0)$-admissible
algebraic Poisson deformations of $\pi_0$ along $\pi_1 =c_1 V_{\bth^{(2,3)}} + c_2V_{\bth^{(1,4)}}$ form 
a one-parameter
family $\{\pi_{(A)}(c_1, c_2): A \in \CC\}$ given by
\[
\pi_{(A)}(c_1, c_2)  = \pi_0+c_1V_{\bth^{(2,3)}} + c_2V_{\bth^{(1,4)}} +2c_1c_2 
x_3x_4^{-1}(\partial_3\wedge \partial_4 -2\partial_2 \wedge \partial_4) +AV,
,\]
where $V = x_3x_4^{-1} \left(\partial_1 \wedge\partial_4 + \partial_2 \wedge \partial_4 -
\partial_3 \wedge \partial_4\right)-c_1 x_1x_2^{-1}x_4^{-1} \partial_2 \wedge \partial_4$.
The corresponding Poisson bracket on $\CC^4$, denoted by $\{\,, \, \}_{(A, c_1, c_2)}$, is given by
\begin{align*}
    \{x_1,x_2\}_{(A, c_1, c_2)}=&x_1x_2,\\
    \{x_1,x_3\}_{(A, c_1, c_2)}=&-x_1x_3,\\
    \{x_1,x_4\}_{(A, c_1, c_2)}=&0+c_2x_2x_3^2+Ax_1x_3,\\
    \{x_2,x_3\}_{(A, c_1, c_2)}=&0+c_1x_1,\\
    \{x_2,x_4\}_{(A, c_1, c_2)}=&x_2x_4- Ac_1x_1+(A-2c_1c_2)x_2x_3,\\
    \{x_3,x_4\}_{(A, c_1, c_2)}=&-x_3x_4+(c_1c_2-A)x_3^2.
\end{align*}

Note that we have  $\calS(\pi_0)_{\geq 2} \cap \calW_1 =\{\bfw =\bth^{(2,3)}+ \bth^{(1,4)}\}$, and 
\[
v_\bfw = x_3x_4^{-1} \partial_4 = x_3 \frac{\partial}{\partial x_4}.
\]
A direct calculation shows that for any  $A, c_1, c_2 \in \CC$ we have
\begin{align*}
\exp(-Av_\bfw) \left(\pi_{(A)}(c_1, c_2)\right)& = \pi_{(A)}(c_1, c_2) -A [v_\bfw, \pi_{(A)}(c_1, c_2)] + \frac{1}{2}
A^2[v_\bfw, [v_\bfw, \pi_{(A)}(c_1, c_2)]]+ \cdots \\
& =\pi_{(A)}(c_1, c_2)-A[v_\bfw, \pi_{(A)}(c_1, c_2)] =\pi_{(A)}(c_1, c_2)-A[v_\bfw, \pi_0] + c_1A[v_\bfw, V_{\bth^{(2,3)}}]\\
& = \pi_{(A)}(c_1, c_2)-AV = \pi_{(0)}(c_1, c_2).
\end{align*}
Equivalently, one checks directly that we have the $\CC^\times$-equivariant Poisson isomorphism
\[
\varphi: \;\; (\CC^4, \{\,, \, \}_{(A, c_1, c_2)}) \longrightarrow (\CC^4, \, \{\, , \, \}_{(0,c_1c_2)})
\]
where $\varphi^* x_i = x_i$ for $i = 1, 2, 3$ and $\varphi^*x_4 = x_4+Ax_3$.
\hfill $\diamond$
}
\end{example}

We now turn to an arbitrary $\TT$-invariant algebraic Poisson structure $\pi$ on $\mX = \CC^n$.
Recall 
from \eqref{eq:mW-pi} that $\mW(\pi) \subset \calW_2$ is the set of all the non-zero $\CCsn$-weights in the 
monomial terms of $\pi$ and that $\mW(\pi)_{\geq 1} = \bigcup_{m \geq 1} \mW(\pi)_m$, where
$\mW(\pi)_m = \{\bfw^{(1)}+\cdots + \bfw^{(m)}: \bfw^{(1)}, \ldots, \bfw^{(m)} \in \mW(\pi)\}$ for $m \geq 1$.

\begin{definition}\label{def:pi-W0} 
An algebraic Poisson structure $\pi$ on $\mX=\CC^n$ is said to have {\it Property \eqref{eq:W0}}
if $\mW(\pi)$ does, i.e., if 
$\mW(\pi)_{\geq 1} \cap (\ZZ_{\geq 0})^n = \emptyset$.
\end{definition}

Let now $\pi_0$ be a $\TT$-log-symplectic log-canonical Poisson structure on $\mX = \CC^n$.  Then for 
any $\calS\subset \calS(\pi_0)$ with Property \eqref{eq:W0}, all  $\calS$-admissible algebraic Poisson 
deformations of $\pi_0$
have Property \eqref{eq:W0} by the definition of being $\calS$-admissible. 
For the converse statement, we first prepare a lemma.

\begin{lemma}\label{lm:w-Spi0}
Let $\pi$ be any $\TT$-invariant algebraic Poisson structure on $\mX = \CC^n$ with log-canonical term $\pi_0$, and let  $\bfw \in \mW(\pi)\backslash \mW(\pi)_2$. If
$\bfw \notin  \calS(\pi_0)$, then $\bfw  \in  \mW(\pi)\cap  \calW_1$.
\end{lemma}

\begin{proof}
Let $\bfw \in \mW(\pi)\backslash \mW(\pi)_2$ and write $\pi = \pi_0 + \pi_{\bfw} + \pi_{\bfw}^\prime$, where $\pi_{\bfw} \in 
\mfX^2(\mX)^{\bfw}$ is non-zero, and $\pi_\bfw^\prime \in  \mfX^2(\mX)^{\mW(\pi)\backslash \{\bfw\}}$. It then follows from $[\pi, \pi]_{\rm Sch} = 0$ that 
\[
0 = [\pi_0 + \pi_\bfw + \pi_\bfw^\prime, \;\pi_0 + \pi_\bfw + \pi_\bfw^\prime] = 2[\pi_0, \pi_\bfw] +2[\pi_0, \pi_\bfw^\prime] + 2[\pi_\bfw, \pi_\bfw^\prime]
+ [\pi_\bfw, \pi_\bfw]+[\pi_\bfw^\prime, \pi_\bfw^\prime].
\]
As the only 
weight $\bfw$-component on the right-hand side of the above equation is $[\pi_0, \pi_\bfw]$, 
we have $[\pi_0, \pi_\bfw] = 0$. Suppose now that $\bfw \notin \calS(\pi_0)$. 
Since $\bfw \in (\ker \bbeta) \backslash \{0\}$,  we have
$\mH^2_{\pi_0}(\mX)^\bfw = 0$ by the definition of $\calS(\pi_0)$. Thus $\pi_\bfw = [\pi_0, v]$ for some non-zero $v \in \mfX^1(\mX)^\bfw$.
In particular, 
$\bfw \in \mW(\pi) \cap \calW_1$.
\end{proof}

\begin{theorem}\label{thm:4.9-W0}
For any $\TT$-log-symplectic log-canonical Poisson structure $\pi_0$ on $\mX = \CC^n$, 
every $\TT$-invariant algebraic Poisson structure $\pi$ that has $\pi_0$ as its log-canonical term and 
has Property \eqref{eq:W0} is isomorphic, via a $\TT$-equivariant algebraic change of 
variables on $\mX = \CC^n$, to a maximal  (\autoref{def:calS-admi}) $\calS$-admissible algebraic Poisson deformation of $\pi_0$ for some
$\calS \subset \calS(\pi_0)$ with Property \eqref{eq:W0}.
\end{theorem}

\begin{proof}  Let $\pi$ be any $\TT$-invariant algebraic Poisson structure with log-canonical term $\pi_0$ 
and assume that $\pi$ has Property \eqref{eq:W0}. Let
$\calS_\pi = \mW(\pi)\backslash \mW(\pi)_{\geq 2}$.
By \autoref{lm:generator-W0}, we have
$(\calS_\pi)_{\geq 1} = \mW(\pi)_{\geq 1}$. It follows that $\calS_\pi$ has Property \eqref{eq:W0}.

If $\calS_\pi \subset \calS(\pi_0)$, then $\pi$ is $\calS_\pi$-admissible and even maximal $\calS_\pi$-admissible by \autoref{lm:admi-max}, which finishes the proof in this case.
Suppose that there exists $\bfw \in \calS_\pi$ and $\bfw \notin \calS(\pi_0)$. 
As in the proof of \autoref{lm:w-Spi0},
writing $\pi =\pi_0 + \pi_{\bfw} + \pi_{\bfw}^\prime$, where $\pi_{\bfw} \in 
\mfX^2(\mX)^{\bfw}$ is non-zero and $\pi_\bfw^\prime \in  \mfX^2(\mX)^{\mW(\pi)\backslash \{\bfw\}}$,
we have $\pi_\bfw = [\pi_0, v]$ for some non-zero $v \in \mfX^1(\mX)^\bfw$.
In particular, 
$\bfw \in \mW(\pi) \cap \calW_1$. Property \eqref{eq:W0} of $\mW(\pi)$ implies that 
$\bfw \in \ZZone$ and has exactly one $-1$ entry. By \autoref{lm:t-flow}, 
$v = tv_\bfw$, where $t \in \CC^\times$ and $v_\bfw \in \mfX^1(\mX)^\bfw$ is given in
\eqref{eq:v-w}. Let $\pi' = \exp(v) \pi$. 
As $2 {\bf w} \notin \calW_2$, we have $[v, [v, \pi_0]] = 0$ and
$[v, \pi_{\bfw}] = 0$. It now follows from $[v, \pi_0] +\pi_\bfw = 0$ that
\begin{align*}
\pi'& = \exp(v)\pi_0 + \exp(v)\pi_\bfw +\exp(v)\pi_{\bfw}^\prime = \pi_0 + [v, \pi_0] +\pi_{\bfw} +\exp(v)\pi_\bfw^\prime = \pi_0 +\exp(v)\pi_\bfw^\prime\\
& = 
\pi_0 +  \pi_\bfw^\prime + [v, \pi_\bfw^\prime] + \frac{1}{2}[v, [v, \pi_\bfw^\prime]]+ \cdots.
\end{align*}
As $0 \notin \mW(\pi)_{\geq 1}$, we see that $\pi'$ has log-canonical term $\pi_0$ and that
\[
\mW(\pi') \subset \{k\bfw + \bfw': k \in \ZZ_{\geq 0}, \,\bfw' \in \mW(\pi), \, \bfw' \neq \bfw\}
\subset \mW(\pi)_{\geq 1}\backslash \{\bfw\}.
\] 
In particular $\pi'$ has 
Property \eqref{eq:W0}. Moreover, 
since $\bfw \notin \mW(\pi)_{\geq 2}$, we have 
\[
\mW(\pi^\prime)_{\geq 1} \cap \calW_2 \subset (\mW(\pi)_{\geq 1} \cap \calW_2)\backslash \{\bfw\}.
\]
Consider $\calS_{\pi'} = \mW(\pi')\backslash \mW(\pi')_{\geq 2}$. If $\calS_{\pi'} \subset \calS(\pi_0)$
we are done. If not, we carry out the same process to $\pi^\prime$ to obtain 
an algebraic Poisson deformation $\pi^{\prime\prime}$ of $\pi_0$ with 
\[
\mW(\pi^{\prime\prime})_{\geq 1} \cap \calW_2 \subsetneq \mW(\pi^\prime)_{\geq 1} \cap \calW_2
\subsetneq \mW(\pi)_{\geq 1} \cap \calW_2.
\]
The process must stop as $\mW(\pi)_{\geq 1} \cap \calW_2$ is a finite set. 
At the end of the process, we obtain a $\TT$-invariant algebraic Poisson structure $\widetilde{\pi}$ with log-canonical term
$\pi_0$ such that $\widetilde{\pi}$ is 
isomorphic to $\pi$ via a $\TT$-equivariant algebraic change of variables and that  
$\calS_{\widetilde{\pi}} :=\mW(\widetilde{\pi})\backslash \mW(\widetilde{\pi})_{\geq 2}$
is contained in $\calS(\pi_0)$ and  has Property \eqref{eq:W0}. 
In particular,  $\widetilde{\pi}$ is $\calS_{\widetilde{\pi}}$-admissible.
The fact that $\widetilde{\pi}$ is maximal $\calS_{\widetilde{\pi}}$-admissible follows from the following
\autoref{lm:admi-max}.
\end{proof}

\begin{lemma}\label{lm:admi-max}
Assume that  $\calS \subset \calS(\pi_0)$ has Property \eqref{eq:W0}, and let $\pi$ be 
an $\calS$-admissible
algebraic Poisson deformation  of $\pi_0$. Then 
$\pi$ is maximal $\calS$-admissible if and only if $\calS = \mW(\pi)\backslash \mW(\pi)_{\geq 2}$.
\end{lemma}

\begin{proof}
Assume first that
$\pi$ is maximal $\calS$-admissible. Then 
$\calS \subset \mW(\pi)$ by definition, and as $\mW(\pi) \subset \calS_{\geq 1}$ and as $\calS$ is 
linearly independent, we have 
$\calS \subset \mW(\pi)\backslash \mW(\pi)_{\geq 2}$. Moreover, for any $\bfw \in\mW(\pi)\backslash \mW(\pi)_{\geq 2}$, if $\bfw \notin \calS$, then $\bfw \in \calS_{\geq 2} \subset \mW(\pi)_{\geq 2}$,
contradicting the assumption that $\bfw \notin \mW(\pi)_{\geq 2}$. Thus $\bfw \in \calS$.
It follows that $\calS = \mW(\pi)\backslash \mW(\pi)_{\geq 2}$. Conversely, assume that 
$\calS = \mW(\pi)\backslash \mW(\pi)_{\geq 2}$. Then $\calS \subset \mW(\pi)$. 
Write $\pi = \pi_0 + \pi_1 + \pi_{\geq 2}$, where $\pi_1 \in \mfX^2(\mX)^{\calS}$ 
and $\pi_{\geq 2} \in \mfX^2(\mX)^{\calS_{\geq 2}}$. 
As no monomial terms in $\pi_{\geq 2}$ have $(\CC^\times)^n$-weights in $\calS$, 
every element in $\calS$ appears as a $(\CC^\times)^n$-weight of some monomial term in 
$\pi_1$. Thus $\pi$ is maximal $\calS$-admissible.
\end{proof}

\subsection{Algebraic Poisson deformations under Property \eqref{eq:W1}}\label{ss:W1}
Recall again from \eqref{eq:calW-p} that 
$\calW_1 = \{\bfw \in \ZZone: |J_\bfw|\leq 1\} = \{\bfw \in \ZZ^n: \mfX^1(\mX)^\bfw \neq 0\}$. 

\begin{definition}\label{def:W1}
1) A finite subset $\mW$ of $\ZZone$ is said to have Property \eqref{eq:W1} if 
\begin{equation}\label{eq:W1}
\mW_{\geq 1} \cap \calW_1 = \emptyset, \hs \mbox{equivalently}\hs \mfX^1(\mX)^{\mW_{\geq 1}} = 0\tag{W1},
\end{equation}
which is also equivalent to every element in $\mW_{\geq 1} \cap \ZZone$ having at least two entries of $-1$;

2) An algebraic Poisson structure $\pi$ on $\mX = \CC^n$ is said to have Property \eqref{eq:W1} if
$\mW(\pi)$ does.
\end{definition}

\begin{remark}\label{rk:W0-to-W1}
{\rm
1) As $(\ZZ_{\geq 0})^n = \calW_0 \subset \calW_1$, Property \eqref{eq:W1} 
 implies Property \eqref{eq:W0};

2) If an algebraic Poisson structure $\pi$ on $\mX = \CC^n$ has Property \eqref{eq:W1}, then every $\bfw \in \mW(\pi)$ has exactly two $-1$ entries and 
$\dim \mfX^2(\mX)^\bfw = 1$ (see \autoref{lm:X-w}). Consequently, all the non-log-canonical  monomial terms in $\pi_{\rm tail}$ have different (non-zero) 
$\CCsn$-weights. 
\hfill $\diamond$
}
\end{remark}

We now have the stronger version of \autoref{thm:4.8-W0} under Property \eqref{eq:W1}.

\begin{theorem}\label{thm:4.8-W1}
Let $\pi_0$ be any $\TT$-log-symplectic log-canonical Poisson structure on $\mX = \CC^n$.
Suppose that $\calS \subset \calS(\pi_0)$ has Property \eqref{eq:W1}. Then for any 
$\pi_1 \in \mfX^2(\mX)^{\calS}$ there is one and only one
$\calS$-admissible algebraic Poisson deformation $\pi$ of $\pi_0$ along $\pi_1$, and 
$\pi$ has Property \eqref{eq:W1}.
\end{theorem}

\begin{proof} Property \eqref{eq:W1} of $\calS$ implies Property \eqref{eq:W0} 
and that $\calG_{\calS_{\geq 2} \cap \calW_1}$
is trivial. \autoref{thm:4.8-W1} now follows from \autoref{thm:4.8-W0}.
\end{proof}

For any $\TT$-log-symplectic log-canonical Poisson structure $\pi_0$
on $\mX = \CC^n$, we also have the following stronger version of \autoref{thm:4.9-W0} under Property \eqref{eq:W1}.

\begin{theorem}\label{thm:4.9-W1} 
Let $\pi$ be any $\TT$-invariant algebraic Poisson structure 
with Property \eqref{eq:W1} and 
log-canonical term $\pi_0$, and let $\calS_\pi =\mW(\pi)\backslash \mW(\pi)_{\ge2}$. 
Then $\calS_\pi=\mW(\pi)\backslash \mW(\pi)_{2}  \subset \calS(\pi_0)$ and has 
Property \eqref{eq:W1}, and $\pi$ is the unique 
$\calS_\pi$-admissible algebraic Poisson deformation of $\pi_0$ along $\pi_1$, where 
$\pi_1$ is the $\mfX^2(\mX)^{\calS_\pi}$-component of $\pi$ with  respect to the
$\CCsn$-weight space decomposition of $\mfX^2(\mX)$.
\end{theorem}

\begin{proof} Let $\pi$ be a $\TT$-invariant algebraic Poisson structure with log-canonical term $\pi_0$ and 
assume that $\pi$ has  Property \eqref{eq:W1}. Then in particular $\mW(\pi) \cap \calW_1 = \emptyset$.
Let 
$\calS_\pi =\mW(\pi)\backslash \mW(\pi)_{\geq 2}$ and 
\[
\calS_\pi^\prime =\mW(\pi)\backslash \mW(\pi)_{2}.
\]
and note 
that $\calS_\pi \subset \calS_\pi^\prime$. 
As $\mW(\pi) \cap \calW_1 = \emptyset$, by \autoref{lm:w-Spi0} we have $\calS_\pi^\prime \subset \calS(\pi_0)$.
As Property \eqref{eq:W1} of $\mW(\pi)$ implies  $0 \notin \mW(\pi)_{\geq 1}$, 
by \autoref{lm:generator-W0} we have $(\calS_\pi)_{\geq 1} = \mW(\pi)_{\geq 1}$, and thus we also have
$(\calS_\pi^\prime)_{\geq 1} = (\calS_\pi)_{\geq 1}=\mW(\pi)_{\geq 1}$. In particular $0 \notin (\calS_\pi^\prime)_{\geq 1}$.
By \autoref{ld:indep}, $\calS_\pi^\prime \subset \calS(\pi_0)$ is linearly independent. It now follows from
$\calS_\pi\subset \calS_{\pi}^\prime$ and $\calS_\pi^\prime \subset  (\calS_\pi)_{\geq 1}$ that 
$\calS_\pi = \calS_\pi^\prime$.

As $\mW(\pi) \subset (\calS_\pi)_{\geq 1}$ and $\calS_\pi \subset \mW(\pi)$, it follows from
\autoref{thm:4.8-W1} that $\pi$ is the unique  $\calS_\pi$-admissible algebraic Poisson deformation of $\pi_0$ along
with $\pi_1\in \mfX^2(\mX)^{\calS_\pi}$ as described.
\end{proof}

\begin{remark}\label{rm:S-pi-unieue}
{\rm
By \autoref{lm:admi-max}, the set $\calS_\pi$ in \autoref{thm:4.9-W1} is the only subset $\calS$ of 
$\calS(\pi_0)$
with Property \eqref{eq:W1} such that $\pi$ is a maximal $\calS$-admissible algebraic Poisson deformation of $\pi_0$.
\hfill $\diamond$
}
\end{remark}

Let again $\pi_0$ be any $\TT$-log-symplectic log-canonical Poisson structure 
on $\mX = \CC^n$.
For  $\calS \subset \calS(\pi_0)$, let 
\[
\CC^\calS = \{c=(c_\bth \in \CC)_{\bth \in \calS}\} \hs \mbox{and} \hs 
(\CC^\times)^\calS = \{c=(c_\bth \in \CC^\times)_{\bth \in \calS}\}.
\]

\begin{notation}\label{nota:pic-W1} 
{\rm For 
$\calS \subset \calS(\pi_0)$ with Property \eqref{eq:W1} and 
$c = (c_\bth)_{\bth \in \calS}  \in \CC^\calS$, let
\begin{equation}\label{eq:pi-1-c}
\pi_1^\calS(c) = \sum_{\bth \in \calS}{c_\bth} V_{\bth}\in \mfX^2(\mX)^{\calS},
\end{equation}
and we denote the unique $\calS$-admissible algebraic Poisson deformation of $\pi_0$ along $\pi_1^\calS(c)$ by
\begin{equation}\label{eq:pi-S-c}
\pi^\calS(c) = \pi_0 + \pi_1^\calS(c) + \pi_2^\calS (c) + \cdots +\pi_N^\calS(c), 
\end{equation}
where $N \geq 1$ and 
$\pi_m^\calS(c) \in \mfX^2(\mX)^{{\calS}_m}$ for $2 \leq m \leq N$. 
\hfill $\diamond$
}
\end{notation}

\begin{remark}\label{rk:pic-W1}
{\rm Assume $\calS \subset \calS(\pi_0)$ has Property \eqref{eq:W1} and let 
$\pi^\calS(c)$ be as in \eqref{eq:pi-S-c} for $c \in \CC^\calS$. 
By \autoref{rmk:in-practice} and 
\autoref{thm:4.8-W1}, $\{\pi_m^\calS(c): m \geq 2\}$ 
is the unique solution to 
the recursive relations in \eqref{eq:pi-pim-1}. It also follows from \eqref{eq:pi-pim-1} 
that for each $m \geq 2$, $\pi_m^\calS(c)$ is homogeneous polynomial in $c$ with homogeneous degree $m$.
For a given $c = (c_\bth)_{\bth \in \calS}  \in \CC^\calS$, by definition
$\pi^\calS(c)$ is maximal ${\rm Supp}(c)$-admissible, where 
${\rm Supp}(c) = \{\bth \in \calS: c_\bth \neq 0\}$.
\hfill $\diamond$
}
\end{remark}

\begin{proposition}\label{pr:scaling-to-c}
For any $\calS \subset \calS(\pi_0)$ with Property \eqref{eq:W1}, any two maximal $\calS$-admissible algebraic Poisson deformations of $\pi_0$ are isomorphic by a rescaling of the coordinates $(x_1, \ldots, x_n)$.
\end{proposition}

\begin{proof}
Assume that $\calS \neq \emptyset$ for there is nothing to prove when $\calS = \emptyset$.
Since $\calS$ is linearly independent, for every $\bth \in \calS$ we can choose  $\xi^\bth\in\mathbb{Q}^n$ such that $(\xi^\bth)^t  \boldsymbol{\theta} =1$ 
and $(\xi^\bth)^t {\bth'} = 0$ for $\bth'\in\calS\setminus\{\bth\}$. 
Let $M\in \mathbb{Z}_{\ge1}$ be such that $M\xi^\bth \in \mathbb{Z}^n$ for every  $\bth \in \calS$. Let 
$(\CC^\times)^\calS = \{t = (t_\bth \in \CC^\times)_{\bth \in \calS}\}$ act 
on $\mathsf{X} =\CC^n$ via
\begin{equation}\label{eq:t-xi}
t \cdot x_i \mapsto \left(\prod_{\bth \in \calS} t_\bth^{M \xi^{\bth}_i}\right) x_i,\hs i \in [1, n].
\end{equation}
Then $\mfX^2(\mX)^{\calS_m}$ is $(\CC^\times)^\calS$-invariant for every $m \geq 1$ and 
$t\cdot V_{\boldsymbol{\theta}} = t_\bth^M V_{\boldsymbol{\theta}}$ for every $t \in (\CC^\times)^\calS$ and 
$\bth \in \calS$. 

Let $c = (c_\bth\in \CC^\times)_{\bth \in \calS}\in (\CC^\times)^\calS$, and 
let ${\bf 1}_\calS = (1, \ldots, 1) \in (\CC^\times)^\calS$.
Choose $t = (t_\bth)_{\bth \in \calS} \in (\CC^\times)^\calS$ such that $t_\bth^M = c_\bth$ for each $\bth$. Then 
$t\cdot (\sum_{\bth \in \calS} V_{\bth}) = \sum_{\bth \in \calS} c_\bth V_{\bth}$. Thus 
$t\cdot \pi^\calS({\bf 1}_\calS)$ is an $\calS$-admissible algebraic Poisson 
deformation of $\pi_0$ along $\sum_{\bth \in \calS} c_\bth V_{\bth}$. By
\autoref{thm:4.8-W1}, $t\cdot \pi^\calS({\bf 1}_\calS) = \pi^\calS(c)$. Thus every 
$\calS$-maximal admissible algebraic Poisson deformation $\pi^\calS(c)$ of
$\pi_0$ is isomorphic to $\pi^\calS({\bf 1}_\calS)$ by a rescaling of the coordinates $(x_1, \ldots, x_n)$.
\end{proof}

\begin{remark}\label{rmk:scaling-to-c}
{\rm In the proof of \autoref{pr:scaling-to-c}, if the sub-lattice of $\ZZ^n$ 
generated by $\calS$ is saturated, then $\calS$ can be extended to a $\ZZ$-basis of $\ZZ^n$, so we can take
 $\xi^\bth\in\mathbb{Z}^n$ and thus $M = 1$. In this case we have
$c\cdot \pi^\calS({\bf 1}_\calS) = \pi^\calS(c)\in {\rm Poi}(\mX)$ for every $c \in (\CC^\times)^\calS$.
\hfill $\diamond$
}
\end{remark}

When $\calS(\pi_0)$ satisfies Property \eqref{eq:W1}, we then have the family $\pi^{\calS(\pi_0)}(c)$
of algebraic Poisson deformations of $\pi_0$, where $c \in \CC^{\calS(\pi_0)}$. To summarize the properties of the
family $\pi^{\calS(\pi_0)}(c)$, let
\[
\pi_1^{\calS(\pi_0)}(c) = \sum_{\bth \in \calS(\pi_0)} c_\bth V_\bth \hs \hs \mbox{for}\;\;\;
c = (c_\bth)_{\bth \in \calS(\pi_0)} \in \CC^{\calS(\pi_0)}.
\]
 Let 
$\calS(\pi_0)_{\geq 0} = \{0\} \sqcup \calS(\pi_0)_{\geq 1}$, and for 
$\calS \subset \calS(\pi_0)$ consider the direct sum decomposition 
\begin{equation}\label{eq:decomp}
\mfX^2(\mX)^{\calS(\pi_0)_{\geq 0}}  = \mfX^2(\mX)^{\calS_{\geq 0}} + \mfX^2(\mX)^{(\calS_{\geq 0})^o},
\end{equation}
where 
$\calS_{\geq 0} = \{0\} \sqcup \calS_{\geq 1}$ and  
$(\calS_{\geq 0})^o = \calS(\pi_0)_{\geq 0} \backslash \calS_{\geq 0}$. 
For  $c = (c_\bth)_{\bth \in \calS(\pi_0)} \in \CC^{\calS(\pi_0)}$ let
 $c|_\calS = (c_\bth)_{\bth \in \calS} \in \CC^{\calS}$. 

\begin{theorem}\label{thm:max-family-W1}
Suppose that $\pi_0$ is a $\TT$-log-symplectic log-canonical Poisson structure  on $\mX = \CC^n$
such that $\calS(\pi_0)$ has Property \eqref{eq:W1}. Then for 
every $c = (c_\bth)_{\bth \in \calS(\pi_0)} \in \CC^{\calS(\pi_0)}$ there is a unique 
$\TT$-invariant Poisson structure $\pi^{\calS(\pi_0)}(c)$ on $\mX$ of the form
\begin{equation}\label{eq:pic-max}
\pi^{\calS(\pi_0)}(c) = \pi_0 + \pi_1^{\calS(\pi_0)}(c) +\cdots + \pi_N^{\calS(\pi_0)}(c),
\end{equation}
where $N \geq 1$ and for each $m \in [2, N]$, $\pi_m^{\calS(\pi_0)}(c)
\in \mfX^2(\mX)^{\calS(\pi_0)_m}$ and is homogeneous polynomial in $c$ with homogeneous degree $m$.
Furthermore, for any $\calS \subset \calS(\pi_0)$ and $c = (c_\bth)_{\bth \in \calS(\pi_0)} \in \CC^{\calS(\pi_0)}$, one has
\begin{equation}\label{eq:pi-cc}
\pi^{\calS(\pi_0)}(c) = \pi^\calS(c|_\calS) \hs \mbox{{\rm mod}} \;\; \sum_{\bth \in \calS(\pi_0)\backslash \calS}
c_\bth \mfX^2(\mX)^{(\calS_{\geq 0})^o}.
\end{equation}
\end{theorem}

\begin{proof}  The existence and uniqueness of $\pi^{\calS(\pi_0)}(c)$ as in \eqref{eq:pic-max}
is a special case of \autoref{thm:4.8-W1} and \autoref{rk:pic-W1}.
Let $c \in \CC^{\calS(\pi_0)}$ and 
write $\pi^{\calS(\pi_0)}(c)  = P_1+P_2$ with $P_1 \in \mfX^2(\mX)^{\calS_{\geq 0}}$
and $P_2 \in \mfX^2(\mX)^{(\calS_{\geq 0})^o}$. As 
$[\pi^{\calS(\pi_0)}(c), \, \pi^{\calS(\pi_0)}(c)]_{\rm Sch} =0$,
one has
$[P_1, \, P_1]_{\rm Sch} + 2[P_1, \, P_2]_{\rm Sch} + [P_2, \, P_2]_{\rm Sch}=0$.
As $\calS_{\geq 0}$ is closed under addition in 
$\calS(\pi_0)_{\geq 0}$ and
$\calS(\pi_0)_{\geq 0}+ (\calS_{\geq 0})^o\subset  (\calS_{\geq 0})^o$,
we have
\[
[P_1, \, P_1]_{\rm Sch}=0 \hs \mbox{and} \hs 2[P_1, \, P_2]_{\rm Sch} + [P_2, \, P_2]_{\rm Sch}=0.
\]
Since  $\pi_1^\calS(c|_{\calS}) 
=\sum_{\bth \in \calS}c_\bth V_{\bth}$ is the $\mfX^2(\mX)^{\calS}$-component of $P_1$,
we know that $P_1$ is an $\calS$-admissible algebraic Poisson deformation of
$\pi_0$ along $\pi_1^\calS(c|_\calS) \in \mfX^2(\mX)^{\calS}$. By \autoref{thm:4.8-W1}, we have $P_1 = \pi^\calS(c|_{\calS})$. If $c$ is such that $c_\bth = 0$ for all $\bth \in \calS(\pi_0)\backslash \calS$, 
then both $\pi^{\calS(\pi_0)}(c)$ and $\pi^\calS(c|_\calS)\in \mfX^2(\mX)$ are $\calS(\pi_0)$-admissible
algebraic deformation of  $\pi_0$ along $\pi_1^\calS(c|_\calS)$, so $\pi^{\calS(\pi_0)}(c)=\pi^\calS(c|_\calS)$.
It follows that \eqref{eq:pi-cc} holds.
\end{proof}

We now formulate the following direct consequence of \autoref{thm:max-family-W1}.

\begin{corollary}\label{cor:member-summand}
Suppose that $\pi_0$ is a $\TT$-log-symplectic log-canonical Poisson structure  on $\mX = \CC^n$
such that $\calS(\pi_0)$ has Property \eqref{eq:W1}.

1) The  family $\pi^{\calS(\pi_0)}(c)$ in \eqref{eq:pic-max}, where
$c \in \CC^{\calS(\pi_0)}$, consists precisely of all $\calS$-admissible algebraic Poisson deformations of $\pi_0$
for all subsets $\calS$ of $\calS(\pi_0)$;

2) For any $\calS \subset \calS(\pi_0)$, every 
$\calS$-admissible algebraic Poisson deformation of $\pi_0$ is a direct summand of 
$\pi^{\calS(\pi_0)}(c)$ for some $c \in (\CC^\times)^{\calS(\pi_0)}$, with respect to the 
decomposition in \eqref{eq:decomp}.
\end{corollary}

\begin{definition}\label{defn:max-W1}
{\rm 
When $\calS(\pi_0)$ has Property \eqref{eq:W1}, we call 
the family $\pi^{\calS(\pi_0)}(c)$ in \eqref{eq:pic-max} 
the {\it maximal family of admissible algebraic Poisson 
deformations of $\pi_0$}, and we call 
any $\pi^{\calS(\pi_0)}(c)$ with $c \in (\CC^\times)^{\calS(\pi_0)}$
 a {\it maximal admissible algebraic Poisson deformation of $\pi_0$}.
}
\end{definition}

\subsection{Negatively bordered weights and iterated Poisson-Ore extensions}
Our primary examples of algebraic Poisson structures $\pi$ with Properties \eqref{eq:W0} or 
\eqref{eq:W1}, to be given in the next $\S$\ref{s:action-data}, have the additional property that 
the elements in $\mW(\pi)$ are {\it negatively bordered} in the sense we now define.
Recall that an element $\bfw \in \ZZ^n$ is a column vector and $\bfw^t$ stands for 
 the transpose of $\bfw$.

\begin{definition}\label{defn:n-b}
1) An element $\bfw \in (\ZZ_{\geq -1})^n$ is said to be {\it negatively bordered on the right} if 
there exists  $k \in [1, n]$ such that 
$\bfw^t = (\bfw_1, \ldots, \bfw_{k-1}, -1, 0, \ldots, 0)$.
For such a $\bfw$, we write 
${\rm rb}(\bfw) = k$.

2) A finite subset of $(\ZZ_{\geq -1})^n$ is said to be {\it negatively bordered on the right}
if all of its elements are;

3) An element $\bfw \in (\ZZ_{\geq -1})^n$ is said to be {\it negatively bordered on both sides} if 
there exist $1 \leq j < k \leq n$ such that 
$\bfw^t = (0, \ldots, 0, -1, \bfw_{j+1}, \ldots, \bfw_{k-1}, -1, 0, \ldots, 0)$, and we write
${\rm lb}(\bfw) = j$ and ${\rm rb}(\bfw) = k$;

4) A finite subset of $(\ZZ_{\geq -1})^n$ is said to be {\it negatively bordered on both sides}
if all of its elements are.
\end{definition}

\begin{lemma}\label{lm:nbr-W0} Let $\mW$ be any finite subset of $(\ZZ_{\geq -1})^n$.

1) If $\mW$ is negatively bordered on the right, so is  
$\mW_{\geq 1} \cap (\ZZ_{\geq -1})^n$, 
and $\mW$ has Property \eqref{eq:W0};

2) If $\mW$ is negatively bordered on both sides, so is  
$\mW_{\geq 1} \cap (\ZZ_{\geq -1})^n$, 
and $\mW$ has Property \eqref{eq:W1}.
\end{lemma}

\begin{proof}
Assume first that $\mW$ is negatively bordered on the right. Let $\bfw  \in \mW_{\geq 1}\cap \ZZone$ and write
\begin{equation}\label{eq:bfw-sum}
\bfw = (\bfw_1, \ldots, \bfw_n)^t = \sum_{i=1}^l a_i \bfw^{(i)},
\end{equation}
where $l \geq 1$ and $a_i \in \ZZ_{\geq 1}$ and 
$\bfw^{(i)} \in \mW$ for each $i \in [1, l]$.
Let $k = {\rm max}\{{\rm rb}(\bfw^{(i)}): i \in [1, l]\}$. Then $\bfw_k \leq -1$ and $\bfw_{k'} = 0$ for
all $k' \in [k+1, n]$. It thus follows from
$\bfw \in \ZZone$ that $\bfw_k = -1$, so $\bfw$ is negatively bordered on the right with ${\rm rb}(\bfw) = k$. In particular, every 
element in $\mW_{\geq 1} \cap \ZZone$ has at least one entry of $-1$. Thus
$\mW_{\geq 1} \cap (\ZZ_{\geq 0})^n = \emptyset$, i.e., 
$\mW$ has Property \eqref{eq:W0}. 

Assume now that $\mW$ is negatively bordered on both sides. Let again $\bfw
\in \mW_{\geq 1} \cap \ZZone$ be as in \eqref{eq:bfw-sum}, and let
$j = {\rm min}\{{\rm lb}(\bfw^{(i)}): i \in [1, l]\}$. The same arguments as above show that 
$\bfw_j = -1$ and $\bfw_{j'} = 0$ for all $j' \in [1, j-1]$. Moreover,
if $i \in [1, l]$ is such that $j = {\rm lb}(\bfw^{(i)})$, then
$j = {\rm lb}(\bfw^{(i)})< {\rm rb}(\bfw^{(i)}) \leq k$. Thus $\bfw^t = 
(0, \ldots, 0, -1, \bfw_{j+1}, \ldots, \bfw_{k-1}, -1, 0, \ldots, 0)$, so $\bfw$ is bordered on both sides. In particular, every 
element in $\mW_{\geq 1} \cap \ZZone$ has at least two entries of $-1$, so 
$\mW_{\geq 1} \cap \calW_1 = \emptyset$, i.e., 
$\mW$ has Property \eqref{eq:W1}. 
\end{proof}

To motivate the consideration of negatively bordered weights,  we 
recall the notion of iterated Poisson-Ore extensions from \cite{Oh06}.

\begin{definition}\label{def:Poi-Ore-1}
{\rm
An algebraic Poisson structure $\pi$ on $\mX = \CC^n$ is said to be an {\it iterated Poisson-Ore extension} in the (ordered system of linear) coordinates $(x_1, \ldots, x_n)$ if
\begin{equation}\label{eq:iterated-1}
\{\CC[x_1, \ldots, x_{k-1}], \;x_k\}_\pi \subset x_k \CC[x_1, \ldots, x_{k-1}] +\CC[x_1, \ldots, x_{k-1}],
\hs k \in [2, n].
\end{equation}
If, in addition to \eqref{eq:iterated-1}, one also has
\begin{equation}\label{eq:iterated-2}
\{x_k, \, \CC[x_{k+1}, \ldots, x_{n}]\}_\pi 
\subset x_k \CC[x_{k+1}, \ldots, x_{n}] +\CC[x_{k+1}, \ldots, x_{n}], \hs k \in [1, n-1],
\end{equation}
we say that $\pi$ is a {\it symmetric iterated Poisson-Ore extension} in $(x_1, \ldots, x_n)$. 
}
\end{definition}

Recall that for  $\pi \in {\rm Poi}(\mX)$ expressed in the coordinates $(x_1, \ldots, x_n)$, 
 $\mW(\pi)$ denotes the set of all 
non-zero $\CCsn$-weights of the monomial terms in $\pi$.

\begin{lemma}\label{lm:nbr-iterated}
Let $\pi \in {\rm Poi}(\mX)$.

1) If $\mW(\pi)$ is negatively bordered on 
the right, then $\pi$ is an iterated Poisson-Ore extension in
the  coordinates $(x_1, \ldots, x_n)$;

2) If $\mW(\pi)$ is negatively bordered on both sides,
then $\pi$ is a symmetric iterated Poisson-Ore extension in
the coordinates $(x_1, \ldots, x_n)$.
\end{lemma}

\begin{proof} 
We assume that $\pi$ is not log-canonical, for otherwise $\mW(\pi) = \emptyset$ and there is nothing to prove.

Assume first that $\mW(\pi)$ is negatively bordered on the right
and consider any monomial term 
\[
x^\bfw \partial_j \wedge \partial_k = x^{\bfw + e_j+e_k} \frac{\partial}{\partial x_j} \wedge 
\frac{\partial}{\partial x_k}
\]
of $\pi_{\rm tail}$,
where $\bfw \in \mW(\pi)$, $1 \leq j < k \leq n$, and $J_\bfw \subset \{j, k\}$.
Since $\bfw \in \mW(\pi)$ is negatively bordered on the right,
either ${\rm rb}(\bfw) = k$, in which case
$\bfw_k = -1$ and 
\[
x^\bfw \partial_j \wedge \partial_k = x_j \left(\prod_{i=1}^{k-1}x_i^{\bfw_i}\right) 
\frac{\partial}{\partial x_j} \wedge \frac{\partial}{\partial x_k} 
\in \CC[x_1, \ldots, x_{k-1}] \frac{\partial}{\partial x_j} \wedge \frac{\partial}{\partial x_k},
\]
or ${\rm rb}(\bfw) = j$, in which case
$\bfw_j = -1$, $\bfw_i = 0$ for all $i \in [j+1, n]$, and 
\[
x^\bfw \partial_j \wedge \partial_k = x_k \left(\prod_{i=1}^{j-1}x_i^{\bfw_i}\right) 
\frac{\partial}{\partial x_j} \wedge \frac{\partial}{\partial x_k} 
\in x_k\CC[x_1, \ldots, x_{k-1}] \frac{\partial}{\partial x_j} \wedge \frac{\partial}{\partial x_k}.
\]
It thus follows from \autoref{def:Poi-Ore-1} that $\pi$ is an iterated Poisson-Ore extension in
$(x_1, \ldots, x_n)$. 

If, in addition, $\mW(\pi)$ is negatively bordered on both sides, by considering
${\rm lb}(\bfw)$ for each $\bfw \in \mW(\pi)$, the arguments above show that 
$\pi$ is also an iterated Poisson-Ore extension in the (ordered system of linear) coordinates 
$(x_n, x_{n-1}, \ldots, x_1)$, so $\pi$ is 
a symmetric iterated Poisson-Ore extension in
$(x_1, \ldots, x_n)$.
\end{proof}

By \autoref{lm:nbr-W0}, when $\calS(\pi_0)$ is negatively bordered on the right, we can apply
\autoref{thm:4.8-W0} to obtain $\calS$-admissible algebraic Poisson deformations of $\pi_0$ for 
every $\calS \subset \calS(\pi_0)$, and 
all of them are iterated Poisson-Ore extensions in the ordered coordinates 
$(x_1, \ldots, x_n)$. If $\calS(\pi_0)$ is negatively bordered both sides, 
the $\calS$-admissible algebraic Poisson deformations of $\pi_0$ for 
all $\calS \subset \calS(\pi_0)$ are then 
symmetric iterated Poisson-Ore extensions in the ordered coordinates 
$(x_1, \ldots, x_n)$.

We now prove a fact to be used in $\S$\ref{ss:Cartan-integers}. For an arbitrary 
$\TT$-log-symplectic log-canonical Poisson
structure $\pi_0$ on $\CC^n$, recall that $\calE^+(\pi_0)$ denotes the set of all $(j, k)$ with 
$1 \leq j < k \leq n$ such that there is a (necessarily unique) $\bth^{(j, k)} \in \calS(\pi_0)$
with $\bth^{(j, k)}_j = \bth_k^{(j, k)} = -1$.

\begin{lemma}\label{lm:if-bordered-both} Suppose that $\pi_0$ is a $\TT$-log-symplectic log-canonical Poisson
structure on $\CC^n$ such that
$\calS(\pi_0)$ is bordered on both sides. If 
$(j, k), (k, l) \in \calE^+(\pi_0)$, then $a = a'$, where 
$a, a' \in \CC^\times$ are such that 
\[
\blambda \bth^{(j, k)} = a(e_j-e_k) \hs \mbox{and} \hs \blambda \bth^{(k, l)} = a'(e_k-e_l).
\]
\end{lemma}

\begin{proof}
By \autoref{lm:wj-wk}, we have $a(\bth^{(k, l)}_j-\bth^{(k, l)}_k) = -a^\prime(\bth^{(j,k)}_k-\bth^{(j,k)}_l)$.
It then follows from $\bth^{(k, l)}_j=\bth^{(j, k)}_l=0$ and $\bth^{(k, l)}_k=\bth^{(j,k)}_j=-1$ that
$a = a'$.
\end{proof}
\subsection{$\TT$-Pfaffians} Let again $\pi_0$ be any $\TT$-log-smplectic log-canonical Poisson structure on $\mX = \CC^n$.
We now make a digression on {\it $\TT$-Pfaffians} in order to explain the terminology {\it smoothable weights} for elements in $\calS(\pi_0)$.

Assume first that $\pi$ is an arbitrary $\TT$-invariant algebraic Poisson structure on 
$\mX = \CC^n$ of rank $2r$, i.e., the maximal dimension of the symplectic leaves of $\pi$ is $2r$. For $\xi \in \t$, let $\xi^\flat \in \mfX^1(\mX)$ be
the infinitesimal generator of the $\TT$-action on $\mX$ in the direction of $\xi$, i.e.,  
\[
\xi^\flat = \sum_{j=1}^n \beta_j(\xi) x_j \frac{\partial}{\partial x_j} = \sum_{j=1}^n \beta_j(\xi)\partial_j \in \mfX^1(\mX).
\]
By a {\it $\TT$-Pfaffian of $\pi$} we mean any non-zero element of $\mfX^n(\mX)$ of the form
\[
{\rm Pf}_\TT(\pi) = \pi^r \wedge \xi_1^\flat \wedge \cdots \wedge \xi_{n-2r}^\flat \in \mfX^n(\mX),
\]
where $\pi^r = \pi \wedge \cdots \wedge \pi \in \mfX^{2r}(\mX)$, and 
$\{\xi_1, \ldots, \xi_{n-2r}\}$ is a  linearly independent subset of $\t$. Existence of 
$\TT$-Pfaffian of $\pi$ is equivalent to $\pi$ having an open $\TT$-leaf, and in such a case 
${\rm Pf}_\TT(\pi)$ is unique up to a non-zero scalar multiple (\cite[Remark 2.7]{LM:T-leaves}).

Coming back to the $\TT$-log-symplectic log-canonical Poisson structure $\pi_0$, let 
$2r$ be the rank of the matrix $\blambda$. As $\pi_0$ is $\TT$-log-symplectic, by \autoref{rk:open-T-leaf}
there exists 
a linearly independent subset  $\{\xi_1, \ldots, \xi_{n-2r}\}$ of $\t$ such that a $\TT$-Pfaffian of $\pi_0$ is given by
\begin{equation}\label{eq:Pf-pi-0}
{\rm Pf}_\TT(\pi_0) = \pi_0^r \wedge \xi_1^\flat \wedge \cdots \wedge \xi_{n-2r}^\flat =x_1x_2 \cdots x_n \frac{\partial}{\partial x_1} \wedge \frac{\partial}{\partial x_2}
\wedge \cdots \wedge \frac{\partial}{\partial x_n}.
\end{equation}

\begin{lemma}\label{lm:Pf-bth}
Let $\calS$ consist of a single element $\bth = (\bth_1, \ldots, \bth_n)^t \in
 \calS(\pi_0)$ with $\bth_j = \bth_k = -1$ for $1\leq j < k\leq n$.  
Then for any $c \in \CC^\times$, the unique
$\calS$-admissible algebraic Poisson deformation of $\pi_0$ along $cV_\bth$ is 
 $\pi(c) = \pi_0 + cV_\bth$ (see \eqref{eq:V-theta}). Moreover, 
$\pi(c)$ also has rank $2r$ and has a $\TT$-Pfaffian
\begin{equation}\label{eq:Pf-pic}
{\rm Pf}_\TT(\pi(c)) = \pi(c)^r \wedge  \xi_1^\flat \wedge \cdots \wedge \xi_{n-2r}^\flat =  
 \left(\prod_{i\neq j, k} x_i\right)
 \left(x_jx_k -\frac{c}{a} \prod_{i\neq j, k} x_i^{\bth_i}\right)
 \frac{\partial}{\partial x_1} \wedge \cdots \wedge 
\frac{\partial}{\partial x_n},
\end{equation}
where $\xi_1, \ldots, \xi_{n-2r} \in \t$ are as in 
\eqref{eq:Pf-pi-0}, and $a \in \CC^\times$ is such that $\blambda \bth = a(e_j-e_k)$.
\end{lemma}

\begin{proof}
The set $\calS = \{\bth\}$ has Property \eqref{eq:W1}. 
Let $\bth' = \bth + e_j+e_k$ so that (see notation in $\S$\ref{ss:nota-intro})
\[
V_\bth = \left(\prod_{i \neq j, k} x_i^{\bth_i}\right)\frac{\partial}{\partial x_j} \wedge \frac{\partial}{\partial x_k} = 
x^{\bth'} \frac{\partial}{\partial x_j} \wedge \frac{\partial}{\partial x_k}.
\]
As $m\bth \notin \ZZone$ for any $m \geq 2$, the unique $\calS$-admissible 
algebraic Poisson deformation of $\pi_0$ along $\pi_1 = cV_{\bth}$  for any $c \in \CC^\times$ is $\pi(c) = \pi_0 + \pi_1 = \pi_0 + cV_{\bth}$. 

Consider the rational $1$-form $\alpha = -a^{-1}x_k^{-1} dx_k$ on $\mX$ and $u_\bth = 
a(\partial_j-\partial_k) \in \mfX^1(\mX)$. Applying the contraction operator
$\iota_\alpha$ to both sides of $u_\bth \wedge (cV_\bth) = 0$ and by \autoref{lm:dpi-0}, one gets
\[
0 = \iota_\alpha(u_\bth \wedge (cV_\bth)) = cV_\bth - u_\bth \wedge \iota_\alpha (cV_\bth)=
cV_\bth -[\pi_0, \; \iota_\alpha (cV_\bth)].
\]
One thus has $cV_\bth = [v, \, \pi_0]$, where 
\[
v = -\iota_\alpha (cV_\bth) 
= -\frac{c}{a} \left(\frac{1}{x_k} \prod_{i \neq j, k} x_i^{\bth_i}\right)\frac{\partial}{\partial x_j},
\]
regarded as a holomorphic  vector field on the Zariski open subset $U$ of 
$\mX$ defined by $x_k \neq 0$. Note that $v$ has $(\CC^\times)^n$-weight $\bth$ and $[v, V_\bth] = 0$
on $U$. Let $\varphi$ be the time $1$-flow of $v$ on $U$, so 
$\varphi^* x_i = x_i$ for $i \neq j$, and 
\[
\varphi^* x_j = x_j-\frac{c}{a} \left(\frac{1}{x_k} \prod_{i \neq j, k} x_i^{\bth_i}\right).
\]
By an analog of  \autoref{lm:t-flow} for rational vector fields, $\pi(c) = \exp(v) \pi_0 = (\varphi^{-1})_* \pi_0$ on $U$. In particular, $\pi(c)$ also has rank $2r$.
Moreover,
\[
\pi(c)^r \wedge  \xi_1^\flat \wedge \cdots \wedge \xi_{n-2r}^\flat =  \left(x_jx_k -\frac{c}{a} \prod_{i\neq j, k} x_i^{\bth_i}\right)
 \left(\prod_{i\neq j, k} x_i\right)\frac{\partial}{\partial x_1} \wedge \cdots \wedge 
\frac{\partial}{\partial x_n}.
\]
\end{proof}

\begin{remark}\label{rmk:smoothable}
{\rm 
In the setting of \autoref{lm:Pf-bth}, set
$\phi_i = x_i$ for $i \neq k$ and
\[
\phi_k = x_jx_k -\frac{c}{a}\prod_{i\neq j, k} x_i^{\bth_i}
 \in \CC[x_1, \ldots, x_n].
\]
It follows from $\bbeta \bth =0$ that $\phi_k$ is a $\TT$-weight vector with $\TT$-weight $\beta_j + \beta_k$. Note also that
$\phi_1, \ldots, \phi_n$ are algebraically independent. 
A direct computation using \eqref{eq:bra-f-g} and the fact that $\blambda \bth =a(e_j-e_k)$
shows that $\phi_1, \ldots, \phi_n$ have pair-wise log-canonical Poisson brackets with respect to $\pi(c)$. More
precisely, 
\[
\{\phi_i, \, \phi_k\} = \begin{cases} (\lambda_{i,j} + \lambda_{i, k})\phi_i\phi_k, &\hs i \in [1, n]\backslash\{j, k\},\\
\lambda_{j, k} \phi_j \phi_k, & \hs i \in \{j, k\}.\end{cases}
\]
Note that 
during the Poisson deformation  
from $\pi_0$ to $\pi(c)$ the zero divisor of ${\rm Pf}_\TT$
undergoes the transformation of having two irreducible components $\{x_j=0\}$ and $\{x_k=0\}$ merge to a single irreducible component given  by $\{\phi_k = 0\}$. As a result, the normal crossings singularity of the divisor at a generic point of the co-dimension two stratum $\{x_j=x_k=0\}$ gets smoothed out. This explains the term
{\it smoothable weights} for elements in $\calS(\pi_0)$.
\hfill $\diamond$
}
\end{remark}

\section{Action data and classification of symmetric Poisson CGL extensions}\label{s:action-data}

For a given complex algebraic torus $\TT$, we present in 
$\S$\ref{ss:action-data} our primary examples of $\TT$-log-symplectic log-canonical Poisson
structures $\pi_0$ on $\mX = \CC^n$, namely those defined by the so-called {\it $n$-dimensional
$\TT$-action data}. For such a $\pi_0$, we show that $\calS(\pi_0)$ is negatively bordered 
on the right, and when the $\TT$-action data is {\it symmetric}, $\calS(\pi_0)$ is negatively bordered on both sides. Our main result in $\S$\ref{s:action-data} is \autoref{thm:sym-CGL}, which gives a classification,
up to coordinate rescalings, of symmetric Poisson CGL extensions as defined by K. 
Goodearl and M. Yakimov in 
\cite{GY:Poi-CGL} 
in terms of their log-canonical part $\pi_0$ and subsets of $\calS(\pi_0)$.

\subsection{Log-canonical Poisson structures of action type}\label{ss:action-data}
Let again $\TT$ be a complex  algebraic torus with Lie algebra $\t$ and character lattice $X^*(\TT) \subset \t^*$. 

\begin{definition}\label{def:action-data-0} 
1) By an $n$-dimensional $\TT$-action datum we mean a pair $(\bbeta, \bfh)$, where 
\[
\bbeta = (\beta_1, \ldots, \beta_n) \in X^*(\TT)^n \hs \mbox{and} \hs 
{\bf h}=(h_1, \ldots, h_n)  \in \t^n,
\]
such that $\beta_j(h_j) \neq 0$ for every $j \in [1, n]$. Given such a pair $(\bbeta, \bfh)$, let $\TT$ act on $\CC^n$ via 
\begin{equation}\label{eq:T-CCsn-1}
t\cdot x_j = t^{\beta_j} x_j, \hs j \in [1, n],
\end{equation}
and define the log-canonical Poisson structure $\pi_0$ on $\CC^n$ by
\begin{equation}\label{eq:pi0-action}
\pi_0 =- \sum_{j < k} \beta_j(h_k) \partial_j \wedge \partial_k
=-\sum_{j < k} \beta_j(h_k) x_jx_k \frac{\partial}{\partial x_j} \wedge \frac{\partial}{\partial x_j}.
\end{equation}

2) A log-canonical Poisson structure $\pi_0$ on $\CC^n$  is said to be of {\it $\TT$-action type}
(in the ordered linear coordinate system $(x_1, \ldots, x_n)$)
if it is defined  as in 
\eqref{eq:pi0-action} by some $n$-dimensional $\TT$-action datum $(\bbeta, \bfh)$.
\end{definition}

\begin{remark}\label{rk:hk}
{\rm
Note that while the $\TT$-action on $\mX = \CC^n$ is determined by $\bbeta = (\beta_1, \ldots, \beta_n)$,
the Poisson structure $\pi_0$ does not change if ${\bf h}$ is changed to
$(h_1 + h_1^\prime, \ldots, h_n + h_n^\prime)$ where $\beta_j(h_k^\prime) = 0$ for $j, k \in [1, n]$. 
\hfill $\diamond$
}
\end{remark}

Let $\pi_0$ be a log-canonical Poisson structure on $\CC^n$ of $\TT$-action type, given as in
\eqref{eq:pi0-action}.
We now show that 
$\pi_0$  is $\TT$-log-symplectic. Note that  
the Poisson coefficient matrix of $\pi_0$ is 
\begin{equation}\label{eq:blam}
\blam= \left(\displaystyle \begin{array}{cccc} 0 & -\beta_1(h_2) &  \cdots & -\beta_1(h_n)\\
\beta_1(h_2) & 0  & \cdots & -\beta_2(h_n)\\
\cdots & \cdots & \cdots& \cdots \\
\beta_1(h_n) & \beta_2(h_n)  & \cdots & 0\end{array}\right).
\end{equation}
Set $\lambda_j = \beta_j(h_j) \neq 0$ for $j \in [1, n]$, and  $\nu_{j, k} = \beta_j(h_k) + \beta_k(h_j)$
for $1 \leq j < k \leq n$, and set
\begin{equation}\label{eq:bnu}
\bnu = \left(\begin{array}{cccc} \lambda_1 & \nu_{1,2}  & \cdots &\nu_{1, n}\\
0 & \lambda_2  & \cdots & \nu_{2, n}\\
\cdots  & \cdots & \cdots &\cdots\\
0 & 0  & \cdots & \lambda_n\end{array}\right) \hs \mbox{and} \hs
\bsigma = \left(\displaystyle \begin{array}{cccc} \beta_1(h_1) & \beta_2(h_1) &  \cdots & \beta_n(h_1)\\
\beta_1(h_2) & \beta_2(h_2)  & \cdots & \beta_n(h_2)\\
\cdots & \cdots & \cdots& \cdots \\
\beta_1(h_n) & \beta_2(h_n)  & \cdots & \beta_n(h_n)\end{array}\right),   
\end{equation}
so that  $\bsigma = \bnu +\blam$. We first prove an elementary linear algebra fact.

\begin{lemma}\label{lm:linear-alg-0}
With $\blam$ given in \eqref{eq:blam}, for any $g \in \CC^n$ the linear system 
\begin{equation}\label{eq:blam-chi-f}
\blam f = -g \hand \bbeta f = 0
\end{equation}
has a solution $f \in \CC^n$ if and only if $\bbeta \bnu^{-1}g=0$, and in this case
the solution $f \in \CC^n$ to \eqref{eq:blam-chi-f} is unique and  is given by
$f = \bnu^{-1}g$.
\end{lemma}

\begin{proof} 
Suppose that $f \in \CC^n$ satisfies
$\bbeta f = 0$. Evaluating both sides of $\bbeta f = 0$ at 
$h_j$ for every $j \in [1, n]$ gives $\bsigma f = 0$. As $\bnu = \bsigma - \blam$, 
the equations in \eqref{eq:blam-chi-f} are now equivalent to 
\[
\bnu f = g \hand \bbeta f = 0,
\]
from which \autoref{lm:linear-alg-0} follows.
\end{proof}

\begin{corollary}\label{cor:action-pi0-log-sym}
Every log-canonical Poisson structure $\pi_0$ on $\mX = \CC^n$ 
of $\TT$-action type, as given in 
\eqref{eq:pi0-action}, is  
$\TT$-log-symplectic for the $\TT$-action on $\mX$ defined  in \eqref{eq:T-CCsn-1}.
\end{corollary}

\begin{proof} The $\TT$-log-symplecticity of $\pi_0$ follows from \autoref{lm:linear-alg-0} by 
setting $g = 0$ in \eqref{eq:blam-chi-f}.
\end{proof}

Recall that $\calW_2 = \{\bfw =(\bfw_1, \ldots, \bfw_n)^t\in \ZZone: |J_\bfw|\leq 2\}$, where
$J_\bfw = \{j \in [1, n]: \bfw_j = -1\}$. For a $\TT$-log-symplectic log-canonical Poisson structure $\pi_0$ 
on $\mX = \CC^n$, recall that 
$\calS(\pi_0)\subset \calW_2$ is the set of all  non-zero $\CCsn$-weights in 
$\mH^2_{\pi_0}(\mX)^\TT$, and  that $|J_\bth| = 2$ for every $\bth \in \calS(\pi_0)$. 
Recall from  \autoref{de:Delta-n}  the edge set $\calE^+(\pi_0)$ of  the oriented smoothing graph of $\pi_0$, i.e., 
\begin{equation}\label{eq:calE-plus}
\calE^+(\pi_0) = \{(j, k): 1 \leq j < k \leq n  \;\mbox{and}\;J_\bth = \{j, k\} \; 
\mbox{for some (necessarily unique)} \; \bth \in 
\calS(\pi_0)\},
\end{equation}
and recall  the bijection 
$\calE^+(\pi_0) \rightarrow \calS(\pi_0), \, (j, k) \mapsto \bth^{(j, k)}$, 
where $J_{\bth^{(j, k)}} = \{j, k\}$ for each $(j, k) \in \calE^+(\pi_0)$.

For $\pi_0$ of $\TT$-action type, we now explain how $\calE^+(\pi_0)$ and $\calS(\pi_0)$ can be explicitly computed. 

\begin{proposition}\label{pr:Spi0-action}
For the log-canonical Poisson structure $\pi_0$ on $\mX = \CC^n$ 
defined by a $\TT$-action datum $(\bbeta, \bfh)$ as 
in \eqref{eq:pi0-action}, the set 
$\calE^+(\pi_0)$ consists of all pairs $(j, k)$ of 
$1\leq j < k \leq n$ such that 
\begin{equation}\label{eq:beta-nu-j-k}
\bbeta \bnu^{-1}e_j = \bbeta \bnu^{-1}e_k \hs \mbox{and} \hs 
\lambda_k (\bnu^{-1}e_j -\bnu^{-1}e_k) \in \calW_2. 
\end{equation}
Moreover, for all $(j, k) \in \calE^+(\pi_0)$ one has
$\bth^{(j, k)} =\lambda_k (\bnu^{-1}e_j -\bnu^{-1}e_k)$, 
$\blam \bth^{(j, k)} = -\lambda_k(e_j-e_k)$, and 
\begin{equation}\label{eq:bth-jk-0}
\bth^{(j, k)} = 
(\bth^{(j, k)}_1, \;\ldots,\; \bth^{(j, k)}_{j-1},\; -1, \;\bth^{(j, k)}_{j+1},\; \ldots,\;
\bth^{(j, k)}_{k-1}, \; -1,\; 0, \;\ldots, \;0)^t,
\end{equation}
with $\bth^{(j, k)}_i \in \ZZ_{\geq 0}$ for all $i \notin \{j, k\}$.
In particular, $\calS(\pi_0)$ is negatively bordered on the right. 
\end{proposition}

\begin{proof} 
By \eqref{eq:calS-pi0}, the set  $\calS(\pi_0)$ consists of all $\bth \in \calW_2$ with exactly 
two $-1$ entries at some $j, k \in [1, n]$ with $j < k$  and satisfy 
\begin{equation}\label{eq:btheta-0}
\blam \btheta = a (e_j-e_{k}) \hs \mbox{and} \hs \beta \btheta = 0
\end{equation}
for some $a \in \CC^\times$. By \autoref{lm:linear-alg-0}, \autoref{eq:btheta-0} has a solution 
$\bth \in\CC^n$ for given $a \in \CC^\times$
and $1\leq j < k \leq n$ if and only if 
$\bbeta \bnu^{-1}e_j = \bbeta \bnu^{-1}e_k$, and in such a case the unique solution of
\eqref{eq:btheta-0} in $\CC^n$ is given by 
\begin{equation}\label{eq:bth-jk-00}
\bth^{(j, k)}  = -a(\bnu^{-1}e_j - \bnu^{-1}e_k).
\end{equation}
Assume that the pair $(j, k)$ with $1 \leq j < k \leq n$ satisfy  $\bbeta \bnu^{-1}e_j = \bbeta \bnu^{-1}e_k$,
and write  $\bth^{(j, k)}$ in \eqref{eq:bth-jk-00} as 
$\bth^{(j, k)} = (\bth^{(j, k)}_1, \ldots, \bth^{(j, k)}_n)^t$. 
By \autoref{lm:wj-wk}, the identity 
$\blam \btheta^{(j, k)} = a (e_j-e_{k})$ implies that 
$\bth^{(j, k)}_j = \bth^{(j, k)}_k$. 
As $\bnu^{-1}$ is upper-triangular with $k^{\rm th}$-diagonal entry $\lambda_k^{-1}$, 
we have $\bth^{(j, k)}_k = a \lambda_k^{-1}$ by 
\eqref{eq:bth-jk-00}. Thus 
$\bth^{(j, k)}_j = \bth^{(j, k)}_k = -1$  if and only if $a = -\lambda_k$. We hence conclude that 
for $1 \leq j < k \leq n$, 
\eqref{eq:btheta-0} has a solution $\bth =(\bth_1, \ldots, \bth_n)^t \in \calW_2$ with $\bth_j=\bth_k=-1$ if and only if $a = 
-\lambda_k$ and \eqref{eq:beta-nu-j-k} holds.
Moreover, for $(j, k)$ satisfying \eqref{eq:beta-nu-j-k}, it again follows from 
$\bnu^{-1}$ being upper-triangular that $\bth^{(j, k)}$ is given as in \eqref{eq:bth-jk-0}.
In particular,
every $\bth^{(j, k)} \in \calS(\pi_0)$ is negatively bordered on the right. 
\end{proof}

\begin{example}\label{ex:111}
{\rm
Let $\TT=(\CC^\times)^2$, acting on $\CC^4$ via 
$\beta_1 = (-1,0)^t$, $\beta_2=(0,1)^t$, $\beta_3=\beta_4=(-1, -1)^t$, and
$h_1 = (-1, 0), h_2 = (1, -1), h_3 = (-1, 0)$ and $h_4 = (0, -1)$.
The log-canonical Poisson structure $\pi_0$ defined by $(\bbeta, \bfh)$ is then the one given in 
\autoref{ex:11}. Computing $\calS(\pi_0)$ using \autoref{pr:Spi0-action}, one gets the results in \autoref{ex:11}. 
\hfill $\diamond$
}
\end{example}

\begin{remark}\label{rk:new-class}
{\rm
By \autoref{pr:Spi0-action},  \autoref{thm:4.8-W0} applies to every $\pi_0$ of $\TT$-action type. 
We have thus constructed   a large class of
$\TT$-log-symplectic log-canonical Poisson structures on $\mX = \CC^n$
which admit families of $\TT$-invariant algebraic Poisson deformations into
iterated Poisson-Ore extensions. Examples  include the restriction of the standard Poisson structure $\pi$ on $Z$ to any of the $2^n$ affine coordinate charts of $Z$, where 
$Z$ is any $n$-dimensional Bott-Samelson
variety of a complex semi-simple Lie group \cite[Theorem 5.12]{EL:BS}). We will 
treat these examples in more detail elsewhere.
\hfill $\diamond$
}
\end{remark}

\begin{example}\label{ex:11111}
{\rm
Let $\TT = \CC^\times$ and we identify $X^*(\TT) \cong \ZZ$ and $\t \cong \CC$.
Consider the $5$-dimensional $\TT$-action datum $(\bbeta, {\bf h})$ with
$\bbeta = (1, 1, 1, 1, 1)$ and ${\bf h} = (1, 1, 1, -1, -1)$. We then have 
\[
\bnu = \left(\begin{array}{ccccc} 1 & 2 & 2 & 0 & 0\\ 0 & 1 & 2 & 0 & 0\\
0 & 0 & 1 & 0 & 0\\
0 & 0 & 0 & -1 & -2\\
0 & 0 & 0 & 0 & -1\end{array}\right) \hs \mbox{and} \hs
\bnu^{-1} = \left(\begin{array}{ccccc} 1 & -2 & 2 & 0 & 0\\ 0 & 1 & -2 & 0 & 0\\
0 & 0 & 1 & 0 & 0\\
0 & 0 & 0 & -1 & 2\\
0 & 0 & 0 & 0 & -1\end{array}\right).
\]
By \autoref{pr:Spi0-action}, we have $\calE^+(\pi_0) = \{(1, 3), (1, 5), (2, 4)\}$, and 
$\calS(\pi_0) = \{\bth^{(1, 3)}, \bth^{(1, 5)}, \bth^{(2, 4)}\}$ with
\[
\bth^{(1, 3)} = (-1, 2, -1, 0, 0)^t, \hs \bth^{(1, 5)} = (-1, 0, 0, 2, -1)^t, \hs
\bth^{(2, 4)} = (2, -1, 0 , -1, 0)^t.
\]
(see \autoref{fig:SmDiag11111}).

\begin{figure}[h]
    \centering
\pgfdeclarelayer{background layer}
\pgfdeclarelayer{foreground layer}
\pgfsetlayers{background layer,main,foreground layer}
\begin{tikzpicture}[baseline=-1ex,
rotate=360/2/5,
scale=0.9,line join = round 
    ] 
    
    \begin{pgfonlayer}{main}
\foreach \n in {1,...,5}
{
    \coordinate (v\n) at ({-90+((\n-1)*360/5)}:1.5);
}
\node[below right] at (v1) {$1$};
\node[right] at (v2) {$2$};
\node[above] at (v3) {$3$};
\node[left] at (v4) {$4$};
\node[below left] at (v5) {$5$};

\foreach \n in {1,...,5}
{
	\foreach \m in {1,...,5}
	\draw (v\n) -- (v\m);
}
\end{pgfonlayer}

\draw[thick,red,-] (v2) ++({0.4*cos(108)},{0.4*sin(108)})  arc[start angle=108,end angle=216,radius=0.4];
\draw[thick,red,-] (v2) ++({0.55*cos(108)},{0.55*sin(108)})  arc[start angle=108,end angle=216,radius=0.55];
\draw[thick,red,-] (v4) ++({0.4*cos(252)},{0.4*sin(252)})  arc[start angle=252,end angle=286,radius=0.4];
\draw[thick,red,-] (v4) ++({0.55*cos(252)},{0.55*sin(252)})  arc[start angle=252,end angle=286,radius=0.55];
\draw[thick,red,-] (v1) ++({0.4*cos(36)},{0.4*sin(36)})  arc[start angle=36,end angle=108,radius=0.4];
\draw[thick,red,-] (v1) ++({0.55*cos(36)},{0.55*sin(36)})  arc[start angle=36,end angle=108,radius=0.55];

\draw[very thick, blue] (v1)--(v3);
\draw[very thick, blue] (v1)--(v5);
\draw[very thick, blue] (v2)--(v4);

\begin{pgfonlayer}{foreground layer}
\foreach \n in {1,...,5}
{
	\draw[fill] (v\n) circle (0.03);
}
\end{pgfonlayer}
 \end{tikzpicture}
    \caption{Smoothing diagram appearing in \autoref{ex:11111}}
    \label{fig:SmDiag11111}
\end{figure}
\hfill $\diamond$
}
\end{example}

Recall from \autoref{defn:n-b} that for an element $\bfw \in \ZZone$ that is negatively bordered on the right,
we have the right border ${\rm rb}(\bfw)\in [2, n]$ of $\bfw$. 

\begin{lemma}\label{lm:j-unique}
Let $\pi_0$ be any log-canonical Poisson structure on $\CC^n$ of $\TT$-action type. Then 
for each $k \in [2, n]$ there is
at most one $j \in [1, k-1]$ such that $(j, k) \in \calE^+(\pi_0)$, and we have the bijective map
\begin{equation}\label{eq:rb-S}
{\rm rb}:\;\; \calS(\pi_0) \longrightarrow {\rm rb}(\calS(\pi_0)), \;\; \bth \longmapsto {\rm rb}(\bth).
\end{equation}
\end{lemma}

\begin{proof}
Let $k \in [2, n]$ and suppose that there are $1\leq j < j' <k$ such that
both $(j, k)$ and $(j', k)$ are in $\calE^+(\pi_0)$. By  \autoref{pr:Spi0-action}, 
$\blam \bth^{(j, k)} = -\lambda_k (e_j-e_k)$ and 
$\blam \bth^{(j', k)} =- \lambda_k (e_{j'}-e_k)$, so
\[
\blam (\bth^{(j, k)}-\bth^{(j', k)}) = -\lambda_k (e_j-e_{j'}).
\]
By 1) of \autoref{lm:wj-wk},  $\bth^{(j, k)}_j-\bth^{(j', k)}_j = \bth^{(j, k)}_{j'}-\bth^{(j', k)}_{j'}$, 
from which we have
\[
\bth^{(j, k)}_{j'} = \bth^{(j, k)}_j+\bth^{(j', k)}_{j'}-\bth^{(j', k)}_j = -2-\bth^{(j', k)}_j\leq -2,
\]
contradicting 
the fact that $\bth^{(j, k)}_{j'} \geq -1$. the 
 map ${\rm rb}: \calS(\pi_0) \rightarrow {\rm rb}(\calS(\pi_0))$ is thus injectve by \autoref{lm:ij-unique}. 
Being surjective by definition,  the map ${\rm rb}: \calS(\pi_0) \rightarrow {\rm rb}(\calS(\pi_0))$ is thus 
bijective. 
\end{proof}

\begin{remark}\label{rk:no-sink}
{\rm 
The bijectivity of the map $\rb$ in \eqref{eq:rb-S}  is equivalent to 
the oriented smoothing graph $\Gamma^+(\pi_0)$ of $\pi_0$ (see \autoref{de:Delta-n}) 
having no bivalent sinks. 
\hfill $\diamond$
}
\end{remark}

\subsection{Log-canonical Poisson structures of symmetric action type}\label{ss:symmetric-type}
Let again $\TT$ be an algebraic torus over $\CC$ with Lie algebra $\t$, and we continue with the notation from 
$\S$\ref{ss:action-data}.

\begin{definition}\label{def:sym-action-data}
{\rm
1) An $n$-dimensional action datum $(\bbeta = (\beta_1, \ldots, \beta_n), \bfh = (h_1, \ldots, h_n))$
is said to be {\it symmetric} if there exists $ (h_1^\prime, \ldots, h_n^\prime) \in \t^n$
such that $\beta_j(h_j^\prime) \neq 0$ for every $j \in [1, n]$ and 
\begin{equation}\label{eq:beta_jk}
\beta_j(h_k) = -\beta_k(h_j^\prime), \hs 1 \leq j < k \leq n.
\end{equation}

2) A log-canonical  Poisson structure $\pi_0$ on $\mX = \CC^n$ 
is said to be of {\it symmetric $\TT$-action type} if it is defined 
as in \eqref{eq:pi0-action} by a symmetric $\TT$-action datum $(\bbeta, {\bf h})$. 
}
\end{definition}

\begin{remark}\label{rmk:symmetric}
{\rm
The symmetric condition on the action datum $(\bbeta, \bfh)$ says that the 
 Poisson structure $\pi_0$ on $\mX =\CC^n$ defined by $(\bbeta, \bfh)$ 
 also takes the form
 \[
\{x_k, x_j\}_{\pi_0} = -\beta_k(h_j^\prime)x_jx_k, \hs 1 \leq j < k \leq n,
\]
so $\pi_0$ is also defined by the action datum
$(\bbeta^\prime = (\beta_n, \ldots, \beta_2, \beta_1), \bfh^\prime = (h_n, \ldots, h_2^\prime, h_1^\prime)$
in the reversely ordered linear coordinate system $(x_n, \ldots, x_2, x_1)$. 
\hfill $\diamond$
}
\end{remark}

\begin{example}\label{ex:00}
{\rm
Let $\TT = \CC^\times$ and consider the $n$-dimensional action datum $(\bbeta, \bfh)$,
where $\bbeta = (1, 1, \ldots, 1)$ and $\bfh = (a_1, a_2, \ldots, a_n)$ with $a_j \in \CC^\times$ for each $j \in [1, n]$. One checks directly from the definition
that $(\bbeta, \bfh)$ is symmetric if and only if $a_2=a_3=\cdots = a_n$.
\hfill $\diamond$
}
\end{example}

For a log-canonical Poisson structure $\pi_0$ of symmetric $\TT$-action type,
defined by a symmetric $\TT$-action datum $(\bbeta, \bfh)$ as in  \autoref{def:sym-action-data},
recall that we have set $\lambda_i = \beta_i(h_i)$ for $i \in [1, n]$. Set also 
\[
\lambda_i^\prime = \beta_i(h_i^\prime), \hs i \in [1, n].
\]
Recall from \autoref{pr:Spi0-action} the description of the set 
$\calE^+(\pi_0)$, the bijection 
$\calE^+(\pi_0) \rightarrow  \calS(\pi_0):  (j, k) \mapsto \bth^{(j, k)}$, and the fact that $\blam 
\bth^{(j, k)} = -\lambda_k (e_j-e_k)$ for $(j, k) \in \calE^+(\pi_0)$.

\begin{proposition}\label{pr:Spi0-action-symmetric}  For a log-canonical Poisson
structure $\pi_0$ on $\mX = \CC^n$ of symmetric action type, and 
for every $(j, k) \in \calE^+(\pi_0)$, one has $\lambda_j^\prime = -\lambda_k$ and
\begin{equation}\label{eq:bth-jk-1}
\bth^{(j, k)} = 
(0, \; \ldots, \; 0,\; -1, \;\bth^{(j, k)}_{j+1},\; \ldots,\;
\bth^{(j, k)}_{k-1}, \; -1,\; 0, \;\ldots, \;0)^t,
\end{equation}
where $\bth^{(j, k)}_i \in \ZZ_{\geq 0}$ for each $i \in [j+1, k-1]$.
In particular, $\calS(\pi_0)$ is negatively bordered on both sides, so every subset $\calS$ of $\calS(\pi_0)$ has
Property \eqref{eq:W1}.
\end{proposition}

\begin{proof} As $\pi_0$ is defined by the $\TT$-action datum 
$(\bbeta =(\beta_1, \ldots, \beta_n), \bfh = (h_1, \ldots, h_n))$ 
in the ordered linear coordinate system $(x_1, \ldots, x_n)$, as well as by the  
$\TT$-action datum $(\bbeta^\prime = (\beta_n, \ldots, \beta_1), \bfh^\prime = (h_n, \ldots, h_1^\prime)$
in the ordered linear coordinate system $(x_n, \ldots, x_1)$,  it follows from 
\autoref{pr:Spi0-action} that $\bth^{(j, k)}$, for every $(j, k) \in \calE^+(\pi_0)$, is 
negatively bordered on both sides as in \eqref{eq:bth-jk-1} and that
\[
\blam \bth^{(j, k)} = -\lambda_k(e_j-e_k) = -\lambda^\prime_j (e_k-e_j),
\]
from which one sees that $\lambda_j^\prime = -\lambda_k$.
\end{proof}

\begin{remark}\label{rk:lambda-jkl}
{\rm
In the setting of \autoref{pr:Spi0-action-symmetric}, if $j, k, l \in [1,n]$ are such that
$(j, k), (k, l) \in \calE^+(\pi_0)$, then, as $\bth^{(j, k)}_l = \bth^{(k, l)}_j = 0$,
it follows from
\autoref{lm:rational}  that 
$\lambda_k = \lambda_l = -\lambda_k^\prime = -\lambda_j^\prime$.
\hfill $\diamond$
}
\end{remark}

\begin{example}\label{ex:1111}
{\rm
The $4$-dimensional $(\CC^\times)^2$-action datum in \autoref{ex:111} is not symmetric, as 
$\bth^{(2, 3)} = (1, -1, -1, 0)^t \in \calS(\pi_0)$ is not negatively bounded on both sides.
\hfill $\diamond$
}
\end{example}

\begin{example}\label{ex:beta-2p-00}
{\rm 
Let $p \in \ZZ\backslash \{0\}$ and consider the log-canonical Poisson structure on $\CC^4$ given by
\begin{align}\nonumber
\pi_0 =&-p x_1x_2\frac{\partial}{\partial x_1} \wedge \frac{\partial}{\partial x_2} + p
x_1x_3\frac{\partial}{\partial x_1}\! \wedge \!\frac{\partial}{\partial x_3}  +
p^2 x_1x_4\frac{\partial}{\partial x_1}\! \wedge \!\frac{\partial}{\partial x_4} +
x_2x_3\frac{\partial}{\partial x_2}\! \wedge \!\frac{\partial}{\partial x_3}\\
\label{eq:pi0-p}
&+ p
x_2x_4\frac{\partial}{\partial x_2}\! \wedge\! \frac{\partial}{\partial x_4} -
p x_3x_4\frac{\partial}{\partial x_3}\! \wedge \!\frac{\partial}{\partial x_4}.
\end{align}
Then $\pi_0$ is of symmetric $\TT$-action type for any non-trivial $\TT$. Indeed, choose any non-zero 
$\beta \in X^*(\TT)$ and $h \in \t$ such that $\beta(h) = 1$, and let
$\bbeta = (p\beta, \beta, -\beta, -p\beta)$ and ${\bf h} = (ph, h, -h, -ph)$. Then $\pi_0$ is defined by the symmetric 
$\TT$-action datum $(\bbeta, {\bf h})$. Computing $\calS(\pi_0)$ using 
\autoref{pr:Spi0-action}, we have the following.

\begin{figure}[h]
    \centering
\pgfdeclarelayer{background layer}
\pgfdeclarelayer{foreground layer}
\pgfsetlayers{background layer,main,foreground layer}
\begin{subfigure}{0.3\textwidth}
\centering
\begin{tikzpicture}[baseline=-1ex,
rotate=360/2/4,
scale=0.9,line join = round 
    ] 
    
    \begin{pgfonlayer}{main}
\foreach \n in {1,...,4}
{
    \coordinate (v\n) at ({-90+((\n-1)*360/4)}:1.5);
}
\node[below right] at (v1) {$1$};
\node[above right] at (v2) {$2$};
\node[above left] at (v3) {$3$};
\node[below left] at (v4) {$4$};

\foreach \n in {1,...,4}
{
	\foreach \m in {1,...,4}
	\draw (v\n) -- (v\m);
}
\end{pgfonlayer}

\draw[very thick, blue] (v1)--(v2);
\draw[very thick, blue] (v2)--(v3);
\draw[very thick, blue] (v3)--(v4);

\begin{pgfonlayer}{foreground layer}
\foreach \n in {1,...,4}
{
	\draw[fill] (v\n) circle (0.03);
}
\end{pgfonlayer}
 \end{tikzpicture}
\caption{Case 1, $p=-1$.}
\end{subfigure} 
\hfill 
\begin{subfigure}{0.3\textwidth}
\centering
\begin{tikzpicture}[baseline=-1ex,
rotate=360/2/4,
scale=0.9,line join = round 
    ] 
    
    \begin{pgfonlayer}{main}
\foreach \n in {1,...,4}
{
    \coordinate (v\n) at ({-90+((\n-1)*360/4)}:1.5);
}
\node[below right] at (v1) {$1$};
\node[above right] at (v2) {$2$};
\node[above left] at (v3) {$3$};
\node[below left] at (v4) {$4$};

\foreach \n in {1,...,4}
{
	\foreach \m in {1,...,4}
	\draw (v\n) -- (v\m);
}
\end{pgfonlayer}

\draw[very thick, blue] (v2)--(v3);

\begin{pgfonlayer}{foreground layer}
\foreach \n in {1,...,4}
{
	\draw[fill] (v\n) circle (0.03);
}
\end{pgfonlayer}
 \end{tikzpicture}
\caption{Case 2, $p<0$ and $p\not=-1$.}
\end{subfigure} 
\hfill 
\begin{subfigure}{0.3\textwidth}
\centering
\begin{tikzpicture}[baseline=-1ex,
rotate=360/2/4,
scale=0.9,line join = round 
    ] 
    
    \begin{pgfonlayer}{main}
\foreach \n in {1,...,4}
{
    \coordinate (v\n) at ({-90+((\n-1)*360/4)}:1.5);
}
\node[below right] at (v1) {$1$};
\node[above right] at (v2) {$2$};
\node[above left] at (v3) {$3$};
\node[below left] at (v4) {$4$};

\foreach \n in {1,...,4}
{
	\foreach \m in {1,...,4}
	\draw (v\n) -- (v\m);
}
\end{pgfonlayer}

\draw[thick,red,-] (v2) ++({0.3*cos(180)},{0.3*sin(180)})  arc[start angle=180,end angle=225,radius=0.3];
\draw[fill,red] (v2) ++ ({0.4*cos(202)},{0.4*sin(202)}) circle (0.01);
\draw[fill,red] (v2) ++ ({0.5*cos(202)},{0.5*sin(202)}) circle (0.01);
\draw[fill,red] (v2) ++ ({0.6*cos(202)},{0.6*sin(202)}) circle (0.01);
\draw[thick,red,-] (v2) ++({0.7*cos(180)},{0.7*sin(180)})  arc[start angle=180,end angle=225,radius=0.7];
\draw[thick,red,-] (v3) ++({0.3*cos(225)},{0.3*sin(225)})  arc[start angle=225,end angle=270,radius=0.3];
\draw[fill,red] (v3) ++ ({0.4*cos(248)},{0.4*sin(248)}) circle (0.01);
\draw[fill,red] (v3) ++ ({0.5*cos(248)},{0.5*sin(248)}) circle (0.01);
\draw[fill,red] (v3) ++ ({0.6*cos(248)},{0.6*sin(248)}) circle (0.01);
\draw[thick,red,-] (v3) ++({0.7*cos(225)},{0.7*sin(225)})  arc[start angle=225,end angle=270,radius=0.7];

\coordinate (u1) at ($(v2) +({0.25*cos(235)},{0.25*sin(235)})$);
\coordinate (u2) at  ($(v2) +({0.75*cos(230)},{0.75*sin(230)})$);

\draw[decorate,decoration={brace,amplitude=5pt}] (u1) -- (u2) node[midway, right, xshift = 4pt]{\textcolor{red}{$\scriptstyle 2p$}};

\coordinate (u3) at ($(v3) +({0.25*cos(215)},{0.25*sin(215)})$);
\coordinate (u4) at  ($(v3) +({0.75*cos(220)},{0.75*sin(220)})$);

\draw[decorate,decoration={brace,amplitude=5pt}] (u4) -- (u3) node[midway, left, xshift = -4pt]{\textcolor{red}{$\scriptstyle 2p$}};

\draw[very thick, blue] (v1)--(v4);
\draw[very thick, blue] (v2)--(v3);

\begin{pgfonlayer}{foreground layer}
\foreach \n in {1,...,4}
{
	\draw[fill] (v\n) circle (0.03);
}
\end{pgfonlayer}
 \end{tikzpicture}
 \caption{Case 3, $p>0$.}
 \end{subfigure} 
    \caption{Smoothing diagrams appearing in \autoref{ex:beta-2p-00}}
    \label{fig:SmDiagbeta-2p-00}
\end{figure}

{\bf Case 1,} $p =-1$: Then $\calE^+(\pi_0) = \{(1, 2), (2, 3), (3, 4)\}$ and 
$\calS(\pi_0) = \{\bth^{(1, 2)}, \bth^{(2, 3)}, \bth^{(3, 4)}\}$, where
\[
\bth^{(1, 2)} = (-1, -1,0, 0)^t, \hs  \bth^{(2, 3)} = (0, -1, -1, 0)^t,\hs  \bth^{(3, 4)}=(0, 0, -1, -1)^t;
\]

{\bf Case 2,} $p < 0$ and $p \neq -1$. Then $\calE^+(\pi_0) = \{(2, 3)\}$ and  
$\calS(\pi_0) = \{\bth^{(2, 3)}\}$, where
\[
\bth^{(2, 3)} = (0, -1, -1, 0)^t;
\]

{\bf Case 3,} $p > 0$: Then $\calE^+(\pi_0) = \{(1, 4), (2, 3)\}$ and 
$\calS(\pi_0) = \{\bth^{(1,4)}, \bth^{(2,3)}\}$, where
\[
\bth^{(1,4)} = (-1, 2p, 2p, -1)^t, \hs \bth^{(2,3)} = (0, -1, -1, 0)^t.
\]

This example will be continued in \autoref{ex:beta-2p-01}. 
\hfill $\diamond$
}
\end{example}

\begin{lemma}\label{lm:j-k-unique}
Let $\pi_0$ be a log-canonical Poisson structure on $\mX = \CC^n$ of symmetric $\TT$-action type. 

1)  For each $k \in [2, n]$ there is
at most one $j \in [1, k-1]$ such that $(j, k) \in \calE^+(\pi_0)$;

2) For each $j \in [1, n-1]$ there is at most one
$k \in [j+1, n]$ such that $(j, k) \in \calE^+(\pi_0)$. 

\noindent
In other words, the oriented smoothing graph $\Gamma^+(\pi_0)$ of $\pi_0$
has neither bivalent sinks nor bivalent sources.
\end{lemma}

\begin{proof}
1) is \autoref{lm:j-unique}, and 2) follows by applying  \autoref{lm:j-unique} to $\pi_0$ being of action type
in the coordinates $(x_n, \ldots, x_1)$.
\end{proof}

Recall from \autoref{defn:n-b} that for an element $\bfw \in \ZZone$ that is negatively bordered on both sizes,
we have the left border ${\rm lb}(\bfw) \in [1, n-1]$ and the right border ${\rm rb}(\bfw)\in [2, n]$ of $\bfw$. 
As a consequence of \autoref{lm:j-k-unique},
for $\pi_0$ of symmetric action type we have the following improved version of \autoref{lm:j-unique} and 
consequence of \autoref{pr:Spi0-action-symmetric}.

\begin{lemma}\label{lm:lb-rb-S}
For a log-canonical Poisson structure $\pi_0$ of symmetric action type, one has bijections 
\begin{equation}\label{eq:lb-rb-S}
{\rm lb}: \; \calS(\pi_0) \longrightarrow {\rm lb}(\calS(\pi_0)), \;\; \bth \longmapsto {\rm lb}(\bth),\hs \mbox{and} \hs 
{\rm rb}:\;\; \calS(\pi_0) \longrightarrow {\rm rb}(\calS(\pi_0)), \;\; \bth \longmapsto {\rm rb}(\bth).
\end{equation}
Moreover, for every 
$\bth \in \calS(\pi_0)$ one has $-\lambda_{{\rm rb}(\bth)} = \lambda_{{\rm lb}(\bth)}^\prime$, and 
\begin{equation}\label{eq:blam-bth}
\blam \bth = -\lambda_{{\rm rb}(\bth)} (e_{{\rm lb}(\bth)}-e_{{\rm rb}(\bth)})
=\lambda_{{\rm lb}(\bth)}^\prime (e_{{\rm lb}(\bth)}-e_{{\rm rb}(\bth)}).
\end{equation}
\end{lemma}

\subsection{Cartan integers associated to $\pi_0$ of symmetric action type}\label{ss:Cartan-integers}
Assume that $\pi_0$ is a log-canonical Poisson structure on $\CC^n$ of symmetric 
$\TT$-action type, and let the notation be as in $\S$\ref{ss:symmetric-type}. In this section, we introduce certain 
non-positive integers, 
called {\it Cartan integers associated to $\pi_0$}, and we show that all entries of elements in 
$\calS(\pi_0)$ are either $0$, or $-1$, or multiples of these Cartan integers by  $-1$. 

Recall from  \autoref{de:Delta-n} the smoothing 
graph $\Gamma^+(\pi_0)$ of $\pi_0$ with vertex set $[1, n]$ and edge set $\calE^+(\pi_0)$, where
 one has an edge $j \rightarrow k$ for $1\leq j < k\leq n$ if and only if there is 
$\bth^{(j, k)} \in \calS(\pi_0)$ with entry $-1$ at $j$ and $k$, and that 
$\calE^+(\pi_0) \ni (j, k) \mapsto \bth^{(j, k)} \in \calS(\pi_0)$ is a bijection. 
As  $0 \notin \calS(\pi_0)_{\geq 1}$,
$\Gamma^+(\pi_0)$ has no cycles by \autoref{ld:indep}. 

\begin{lemma-definition}\label{ld:levels}
For any log-canonical Poisson structure $\pi_0$ on $\CC^n$ of symmetric $\TT$-action type,
every connected component of 
$\Gamma^+(\pi_0)$ is of the form
\[
j_1  \longrightarrow \cdots \longrightarrow j_p
\]
for some $p\geq 1$. 
We call the vertex set of a 
connected component of $\Gamma^+(\pi_0)$ a {\it level set in $[1, n]$ defined by $\pi_0$}. For $j \in [1, n]$ 
we denote by $L(j)$ the 
level set containing $j$.
\end{lemma-definition}

\begin{proof} 
Let $L$ be the vertex set of a connected component  of $\Gamma^+(\pi_0)$ with $|L| =p \geq 2$, and let $j_1 = {\rm min}(L)$. By the injectivity of 
the map ${\rm lb}$ in \eqref{eq:lb-rb-S}, there is a unique $j_2>j_1$ such that
$(j_1, j_2) \in \calE^+(\pi_0)$. If $p = 2$, we are done. Suppose that $p >2$. Then by the 
injectivity of 
the map ${\rm rb}$ in \eqref{eq:lb-rb-S}, there is a unique $j_3>j_2$ such that
$(j_2, j_3) \in \calE^+(\pi_0)$. Continuing this way, we see that the connected component  is as described. 
\end{proof}

\begin{lemma}\label{lm:L-L}
Let $\pi_0$ be any log-canonical Poisson structure on $\CC^n$ of symmetric $\TT$-action type.

1) For every  level set $L\subset [1, n]$ defined by $\pi_0$ with $|L| \geq 2$ there exists $a_{\sL} \in \CC^\times$ such that 
\begin{equation}\label{eq:a-L-0}
\blambda \bth^{(j, k)} = a_{\sL} (e_j-e_k), \quad \forall\; 
(j, k) \in \calE^+(\pi_0) \;\;\mbox{such that}\;\; j, k \in L.
\end{equation}
Specifically, 
$a_\sL = \lambda_{{\rm min}(L)}^\prime
=-\lambda_{{\rm max}(L)}$.

2) For every $(j, k) \in \calE^+(\pi_0)$ and $i \in [j+1, k-1]$, the $i^{\rm th}$ entry $\bth^{(j, k)}_i \in
\ZZ_{\geq 0}$ of $\bth^{(j, k)}$ depends only on the level sets $L(j)=L(k)\neq L(i)$.
\end{lemma}

\begin{proof}
1) follows directly from \autoref{pr:Spi0-action} and \autoref{rk:lambda-jkl}. 

To prove 2), 
let $L = L(j) = L(k)$ and $L' = L(i)$. By  \autoref{ld:levels}, $L\neq L'$.  If $L' = \{i\}$ is a singleton, then
$(j, k)$ is the only element in $\calE^+(\pi_0)$ with $j, k \in L$ such that $i \in [j+1, k-1]$, so there is nothing to prove. Assume thus that $|L'|\geq 2$. Let 
\begin{equation}\label{eq:jj-ii}
j_1  \longrightarrow \cdots \longrightarrow j_p \hs \mbox{and} \hs i_1  \longrightarrow \cdots \longrightarrow i_q
\end{equation}
be any two sub-graphs of $\Gamma^+(\pi_0)$ with $p \geq 2$, $q \geq 2$,  
$\{j_1, \ldots, j_p\} \subset L$, and $\{i_1,\ldots, i_q\} \subset L'$, and set
\begin{equation}\label{eq:bth-jj}
\bth^{[j_1, j_p]} =  \bth^{(j_1, j_2)} + \cdots + \bth^{(j_{p-1}, j_p)} 
\hs \mbox{and} \hs 
\bth^{[i_1, i_q]} =  \bth^{(i_1, i_2)} + \cdots + \bth^{(i_{q-1}, i_q)}, 
\end{equation}
and write $\bth^{[j_1, j_p]}=(\bth^{[j_1, j_p]}_1, \ldots, \bth^{[j_1, j_p]}_n)^t$ and
$\bth^{[i_1, i_q]}=(\bth^{[i_1, i_q]}_1,\ldots, \bth^{[i_1, i_q]}_n)^t$.
 By \autoref{lm:if-bordered-both}, we have
\[
\blambda \bth^{[j_1, j_p]} = a_{\sL} (e_{j_1}-e_{j_p}) \hs \mbox{and} \hs 
\blambda \bth^{[i_1, i_q]} = a_{\sL'} (e_{i_1}-e_{i_q}).
\]
It then follows from \autoref{lm:wj-wk} that 
\begin{equation}\label{eq:aL-jj}
 a_{\sL'}\left(\bth^{[j_1, j_p]}_{i_1}-\bth^{[j_1, j_p]}_{i_q}\right) = -
 a_{\sL}\left(\bth^{[i_1, i_q]}_{j_1}-\bth^{[i_1, i_q]}_{j_p}\right).
\end{equation}
Assume now that  $j_1 < i_1 < j_2$, and $j_{{p-1}} < i_q < j_p$. Then 
$\bth^{[i_1, i_q]}_{j_1}=\bth^{[i_1, i_q]}_{j_p}=0$, so
$\bth^{[j_1, j_p]}_{i_1}=\bth^{[j_1, j_p]}_{i_q}$. Thus 
$\bth^{(j_1, j_2)}_{i_1} = \bth^{[j_1, j_p]}_{i_1} = \bth^{[j_1, j_p]}_{i_q} = \bth^{(j_{p-1}, j_p)}_{i_q}$.
This finishes the proof of \autoref{lm:L-L}.
\end{proof}

\begin{definition}\label{def:a-LL}
{\rm Let $\pi_0$ be any log-canonical Poisson structure on $\CC^n$ of symmetric $\TT$-action type.
For two distinct level sets $L$ and $L'$ defined by $\pi_0$ such that 
$|L|\geq 2$ and $L' \cap [{\rm min}(L), {\rm max}(L)] \neq \emptyset$, we set
\[
a_{\sL', \sL} = -\bth^{(j, k)}_i\in \ZZ_{\leq 0}
\]
for any $(j, k) \in \calE^+(\pi_0)$ and $i \in [j+1, k-1]$ such that $j, k \in L$ and $i \in L'$, and 
we call the non-positive integer 
$a_{\sL', \sL}$, which is well-defined by \autoref{lm:L-L}, a {\it Cartan integer} associated to $\pi_0$.
}
\end{definition}

The following fact now follows from the definitions.

\begin{lemma}\label{lm:bth-Cartan-integers}
Let $\pi_0$ be any log-canonical Poisson structure on $\CC^n$ of symmetric $\TT$-action type. For any 
$(j, k) \in \calE^+(\pi_0)$, one has
\begin{equation}\label{eq:theta-jk-a}
\bth^{(j, k)} = (0, \ldots, 0, -1, -a_{\sL(j+1), \sL(j)}, \; \ldots, \; -a_{\sL(k-1), \sL(j)}, \, -1, 0, \ldots, 0)^t
\in \calS(\pi_0).
\end{equation}
\end{lemma}

We say two distinct level sets $L$ and $L'$ {\it interlace each other} if $|L| \geq 2$, $|L'| \geq 2$, 
\[
L' \cap [{\rm min}(L), {\rm max}(L)] \neq \emptyset \quad \mbox{and} \quad
L \cap [{\rm min}(L'), {\rm max}(L')] \neq \emptyset.
\]
In such a case we also have, by \autoref{ld:levels}, 
 the well-defined 
$a_{\sL} \in \CC^\times$ and $a_{\sL'} \in \CC^\times$.

\begin{lemma}\label{lm:interlace}
Let $\pi_0$ be any log-canonical Poisson structure on $\CC^n$ of symmetric $\TT$-action type.
For any two distinct level sets $L$ and $L'$ in $[1, n]$ defined by $\pi_0$ that interlace each other, one has
\[
a_{\sL} a_{\sL, \sL'} = a_{\sL'} a_{\sL', \sL}.
\]
\end{lemma}

\begin{proof}
Let $j_1 = {\rm min}(L)$ and $i_1 = {\rm min}(L')$. As $L \cap L'=\emptyset$, we may assume that $j_1 < i_1$. 
As $L$ and $L'$ interlace each other, we may assume that there are two sub-graphs of $\Gamma^+(\pi_0)$ 
as in \eqref{eq:jj-ii}, with $p \geq 2$ and $q \geq 2$,  such that
$\{j_1, \ldots, j_p\} \subset L$,  $\{i_1,\ldots, i_q\} \subset L'$, and 
$j_1 < i_1 < \cdots < j_p < i_q$.
Let $\bth^{[j_1, j_p]}$ and $\bth^{[i_1, i_q]}$ be defined as in \eqref{eq:bth-jj}.  It now follows from 
\eqref{eq:aL-jj} and 
\[
\bth^{[j_1, j_p]}_{i_1} = -a_{\sL, \sL'}, \hs \bth^{[j_1, j_p]}_{i_q}=0, \hs 
 \bth^{[i_1, i_q]}_{j_1}=0, \hs \bth^{[i_1, i_q]}_{j_p} = -a_{L, L'}
\]
that $a_{\sL} a_{\sL, \sL'} = a_{\sL'} a_{\sL', \sL}$.
\end{proof}

\subsection{Classification of symmetric Poisson CGL extensions}\label{ss:sym-CGL-classification}
 Let again $\TT$ be a complex algebraic torus with Lie algebra $\t$
 and character lattice $X^*(\TT) \subset \t^*$.
 We now recall the notion of symmetric $\TT$-Poisson CGL extensions 
 by K. Goodearl and M. Yakimov
 \cite{GY:Poi-CGL}.

\begin{definition}\label{def:Poi-CGL}\cite[Definition 6.1]{GY:Poi-CGL} A  {\it
symmetric $\TT$-Poisson CGL extension} of dimension $n$ is the polynomial 
algebra $R=\CC[x_1, \ldots, x_n]$ together with a Poisson bracket $\{\,, \, \}_\pi$ 
and a rational $\TT$-action preserving
$\{\,, \, \}_\pi$ such that

1) each $x_j$, for $j \in [1, n]$, is a $\TT$-weight vector with $\TT$-weight 
$\beta_j \in X^*(\TT) \subset \t^*$;

2) there exist $h_1, \ldots, h_n, h_1^\prime, \ldots,  h_n^\prime \in \t$ such that 
$\beta_j(h_j) \neq 0$ and $\beta_j(h_j^\prime) \neq 0$ for each $j \in [1, n]$ and 
$\beta_j(h_k) = -\beta_k(h_j')$ for all $1 \leq j < k \leq n$; 

3) the Poisson bracket $\{\,, \, \}_\pi$ has the form
\[
\{x_j, x_k\}_{\pi} = -\beta_j(h_k) x_jx_k + \varphi_{j, k}, \hs \hs 1 \leq j < k \leq n,
\]
where $\varphi_{j, k} \in \CC[x_{j+1}, \ldots, x_{k-1}]$ for each pair $1 \leq j< k\leq n$. 
The variables $x_1, \ldots, x_n$ are called the {\it CGL generators} of the Poisson CGL extension
$R$.
\end{definition}

\begin{definition}\label{def:pi-CGL}
Given the linear coordinates $(x_1, \ldots, x_n)$ on $\CC^n$, if $\pi$ is a
$\TT$-invariant  Poisson structure on $\CC^n$ 
such that $(\CC[x_1, \ldots, x_n], \{\,, \, \}_\pi)$ is a 
symmetric $\TT$-Poisson CGL extension, we also say that  $(\CC^n, \pi)$, or simply $\pi$, is a 
symmetric $\TT$-Poisson CGL extension in the coordinates $(x_1, \ldots, x_n)$.
\end{definition}

Recalling from \eqref{eq:mW-pi} the definition of 
$\mW(\pi)\subset \calW_2\backslash \{0\}$ for an arbitrary 
algebraic Poisson structure $\pi$ on $\mX = \CC^n$,
a $\TT$-symmetric Poisson CGL extension of dimension $n$ is thus nothing but a $\TT$-invariant 
algebraic Poisson structure $\pi$ on $\mX =\CC^n$ such that

1) the log-canonical term of $\pi$ is of symmetric $\TT$-action type;

2) the set $\mW(\pi)$ is negatively bordered on both sides. 

\noindent
We now have the following classification theorem on symmetric $\TT$-Poisson CGL extensions.

\begin{theorem}\label{thm:sym-CGL} 
1) For any log-canonical Poisson structure $\pi_0$ on $\CC^n$ of 
symmetric $\TT$-action type and any
$\calS \subset \calS(\pi_0)$, every $\calS$-admissible algebraic Poisson deformation of
$\pi_0$ (see \autoref{def:calS-admi}) is a symmetric $\TT$-Poisson CGL extension of dimension $n$;

2) Every symmetric $\TT$-Poisson CGL extension $\pi$ of dimension $n$ is the unique  $\calS_\pi$-admissible algebraic Poisson deformation of
its log-canonical term 
$\pi_0$ along $\pi_1 \in \mfX^2(\mX)^{\calS_\pi}$, where
\[
\calS_\pi = \mW(\pi)\backslash \mW(\pi)_{2} \subset \calS(\pi_0), 
\]
and $\pi_1$ is the $\mfX^2(\mX)^{\calS_\pi}$-component of $\pi$ with respect to the
$\CCsn$-weight space decomposition of $\mfX^2(\mX)$. 
\end{theorem}

\begin{proof}
1) Let $\pi_0$ be as given in 1) and let $\calS \subset \calS(\pi_0)$. By 
\autoref{pr:Spi0-action-symmetric}, $\calS$ is negatively bordered on both sides, and 
so is $\calS_{\geq 1}$ by \autoref{lm:nbr-W0}. It now follows from \autoref{def:Poi-CGL} and
\autoref{def:calS-admi}
that all the  $\calS$-admissible algebraic Poisson deformation of
$\pi_0$, in particular the maximal ones, are symmetric $\TT$-Poisson CGL extensions of dimension $n$;

2) Given a symmetric $\TT$-Poisson CGL extension $\pi$ of dimension $n$,   by 
\autoref{def:Poi-CGL} the log-canonical term $\pi_0$ of $\pi$ is
of symmetric $\TT$-action type and $\pi$ has Property \eqref{eq:W1}. It then 
follows from \autoref{thm:4.9-W1} that $\pi$ is as described.
\end{proof}

By \autoref{thm:max-family-W1} and \autoref{defn:max-W1}, for a log-canonical Poisson structure $\pi_0$ of symmetric $\TT$-action type we have the maximal family $\pi^{\calS(\pi_0)}(c)$ of algebraic Poisson
deformations of $\pi_0$, where $c \in \CC^{\calS(\pi_0)}$.

\begin{definition-remark}\label{defn-rk:CGL-maximal}
{\rm
We say a symmetric $\TT$-Poisson CGL extension $\pi$ with log-canonical term $\pi_0$ is  {\it maximal}
if $\pi = \pi^{\calS(\pi_0)}(c)$ for some $c \in (\CC^\times)^{\calS(\pi_0)}$.
By \autoref{cor:member-summand}, 
every symmetric $\TT$-Poisson CGL extension 
is both a direct summand of a maximal one  and the 
limit of a family of maximal ones.
\hfill $\diamond$
}
\end{definition-remark}

By \autoref{def:Poi-CGL}, any rescaling of the CGL generators 
of a symmetric Poisson CGL extension is again a symmetric Poisson CGL extension with the same
log-canonical term.  It is shown in 
\cite[$\S$9.2]{GY:Poi-CGL} that 
a symmetric Poisson CGL extension needs to be normalized, via rescaling the CGL generators,
for the construction of cluster structures.
The following definition is motivated by \cite[$\S$9.2]{GY:Poi-CGL}.

\begin{definition}\label{defn:normalized}
{\rm
For a log-canonical Poisson structure $\pi_0$ of symmetric $\TT$-action type, we also set  
\[
a_\bth = -\lambda_{{\rm lb}(\bth)}= \lambda^\prime_{{\rm lb}(\bth)}\in \CC^\times,
\]
for $\bth \in \calS(\pi_0)$, so that $\blam \bth = a_\bth (e_{{\rm lb}(\bth)}-e_{{\rm rb}(\bth)})$.
For $\calS \subset \calS(\pi_0)$, we call $\pi^\calS((a_\bth)_{\bth \in \calS})$
the {\it normalized} $\calS$-admissible algebraic Poisson deformation of $\pi_0$.
We call $\pi^{\calS(\pi_0)}((a_\bth)_{\bth \in \calS(\pi_0)})$
the {\it maximal normalized} admissible algebraic Poisson deformation of $\pi_0$.
}
\end{definition}

We now have the following consequence of \autoref{pr:scaling-to-c} and 
\autoref{thm:sym-CGL}.

\begin{corollary}\label{cor:classification-CGL}
Given a log-canonical Poisson structure $\pi_0$ on $\CC^n$ of symmetric $\TT$-action type, 
symmetric Poisson CGL extensions $\pi$  with log-canonical term
$\pi_0$ are, up to rescaling of CGL generators, in bijection with subsets of $\calS(\pi_0)$ via the correspondences
\begin{equation}\label{eq:bf-1}
\pi \longmapsto \calS_\pi  = \mW(\pi)/\mW(\pi)_{2} \subset \calS(\pi_0) \hs \mbox{and} \hs
\calS(\pi_0) \supset \calS \longmapsto \pi^\calS((a_\bth)_{\bth \in \calS}).
\end{equation}
\end{corollary}

\begin{remark}\label{rk:bf-1}
{\rm
In place of $(a_\bth)_{\bth \in \calS}\in (\CC^\times)^\calS$, another choice of the bijection in 
\eqref{eq:bf-1} is 
\[
\calS(\pi_0) \supset \calS \longmapsto \pi^\calS({\bf 1}_\calS),
\]
where ${\bf 1}_\calS \in (\CC^\times)^\calS$ has all entries $1$.
\hfill $\diamond$
}
\end{remark}

\begin{example}\label{ex:beta-2p-01}
{\rm 
Continuing with {\bf Case 3} of \autoref{ex:beta-2p-00}, where $p \in \ZZ_{>0}$ and $\pi_0$ is given in \eqref{eq:pi0-p}, we have
$\calS(\pi_0) = \{\bth^{(1,4)}, \bth^{(2,3)}\}$, where $\bth^{(1,4)} = (-1, 2p, 2p, -1)^t$ and
$\bth^{(2,3)} = (0, -1, -1, 0)^t$, and 
\[
V_{\bth^{(1,4)}} = x_2^{2p}x_3^{2p} \frac{\partial}{\partial x_1}\! \wedge \!\frac{\partial}{\partial x_4} 
\hs \mbox{and} \hs V_{\bth^{(2,3)}} =  \frac{\partial}{\partial x_2}\! \wedge \!\frac{\partial}{\partial x_3}.
\]
Let $\pi_1 = V_{\bth^{(1,4)}} + V_{\bth^{(2,3)}}$,  and let $\pi(1, 1)$ be the $\calS(\pi_0)$-admissible algebraic Poisson 
deformation of $\pi_0$ along $\pi_1$. Note that 
$\calS_{\geq 2} \cap \calW_2 =\{\bth^{(1,4)}+k \bth^{(2,3)}: k \in [1, 2p]\}$. By \autoref{rmk:in-practice}, 
\[
\pi(1, 1) = \pi_0 + V_{\bth^{(1,4)}} + V_{\bth^{(2,3)}} + \sum_{k=1}^{2p} \mu_k 
(x_2x_3)^{2p-k} \frac{\partial}{\partial x_1}\! \wedge \!\frac{\partial}{\partial x_4}
\]
for some $\mu_1, \ldots, \mu_{2p} \in \CC$. Solving for $\mu_1, \ldots, \mu_{2p}$ from $[\pi, \pi]_{\rm Sch} = 0$, 
we get 
\[
\pi(1, 1) = \pi_0 + \frac{\partial}{\partial x_2}\! \wedge \!\frac{\partial}{\partial x_3} +(1+x_2x_3)^{2p}\frac{\partial}{\partial x_1}\! \wedge \!\frac{\partial}{\partial x_4}.
\]
We have $-\lambda_{{\rm rb}(\bth^{(1, 4)})} = -p^2$ and $-\lambda_{{\rm rb}(\bth^{(2, 3)})} = -1$. 
The $\calS(\pi_0)$-admissible deformation of $\pi_0$ along $\pi_1 =-p^2V_{\bth^{(1,4)}} - V_{\bth^{(2,3)}}$ is given by
\[
\pi(-p^2, -1) = \pi_0 - \frac{\partial}{\partial x_2}\! \wedge \!\frac{\partial}{\partial x_3} -p^2(x_2x_3-1)^{2p}\frac{\partial}{\partial x_1}\! \wedge \!\frac{\partial}{\partial x_4}.
\]
The rescaling of the coordinates $(x_, x_2, x_3, x_4) \mapsto (-p^{-2}x_1, x_2, -x_3, x_4)$
brings $\pi(1, 1)$ to $\pi(-p^2, -1)$.
\hfill $\diamond$
}
\end{example}

\subsection{The predecessor
and successor maps in terms of  $\calS_\pi$}
Fix a 
symmetric $\TT$-Poisson CGL extension 
$(\CC[x_1, \ldots, x_n], \{\, ,\, \}_\pi)$ with log-canonical term of $\pi_0$. In this subsection, 
we first give an alternative description of the subset 
$\calS_\pi = \mW(\pi)\backslash \mW(\pi)_{2}$ of $\calS(\pi_0)$
in terms of the Poisson 
bracket $\{\, ,\, \}_\pi$ on $\CC[x_1, \ldots, x_n]$. We then show how the  {\it predecessor
and successor maps} for $\pi$ defined by Goodearl and Yakimov in \cite{GY:Poi-CGL} can be expressed in terms of elements in $\calS_\pi$. 
To this end, 
write again
$\pi = \pi_0 + \pi_{\rm tail}$ and correspondingly 
\[
\{x_j, x_k\}_{\pi} = 
\{x_j, x_k\}_{\pi_0} + \{x_j, x_k\}_{\pi_{\rm tail}},\hs 1 \leq j < k \leq n,
\]
where $\{x_j, x_k\}_{\pi_{\rm tail}} \in \CC[x_{j+1}, \ldots, x_{k-1}]$. 
Consider also, as in \cite[$\S$5.2]{GY:Poi-CGL}, the 
{\it reverse lexicographic order} $\prec$ on $\ZZ^n$, i.e., for
$f = (f_1, \ldots, f_n)^t$ and $g = (g_1, \ldots, g_n)^t \in \ZZ^n$, one has 
$f \prec g$ if and only if there exists $i \in [1, n]$ such that $f_j = g_j$ 
for all $j \in [i+1, n]$ and $f_i < g_i$. Every non-zero $\phi \in \CC[x_1, \ldots, x_n]$ then has a 
well-defined {\it $x$-degree}, denoted as  $\deg_x\phi \in (\ZZ_{\geq 0})^n$, and an {\it $x$-leading term}, 
denoted as
${\rm lt}_x \phi = \xi x^{\deg_x\phi}$ for some 
$\xi \in \CC^\times$, such that 
\[
\phi = {\rm lt}_x \phi + \sum_{g \prec \deg_x\phi} \xi_g x^g
\]
as a finite sum and with $\xi_g \in \CC^\times$ for each $g \in (\ZZ_{\geq 0})^n$  appearing 
in the sum on the right-hand side.  
Recall from \autoref{lm:lb-rb-S} that we have  the bijection 
${\rm rb}: \calS_\pi \rightarrow {\rm rb}(\calS_\pi)\subset [2, n],  \bth \mapsto {\rm rb}(\bth)$.
For $k \in [2, n]$ let 
\[
I_\pi(k) = \{i \in [1, k-1]:\;  \{x_i, x_k\}_{\pi_{\rm tail}} \neq 0\}.
\]

\begin{proposition}\label{pr:p-pi}
For any  symmetric $\TT$-Poisson CGL extension $\pi$ of dimension  $n$, one has
\[
{\rm rb}(\calS_\pi) = \{k \in [2, n]: I_\pi(k) \neq \emptyset\}. 
\]
Setting  $p_\pi(k) = {\rm max} (I_\pi(k))$ for $k \in {\rm rb}(\calS_\pi)$,  one has 
$\calS_\pi = \{\bth^{(p_\pi(k), k)}: k \in {\rm rb}(\calS_\pi)\}$, where for $k \in {\rm rb}(\calS_\pi)$,
\begin{equation}\label{eq:p-pi-Poi}
\bth^{(p_\pi(k), k)}= -e_{p_\pi(k)}-e_k +\deg_x \left(\{x_{p_\pi(k)}, \, x_k\}_{\pi_{\rm tail}}\right)
\hs \mbox{and} \hs
\blam \bth^{(p_\pi(k), k)} = -\lambda_k (e_{p_\pi(k)}-e_k).
\end{equation}
\end{proposition}

\begin{proof}
We first prove the following claims:

Claim (a): for any  $k \in [1, n]$, there exists $\bfw \in \mW(\pi)$ with $\rb(\bfw) = k$ if and only if 
$k \in {\rm rb}(\calS_\pi)$;

Claim (b): if $k \in {\rm rb}(\calS_\pi)$, then for every 
$\bfw \in \mW(\pi)$ with $\rb(\bfw) = k$ one has 
$\lb(\bfw) \leq \lb(\bth)$, where $\bth$ is the unique element in $\calS_\pi$ with $\rb(\bth)=k$;

Claim (c): if $\bfw \in \mW(\pi)$ such that $\rb(\bfw) = k$, then either $\bfw = \bth$, or $\bfw \prec \bth$ and $\bfw = \bfw'+\bth$, where $\bfw' \in (\calS_\pi)_{\geq 1}$
does not have $\bth$ in its decomposition as a sum of elements in $\calS_\pi$.

To prove the claims,  for $\bfw \in (\calS_\pi)_{\geq 1}$ let $\calS_\pi(\bfw)$ be the set of all elements in 
$\calS_\pi$ that appear in $\bfw$ when expressed as a (unique finite) sum of elements in $\calS_\pi$. Suppose that 
$\bfw \in \mW(\pi)$ and $\rb(\bfw) = k$. Let $\bth \in \calS_\pi(\bfw)$ be such that $\rb(\bth) = {\rm max}
\{\rb(\widetilde{\bth}): \widetilde{\bth} \in \calS_\pi(\bfw)\}$. By 
\autoref{lm:j-k-unique}, such a $\bth \in \calS_\pi(\bfw)$ is unique. As $\bfw \in \ZZone$, the element
$\bth$  appears exactly once in $\bfw$ when written as a sum of elements in $\calS_\pi(\bfw)$, and $\rb(\bth) = k$. This proves that $k\in \rb(\calS_\pi)$, and Claim (a) follows. Let  
$\bth'$ be the unique element in $\calS_\pi(\bfw)$ such that 
$\lb(\bth') = {\rm min}\{\lb(\widetilde{\bth}): \widetilde{\bth}\in \calS_\pi(\bfw)\}$, then
$\lb(\bfw) = \lb(\bth')$. 
As $\bth \in \calS_\pi(\bfw)$, we have $\lb(\bfw) = \lb(\bth') \leq \lb(\bth)$. This proves Claim (b). Suppose, in addition, that $\bfw \neq \bth$. Then $\bfw = \bfw' + \bth$ with
$\bfw' \in (\calS_\pi)_{\geq 1}$ and $\bth \notin \calS_\pi(\bfw')$. As 
$\widetilde{\bth} \prec {\bf 0}$ for every $\widetilde{\bth} \in \calS_\pi(\bfw')$, where ${\bf 0} = (0, \ldots, 0)^t \in \ZZ^n$,
we have $\bfw' \prec {\bf 0}$, 
Thus $\bfw =\bfw' + \bth \prec \bth$, proving Claim (c).

Fix now $k \in [1, n]$. Assume first that $k \in [2, n]$ and $I_\pi(k) \neq 0$. 
For
$i \in I_\pi(k)$, let 
\[
 \pi_{{\rm tail}, i,k} :=\{x_i, \, x_k\}_{\pi_{\rm tail}} \frac{\partial}{\partial x_i}\wedge 
\frac{\partial}{\partial x_k},
\]
and let $\mW(\pi_{{\rm tail}, i,k})$ be the non-empty set of all  
$\CCsn$-weights of the monomial bi-vector fields appearing in  $\pi_{{\rm tail}, i,k}$. 
Then ${\rm rb}(\bfw) = k$
for any $\bfw  \in \mW(\pi_{{\rm tail}, i,k})$ and any  $i \in I_\pi(k)$, so 
$k \in {\rm rb}(\calS_\pi)$
by Claim (a).

Conversely, suppose that $k \in {\rm rb}(\calS_\pi)$ and let 
$\bth$ be the (necessarily unique) element in 
$\calS_\pi$ such that 
$\rb(\bth) = k$, and let $j = \lb(\bth)$. Then 
$\pi_{\rm tail}$ contains the monomial bi-vector field
\[
\xi V_\bth = \xi \prod_{i \neq j, k} x_i^{\bth_i}\frac{\partial}{\partial x_j}\wedge 
\frac{\partial}{\partial x_k}
\]
for some $\xi \in \CC^\times$.
In particular, $\{x_j, x_k\}_{\pi_{\rm tail}} \neq 0$, so $j \in I_\pi(k)$ and thus $I_\pi(k) \neq \emptyset$. 
For any $i \in I_\pi(k)$, applying Claim (b) to any  $\bfw \in \mW(\pi_{{\rm tail}, i,k})$ we see  that  
$i ={\rm lb}(\bfw) \leq \lb(\bth)= j$. Thus 
\[
j = {\rm max}\{i \in I_\pi(k)\}=p_\pi(k).
\]
Moreover, for any  $\bfw \in \mW(\pi_{{\rm tail}, j, k})$ and $\bfw \neq \bth$, as $\rb(\bfw) = k$, we have 
$\bfw \prec \bth$ by Claim (c). Thus  
$\bfw + e_j + e_k  \prec \bth + e_j + e_k$, from which it 
follows that $\bth + e_j+e_k = {\rm deg}_x \left(\{x_j, x_k\}_{\pi_{\rm tail}}\right)$.
This proves the first identity in \eqref{eq:p-pi-Poi}.
The second identity in \eqref{eq:p-pi-Poi} is proved in \autoref{pr:Spi0-action}.
\end{proof}

Similarly, recall from \autoref{lm:lb-rb-S} that we have  the bijection 
${\rm lb}: \calS_\pi \rightarrow {\rm lb}(\calS_\pi),  \bth \mapsto {\rm lb}(\bth) \in  [1, n-1]$.
For $j \in [1, n-1]$ such that
$\{x_j, x_k\}_{\pi_{\rm tail}} \neq 0$ for some $k \in [j+1, n]$, set
\begin{equation}\label{eq:s-pi-j}
s_\pi(j) = {\rm min} \{k \in [j+1, n]: \{x_j, x_k\}_{\pi_{\rm tail}} \neq 0\}.
\end{equation}
The following analog of \autoref{pr:p-pi}  is proved similarly.

\begin{proposition}\label{pr:s-pi}
For any  symmetric $\TT$-Poisson CGL extension $\pi$ of dimension $n$, one has
\[
{\rm lb}(\calS_\pi) = \{j \in [1, n-1]: \;\exists \; k \in [j+1, n]\; \mbox{such that}\; 
\{x_j, x_k\}_{\pi_{\rm tail}} \neq 0\},
\]
and 
$\calS_\pi = \{\bth^{(j, s_\pi(j))}: j \in {\rm lb}(\calS_\pi)\}$, where for $j \in {\rm lb}(\calS_\pi)$
(recall from \autoref{pr:Spi0-action-symmetric} that $\lambda_j^\prime = -\lambda_{s_\pi(j)}$), 
\begin{equation}\label{eq:s-pi-Poi}
\bth^{(j, s_\pi(j)}= -e_{j}-e_{s_\pi(j)} +\deg_x \left(\{x_{j}, \, x_{s_\pi(j)}\}_{\pi_{\rm tail}}\right)
\hs \mbox{and} \hs \blam \bth^{(j, s_\pi(j)} = \lambda_{j}^\prime(e_j-e_{s_\pi(j)}).
\end{equation}
\end{proposition}

\begin{remark}\label{rk:s-p-pi}
{\rm
By \autoref{pr:p-pi} and \autoref{pr:s-pi}, we can regard $p_\pi$ and $s_\pi$ as two maps 
\begin{align*}
&p_\pi:\;\; {\rm rb}(\calS_\pi) \longrightarrow {\rm lb}(\calS_\pi), \;\;\; {\rm rb}(\bth) \longmapsto {\rm lb}(\bth),\\
&s_\pi:\;\; {\rm lb}(\calS_\pi) \longrightarrow {\rm rb}(\calS_\pi), \;\;\; {\rm lb}(\bth) \longmapsto {\rm rb}(\bth),
\end{align*}
that are inverses of each other. 
\hfill $\diamond$
}
\end{remark}

We now extend the definition of $p_\pi$ in \autoref{pr:p-pi} to a map
\[
p_\pi: \; [1, n] \longrightarrow \{-\infty\} \sqcup [1, n-1]
\]
by setting, for all $k \in [1, n]$, 
\begin{align}\nonumber
p_\pi(k) &= \begin{cases} -\infty, & \hs \mbox{if} \; \;
\{x_i, x_k\}_{\pi_{\rm tail}} = 0 \; \forall\; i \in [1, k-1],\\
{\rm max}\{i \in [1, k-1]: \; \{x_i, x_k\}_{\pi_{\rm tail}} \neq 0\}, &\hs  \mbox{otherwise},\end{cases}\\
\label{eq:p-pi-bth}& = \begin{cases} {\rm lb}(\bth), & \hs \mbox{if}\;\;k = {\rm rb}(\bth) \;\;
\mbox{for (a necessarily unique)} \;\;  
\bth\in \calS_\pi,\\
-\infty, & \hs \mbox{otherwise}, 
\end{cases}
\end{align}
Similarly, define $s_\pi: [1, n] \to [2, n] \sqcup \{+\infty\}$ by setting, for $j \in [1, n]$, 
\begin{align}\nonumber
s_\pi(j)& = \begin{cases} +\infty, & \hs \mbox{if} \; 
\{x_j, x_k\}_{\pi_{\rm tail}} = 0 \; \forall\; k \in [j+1, n],\\
{\rm min}\{k \in [j+1, n]: \; \{x_j, x_k\}_{\pi_{\rm tail}} \neq 0\}, &\hs  \mbox{otherwise},\end{cases}\\
\label{eq:s-pi-bth}&=\begin{cases} {\rm rb}(\bth), & \hs \mbox{if}\;j = {\rm lb}(\bth) \;
\mbox{for (a necessarily unique)} \; 
\bth\in \calS_\pi,\\
+\infty, &\hs \mbox{otherwise}.
\end{cases}
\end{align}
On the other hand, 
Goodearl and Yakimov defined in \cite[$\S$5.2]{GY:Poi-CGL} two maps
\[
p: \;\; [1, n]\longrightarrow \{-\infty\}\cup [1, n-1] \hs \mbox{and} \hs 
s: \;\;[1, n] \longrightarrow  [2, n] \cup \{+\infty\},
 \]
respectively called the  {\it predecessor map} and the {\it successor map} of  the symmetric Poisson CGL extension $\pi$ (with respective to the ordered CGL generators $(x_1, \ldots, x_n)$).

\begin{lemma}\label{lm:s-p-same}
One has  $p_\pi = p: [1, n]\rightarrow \{-\infty\}\cup [1, n-1]$ and 
and $s_\pi=s: [1, n] \longrightarrow  [2, n] \cup \{+\infty\}$. 
\end{lemma}

\begin{proof}
Let $k \in [1, n]$. By \cite[Theorem 5.5]{GY:Poi-CGL}, 
$p(k) = -\infty$
if and only if $\{x_i, x_k\}_{\pi_{\rm tail}} = 0$ for every $i \in [1, k-1]$.
Assume that $p(k)  \neq -\infty$. By \cite[Corollary 8.11]{GY:Poi-CGL}
(see also the paragraph from \cite[Page 64]{GY:Poi-CGL} 
before \cite[Example 8.12]{GY:Poi-CGL}), one has 
$\{x_{p(k)}, x_k\}_{\pi_{\rm tail}} \neq  0$. Thus $p(k) = p_\pi(k)$ if
$p(k) = k-1$. Suppose that $p(k) < k-1$. By \cite[Corollary 8.7]{GY:Poi-CGL}
and applying \cite[Theorem 5.5]{GY:Poi-CGL} to the {\it interval Poisson subalgebra}
(see \cite[Page 60]{GY:Poi-CGL}) $\CC[x_{p(k)+1}, \ldots, x_k]$, one has 
$\{x_{i}, x_k\}_{\pi_{\rm tail}} =  0$ for all $i \in [p(k)+1, k-1]$. Thus 
$p(k) = {\rm max}\{i \in [1, k-1]: \{x_i, x_k\}_{\pi_{\rm tail}} \neq  0\} = p_\pi(k)$.

 The {\it successor map} $s: [1, n] \to  [2, n] \cup \{+\infty\}$ is defined in 
\cite[$\S$5.2]{GY:Poi-CGL} by $s(j) = +\infty$ if $j \neq p_(k)$ for any $k \in [j+1, n]$
and otherwise $s(j)$ is  (the necessarily unique) $k \in [j+1, n]$ such that $p(k) = j$. It now
follows from the definition of $s_\pi$ that $s = s_\pi$. 
\end{proof}

\begin{remark}\label{rk:normal}
{\rm
It follows from \eqref{eq:s-pi-Poi} and \autoref{lm:s-p-same} that the symmetric Poisson CGL extension $\pi$ is normal
in the sense of \cite[$\S$9.2]{GY:Poi-CGL} if and only if $\pi$ is the normalized 
$\calS_\pi$-admissible algebraic Poisson deformation of $\pi_0$ as in \autoref{defn:normalized}.
\hfill $\diamond$
}
\end{remark}

Following \cite[$\S$5.2]{GY:Poi-CGL} and \cite[$\S$8.3]{GY:Poi-CGL}, 
for $k \in [1, n]$ define 
\[
L_\pi(k) = \{p_\pi^{o_-(k)}(k), \;\cdots, \;p_\pi(k),\; k, \;s_\pi(k), \;\ldots, \;s_\pi^{o_+(k)}(k)\}
\subset [1, n]
\]
as  {\it level set of $k$ defined by $\pi$}, 
where 
\[
o_-(k) = {\rm max}\{m \in \ZZ_{\geq 0}: p_\pi^m(k) \neq -\infty\},\quad
o_+(k) = {\rm max}\{m \in \ZZ_{\geq 0}: s_\pi^m(k) \neq +\infty\}.
\]
On the other hand, recall from \autoref{ld:levels}
that we have defined the level set $L(k) \subset [1, n]$ for each $k \in [1, n]$ using the entire 
$\calS(\pi_0)$. For $k \in [1, n]$, one then has 
$L_\pi(k) \subset L(k)$, and 
$|L_\pi(k)| \geq 2$ if and only if $k \in {\rm lb}(\calS_\pi)\cup {\rm rb}(\calS_\pi)$. 
Moreover, if $k \in [1, n]$ is such that 
$L_\pi(k) = \{k_0, k_1, \ldots, k_l\}$, where
$k_0 < k_1 < \cdots < k_l$ and $\l \geq 1$, then by \eqref{eq:p-pi-bth} and \eqref{eq:s-pi-bth} one has the
 well-defined (see \autoref{lm:L-L})
\begin{equation}\label{eq:lambda-level}
\lambda_{k_0}^\prime = \lambda_{k_1}^\prime =\cdots =  \lambda^\prime_{k_{l-1}} = -\lambda_{k_l} = 
 -\lambda_{k_{l-1}}\cdots  = -\lambda_{k_1} = a_{\sL(k)}.
\end{equation}
The identities in \eqref{eq:lambda-level} are first proved in \cite[Proposition 8.8]{GY:Poi-CGL}.
 For  $j \in 
{\rm lb}(\calS_\pi)\cup {\rm rb}(\calS_\pi)$, set 
\begin{equation}\label{eq:a-SL-SL-0}
a_{\sL_\pi(j)} = a_{\sL(j)}
\end{equation} 
and for  $i \in [1, n]$ such that $L_\pi(i) 
\cap [{\rm min}(L_\pi(j)), {\rm max}(L_\pi(j))] \neq \emptyset$, set
\begin{equation}\label{eq:a-SL-SL-1}
a_{\sL_\pi(i), \sL_\pi(j)} = a_{\sL(i), \sL(j)},
\end{equation}
where recall from \autoref{def:a-LL} that $a_{\sL(i), \sL(j)}$ is 
a Cartan integer associated to $\pi_0$.

\begin{remark}\label{rk:GY-also}
{\rm
It follows from \autoref{pr:s-pi} and \autoref{lm:bth-Cartan-integers} that
for every $j \in {\rm lb}(\calS_\pi)$ and
$i \in [j+1, s_\pi(j)-1]$, the $i^{\rm th}$ entry of ${\rm deg}_x (\{x_j, x_{s_\pi(j)}\})_{\pi_{\rm tail}})$ 
depends only on the level set $L_\pi(i)$. This fact is also proved in \cite[Corollary 8.11]{GY:Poi-CGL}.
\hfill $\diamond$
}
\end{remark}

\subsection{The Goodearl-Yakimov initial mutation matrix in terms of  $\calS_\pi$}\label{ss:mut-matrix}
Recall \cite{FZ:I} that a seed in $\CC(x_1, \ldots, x_n)$ is 
a pair $({\bf y}, M)$, where ${\bf y} = \{y_1, \ldots, y_n\}$ is an ordered set of free transcendental generators of
$\CC(x_1, \ldots, x_n)$ over $\CC$, and $M$ is an integral matrix with rows indexed by $j \in [1, n]$
and columns indexed by $k \in {\rm ex}$ for some subset ${\rm ex}$ of $[1, n]$, called the {\it exchange set}, 
such that the 
${\rm ex} \times {\rm ex}$ sub-matrix of $M$ is skew-symmetrizable.  The set ${\bf y}$ is called 
the {\it extended cluster} and $M$ the {\it mutation matrix} of the seed $({\bf y}, M)$. Seeds in 
$\CC(x_1, \ldots, x_n)$ {\it mutate} according to certain mutation rules, and the {\it mutation equivalence
class}  of a seed $({\bf y}, M)$ defines a sub-algebra of $\CC(x_1, \ldots, x_n)$ (with the frozen variables not inverted), called 
the {\it cluster algebra} associated to  $({\bf y}, M)$. We refer to \cite{FZ:I} 
for the basics of cluster algebras. 

Associated to any  dimension $n$ symmetric $\TT$-Poisson CGL extension $\pi$, 
Goodearl and Yakimov constructed in \cite{GY:Poi-CGL} 
 a seed $({\bf y}, M)$ in $\CC(x_1, \ldots, x_n)$  and showed that 
the cluster algebra defined by  $({\bf y}, M)$
coincides with the polynomial ring $\CC[x_1, \ldots, x_n]$. In this section, we prove an explicit formula
expressing the mutation matrix $M$ in terms of the subset $\calS_\pi$ of $\calS(\pi_0)$, where
$\pi_0$ is the log-canonical term of $\pi$.

Fix thus a dimension $n$ symmetric $\TT$-Poisson CGL extension $\pi$ with log-canonical term
$\pi_0$ in the linear coordinates $(x_1, \ldots, x_n)$ on $\CC^n$, 
and let the notation be as in $\S$\ref{ss:sym-CGL-classification}. 
Let $R_k=\CC[x_1, \ldots, x_k]$ for  $k \in [1, n]$
(and we set $R_0 = \CC$), and note that
$R_k$ is a $\TT$-invariant Poisson subalgebra of $(R = R_n, \{\, , \, \}_\pi)$. For $k \in [1, n]$, 
an element $y \in R_k$ is called a {\it $\TT$-Poisson prime} if $y$ is a $\TT$-weight vector 
and a prime element of $R_k$ such that
 $\{y, R_k\}_\pi
\subset y R_k$. 
By \cite[Theorem 5.5]{GY:Poi-CGL}, there is a unique sequence 
\begin{equation}\label{eq:by}
{\bf y} = (y_1, y_2, \ldots, y_n),
\end{equation}
where $y_k$ for each $k \in [1, n]$ is the unique $\TT$-Poisson prime element in $R_k$ of the form
\begin{equation}\label{eq:y-kk}
y_k = x_k y_{p_\pi(k)} +c_k
\end{equation}
for some  $c_k \in \RR_{k-1}$ (here we set $y_{-\infty} = 1$). It follows from \eqref{eq:y-kk} 
that ${\bf y}$ is a  free generating set of $\CC(x_1, \ldots, x_n)$. To recall 
the mutation matrix $M$ from \cite{GY:Poi-CGL}  for the Goodearl-Yakimov seed $({\bf y}, M)$, 
note first that as $\{y_k, R_k\}_\pi
\subset y_k R_k$ for each $k \in [1, n]$,  the elements in ${\bf y}$ have log-canonical Poisson brackets
with respect to $\{\, , \, \}_\pi$. Consider the skew-symmetric $n \times n$ matrix 
\[
{\bf q} = (q_{j, k})_{j, k \in [1, n]}
\]
where 
$\{y_j, y_k\}_{\pi} = q_{j, k} y_jy_k$ for $j, k \in [1, n]$.
Let $\chi_{y_k} \in X^*(\TT)$ be the $\TT$-weight of $y_k$, and set
\[
\chi_{\bf y} = (\chi_{y_1}, \chi_{y_2}, \ldots, \chi_{y_n}) \in X^*(\TT)^n. 
\]
For a subset $J$ of $[1, n]$ and for $A = \ZZ$ or $\CC$, 
denote by ${\calM}_{n \times J}(A)$ 
the set of all matrices with entries in $A$ whose rows are indexed by $i \in [1, n]$ and whose columns are indexed 
by $j \in J$. 
Set now
\begin{equation}\label{eq:def-ex}
{\rm ex}  ={\rm lb}(\calS_\pi)=\{j \in [1, n]: s_\pi(j) \neq +\infty\} \subset [1, n],
\end{equation}
and let  $\Lambda^\prime \in \calM_{n \times {\rm ex}}(\CC)$ whose  $(j, j)$-entry for $j \in {\rm ex}$ is
$\lambda_j^\prime = -\lambda_{s_\pi(j)} = a_{\sL_\pi(j)}$ and all  other entries $0$.

\begin{theorem}\label{thm:GY-main}\cite[Theorem 11.1]{GY:Poi-CGL}
For any symmetric $\TT$-Poisson CGL extension $\pi$ of dimension $n$ and with notation as above, 
there is a unique $M \in \calM_{n \times {\rm ex}}(\ZZ)$
satisfying
\begin{equation}\label{eq:bfq-M}
{\bf q} M = \Lambda^\prime \hs \mbox{and} \hs \chi_{\bf y} M = 0.
\end{equation}
If, in addition, there exist positive integers $d_{\sL_\pi(j)}$ for each $j \in {\rm ex}$ level set $L$ such that
\[
\frac{a_{\sL_\pi(j)}}{d_{L_\pi(j)}} = \frac{a_{\sL_\pi(k)}}{d_{L_\pi(k)}}, \hs \forall \; j, k \in {\rm ex},
\]
then $M$ is skew-symmetrizable, and the cluster sub-algebra of $\CC(x_1, \ldots, x_n)$ defined by the (equivalence class of the) seed $({\bf y}, M)$ (with no frozen variable inverted) is the polynomial ring $\CC[x_1, \ldots, x_n]$. 
\end{theorem}

\begin{remark}\label{rk:other-GY}
{\rm \cite[Theorem 11.1]{GY:Poi-CGL} also contains statements on  
upper-cluster algebras and cluster algebras defined by $({\bf y}, M)$ with subsets of the set of 
frozen variables inverted.
It also describes a family of seeds in the equivalence class of $({\bf y}, M)$. We do not review these facts here
and we refer to \cite{GY:Poi-CGL} for details.   We  point out that the two conditions in \eqref{eq:bfq-M} guarantee  \cite{GSV:book} that every extended cluster ${\bf y}'$
mutation equivalent to ${\bf y}$ is 
 $\TT$-Poisson in the sense that all the elements in ${\bf y}'$ are $\TT$-weight vectors and have 
log-canonical Poisson brackets with respect to $\pi$. 
\hfill $\diamond$
}
\end{remark}

We are now ready to state our main result on the mutation matrix $M \in \calM_{n \times {\rm ex}}(\ZZ)$. 

Introduce the matrix $\calH_\pi  \in \calM_{n \times {\rm ex}}(\ZZ)$ whose columns are
 the elements in $\calS_\pi \subset \calS(\pi_0)  \subset \ZZone$.
  More precisely, define (see \autoref{pr:s-pi})
\begin{equation}\label{eq:calH-pi}
\calH_\pi = (\bth^{(j, s_\pi(j))})_{j \in {\rm ex}} \in \calM_{n \times {\rm ex}}(\ZZ).
\end{equation}
Recalling again that $\{e_1, \ldots, e_n\}$ is the standard basis of $\ZZ^n$ consisting of column vectors,
introduce also the $n \times n$ lower-triangular
matrix (we set $e_{+\infty} = 0$)
\begin{equation}\label{eq:E-pi}
E_\pi = (e_1 -e_{s_\pi(1)}, \; e_2 -e_{s_\pi(2)}, \; \ldots,\; e_n -e_{s_\pi(n)}), 
\end{equation}
Let $E_\pi^t = (E_\pi)^t$, the transpose of $E_t$. Recall from \autoref{def:a-LL} and \eqref{eq:a-SL-SL-1} the Cartan 
integers $a_{\sL', \sL}$ associated to interlacing level sets in $[1, n]$ defined by
the log-canonical part  $\pi_0$ of $\pi$.

\begin{theorem}\label{thm:main-M}
For any symmetric $\TT$-Poisson CGL extension $\pi$ of dimension $n$, 
the unique matrix $M \in \calM_{n \times {\rm ex}}(\ZZ)$ satisfying \eqref{eq:bfq-M} is given by
\[
M = E_\pi^t \calH_\pi.
\]
Writing $M = (m_{i, j})_{i \in [1, n], j \in {\rm ex}}$, for $i \in [1, n]$ and $j \in {\rm ex}$, we also have
\[
m_{i, j}  = \begin{cases} 1, & \hs i = p_\pi(j), \\
-1, & \hs i = s_\pi(j), \\
a_{\sL(i), \sL(j)}, & \hs i < j < s_\pi(i) < s_\pi(j), \\
-a_{\sL(i), \sL(j)}, & \hs j < i < s_\pi(j) < s_\pi(i) \;\; (\mbox{including when} \; s_\pi(i) = +\infty),\\
0, & \hs \mbox{otherwise}.\end{cases}
\]
\end{theorem}

\begin{proof} Recall from \eqref{eq:blam} the Poisson coefficient matrix 
$\blam$ of the coordinates $(x_1, \ldots, x_n)$ with respect to the log-canonical Poisson structure $\pi_0$. 
Recall also that $\bbeta = (\beta_1, \ldots, \beta_n) \in X^*(\TT)^n$, where $\beta_j$ is the $\TT$-weight of $x_j$
for $j \in [1, n]$. 
Let $F_\pi = (E_\pi)^{-1}$, and let $F_\pi^t = (F_\pi)^t$ be the transpose of $F_\pi$. 
It is proved in \cite[Proposition 5.8]{GY:Poi-CGL} that
${\bf q} = F_\pi \blam F_\pi^t$ and $\chi_{\bf y} = \bbeta F_\pi^t$.
Thus the two equation in \eqref{eq:bfq-M} become
\[
\blam F_\pi^t M = E_\pi \Lambda^\prime \hs \mbox{and} \hs \bbeta F_\pi^t M = 0.
\]
Set $M' = E_\pi^t\calH_\pi$ so that $F_\pi^t M' = \calH_\pi$. Using the second identity in
\eqref{eq:s-pi-Poi}, for each $j \in {\rm ex}$ we have
\[
\blam F_\pi^t M' e_j = \blam \calH_\pi e_j = \blam \bth^{(j, s_\pi(j))} = \lambda^\prime_{j}(e_j-e_{s_\pi(j)})
= E_\pi \Lambda^\prime e_j.
\]
Thus $\blam F_\pi^t M' = E_\pi \Lambda^\prime$. As $\calS_\pi \subset \ker \bbeta$, we also have
$\bbeta F_\pi^t M = \bbeta \calH_\pi = 0$. By the uniqueness of the solution $M$ to 
\eqref{eq:bfq-M}, we have $M = M' = E_\pi^t\calH_\pi$. As
\[
E_\pi^t = (e_1-e_{p_\pi(1)}, \, e_2-e_{p_\pi(2)}, \,\ldots,\, -e_{p_\pi(n)}+e_n),
\]
where we set $e_{-\infty} = 0$, the description of the entries of $M$ is now a direct consequence 
of \autoref{lm:bth-Cartan-integers}.
\end{proof}

\begin{example}\label{ex:beta-2p-02}
{\rm
Let $\pi = \pi(-p^2, -1)$ in  \autoref{ex:beta-2p-01}. We then have ${\rm ex}  = \{1, 2\}$, and
\[
M = E_\pi^t \calH_\pi = \left(\begin{array}{cccc} 1 & 0 & 0 & -1\\ 0 & 1 & -1 & 0\\
0 & 0 & 1 & 0\\ 
0 & 0 & 0 & 1\end{array}\right) \left(\begin{array}{cc} -1 & 0 \\ 2p & -1\\ 2p & -1 \\ -1 & 0\end{array}
\right) = \left(\begin{array}{cc} 0 & 0 \\ 0 & 0 \\ 2p & -1\\ -1 & 0\end{array}\right).
\]
On the other hand, one checks directly that the sequence ${\bf y}$ in \eqref{eq:by} is 
\[
{\bf y} = (y_1, y_2, y_3, y_4) = (x_1, \;x_2, \;x_2x_3-1, \;x_1x_4-(x_2x_3-1)^{2p}).
\]
The seed $({\bf y}, M)$ is thus of finite type $A_1 \times A_1$, with 
$x_3$ and $x_4$ as the only two new cluster variables other than those in the initial extended cluster ${\bf y}$.
\hfill $\diamond$
}
\end{example}

\section{Strongly symmetric  Poisson CGL extensions}\label{s:strongly-CGL}
Let again $\TT$ be a complex algebraic torus with Lie algebra $\t$ and character lattice 
$X^*(\TT) \subset \t^*$, and let 
$\TT$ act on $\CC^n$ via $\bbeta = (\beta_1, \ldots, \beta_n) \in X^*(\TT)^n \subset (\t^*)^n$. 
In this section, we show that if $\pi_0$ is a log-canonical Poisson structure on $\CC^n$ of {\it strongly
symmetric $\TT$-action type}, the elements in $\calS(\pi_0)$ can be  read off directly from 
a sequence of elements in $\t^*$ computed using 
reflection operators defined by the $\beta_j$'s. This class of examples are important, as we will see
in $\S$\ref{s:Cartan} that they contain 
the Lie theoretical examples as special cases.

\subsection{Strongly symmetric  Poisson CGL extensions and action pairs}\label{ss:strongly}

\begin{definition}\label{def:strong-sym-action-data}
{\rm
1) An $n$-dimensional $\TT$-action datum 
$(\bbeta = (\beta_1, \ldots, \beta_n), \bfh = (h_1, \ldots, h_n))$
is said to be {\it strongly symmetric} if 
$\beta_j(h_k) = \beta_k(h_j)$ for all $j, k \in [1, n]$, i.e., if it is 
of symmetric action type as in \autoref{def:sym-action-data}, where 
$h_j^\prime$ for each $j \in [1, n]$ can be chosen to be $-h_j$;

2) A log-canonical Poisson structure $\pi_0$ on $\CC^n$ is said to be 
of {\it strongly symmetric $\TT$-action type} if it is defined by a strongly
symmetric $\TT$-action datum;

3) A symmetric $\TT$-Poisson CGL as in \autoref{def:Poi-CGL} is said to be {\it strongly symmetric} if
its log-canonical term $\pi_0$ is of strongly symmetric $\TT$-action type.
}
\end{definition}

In view of the classification results in \autoref{thm:sym-CGL} and \autoref{cor:classification-CGL} on 
symmetric $\TT$-Poisson CGL extensions, $n$-dimensional 
strongly symmetric $\TT$-Poisson CGL extensions are classified, up to rescaling of CGL generators,
by log-canonical Poisson structures
$\pi_0$ on $\CC^n$ of strongly symmetric $\TT$-action type and by subsets of the set
$\calS(\pi_0)$ of all  non-zero $\CCsn$-weights in $\mH^2_{\pi_0}(\CC^n)^\TT$. 

\begin{lemma}\label{lm:lara}
The following are equivalent for a $\TT$-action datum $(\bbeta = (\beta_1, \ldots, \beta_n), \bfh = (h_1, \ldots, h_n))$:

1) $(\bbeta, {\bf h})$ is strongly symmetric;

2) there exists a bilinear symmetric form $\lara$ on $\t^*$ such that 
$\beta_j(h_k) = \la \beta_j, \beta_k\ra$ for all $j, k \in [1, n]$.
\end{lemma}

\begin{proof} 
Clearly 2) implies 1). Assuming 1), we now prove 2). 
Without loss of generality, assume that $\{\beta_1, \ldots, \beta_l\}$ is a  linearly independent subset of 
$\{\beta_1, \ldots, \beta_n\}$, and let $\bbeta^\prime = (\beta_1^\prime, \ldots, \beta_r^\prime)$ be any basis of $\t^*$ with $\beta_j^\prime = \beta_j$
for $j \in [1, l]$. One then has
\[
\bbeta = \bbeta^\prime \left(\begin{array}{cc} I_l & A \\ 0 & 0 \end{array}\right),
\]
where $I_l$ is the $l \times l$ identity matrix and $A$ is the $l \times (n-l)$ matrix such that
\begin{equation}\label{eq:beta-beta}
(\beta_{l+1}, \ldots, \beta_n) = (\beta_1, \ldots, \beta_l) A.
\end{equation}
Let $B = (\beta_j(h_k))_{j, k \in [1, l]}$, which is symmetric, and let $\lara$ be the symmetric bilinear form on $\t^*$ given by
\[
\langle \bbeta^\prime, \bbeta^\prime \rangle = \left(\begin{array}{cc} B & 0 \\ 0 & 0 \end{array}\right),
\]
where $\langle \bbeta^\prime, \bbeta^\prime \rangle 
= (\langle \beta_j^\prime, \beta_k^\prime\rangle)_{j, k \in [1, n]}$. Writing 
$\langle \bbeta, \bbeta \rangle = (\langle \beta_j, \beta_k\rangle)_{j, k \in [1, n]}$, one then has
\[
\langle \bbeta, \bbeta\rangle =\left(\begin{array}{cc} I_l & A \\ 0 & 0 \end{array}\right)^t
\left(\begin{array}{cc} B & 0 \\ 0 & 0 \end{array}\right)
\left(\begin{array}{cc} I_l & A \\ 0 & 0 \end{array}\right) =
\left(\begin{array}{cc} B & BA \\ A^tB &A^tBA \end{array}\right).
\]
Evaluating both sides of \eqref{eq:beta-beta} at every $h_j$ for $j \in [1, l]$, one has
\[
BA = \left(\begin{array}{ccc} \beta_{l+1}(h_1) & \cdots & \beta_n(h_1)\\
\cdots & \cdots & \cdots \\
\beta_{l+1}(h_l) & \cdots & \beta_n(h_l)\end{array}\right) =
\left(\begin{array}{ccc} \beta_{1}(h_{l+1}) & \cdots & \beta_1(h_{n+1})\\
\cdots & \cdots & \cdots \\
\beta_{l}(h_{l+1}) & \cdots & \beta_l(h_n)\end{array}\right).
\]
Similarly, $A^tB = (\beta_j(h_k))_{j \in [l+1, n], k \in [1, l]}$ and 
$A^tBA = (\beta_j(h_k))_{j, k \in [l+1, n]}$. Thus $\langle \beta_j, \beta_k\rangle = \beta_j(h_k)$ for all
$j, k \in [1, n]$.
\end{proof}

\begin{definition}\label{de:action-pair}
{\rm By an {\it $n$-dimensional $\TT$-action pair} we mean a pair $(\lara, \bbeta)$, where 
$\lara$ is a symmetric bilinear form on
$\t^*$ and $\bbeta = (\beta_1, \ldots, \beta_n) \in X^*(\TT)^n \subset (\t^*)^n$ is such that
$\langle \beta_j, \; \beta_j \rangle \neq 0$ for every $j \in [1, n]$.
For such a pair $(\lara, \bbeta)$, and with again $\TT$ acting on $\CC^n$ as in \eqref{eq:T-CCsn-1}, we call 
\begin{equation}\label{eq:pi0-beta}
\pi_0 = -\sum_{1\leq j < k \leq n} \langle \beta_j, \, \beta_k\rangle x_jx_k\frac{\partial}{\partial x_j} \wedge \frac{\partial}{\partial x_k}
\end{equation}
the ($\TT$-log-symplectic) log-canonical Poisson structure on $\mX = \CC^n$ defined by $(\lara, \bbeta)$. 
}
\end{definition}

\begin{remark}\label{rk:action-pair}
{\rm
In view of \autoref{lm:lara},
a log-canonical Poisson structure $\pi_0$ on $\CC^n$ is of  strongly symmetric $\TT$-action type 
(see \autoref{def:Poi-CGL}) if and only if 
$\pi_0$ is defined  by an $n$-dimensional  $\TT$-action pair. 
Consequently, a symmetric $\TT$-Poisson CGL  is strongly symmetric (see 
again \autoref{def:Poi-CGL}) if and only if 
its log-canonical term is defined by a $\TT$-action pair.
\hfill $\diamond$
}
\end{remark}

\subsection{The set $\calS(\pi_0)$ for $\pi_0$ defined by $\TT$-action pairs}\label{ss:lara-beta}
Let again $\TT$ be a complex  algebraic torus with Lie algebra $\t$ and character lattice $X^*(\TT)\subset \t^*$.
In $\S$\ref{ss:lara-beta}, we fix a $\TT$-action pair $(\lara, \bbeta=(\beta_1, \ldots, \beta_n))$ and 
let $\pi_0$ be defined by $(\lara, \bbeta)$
as  in \eqref{eq:pi0-beta}. Our main result of $\S$\ref{ss:lara-beta} is \autoref{pr:lara-beta}, which describes 
the set $\calS(\pi_0)$ of all non-zero $\CCsn$-weights in $\mH^2_{\pi_0}(\CC^n)^\TT$ 
in terms of a sequence $\bgamma = (\gamma_1, \ldots, \gamma_n) \in (\t^*)^n$ that is computed from $(\lara, \bbeta)$ using reflection operators on $\t^*$. 
To this end, for $\beta \in \t^*$ such that $\la \beta, \beta\ra \neq 0$, introduce the {\it Cartan number}
\begin{equation}\label{eq:Cartan-number}
a_{\beta, \xi}=\frac{2\langle \beta, \xi\rangle}{\langle \beta, \beta\rangle}\in \CC
\end{equation} 
for $\xi \in \t^*$, and define the reflection operator
$s_{\beta}$ on $\t^*$  by 
\begin{equation}\label{eq:s-beta}
s_{\beta}(\xi) = \xi - \frac{2\langle \beta, \xi \rangle}{\langle \beta, \beta\rangle} \beta
=\xi -a_{\beta, \xi} \beta, \hs \xi \in \t^*.
\end{equation}
One checks directly that $\langle s_{\beta}(\xi), s_{\beta}(\eta)\rangle = \la \xi, \eta\ra$ for all $\xi, \eta\in \t^*$. Define 
\begin{equation}\label{eq:gamma-j}
\bgamma = (\gamma_1, \, \gamma_2, \, \ldots, \,\gamma_n)\in (\t^*)^n,\hs \mbox{where}\hs 
\gamma_j = s_{\beta_1}\cdots s_{\beta_{j-1}} \beta_j \hs \mbox{for}\;\; j \in [1, n].
\end{equation}
Then $\la \gamma_j, \gamma_j\ra =\langle \beta_j, \beta_j\rangle  \neq 0$ for every $j \in [1, n]$, and, as $s_{\gamma_j} = 
s_{\beta_1} \cdots s_{\beta_{j-1}} s_{\beta_j} s_{\beta_{j-1}} \cdots s_{\beta_1}$, one has
\begin{equation}\label{eq:beta-j-gamma-j}
\beta_j = s_{\gamma_1} \cdots s_{\gamma_{j-1}} \gamma_j, \hs j \in [1, n].
\end{equation}

\begin{lemma}\label{lm:P-Q}
One has $\bbeta = \bgamma P$ and $\bgamma = \bbeta Q$,  where $Q = P^{-1}$ and 
\begin{equation}\label{eq:P-Q}
P = \left(\begin{array}{ccccc}
1 & a_{{}_{\beta_1, \beta_2}} & \cdots & a_{{}_{\beta_1, \beta_{n-1}}}  & a_{{}_{\beta_1, \beta_n}}\\
0 & 1 &\cdots & a_{{}_{\beta_2, \beta_{n-1}}}  & a_{{}_{\beta_2, \beta_n}}\\
\cdots & \cdots & \cdots & \cdots & \cdots \\
0 & 0 & \cdots & 1 & a_{{}_{\beta_{n-1}, \beta_n}}\\
0 & 0 & \cdots & 0 & 1\end{array}\right),\;\; \; \;
Q = \left(\begin{array}{ccccc} 1 & a_{\gamma_1, \gamma_2} & \cdots & a_{\gamma_1, \gamma_{n-1}}  & a_{\gamma_1, \gamma_n}\\
0 & 1 &\cdots & a_{\gamma_2, \gamma_{n-1}}  & a_{\gamma_2, \gamma_n}\\
\cdots & \cdots & \cdots & \cdots & \cdots \\
0 & 0 & \cdots & 1 & a_{\gamma_{n-1}, \gamma_n}\\
0 & 0 & \cdots & 0 & 1\end{array}\right).
\end{equation}
\end{lemma}

\begin{proof}
Clearly $\bbeta = \bgamma P$ holds for $n =1$. Assume that $n \geq 2$ and the statement holds for $n-1$.
Then 
$\beta_k = \gamma_k + \sum_{j=1}^{k-1}a_{\beta_j, \beta_k} \gamma_j$ for $k \in [1, n-1]$.
Setting 
\[
\gamma_{n-1}^\prime = s_{\gamma_{n-1}}\gamma_n = \gamma_n -a_{{}_{\gamma_{n-1}, \gamma_n}} \gamma_{n-1} =\gamma_n + a_{\beta_{n-1}, \beta_n} \gamma_{n-1}
\]
and noting that $\beta_n=s_{\gamma_1}\cdots s_{\gamma_{n-2}} \gamma_{n-1}^\prime$, 
we apply the induction assumption to 
the sequences $(\gamma_1, \ldots, \gamma_{n-2}, \gamma_{n-1}^\prime)$ and $(\beta_1, \ldots, \beta_{n-2}, \beta_n)$ and we get
\[
\beta_n = \gamma_{n-1}^\prime + \sum_{j=1}^{n-2} a_{\beta_j, \beta_n} \gamma_j=
\gamma_n + \sum_{j=1}^{n-1} a_{\beta_j, \beta_n}  \gamma_j.
\]
Thus $\bbeta =\bgamma P$. Switching $\bbeta$ and $\bgamma$, we have $\bgamma = \bbeta Q$ with $Q = P^{-1}$
given in \eqref{eq:P-Q}.
\end{proof}

For $\pi_0$ defined by $(\lara, \bbeta)$, 
we now turn to the set $\calS(\pi_0) \subset \calW_2\backslash \{0\}$ of all non-zero $\CCsn$-weights in 
$\mH^2_{\pi_0}(\CC^n)^\TT$. 
Introduce the map $[1, n] \mapsto [2, n] \cup \{+\infty\}$ by 
\begin{equation}\label{eq:j-plus-def}
j \longmapsto j^+ = \begin{cases} +\infty, & \;\;  \mbox{if} \;\; \{j' \in [j+1, n]: \gamma_{j'} = \gamma_j\} = \emptyset,\\
{\rm min}\{j' \in [j+1, n]: \gamma_{j'} = \gamma_j\}, & \;\; \mbox{otherwise}.\end{cases}
\end{equation}
Setting $e_{+\infty} = 0$, we  also define the lower triangular $n \times n$ matrix 
\begin{equation}\label{eq:Emax-def}
E_+ = (e_{1}-e_{1^+}, \; e_2-e_{2^+}, \; \ldots, e_n-e_{n^+}).
\end{equation}
For future use, note that for $j \in [1, n]$, the $j^{\rm th}$-row of 
 the inverse of  $E_+$ is given by
\begin{equation}\label{eq:F-row}
e_j^t E_+^{-1} = \sum_{i \in [1, j], \gamma_i = \gamma_j} e_i^t,
\end{equation} 
Consider now the matrix $QE_+$, where $Q = P^{-1}$ is given in \eqref{eq:P-Q}.
Set 
\begin{equation}\label{eq:calJ}
{\calJ} = \{j \in [1, n]: \; j^+\neq +\infty\}.
\end{equation}
Note then that for $j \in \calJ$, the 
$j^{\rm th}$ column of  the matrix $QE_+$ is given by 
\begin{equation}\label{eq:QE-j}
QE_+e_j = Qe_j-Qe_{j^+} 
= (0, \, \ldots, \, 0, \, -1, \, -a_{\gamma_{j+1}, \gamma_j}, \,\ldots,\,
-a_{\gamma_{j^+-1}, \gamma_j}, \, -1, \, 0, \, \ldots, \, 0)^t.
\end{equation}
Introduce 
\begin{equation}\label{eq:calJ-plus}
\calJ_{{\rm int}} = \{j \in {\calJ}: \;
-a_{\gamma_i, \gamma_j}  \in \ZZ_{\geq 0}\;\;\forall\;\;
i \in [j+1, \;j^+-1]\} \subset {\calJ}.
\end{equation}

\begin{lemma}\label{lm:J-0-int}
For $j \in [1, n]$, one has $j \in \calJ_{{\rm int}}$ if and only if the $j^{\rm th}$-column
 of the matrix  $QE_+$ is in $\calW_2$ and is negatively bordered on both sides.
\end{lemma}

\begin{proof}
Let  $j \in [1, n]$. If $j \notin \calJ$, then $QE_+e_j= Qe_j$ has $1$ as the last non-zero entry, so it is not negatively bordered on both sides. Suppose that $j \in \calJ$. By \eqref{eq:QE-j}, 
$QE_+e_j \in \calW_2$ and is negatively bordered on both sides if and only if $j \in \calJ_{{\rm int}}$. 
\end{proof}

Recall
from \eqref{eq:E-plus-S} the bijection 
$\calE^+(\pi_0) \ni (j, k) \mapsto \bth^{(j, k)} \in \calS(\pi_0)$.

\begin{notation}\label{nota:Col-J-M}
{\rm In what follows, for an $n \times n$ matrix $M$ and $J \subset [1, n]$, 
denote by 
\begin{equation}\label{eq:Col-J-M}
{\rm Col}_J(M) = \{Me_j: j \in J\}, 
\end{equation}
the set of columns of $M$ indexed by $j \in J$, and call ${\rm Col}_J(M)$ the {\it $J$-column set of $M$}.
\hfill $\diamond$
}
\end{notation}

\begin{proposition}\label{pr:lara-beta}
For any $n$-dimensional $\TT$-action pair  $(\lara, \bbeta)$ and the log-canonical Poisson structure
$\pi_0$ defined by $(\lara, \bbeta)$ in \eqref{eq:pi0-beta}, one has
$\calE^+(\pi_0) = \{(j,j^+): j \in \calJ_{{\rm int}}\}$, and 
\[
\calS(\pi_0) = {\rm Col}_{\calJ_{{\rm int}}}(Q E_+),
\]
where for $j \in \calJ_{{\rm int}}$,  $\bth^{(j, j^+)} = QE_+e_j= Qe_j-Qe_{j^+} \in \calS(\pi_0)$ is given in 
\eqref{eq:QE-j}. 
\end{proposition}

\begin{proof} For $\beta \in \t^*$, let $\beta^\# \in \t$ be such that 
$\beta'(\beta^\#) =\langle \beta', \beta\rangle$ for all $\beta' \in \t^*$. Setting $h_j = \beta_j^\#$ 
and ${\bf h} = (h_1,\ldots, h_n)$, the log-canonical Poisson structure $\pi_0$ is then defined by the
strongly symmetric $\TT$-action datum $(\bbeta, {\bf h})$. The corresponding 
matrix $\bnu$  in \eqref{eq:bnu} is then given by
\[
\bnu =  \left(\begin{array}{cccc} 
\langle \beta_1, \beta_1\rangle & 2\langle \beta_1, \beta_2\rangle & \cdots &2\langle \beta_1, \beta_n\rangle \\
0 & \langle \beta_2, \beta_2\rangle & \cdots &2\langle \beta_2, \beta_n\rangle \\
\cdots & \cdots & \cdots & \cdots \\
0& 0 & \cdots &\langle \beta_n, \beta_n\rangle 
\end{array}\right) = \left(\begin{array}{cccc} 
\lambda_1 & 2\langle \beta_1, \beta_2\rangle & \cdots &2\langle \beta_1, \beta_n\rangle \\
0 & \lambda_2 & \cdots &2\langle \beta_2, \beta_n\rangle \\
\cdots & \cdots & \cdots & \cdots \\
0& 0 & \cdots &\lambda_n
\end{array}\right).
\]
Let $\Lambda$ be the diagonal matrix whose $(j, j)$-entry for 
$j \in [1, n]$ is $\lambda_j =\la \beta_j, \beta_j\ra=\la \gamma_j, \gamma_j\ra$.
Then, by definition, $\bnu = \Lambda P$ with $P$ given in \eqref{eq:P-Q}. Thus
$\bmu^{-1} = Q\Lambda^{-1}$. 
By \autoref{lm:P-Q}, 
\[
\bbeta \bnu^{-1} = \bbeta Q \Lambda^{-1}=\bgamma \Lambda^{-1}.
\]
By \autoref{pr:Spi0-action}, the set 
$\calE^+(\pi_0)$ consists of all pairs $(j, k)$ of 
$1\leq j < k \leq n$ such that 
$\bgamma \Lambda^{-1} e_j = \bgamma \Lambda^{-1} e_k$ and 
$\lambda_k \left(\bnu^{-1}e_j - \bnu^{-1}e_k\right) \in \calW_2$, or,
equivalently,
\begin{equation}\label{eq:E-plus-strong}
{\lambda_j}^{-1}{\gamma_j} = {\lambda_k}^{-1}{\gamma_k} \hs \mbox{and} \hs 
{\lambda_j}^{-1}{\lambda_k} Qe_j - Qe_k \in \calW_2.
\end{equation}
Note that ${\lambda_j}^{-1}{\gamma_j} = {\lambda_k}^{-1}{\gamma_k}$ if and only if
$\gamma_j = \gamma_k$. By the formula for $Q$ in \eqref{eq:P-Q} and the fact 
that $a_{\gamma_i, \gamma_i} = 2$ for every $i \in [1, n]$, one sees that \eqref{eq:E-plus-strong} 
is equivalent to 
\begin{equation}\label{eq:gamma-jk}
\gamma_j = \gamma_k \hs \mbox{and} \hs -a_{\gamma_i, \gamma_j}  \in \ZZ_{\geq 0},\;\; 
\forall\;\; i \in [j+1, k-1].
\end{equation}
Again by the fact that $a_{\gamma_i, \gamma_i} = 2$ for every $i \in [1, n]$, 
\eqref{eq:gamma-jk} is equivalent to  $k = j^+$ and $j \in \calJ_{{\rm int}}$. 
Thus $\calE^+(\pi_0) = \{(j, j^+): j \in \calJ_{{\rm int}}\}$. Moreover, 
for $j \in \calJ_{{\rm int}}$, as $\gamma_j = \gamma_{j^+}$, 
one has $\lambda_j = \lambda_{j^+}$, so by 
\autoref{pr:Spi0-action} again, one has
\begin{align*}
\bth^{(j, j^+)} &= \lambda_{j^+} (\bnu^{-1}e_j - \bnu^{-1}e_{j^+}) =
\lambda_{j} \bnu^{-1}e_j - \lambda_{j^+}\bnu^{-1}e_{j^+}=\bnu^{-1}\Lambda e_j -\bnu^{-1}
\Lambda e_{j^+}\\
&= Qe_j - Qe_{j^+} = QE_+ e_j.
\end{align*}
This finishes the proof of \autoref{pr:lara-beta}.
\end{proof}

\begin{remark}\label{rk:a-LL-J}
{\rm
By \autoref{pr:lara-beta}, the Cartan integers associated to  $\pi_0$ as defined in \autoref{def:a-LL}  are precisely the integers $a_{\gamma_i, \gamma_j}$,
for $j \in \calJ_{{\rm int}}$ and $i \in [j+1, j^+-1]$. As $\calJ_{{\rm int}} = {\rm lb}(\calS(\pi_0))$, 
by \eqref{eq:def-ex} the set $\calJ_{{\rm int}}$ is also the exchange set for the Goodearl-Yakimov seed
$({\bf y}, M)$ associated to maximal normalized admissible algebraic Poisson deformation of $\pi_0$
regarded as a symmetric Poisson CGL extension (see \autoref{defn:normalized}). 
\hfill $\diamond$
}
\end{remark}

\begin{example}\label{ex:beta-2p-03}
{\rm 
For every $p \in \ZZ\backslash\{0\}$, the log-canonical Poisson structure $\pi_0$ in \eqref{eq:pi0-p} in 
\autoref{ex:beta-2p-00} is defined by the $\TT$-action pair $(\lara, \bbeta = (p\beta, \beta, -\beta, -p\beta))$, where $\lara$ is any  symmetric bilinear form 
on $\t^*$ such that $\langle \beta, \beta\rangle = 1$.  We have
$\bgamma = (p\beta, -\beta, -\beta, p\beta)$ and 
\[
Q = \!\left(\!\begin{array}{cccc} 1 & -\frac{2}{p} & -\frac{2}{p} & 2\\ 0 & 1 & 2 & -2p\\ 0 & 0 & 1 & -2p \\ 0 & 0 & 0 & 1\end{array}\!\right).
\]
For $p = -1$,  we have $\calJ_{{\rm int}} = {\calJ} = \{1, 2, 3\}$ with $1^+ = 2$, $2^+ = 3$, and $3^+ = 4$, so  
\[
QE_+ =Q \left(\begin{array}{cccc} 1 & 0 & 0 & 0\\ -1 & 1 & 0 & 0 \\ 0 & -1 & 1 & 0\\ 0& 0 & -1 & 1\end{array}\right)=\left(\!\begin{array}{cccc} -1 & 0 & 0 & 2\\ -1 & -1 & 0 & 2 \\ 0 & -1 & -1 & 2\\ 0& 0 & -1 & 1\end{array}\right);
\]
Assume that  $p \neq -1$. We then have ${\calJ} = \{1, 2\}$ with
 $1^+ = 4$ and $2^+ = 3$, and 
\[
Q E_+ = Q\left(\begin{array}{cccc} 1 & 0 & 0 & 0\\ 0 & 1 & 0 & 0 \\ 0 & -1 & 1 & 0\\ -1& 0 & 0 & 1\end{array}\right)
=\left(\begin{array}{cccc} -1 & 0 & -\frac{2}{p} & 2\\
2p & -1 & 2 & -2p\\ 2 p & -1 & 1 & -2p\\ -1 & 0 & 0 &1\end{array}\right).
\]
Since $-a_{{}_{-\beta, p\beta}} = 2p$,
if  $p <0$ and $p \neq -1$,  then $\calJ_{{\rm int}} = \{2\}$, and if  $p > 0$, then $\calJ_{{\rm int}} = {\calJ} =\{1, 2\}$.
In all the cases, the conclusions on $\calS(\pi_0)$ via \autoref{pr:lara-beta} coincide with that stated in 
\autoref{ex:beta-2p-00}.
\hfill $\diamond$
}
\end{example}

\subsection{Distinguished $\TT$-action pairs}\label{ss:distinguished}
Continuing with the setting and notation  in $\S$\ref{ss:strongly},
for an $n$-dimensional $\TT$-action pair $(\lara, \bbeta = (\beta_1, \ldots, \beta_n))$, recall 
that $\ker \bbeta$ denotes the kernel of the 
linear map 
\[
\CC^n \longrightarrow \t^*, \;\; \bfw =(\bfw_1, \ldots, \bfw_n)^t \longmapsto \bbeta \bfw = \bfw_1\beta_1+ \cdots + 
\bfw_n \beta_n.
\]
Let again $\bgamma = (\gamma_1, \ldots, \gamma_n)$ be given in \eqref{eq:gamma-j}.
With $Q = P^{-1}$ and $E_+$ given respectively in \eqref{eq:P-Q} and \eqref{eq:Emax-def} 
and ${\calJ} \subset [1, n]$  in \eqref{eq:calJ}, 
for every $j \in {\calJ}$ we have 
by  \autoref{pr:lara-beta} and \autoref{lm:P-Q} that
\[
\bbeta QE_+e_j = \bgamma E_+e_j = \bgamma(e_j-e_{j^+}) = \gamma_j-\gamma_{j^+} = 0. 
\]
Let $\pi_0$ be defined in \eqref{eq:pi0-beta}. By \autoref{pr:lara-beta} we have 
\begin{equation}\label{eq:three-sets}
\calS(\pi_0) = {\rm Col}_{\calJ_{{\rm int}}}(Q E_+) \subset  {\rm Col}_{{\calJ}}(Q E_+) 
\subset \ker \bbeta.
\end{equation}
As  $QE_+$ is invertible, its column set ${\rm Col}_{\calJ}(QE_+)$ is a $\CC$-linearly independent
subset of $\ker \bbeta$.
We now give a necessary and sufficient condition on $(\lara, \bbeta)$ for ${\rm Col}_{\calJ}(QE_+)$ to be
a $\CC$-basis of $\ker \bbeta$. Let
\begin{equation}\label{eq:supp-gamma}
{\rm Supp}(\bgamma) = \{\gamma_j: j \in [1, n]\}, 
\end{equation}
i.e, the set of the distinct elements in $\t^*$ that appear in the sequence $\bgamma =(\gamma_1, \ldots, \gamma_n)$.

\begin{definition}\label{def:disting}
{\rm An $n$-dimensional  $\TT$-action pair $(\lara, \bbeta)$ is said to be {\it distinguished} if 
${\rm Supp}(\bgamma)$ is a $\CC$-linearly independent subset of $\t^*$.
}
\end{definition}

\begin{example}\label{ex:beta-2p-04}
{\rm
The $\TT$-action pair $(\lara, \bbeta =(p\beta, \beta, -\beta, -p\beta))$ in \autoref{ex:beta-2p-03} is  distinguished if and only if $p = -1$. 
\hfill $\diamond$
}
\end{example}

In what follows, 
for a $p \times q$ matrix $M$ and a subset $J \subset [1, q]$ 
we denote by $M_{p \times J}$ the sub-matrix
of $M$ formed by the columns of $M$ indexed by $j \in J$. Similarly, for $K \subset [1, p]$, 
denote by $M_{K \times q}$ the 
sub-matrix of $M$ formed by the rows of $M$ indexed by $k \in K$.

\begin{lemma}\label{lm:disting-ker-beta}
For an $n$-dimensional
$\TT$-action pair $(\lara, \bbeta)$, the set
${\rm Col}_{\calJ}(QE_+)$ 
is a $\CC$-basis of $\ker \bbeta$ if and only if  $(\lara, \bbeta)$ is distinguished, 
and in such a case, for every $\bfw \in \ker \bbeta$ one has 
\begin{equation}\label{eq:bfw-bth}
\bfw =(QE_+)_{n \times {\calJ}} \left((E_+^{-1}P)_{{\calJ} \times n}\bfw\right).
\end{equation}
\end{lemma}

\begin{proof} We already know  that ${\rm Col}_{\calJ}(QE_+)$ is a  
$\CC$-linearly independent subset of $\ker \bbeta$ of cardinality $|{\calJ}|$.

Let $\calJ^c = \{j \in [1, n]: j^+=+\infty\}= [1, n]\backslash {\calJ}$.
As $I_n  = (QE_+) (E_+^{-1}P)$, where  
$I_n$ is the $n \times n$ identity matrix, for  every $\bfw \in \CC^n$ we have
\begin{equation}\label{eq:bfw-QQ}
\bfw =(QE_+)_{n \times {\calJ}} (E_+^{-1}P)_{{\calJ} \times n}\bfw  
+ (QE_+)_{n \times \calJ^c} (E_+^{-1}P)_{\calJ^c \times n} \bfw.
\end{equation}
Note that $\bgamma$, regarded as a $1 \times n$ matrix with entries in $\t^*$, has the sub-matrix
$\bgamma_{1 \times {\calJ}^c}$ whose entries are precisely the elements in ${\rm Supp}(\bgamma)$. By the definition of $\calJ^c$,  we have
\begin{equation}\label{eq:QEJw}
\bbeta (QE_+)_{n \times \calJ^c} = \bbeta Q (E_+)_{n \times \calJ^c} = \bgamma
(E_+)_{n \times \calJ^c} = \bgamma_{1 \times \calJ^c}.
\end{equation}
It thus follows from \eqref{eq:bfw-QQ} that for every $\bfw \in \CC^n$, we have
\[
\bbeta \bfw = \bbeta (QE_+)_{n \times \calJ^c} (E_+^{-1}P)_{\calJ^c \times n} \bfw = 
\bgamma_{1 \times \calJ^c}(E_+^{-1}P)_{\calJ^c \times n} \bfw.
\]

Assume first that $(\lara, \bbeta)$ is distinguished. Then for every $\bfw \in \CC^n$,
\[
\bbeta \bfw =0\hs \Longleftrightarrow \hs 
(E_+^{-1}P)_{\calJ^c \times n} \bfw = 0.
\]
As the rank of the matrix 
$(E_+^{-1}P)_{\calJ^c \times n}$ is $|\calJ^c| = n-|{\calJ}|$,  
we have $\dim \ker \bbeta = |{\calJ}|$.
Thus ${\rm Col}_{\calJ}(QE_+)$ 
is a $\CC$-basis of $\ker \bbeta$. It also follows from \eqref{eq:bfw-QQ}
that \eqref{eq:bfw-bth} holds for $\bfw \in \ker \bbeta$.

Conversely, assume that ${\rm Col}_{\calJ}(QE_+)$ 
is a $\CC$-basis of $\ker \bbeta$. Let ${\bf u}$ be a column vector with entries in $\CC$ and 
indexed by $j \in \calJ^c$ and suppose that  $\bgamma_{1 \times \calJ^c} {\bf u} = 0$. Then 
$\bbeta (QE_+)_{n \times \calJ^c} {\bf u} = 0$ by \eqref{eq:QEJw}, so 
$(QE_+)_{n \times \calJ^c} {\bf u}$ is a $\CC$-linear combination of 
columns of $(QE_+)_{n \times {\calJ}}$, possible only if ${\bf u} = 0$. Hence 
${\rm Supp}(\bgamma)$ is a linearly independent subset of $\t^*$, i.e., the $\TT$-action pair
$(\lara, \bbeta)$ is distinguished. 
\end{proof}

For a distinguished $\TT$-action pair $(\lara, \bbeta)$, we now give another formula for
elements in $\ker \bbeta$ as linear combinations of columns of $(QE_+)_{n \times {\calJ}}$.
To this end, for $\eta \in \sum_{\gamma \in {\rm Supp}(\bgamma)} \CC \gamma \subset \t^*$, write 
\[
\eta = \sum_{\gamma \in {\rm Supp}(\bgamma)} [\eta:\gamma].
\]
For $\bfw =(\bfw_1, \ldots, \bfw_n)^t\in \CC^n$, set $\eta^{(\bfw)}_0 = 0$ and 
define $\eta^{(\bfw)} = (\eta^{(\bfw)}_1, \ldots, \eta^{(\bfw)}_n) \in (\t^*)^n$ by
\begin{equation}\label{eq:eta-j-inductive}
\eta^{(\bfw)}_j= s_{\gamma_j} \eta_{j-1}-\bfw_j \gamma_j, \hs j \in [1, n]. 
\end{equation}
An induction argument shows that we also have
\begin{equation}\label{eq:eta-all}
\eta^{(\bfw)}_j = s_{\gamma_j}\cdots s_{\gamma_1} (\bfw_1\beta_1 + \cdots + \bfw_j \beta_j), \hs j \in [1, n].
\end{equation}

\begin{lemma}\label{lm:etas}
For a distinguished $\TT$-action pair $(\lara, \bbeta)$ and for any $\bfw \in \ker \bbeta$, one has
\[
\bfw = \sum_{j \in {\calJ}} [\eta^{(\bfw)}_j:\gamma_j](QE_+e_j).
\]
\end{lemma}

\begin{proof}
By \autoref{lm:disting-ker-beta}, it suffices to 
 prove the following identity for every $\bfw \in \ker \bbeta$: 
\begin{equation}\label{eq:EPw}
E_+^{-1} P \bfw = \left([\eta^{(\bfw)}_1:\gamma_1], \, \ldots, \, [\eta^{(\bfw)}_n:\gamma_n]\right)^t.
\end{equation}
First let $i \in [1, n]$ be arbitrary. Using \eqref{eq:P-Q} and 
the facts that $\bfw \in \ker \bbeta$ and $\langle \beta_i, \beta_i \rangle = \langle \gamma_i, \gamma_i \rangle$, one sees that the 
$i^{\rm th}$-row of (the column vector) $P\bfw$ is given by
\begin{align}\nonumber
e_i^t P\bfw &= \bfw_i + \sum_{l=i+1}^n a_{\beta_i, \beta_l} \bfw_l =\bfw_i +
\frac{2}{\langle \beta_i, \beta_i\rangle}\left\langle  \beta_i, \, \sum_{l=i+1}^n \bfw_l \beta_l\right\rangle\\
\nonumber& = -\bfw_i + 
\frac{2}{\langle \beta_i, \beta_i\rangle}\left\langle  \beta_i, \; \sum_{l=i}^n \bfw_l \beta_l\right\rangle
=-\bfw_i- 
\frac{2}{\langle \beta_i, \beta_i\rangle}\left\langle  \beta_i, \; \sum_{l=1}^{i-1} \bfw_l \beta_l\right\rangle\\
\nonumber& =-\bfw_i- 
\frac{2}{\langle \gamma_i, \gamma_i\rangle}\left\langle  \beta_i, \; \sum_{l=1}^{i-1} \bfw_l \beta_l\right\rangle
= -\bfw_i-
\frac{2}{\langle \gamma_i, \gamma_i\rangle}\left\langle s_{\gamma_1}\cdots s_{\gamma_{i-1}} \gamma_i, \; 
\sum_{l=1}^{i-1} \bfw_l \beta_l\right\rangle\\
\nonumber& = -\bfw_i -\frac{2}{\langle \gamma_i, \gamma_i\rangle}\left\langle \gamma_i, \; s_{\gamma_{i-1}}\cdots s_{\gamma_1} 
\left(\sum_{l=1}^{i-1} \bfw_l \beta_l\right)\right\rangle=
-\bfw_i -\frac{2}{\langle \gamma_i, \gamma_i\rangle}\left\langle \gamma_i, \; \eta^{(\bfw)}_{i-1}\right\rangle\\
\label{eq:iPw} & =-\bfw_i - a_{\gamma_i, \eta^{(\bfw)}_{i-1}}.
\end{align}
By \eqref{eq:F-row} again, \eqref{eq:EPw} is equivalent to 
\begin{equation}\label{eq:EPw-j}
\sum_{i \in [1, j], \, \gamma_i = \gamma_j}\left(-\bfw_i - a_{\gamma_j, \eta^{(\bfw)}_{i-1}}\right) = 
[\eta^{(\bfw)}_j:\gamma_j], \hs \forall \; j \in [1, n].
\end{equation}
Let $j \in [1, n]$.
By the definition of $\eta^{(\bfw)}_j$ in \eqref{eq:eta-j-inductive}, 
\[
[\eta^{(\bfw)}_j:\gamma_j] = \left[s_{\gamma_j} \eta^{(\bfw)}_{j-1}-\bfw_j \gamma_j:\gamma_j\right]
=\left[\eta^{(\bfw)}_{j-1}-a_{\gamma_j, \eta^{(\bfw)}_{j-1}}\gamma_j-\bfw_j \gamma_j:\gamma_j\right]
=\left[\eta^{(\bfw)}_{j-1}:\gamma_j\right]
-\bfw_j-a_{\gamma_j, \eta^{(\bfw)}_{j-1}}.
\]
If $\gamma_i \neq \gamma_j$ for all $i \in [1, j-1]$, then $\left[\eta^{(\bfw)}_{j-1}:\gamma_j\right]=0$, so
\eqref{eq:EPw-j} holds for $j$. Otherwise, let 
\begin{equation}\label{eq:j-minus}
j^- = {\rm max}\{i \in [1, j-1], \, \gamma_i = \gamma_j\}.
\end{equation}
Then $\left[\eta^{(\bfw)}_{j-1}:\gamma_j\right] = \left[\eta^{(\bfw)}_{j^-}:\gamma_j\right] =
 \left[\eta^{(\bfw)}_{j^-}:\gamma_{j^-}\right]$, so
\begin{equation}\label{eq:eta-j-ind}
[\eta^{(\bfw)}_j:\gamma_j] = \left[\eta^{(\bfw)}_{j^-}:\gamma_{j^-}\right] 
-\bfw_j-a_{\gamma_j, \eta^{(\bfw)}_{j-1}}.
\end{equation}
By induction, 
 \eqref{eq:EPw-j} holds for $j$.
This finishes the proof of \eqref{eq:EPw} and thus of that of \autoref{lm:disting-ker-beta}.
\end{proof}

\begin{example}\label{ex:hideous} 
{\rm
Let 
$\bfw = (0, \ldots, 0, -1, \bfw_{j+1}, \ldots, \bfw_{k-1}, -1, 0, \ldots, 0)^t\in \CC^n$, 
where the two $-1$ entries are at the $j^{\rm th}$ and the $k^{\rm th}$ places. Then 
$\bfw \in \ker \bbeta$ if and only if
\begin{equation}\label{eq:bff-bth}
{\bfw} = \sum_{l \in [j, k-1], \,l^+\leq k} [\eta^{(\bfw)}_l:\gamma_l]\, (Q E_+ e_l).
\end{equation}
Indeed, by \eqref{eq:eta-j-inductive} we have
$\eta^{(\bfw)} = \left(0, \;\ldots, \;0, \;\eta^{(\bfw)}_j, \; \eta^{(\bfw)}_{j+1}, \; \ldots, \; 
\eta^{(\bfw)}_{k-1}, \;\eta^{(\bfw)}_k,\; \ldots, \;\eta^{(\bfw)}_n\right)$,
where $\eta^{(\bfw)}_j = \gamma_j$, and $\eta^{(\bfw)}_l = s_{\gamma_l} \eta^{(\bfw)}_{l-1} - \bfw_l \gamma_l$ 
for $l \in [j+1,n]$. By \eqref{eq:eta-all}, $\bfw \in \ker \bbeta$ if and only if 
$\eta^{(\bfw)}_{k-1} = \gamma_k$,  and in such a 
case, $\eta^{(\bfw)}_k = \eta^{(\bfw)}_{k+1}=\cdots =\eta^{(\bfw)}_n = 0$.  
Assume that $\bfw \in \ker \bbeta$. Then by \autoref{lm:etas},
\[
{\bfw} =\sum_{l \in [j, k-1] \cap {\calJ}} [\eta^{(\bfw)}_l:\gamma_l]\,(Q E_+ e_l).
\]
If $l \in [j, k-1]\cap {\calJ}$ is such that $l^+ \geq k+1$, then 
$\left[\eta^{(\bfw)}_l:\gamma_l\right] = \left[\eta^{(\bfw)}_{l^+}:\gamma_{l^+}\right]+\bfw_{l^+} +
a_{\gamma_l,\eta^{(\bfw)}_{l^+-1}} = 0$ by \eqref{eq:eta-j-ind}, so we have \eqref{eq:bff-bth}.
Assuming 
\eqref{eq:bff-bth}, it follows from $QE_+e_l \in \ker \bbeta$ for every $l \in \calJ$ that 
$\bfw \in \ker \bbeta$.
\hfill $\diamond$
}
\end{example}

\begin{definition}\label{def:int}
{\rm A  $\TT$-action pair $(\lara, \bbeta)$ is said to be {\it weakly integral} if $\calJ_{{\rm int}} = {\calJ}$, i.e., if 
\[
-a_{\gamma_i, \gamma_j} \in \ZZ_{\geq 0} \hs \forall\; j\in {\calJ} \; \mbox{and}\; i \in [j+1, \;j^+-1].
\]
}
\end{definition}

For $\pi_0$ defined by 
a weakly integral $\TT$-action pair, it follows from 
the \autoref{pr:lara-beta} that 
\[
\calS(\pi_0) = {\rm Col}_{\calJ}(QE_+).
\]

\begin{proposition}\label{pr:int-basis} For $\pi_0$ defined by 
a  distinguished and weakly integral $\TT$-action pair $(\lara, \bbeta)$ as in \eqref{eq:pi0-beta}, the set 
$\calS(\pi_0)$ is both a $\CC$-basis of $\ker \bbeta \subset \CC^n$ and a $\ZZ$-basis of 
$(\ker \bbeta)\cap \ZZ^n$.
\end{proposition}

\begin{proof} As $(\lara, \bbeta)$ is weakly integral, 
one has ${\calJ} = \calJ_{{\rm int}}$.
By \autoref{pr:lara-beta}, $\calS(\pi_0)$ coincides with  of the $\calJ$-column set of 
the matrix $QE_+\in SL(n, \ZZ)$, so by \autoref{lm:disting-ker-beta} $\calS(\pi_0)$
is both a $\CC$-basis of $\ker \bbeta \subset \CC^n$ and a $\ZZ$-basis of 
$(\ker \bbeta)\cap \ZZ^n$.
\end{proof}

\begin{definition}\label{def:integral}
{\rm A $\TT$-action pair $(\lara, \bbeta)$ is said to be {\it integral} if 
$\langle \gamma, \gamma \rangle \in \QQ_{>0}$ for all $\gamma \in {\rm Supp}(\bgamma)$ and if 
$-a_{\gamma, \gamma'}\in \ZZ_{\geq 0}$ for all 
$\gamma, \gamma' \in {\rm Supp}(\bgamma)$ such that 
$\gamma \neq \gamma'$.
}
\end{definition}

We will show in $\S$\ref{ss:pair-Cartan} that an action pair is distinguished and integral if
and only if it {\it factors through} (see \autoref{defn:factor-through}) an action pair 
associated to a symmetrizable generalized Cartan matrix. 

\section{Symmetric Poisson CGL extensions of Cartan type}\label{s:Cartan}
\subsection{Action pairs of Cartan type}\label{ss:pair-Cartan}
Recall that a {\it symmetrizable generalized Cartan matrix of size $r$} is an integral  $r \times r$ matrix $A = (a_{i,j})$ such that

(1) $a_{i,i} = 2$ for every $i \in [1, r]$;

(2) $a_{i, j} \leq 0$ for all $i, j \in [1, r]$ with $i \neq j$;

(3) there exist $d_i \in \QQ_{>0}$ for each $i \in [1, n]$ such that 
$d_i a_{i, j} = d_j a_{j, i}$ for all $i, j \in [1, r]$.

\noindent
Fix now a symmetrizable generalized Cartan matrix $A = (a_{i, j})$ of size $r$ and 
a symmetrizer $(d_i)_{i \in [1, r]}$. Fix also 
a realization \cite[Definition 1.1.2]{Kumar:Kac-Moody} of $A$ , i.e., a vector space $\h$ of dimension $2r - {\rm rank}(A)$ together with linearly independent subsets
\[
\{\alpha_1^\vee, \ldots, \alpha_r^\vee\} \subset \h \hs \mbox{and} \hs \{\alpha_1, \ldots, \alpha_r\}\subset \h^*, 
\]
such that 
$(\alpha_i^\vee, \alpha_j) = a_{i, j}$ for all $i, j \in [1, r]$, where $(\, , \, )$ is the $\CC$-linear
pairing between $\h$ and $\h^*$. 
Consider the  linear map $\underline{\alpha} = (\alpha_1, \ldots, \alpha_r): \h \to \CC^r$ and 
$\ker \underline{\alpha} = \bigcap_{i=1}^r \ker \alpha_i \subset \h$. Since $\underline{\alpha}$ is surjective and 
since $\dim \h = 2r - {\rm rank}(A)$, we have 
$\dim \ker \underline{\alpha} =
r-{\rm rank}(A)$ which is also the dimension of the kernel of the restriction of $\underline{\alpha}$ to 
$\sum_{i=1}^r \CC \alpha_i^\vee$. 
It follows that
$\ker \underline{\alpha} \subset {\sum_{i=1}^r \CC \alpha_i^\vee}$. Set 
\[
\t_A = \h /\ker \underline{\alpha}
\]
and note that $\dim \t_A = r$. For notational
simplicity, the image of each $\alpha_i^\vee$ in $\t_A$ will still be denoted by $\alpha_i^\vee$. 
Note that the pairing 
$(\, , \,)$ between $\h$ and $\h^*$  induces a non-degenerate pairing, still denoted by $(\, , \, )$,
between $\t_A$ and the vector subspace
$\sum_{i = 1} ^r \CC \alpha_i$ of $\h^*$, so we write $\t_A^* =  \sum_{i = 1} ^r \CC \alpha_i\subset \h^*$.  

Let $\lara_A$ be the 
unique symmetric bilinear form on $\t_A^*$ such that
\begin{equation}\label{eq:lara-A}
\la \alpha_i, \, \alpha_j\ra_A = d_i a_{i, j}, \hs \hs i, j \in [1, r].
\end{equation}
In particular, we have $\la \alpha_i, \alpha_i\ra_A = 2d_i$ for each $i \in [1, r]$. Comparing with the definition in 
\eqref{eq:Cartan-number} of  the Cartan numbers 
$a_{\beta, \xi}\in \t^*$ for  $\beta, \xi \in \t_A^*$ with
$\la \beta, \beta \ra_A \neq 0$, we have 
\[
a_{\alpha_i, \alpha_j} = a_{i, j}, \hs \forall \;\; i, j \in [1, r].
\]
Note that the definition of $\lara_A$ on $\t^*$, and thus that of $\pi_0$, depends on the choice of the symmetrizer 
$(d_i)_{i \in [1, r]}$ $A$ but we suppress the dependence in the notation $\lara_A$.

As in \eqref{eq:s-beta}, for each 
$\beta \in \t_A^*$ such that $\la \beta, \beta\ra_A \neq 0$, we have the reflection operator $s_\beta$ on $\t_A^*$.
In particular we have $s_i :=s_{\alpha_i}$ for each $i \in [1, r]$.
The sub-group $W$ of ${\rm GL}(\t^*)$ generated by $\{s_i: i \in [1, r]\}$ is called the Weyl group
associated to the generalized Cartan matrix $A$. Let $\calQ = \sum_{i=1}^r \ZZ \alpha_i \subset \t_A^*$.
Elements in $\{\alpha_1, \ldots, \alpha_r\}$ are called the simple roots, and elements in
\[
\Phi := \{w\alpha_i: w \in W, \,i \in [1, r]\}\subset \calQ
\]
are called {\it real roots}. 
If $\beta = w\alpha_i$ for some $w \in W$ and $i \in [1, r]$, one readily checks that $s_\beta = w s_i w^{-1} \in W$.
Let $\TT_A$ be the complex algebraic torus
with character lattice $\calQ$, and we 
identify the Lie algebra of $\TT_A$ with $\t_A= \h/\ker \underline{\alpha}$.
For any sequence ${\bf i} = (i_1, \ldots, i_n)$ in $[1, r]$, set $\bbeta({\bf i}) = (\beta_1({\bf i}), \ldots, \beta_n(\bf i)))
\in \calQ^n$,
where
\begin{equation}\label{eq:beta-alpha-j}
\beta_j({\bf i}) = s_{i_1}\cdots s_{i_{j-1}}\alpha_{i_j}, \hs \hs j \in [1, n].
\end{equation}
As $\langle \beta_j({\bf i}), \beta_j({\bf i})\rangle_A = \langle \alpha_{i_j}, \alpha_{i_j}\rangle
=2d_{i_j} \neq 0$ for every 
$j \in [1, n]$, $(\lara_A, \bbeta({\bf i}))$ is a $\TT_A$-action pair.

\begin{lemma}\label{lm:Cartan}
For any symmetrizable generalized Cartan matrix $A$ of size $r$ and 
any sequence ${\bf i} = (i_1, \ldots, i_n)$ in $[1, r]$, the sequence 
$\bgamma$ associated to $(\lara_A, \bbeta({\bf i}))$ defined in \eqref{eq:gamma-j} is
$(\alpha_{i_1}, \ldots, \alpha_{i_n})$, and $(\lara_A, \bbeta({\bf i}))$ is a distinguished 
(\autoref{def:disting}) and integral (\autoref{def:integral})
$\TT_A$-action pair.
\end{lemma}

\begin{proof}
Writing $\bbeta({\bf i}) = (\beta_1, \ldots, \beta_n)$ for simplicity, by \eqref{eq:beta-alpha-j} we  have
$\alpha_{i_j} = s_{\beta_1} \cdots s_{\beta_{j-1}} \beta_j$ for every $j \in [1, n]$. 
Comparing with \eqref{eq:gamma-j}, we have $\gamma_j = \alpha_{i_j}$ for every $j \in [1, n]$.
As the set of all simple roots is a basis of $\t^*$,  the pair $(\lara_A, \bbeta({\bf i}))$ is distinguished.
As for all $i, i' \in [1, n]$ and $i \neq i'$, the Cartan integers $a_{i, i'}$ 
are non-negative and $\langle \alpha_i, \alpha_i\rangle/\langle \alpha_{i'}, \alpha_{i'}\rangle=d_i/d_{i'}
\in \QQ_{>0}$, the pair $(\lara_A, \bbeta({\bf i}))$ is 
 integral. 
\end{proof}

\begin{definition}\label{defn:pair-Cartain-type}
{\rm
The $\TT_A$-action pair $(\lara_A, \bbeta({\bf i}))$, where $A$ is any $r \times r$ 
symmetrizable generalized Cartan matrix and ${\bf i}$ any finite sequence in $[1, r]$, 
is called {\it an action pair of  Cartan type}.
}
\end{definition}

Let  now $\TT$ and $\TT'$ be two complex algebraic tori with respective Lie algebras $\t$ and $\t^\prime$
and character lattices $X^*(\TT) \subset \t^*$ and $X^*(\TT^\prime) \subset (\t^\prime)^*$.

\begin{definition}\label{defn:factor-through}
{\rm
An $n$-dimensional $\TT$-action pair $(\lara, \bbeta = (\beta_1, \ldots, \beta_n))$ is said to 
{\it factor through} an $n$-dimensional $\TT^\prime$-action pair 
$(\lara^\prime, \bbeta^\prime = (\beta_1^\prime, \ldots, \beta_n^\prime))$ if
there exists a surjective morphism $\rho: \TT \to \TT^\prime$ of algebraic tori, inducing a linear map
$\rho^*: (\t^\prime)^* \to \t^*$, such that $\beta_j = \rho^*(\beta_j^\prime)$ for all
$j \in [1, n]$, and 
\begin{equation}\label{eq:rho-lara}
\langle \rho^* \xi, \,\, \rho^* \eta\rangle = \langle \xi, \, \,\eta\rangle^\prime \;\; \mbox{for all}\;\; 
\xi, \, \eta \in (\t^\prime)^*.
\end{equation}
}
\end{definition}

\begin{remark}\label{rk:factor-through}
{\rm 
If a $\TT$-action pair $(\lara, \bbeta)$ factors through 
a $\TT^\prime$-action pair $(\lara^\prime, \bbeta^\prime)$, then 
 $(\lara, \bbeta)$ and
$(\lara^\prime, \bbeta^\prime)$ define the same log-canonical Poisson structure
$\pi_0$,   and the set $\calS(\pi_0)$ associated to the $\TT$-action is the same
as that associated to the $\TT^\prime$-action. Moreover, an algebraic Poisson structure on $\CC^n$ is $\TT$-invariant if and only if it is $\TT^\prime$-invariant. 
\hfill $\diamond$
}
\end{remark}

\begin{lemma}\label{lm:factor-through-dist-int}
Suppose that a $\TT$-action pair $(\lara, \bbeta)$ factors through 
a $\TT^\prime$-action pair $(\lara^\prime, \bbeta^\prime)$. Then 
$(\lara^\prime, \bbeta^\prime)$ is distinguished and integral if and only if  $(\lara, \bbeta)$ is 
distinguished and integral.
\end{lemma}

\begin{proof}
Suppose that a $\TT$-action pair $(\lara, \bbeta)$ factors through 
a $\TT^\prime$-action pair $(\lara^\prime, \bbeta^\prime)$ via $\rho: \TT \to \TT'$ 
as in \autoref{defn:factor-through}, and let
$\bgamma = (\gamma_1, \ldots, \gamma_n) \in (\t^*)^n$ and 
$\bgamma' = (\gamma_1^\prime, \ldots, \gamma_n^\prime) \in ((\t^\prime)^*)^n$ be the corresponding
sequences defined in \eqref{eq:gamma-j}. It then follows from \eqref{eq:rho-lara} that 
$\gamma_j = \rho^* \gamma_j^\prime$ for every $j \in [1, n]$, and that 
$a_{\gamma_j, \gamma_k} = a_{\gamma_j^\prime, \gamma_k^\prime}$. Since the linear map
$\rho^*: (\t')^* \to \t^*$ is injective, 
$(\lara^\prime, \bbeta^\prime)$ is distinguished and integral if and only if  $(\lara, \bbeta)$ is 
distinguished and integral.
\end{proof}

\begin{proposition}\label{pr:factor-Cartan}
Let $\TT$ be any complex algebraic torus. A $\TT$-action pair factors through 
an action pair of Cartan type if and only if it
is distinguished and integral.
\end{proposition}

\begin{proof} Let $(\lara, \bbeta = (\beta_1, \ldots, \beta_n)$ be a $\TT$-action pair. If 
$(\lara, \bbeta)$ factors through an action pair of Cartan type, then
$(\lara, \bbeta)$ is distinguished and integral by \autoref{lm:factor-through-dist-int}.

Assume now that 
$(\lara, \bbeta)$ is distinguished and integral. 
Fix any listing of the elements in ${\rm Supp}(\bgamma) \subset X^*(\TT)$ as 
$\widetilde{\alpha}_1, \ldots, \widetilde{\alpha}_r$, so that $(\gamma_1, \ldots, \gamma_n) = (\widetilde{\alpha}_{i_1}, \ldots, \widetilde{\alpha}_{i_n})$ 
for some sequence ${\bf i} = (i_1, \ldots, i_n)$ in $[1, r]$. As $(\lara, \bbeta)$ is
integral, we have a well-defined $r \times r$ 
integral matrix $A=(a_{i, i'})$, where $a_{i, i} = 2$ for $i \in [1, r]$, and $a_{i, i'}$ 
for  $i, i' \in [1, r]$ and $i \neq i'$ is defined as
\begin{equation}\label{eq:aii}
-a_{i, i'} =  -a_{\gamma, \gamma'} = -\frac{2\langle \gamma, \gamma'\rangle}{\langle \gamma, \gamma\rangle}
\in \ZZ_{\geq 0}
\end{equation}
if $\gamma, \gamma'$ are the unique elements in  ${\rm Supp}(\bgamma)$ such that
$\gamma =\widetilde{\alpha}_i$ and $\gamma' = \widetilde{\alpha}_{i'}$. Setting 
$d_i = \frac{\langle \gamma, \gamma\rangle}{2} \in \QQ_{>0}$ if $\widetilde{\alpha}_i = \gamma \in {\rm Supp}(\bgamma)$,
we then have $d_i a_{i, i'} = d_{i'} a_{i', i}$ for all $i, i' \in [1, r]$.
Thus $A$ is a symmetrizable generalized Cartan matrix.

Let $\lara_A$ be the symmetric bilinear form on $\t_A^*$ in \eqref{eq:lara-A} and consider the
$\TT_A$-action pair $(\lara_A, \bbeta({\bf i}))$. Let $\rho: \TT \to \TT_A$ be the surjective morphism of algebraic tori 
induced by the lattice embedding
\[
X^*(\TT_A) = \sum_{i=1}^r \ZZ \alpha_i \;\;\longrightarrow  \sum_{\gamma \in {\rm Supp}(\bgamma)} \ZZ \gamma \subset X^*(\TT),\hs \alpha_i \longmapsto \widetilde{\alpha}_i,\hs i \in [1, r],
\]
and let $\rho^*$ be the corresponding injective linear map $\t_A^* \to \t^*$. It then follows from the definition of
$\lara_A$ and from \eqref{eq:aii} that 
$\langle \rho^* \alpha,  \rho^* \alpha'\rangle = \langle \alpha, \alpha'\rangle_A$ for all 
$\alpha,  \alpha' \in \t_A^*$. Moreover, if $i, j \in [1, n]$ are such that $\widetilde{\alpha}_i = \gamma$ 
and $\widetilde{\alpha}_j = \gamma'$ for $\gamma, \gamma' \in {\rm Supp}(\bgamma)$, then 
(see notation from $\S$\ref{ss:lara-beta})
\[
\rho^*(s_i(\alpha_{i'})) = \rho^*(\alpha_{i'} - a_{i, i'} \alpha_i) = \gamma' - a_{\gamma, \gamma'}\gamma
= s_{\gamma}(\gamma').
\]
It follows from \eqref{eq:beta-j-gamma-j} that $\rho^* (\bbeta_j({\bf i})) = \beta_j$ for $j \in [1, n]$. 
Thus the $\TT$-action pair $(\lara, \bbeta)$ factors through the $\TT_A$-pair $(\lara_A, \bbeta({\bf i}))$
via $\rho: \TT \to \TT_A$.
\end{proof}

\subsection{Symmetric Poisson CGL extensions of Cartan type}\label{ss:Poi-Cartan}
Let again $A = (a_{i, j})$ be an $r \times r$  symmetrizable generalized Cartan matrix with a given
symmetrizer $(d_i)_{i \in [1, r]}$, and let ${\bf i} = (i_1, \ldots, i_n)$ be a sequence in $[1, r]$.
Then with $\lara_A$ given in \eqref{eq:lara-A} and 
\[
\beta_j({\bf i}) = s_{i_1}\cdots s_{i_{j-1}}\alpha_{i_j}, \hs j \in [1, n],
\]
the log-canonical Poisson structure on $\CC^n$ defined by the $\TT_A$-pair $(\lara_A, \bbeta({\bf i}))$ is then given by
\begin{equation}\label{eq:pi0-alpha}
\pi^{(A,{\bf i})}_0 = -\sum_{1\leq j < k \leq n} \langle \beta_j({\bf i}), \, \beta_k({\bf i})\rangle_A 
\frac{\partial}{\partial x_j} \wedge \frac{\partial}{\partial x_k}
= -\sum_{1\leq j < k \leq n} \langle s_{i_1}\cdots s_{i_{j-1}}\alpha_{i_j}, \; s_{i_1}\cdots s_{i_{k-1}}\alpha_{i_k}\rangle_A 
\frac{\partial}{\partial x_j} \wedge \frac{\partial}{\partial x_k}.
\end{equation}
Note again that the definition of $\lara_A$, and thus that of $\pi_0^{(A,{\bf i})}$, depends on the choice of the symmetrizer 
$(d_i)_{i \in [1, r]}$ of the Cartan matrix $A$.
The matrix $Q = P^{-1}$ in \eqref{eq:P-Q} associated to $\bbeta({\bf i})$ is now
given by
\begin{equation}\label{eq:Q-Cartan}
Q = \left(\begin{array}{ccccc} 1 & a_{i_1, i_2} & \cdots & a_{i_1, i_{n-1}}  & a_{i_1,i_n}\\
0 & 1 &\cdots & a_{i_2, i_{n-1}}  & a_{i_2, i_n}\\
\cdots & \cdots & \cdots & \cdots & \cdots \\
0 & 0 & \cdots & 1 & a_{i_{n-1}, i_n}\\
0 & 0 & \cdots & 0 & 1\end{array}\right),
\end{equation}
and map $[1, n] \to [2, n] \sqcup \{+\infty\}, \,j \mapsto j^+$ in \eqref{eq:j-plus-def} is now  given by
\[
j^+ = \begin{cases} +\infty, & \;\;  \mbox{if} \;\; \{j' >j: \, i_{j'} = i_j\} = \emptyset,\\
{\rm min}\{j' >j:\, i_{j'} = i_j\}, & \;\; \mbox{if} \;\; \{j' >j: \, i_{j'} = i_j\} \neq \emptyset\end{cases}.
\]
Correspondingly, we have the lower triangular matrix 
\begin{equation}\label{eq:E-plus}
E_+ = (e_1 -e_{1^+}, \; \ldots, \; e_n-e_{n^+})
\end{equation}
and the set 
\begin{equation}\label{eq:J-disting}
\calJ = \calJ_{{\rm int}}  = \{j \in [1, n]: j^+ \leq n\}.
\end{equation}
By \autoref{pr:lara-beta}, the set $\calS(\pi^{(A,{\bf i})}_0)$ of all non-zero $(\CC^\times)^n$-weights in $\mH^2_{\pi^{(A,{\bf i})}_0}(\mX)^{\TT_A}$ is now
given by 
$\calS(\pi^{(A,{\bf i})}_0) = {\rm Col}_{\calJ} (QE_+) = \{\bth^{(j, j^+)}: j \in {\calJ}\}$, 
where for $j \in {\calJ}$, 
\[
\bth^{(j, j^+)} = Qe_j-Q_{e_{j^+}} = (0, \,\ldots,\, 0, \,-1,\, -a_{i_{j+1}, i_j}, \,\ldots,\, -a_{i_{j^+-1}, i_j},
\, -1, \, 0, \,\ldots,\, 0)^t.
\]
We can thus identify $\calJ$ with $\calS(\pi^{(A,{\bf i})}_0)$ by $j \mapsto \bth^{(j, j^+)}$ and identify 
$\CC^{\calS(\pi^{(A,{\bf i})}_0)}=\CC^{\calJ} = \{c = (c_j\in \CC)_{j \in \calJ}\}$.
Correspondingly, we denote the
maximal family of algebraic Poisson deformations of $\pi^{(A,{\bf i})}_0$ given in \autoref{thm:max-family-W1} by
$\pi_1^{(A, {\bf i})}(c)$, where $c \in \CC^{\calJ}$. 
For 
$c =  (c_j\in \CC)_{j \in \calJ} \in \CC^{\calJ}$, let
\begin{equation}\label{eq:pi1-c}
\pi_1^{(A, {\bf i})}(c) = \sum_{j \in \calJ} c_j \left(\prod_{j<k<j^+} x_k^{-a_{i_k, i_j}}\right)
\frac{\partial}{\partial x_j} \wedge \frac{\partial}{\partial x_{j^+}},
\end{equation}
where  $\prod_{j<k<j^+} x_k^{-a_{i_k, i_j}} =1$ if $j^+ = j+1$. 
Recall that $\TT_A$ act on $\CC^n$ 
by $t\cdot x_j = t^{\beta_j({\bf i})}x_j$ for $j \in [1, n]$. The following summary 
 on the family
$\pi_1^{(A, {\bf i})}(c)$  follows directly from 
\autoref{thm:max-family-W1} and \autoref{cor:member-summand}.

\begin{theorem}\label{thm:Cartan}
Let $A = (a_{i,j})$ be any $r \times r$ symmetrizable generalized Cartan matrix with symmetrizer $(d_i)_{i \in [1, r]}$, and let
${\bf i} = (i_1, \ldots, i_n)$ be any sequence
in $[1, r]$ with $n \geq 2$. 
Then for  every
$c \in \CC^{\calJ}$ there is a unique $\TT_A$-invariant
algebraic Poisson structure $\pi^{(A,{\bf i})}(c)$ on $\mX = \CC^n$ of the form
\[
\pi^{(A,{\bf i})}(c) = \pi^{(A,{\bf i})}_0 + \pi_1^{(A, {\bf i})}(c) + \cdots + \pi_{N}^{(A, {\bf i})}(c),
\]
where $N \geq 1$, 
$\pi^{(A,{\bf i})}_0$ is given in \eqref{eq:pi0-alpha}, $\pi_1^{(A, {\bf i})}(c)$ is given in \eqref{eq:pi1-c}, and 
for $m \in [2, N]$, 
$\pi_{m}^{(A, {\bf i})}(c) \in \mfX^2(\mX)^{\calS(\pi_0)_m}$ and is homogeneous polynomial in
$c$ with homogeneous degree $m$. Moreover,  for every $c \in \CC^{\calJ}$, 
$\pi^{(A, {\bf i})}(c)$ 
is a strongly symmetric $\TT_A$-Poisson CGL extension in the coordinates $(x_1, \ldots, x_n)$,
and these are all 
symmetric $\TT_A$-Poisson CGL extensions with log-canonical term $\pi^{(A,{\bf i})}_0$. 
\end{theorem}

\begin{notation}\label{nota:pi-bfi}
In the setting of \autoref{thm:Cartan}, for a given sequence ${\bf i} = (i_1, \ldots, i_n)$ in $[1, r]$, we 
set
\begin{equation}\label{eq:pi-bfi}
\pi^{(A, {\bf i})} = \pi^{(A, {\bf i})}(c=(-2\langle \alpha_{i_j}, \alpha_{i_j}\rangle_A)_{j \in \calJ})
\end{equation}
(we suppress the dependence on the symmetrizer in the notation).
In the terminology of \autoref{defn:normalized}, $\pi^{(A, {\bf i})}$
is the  maximal normalized admissible algebraic Poisson deformation of $\pi^{(A, {\bf i})}_0$.
\end{notation}

Consider now the Goodearl-Yakimov seed $({\bf y}, M)$ associated to 
$\pi = \pi^{(A, {\bf i})}$ as a symmetric $T_A$-Poisson CGL extension (see $\S$\ref{ss:mut-matrix}).  
The exchange set is then $\calJ$ in \eqref{eq:J-disting}, and the successor map 
$s_\pi$ is given by $s_\pi(j) = j^+$ for $j \in \calJ$ and $s_\pi(j) = +\infty$ otherwise. Moreover, 
$E_+$ coincides with the matrix 
$E_{\pi}$ defined in \eqref{eq:E-pi}. As a special case of \autoref{thm:main-M}, 
we have the following description of the mutation
matrix $M$. 

\begin{proposition}\label{pr:BS-M}
The mutation matrix  in the Goodearl-Yakimov seed $({\bf y}, M)$ 
associated to 
the symmetric $T_A$-Poisson CGL extension $\pi^{(A, {\bf i})}$ is given by
\[
M = (E_+^tQE_+)_{n \times \calJ},
\]
where the $n\times n$ matrices $E_+$ and $Q$ are respectively given in  \eqref{eq:E-plus}
and \eqref{eq:Q-Cartan}, and the set $\calJ \subset [1, n]$
is given in \eqref{eq:J-disting}. 
Writing $M = (m_{j', j})_{j'\in [1, n], j\in \calJ}$, one also has
\[
m_{j', j} = \begin{cases} 1, & \hs  (j')^+=j, \\
-1, & \hs j' = j^+,\\
a_{i_{j'}, i_j}, &\hs  j' < j < (j')^+ < j^+, \\
-a_{i_{j'}, i_j}, &\hs  j < j' < j^+ < (j')^+ \;\; (\mbox{including when} \; (j')^+ = +\infty),\\
0, & \hs \mbox{otherwise}.\end{cases}
\]
\end{proposition}

Given a symmetrizable generalized Cartan matrix $A$, Shen and Weng introduced in  \cite{ShenWeng:DBS} 
(several versions of) {\it double Bott-Samelson cells}
for Kac-Moody groups associated to $A$ and studied cluster structures on them via cluster ensembles. 
In particular, associated to a sequence ${\bf i} = ({i_1}, \ldots, {i_n})$ in $[1, r]$ one has the (single)
Bott-Samelson cell ${\rm Conf}_{{\bf i}}^e(\calC_{\rm sc})$
defined in \cite{GSW:aug} which is isomorphic to a principal Zariski open subset of $\CC^n$, where sc stands for ``simply connected". An initial mutation matrix
for the cluster structure on ${\rm Conf}_{{\bf i}}^e(\calC_{\rm sc})$, which we denote by
$M_{SW}$, is described on \cite[Page 51]{ShenWeng:DBS}.  

\begin{corollary}\label{cor:M-Shen-Weng}
For any $r \times r$ symmetrizable generalized Cartan matrix $A$ and any sequence ${\bf i} =(i_1, \ldots, i_n)$ in 
$[1, r]$, the Goodearl-Yakimov initial mutation matrix $M$  
associated to $\pi^{(A, {\bf i})}$ as 
a symmetric $T_A$-Poisson CGL extension  coincides with the initial mutation matrix $M_{{\rm SW}}$. 
\end{corollary}

\begin{proof}
The statement follows from a direct comparison between the mutation matrix $M$ in \autoref{pr:BS-M} and 
the description of $M_{{\rm SW}}$ on \cite[Page 51]{ShenWeng:DBS}.
\end{proof}

Let again $\TT$ be an arbitrary algebraic torus over $\CC$ acting on $\CC^n$ via 
by $\bbeta = (\beta_1, \ldots, \beta_n) \in X^*(\TT)^n$.

\begin{definition}\label{defn:Poi-CGL-Cartan}
{\rm
A symmetric $\TT$-Poisson CGL extension $(\CC^n, \pi)$ is said to be {\it of Cartan type} if 
the action of $\TT$ on $\CC^n$ factors through a surjective morphism $\rho: \TT \to \TT_A$ of algebraic tori
for some $r \times r$ symmetrizable generalized Cartan matrix $A$, and if 
\[
\pi = \pi^{(A, {\bf i})}(c)
\]
as given in \autoref{thm:Cartan} for some sequence ${\bf i}=
(i_1, \ldots, i_n)$ in $[1, r]$ and some $c \in \CC^{\calJ}$. 
}
\end{definition}

\begin{proposition}\label{pr:Cartan-type}
A symmetric $\TT$-Poisson CGL extension $(\CC^n, \pi)$ is  of Cartan type if and only if the log-canonical
term $\pi_0$ of $\pi$ is defined by a distinguished and integral $\TT$-action pair.
\end{proposition}

\begin{proof}
Let $(\CC^n, \pi)$ be a symmetric $\TT$-Poisson CGL extension with log-canonical term $\pi_0$.
Suppose first that $\pi$ is of Cartan type as in \autoref{defn:Poi-CGL-Cartan}. Then
$\pi_0 = \pi^{(A, {\bf i})}_0$ given in \eqref{eq:pi0-alpha}, and 
$\beta_j = \rho^* \beta_j({\bf i})$ for every $j \in [1, n]$. As $\rho^*: \t_A^* \to \t^*$ is injective, 
there exists a symmetric bilinear form $\lara$ on $\t^*$ such that  the 
$\TT$-action pair $(\lara,\bbeta)$ factors through the $\TT_A$-action pair $(\lara_A, \bbeta({\bf i)})$
and $\pi_0$ is defined by $(\lara,\bbeta)$. By \autoref{pr:factor-Cartan}, $(\lara,\bbeta)$ is 
distinguished and integral.

Conversely, suppose that $\lara$ is a symmetric bilinear form on $\t^*$ such that 
$\pi_0$ is defined by $(\lara,\bbeta)$ and that $(\lara,\bbeta)$
is distinguished and integral. By \autoref{pr:factor-Cartan}, there exist
some $r \times r$ symmetrizable generalized Cartan matrix $A$ and some sequence
${\bf i} = (i_1, \ldots, i_n)$ in $[1, r]$ such that 
$\TT$-action pair $(\lara,\bbeta)$ factors through the $\TT_A$-action pair $(\lara_A, \bbeta({\bf i)})$ by
a surjective morphism $\rho: \TT \to \TT_A$ of algebraic tori. 
Then $\pi_0 = \pi^{(A, {\bf i})}_0$, and $\pi$ is a symmetric $\TT_A$-Poisson CGL extension. 
By \autoref{thm:Cartan}, $\pi =  \pi^{(A, {\bf i})}(c)$ for some $c \in \CC^{\calJ}$.
\end{proof}

\subsection{The standard Poisson structure on generalized Schubert cells}\label{ss:BS-cells}
We keep the notation as in $\S$\ref{ss:Poi-Cartan} but assume now that the generalized Cartan matrix $A$ is of finite type. 
In such a case, for every sequence ${\bf i} = (i_1, \ldots, i_n)$ in $[1, r]$,
the Bott-Samelson cell ${\rm Conf}_{{\bf i}}^e(\calC_{\rm sc})$ is 
isomorphic \cite{LMY:groupoid} to the generalized
Bruhat cell $\calO^{(s_{i_1}, \ldots, s_{i_n})}$ {\it of Bott-Samelson type} as defined 
in \cite{LM:T-leaves, EL:BS} and will be reviewed below. Moreover, $\calO^{(s_{i_1}, \ldots, s_{i_n})}$ carries
the so-called {\it standard Poisson structure}, which, when regarded as a Poisson structure on $\CC^n$ via 
the so-called Bott-Samelson parametrization, is shown in \cite{EL:BS} to be a strongly symmetric $T_{\rm ad}$-Poisson CGL extension, where $T_{\rm ad}$ is a maximal 
torus of the semi-simple Lie group associated to $A$ and of adjoint type. 
In this sub-section, after recalling some definitions, we show that the standard Poisson structure on 
$\calO^{(s_{i_1}, \ldots, s_{i_n})} \cong \CC^n$
coincides with $\pi^{(A, {\bf i})}$, the 
 maximal normalized admissible algebraic Poisson deformation of $\pi^{(A, {\bf i})}_0$ given in
 \eqref{eq:pi-bfi}.

Assume thus that the $r \times r$ generalized Cartan matrix $A = (a_{i, j})$ is of finite type, and let $(d_i)_{i \in [1, r]}$ be a symmetrizer of $A$. Let $G$ be a 
connected and simply connected complex semi-simple Lie group, with a maximal torus  $T \subset G$ 
 with Lie algebra $\t$ and a Borel subgroup $B \subset G$ 
containing $T$, such that the set of simple roots $\{\alpha_1, \ldots, \alpha_r\} \subset \t^*$
corresponding to $B$ has $A$ as its Cartan matrix. Let $T_{\rm ad} = T/Z(G)$, where $Z(G)$ is the center of $G$, so that the character lattice of $T_{\rm ad}$ is the 
root lattice $\calQ = \sum_{i=1}^r \ZZ\alpha_i \subset \t^*$. 

Let $\lara_\g$ be the invariant symmetric bilinear form on the Lie algebra $\g$
such that $\la\alpha_i, \alpha_i \ra = 2d_i$ for $i \in [1, r]$, where 
$\lara$ is the non-degenerate symmetric bilinear form on $\t^*$ induced by $\lara_\g$.
It is well-known that the choice of $(T, B, \lara_\g)$ 
gives rise to the so-called {\it standard multiplicative Poisson structure} $\pist$ on  $G$,
and $(G, \pist)$ is called a {\it standard complex semi-simple Poisson Lie group} \cite{dr:Hamil, LM:mixed}. 
Among the many interesting Poisson varieties associated to the Poisson Lie
$(G, \pist)$ are the  twisted products 
\[
F_n = G \times_B  \cdots \times_B G/B
\]
of  the flag variety $G/B$, where $n \in \ZZ_{\geq 1}$ and $F_n$ is
the quotient of $G^n$ by the right action of $B^n$ given by
\[
(g_1, g_2, \ldots, g_n) \cdot (b_1, b_2, \ldots, b_n) = (g_1b_1, \, b_1^{-1}g_2 b_2, \ldots, \, b_{n-1}^{-1}g_nb_n),
\hs g_1, \ldots, g_n \in G, \, b_1, \ldots, b_n \in B.
\]
Denote the projection from $G^n$ to $F_n$ by 
$G^n \ni (g_1, \, g_2, \, \ldots, \, g_n) \mapsto [g_1, \, g_2, \, \ldots, \, g_n] \in F_n$.
It is shown in \cite[$\S$7]{LM:mixed} that the product Poisson structure $(\pist)^n$ on $G^n$ projects to a well-defined
Poisson structure, denoted as $\pi_n$, on $F_n$, and the $T$-action on $F_n$ given by
\[
t\cdot [g_1, \, g_2, \, \ldots, \, g_n] = [tg_1, \, g_2, \, \ldots, \, g_n], \hs t \in T, \, g_1, g_2, \ldots, g_n \in G,
\]
 preserves the Poisson structure $\pi_n$. 
One of the most important properties of $(F_n, \pi_n)$ is its decomposition
\[
F_n = \bigsqcup_{(u_1, \ldots, u_n) \in W^n} \calO^{(u_1, \ldots, u_n)}
\]
 into the so-called 
{\it generalized Schubert cells} $\calO^{(u_1, \ldots, u_n)}$, where 
$\calO^{(u_1, \ldots, u_n)} = (Bu_1B) \times_B \cdots \times_B(Bu_nB)/B$.
Each generalized Schubert cell 
$\calO^{(u_1, \ldots, u_n)}$ is a $T$-invariant Poisson submanifold of $(F_n, \pi_n)$ and is a
finite union of $T$-leaves of  $(F_n, \pi_n)$. 
The restriction of $\pi_n$ to a generalized Schubert cell $\calO^{(u_1, \ldots, u_n)} \subset F_n$ 
is called the {\it standard Poisson
structure on $\calO^{(u_1, \ldots, u_n)}$}. We refer to 
\cite{LM:mixed, LM:T-leaves}, and in particular to \cite[Theorem 1.1]{LM:T-leaves}, for more details on  $(F_n, \pi_n)$
and on the $T$-leaf decompositions of generalized Schubert cells. Note  
that the $T$-action on $F_n$ descends to a $T_{\rm ad}$-action on $F_n$, and the $T_{\rm ad}$-leaves of $(F_n, \pi_n)$ are the same as
the $T$-leaves of $(F_n, \pi_n)$.

A generalized Schubert cell $\calO^{(u_1, \ldots, u_n)}$ is said to be {\it of Bott-Samelson type}
\cite{EL:BS} if every $u_j$ is a simple reflection in $W$.   Following \cite{ShenWeng:DBS}, we also refer to
generalized Schubert cells of Bott-Samelson type as {\it Bott-Samelson cells}. 
For any $(u_1, \ldots, u_n) \in W^n$,  it is shown in 
\cite[$\S$1,4]{EL:BS} that any choice 
of a reduced word of
$u_j$ for each $j \in [1, n]$ gives rise to a $T$-equivariant Poisson isomorphism from the generalized Schubert cell
$\calO^{(u_1, \ldots, u_n)}$ to a Bott-Samelson cell $\calO^{(s_{i_1}, \ldots, s_{i_l})}$, 
where $(s_{i_1}, \ldots, s_{i_l})$ is the 
concatenation of the reduced words of the $u_j$'s. For the study of the standard Poisson structures on 
generalized Schubert cells, it is thus enough to study only that on Bott-Samelson cells.

To recall some results from \cite{EL:BS}
on the standard Poisson structure on Bott-Samelson cells, we fix, for each $i \in [1, r]$, 
root vectors $e_{\alpha_i}$ for $\alpha_i$ and 
$e_{-\alpha_i}$ for $-\alpha_i$ such that 
$[e_{\alpha_i}, e_{-\alpha_i}] = \alpha_i^\vee \in \t$, and 
let 
\[
\overline{s}_i = \exp(-e_{\alpha_i})\exp(e_{-\alpha_i})\exp(-e_{\alpha_i}).
\]
Then $\overline{s}_i$
is a representative of $s_i \in W$ in the normalizer subgroup $N_G(T)$ of $T$ in $G$.

Let ${\bf i} = (i_1, \ldots, i_n)$ be
any sequence in $[1, r]$. One then has the 
{\it Bott-Samelson parametrization} \cite{EL:BS}
\begin{equation}\label{eq:BS-para}
\CC^n \longrightarrow \calO^{(s_{i_1}, \ldots, s_{i_n})}, \;\; 
(x_1, \ldots, x_n) \longmapsto [\exp(x_1e_{\alpha_{i_1}}) \overline{s}_{i_1},  \; \ldots, \; 
\exp(x_ne_{\alpha_{i_n}})\, \overline{s}_{i_n}],
\end{equation}
and the resulting coordinates $(x_1, \ldots, x_n)$ on $\calOs$ are called the {\it Bott-Samelson 
coordinates} on $\calOs$. 
Recall the subset $\calJ \subset [1, n]$ associated to ${\bf i}$ defined in 
\eqref{eq:J-disting}.

\begin{theorem}\label{thm:BS-cells}
Let ${\bf i} = (i_1, \ldots, i_n)$ be any sequence in $[1, r]$. Under the identification  $\calOs \cong \CC^n$ via
\eqref{eq:BS-para}, the standard Poisson structure on the Bott-Samelson cell $\calOs$ 
coincides with  $\pi^{(A,{\bf i})}$ on $\CC^n$ given in \eqref{eq:pi-bfi}.
\end{theorem}

\begin{proof}
Denote by $\pi$ the standard Poisson structure on $\calOs \cong \CC^n$ via the 
 Bott-Samelson parametrization. By \cite[Theorem 5.12]{EL:BS}, $\pi$ is a symmetric 
$\TT$-Poisson CGL extension in the Bott-Samelson coordinates $(x_1, \ldots, x_n)$ and that 
the log-canonical term of $\pi$ is $\pi_0$ given as in \eqref{eq:pi0-alpha}.
The set $\mW(\pi)$ of all the non-zero $\CCsn$-weights of the monmial terms of $\pi$
is explicitly computed in \cite[Theorem 4.14]{EL:BS}. More precisely,
 for $1 \leq j < k \leq n$, and 
\begin{equation}\label{eq:bfw-BS}
\bfw = (0, \ldots, -1, \bfw_{j+1}, \ldots, \bfw_{k-1}, -1, \ldots, 0)^t,
\end{equation}
where the two $-1$ entries are at the $j^{\rm th}$ and the $k^{\rm th}$ places and $\bfw_l \in \ZZ_{\geq 0}$ for $l \in [j+1, k-1]$, let 
\begin{equation}\label{eq:V-bff}
v_\bfw = \left(\prod_{l=j+1}^{k-1} x_l^{\bfw_l}\right)\frac{\partial}{\partial x_j} \wedge \frac{\partial}{\partial x_k}\in \mfX^2(\mX)^{{\bf w}},
\end{equation}
and recall from \autoref{ex:hideous} the sequence 
$\eta^{(\bfw)} = \left(\eta^{(\bfw)}_j=\alpha_{i_j},  \,\eta^{(\bfw)}_{j+1},\, \ldots, \, \eta^{(\bfw)}_{k-1}\right)$ in $\t^*$ associated to
$\bfw$. Let $\Phi_+\subset \t^*$ be the set of positive roots. For $1 \leq j < k \leq n$,
let $\mW_{j, k}$ be the set of all $\bfw$  in \eqref{eq:bfw-BS}   such that
$\eta^{(\bfw)}_l \in \Phi_+$ for every $l \in [j, k-1]$ and $\eta^{(\bfw)}_{k-1} = \alpha_{i_k}$.
By \cite[Theorem 4.10 and Theorem 4.14]{EL:BS}, 
\[
\pi =\pi_0 +\sum_{1\leq j < k \leq n}\; \sum_{\bfw \in \mW_{j, k}} c_{\bfw}\, V_{\bfw},
\]
where $c_{\bfw} \in \CC^\times$ for every $\bfw \in \bigcup_{1 \leq j < k \leq n} \mW_{j, k}$.
In particular, for every $j \in \calJ$ (see \eqref{eq:J-disting}), one has
$\bth^{(j, j+)} \in \mW_{j, j^+}$
with $\eta^{\bth^{(j, j^+)}} = (\alpha_{i_j}, \alpha_{i_j}, \ldots, \alpha_{i_j})$. 
Moreover, by \cite[Theorem 4.14]{EL:BS},  $c_{\bth^{(j, j^+)}}=-\la \alpha_{i_j}, \alpha_{i_j}\ra$ for every $j \in \calJ$. 
Thus $\pi$ is the unique maximal $\calS(\pi_0)$-admissible algebraic Poisson deformation of $\pi_0$ along 
$\pi_1^{(A, {\bf i})}(c) =\sum_{j \in J} c_jV_{\bth^{(j, j^+)}}$ with $c_j=-\la \alpha_{i_j}, \alpha_{i_j}\ra
=-2d_{i_j}$ for $j \in \calJ$.   In other words, $\pi = \pi^{(A, {\bf i})}$ given in \eqref{eq:pi-bfi}.
\end{proof}

\begin{example}\label{ex:G2}
{\rm
Consider $G_2$ with two simple roots $\alpha_1$ and $\alpha_2$ such that $\langle \alpha_2, \alpha_2\rangle
= 3\langle \alpha_1, \alpha_1\rangle = 6$, and let ${\bf i} = (1, 2, 1,2,1,2)$. Then (see 
\cite[Example 5.23]{EL:BS}) the Poisson structure 
$\pi^{(A, {\bf i})}$ on $\CC^n$ is given by 
\begin{align*}
&\{x_{1},x_{2}\} = {-3x_{1}x_{2}}, \hs 
\{x_{1},x_{3}\} = { -x_{1}x_{3}}-2x_{2}, \hs
\{x_{1},x_{4}\} = -6x_{3}^2, \\
&\{x_{1},x_{5}\} = { x_{1}x_{5}}-4x_{3}, \hs
\{x_{1},x_{6}\} = { 3x_{1}x_{6}}-6x_{5}, \hs
\{x_{2},x_{3}\} = { -3x_{2}x_{3}}\\
&\{x_{2},x_{4}\} ={  -3x_{2}x_{4}}-6x_{3}^3, \hs 
\{x_{2},x_{5}\} = -6x_{3}^2, \hs
\{x_{2},x_{6}\} ={  3x_{2}x_{6}}-18x_{3}x_{5}+6x_{4}, \\
&\{x_{3},x_{4}\} ={  -3x_{3}x_{4}}, \hs
\{x_{3},x_{5}\}= { -x_{3}x_{5}}-2x_{4}, \hs 
\{x_{3},x_{6}\} = -6x_{5}^2, \\
&\{x_{4},x_{5}\} = { -3x_{4}x_{5}}, \hs
\{x_{4},x_{6}\} ={ -3x_{4}x_{6}} -6x_{5}^3, \hs
\{x_{5},x_{6}\} = { -3x_{5}x_{6}}.
\end{align*}
}
Write $\pi^{(A, {\bf i})} = \pi_0 + \pi_{\rm tail}$, we have 
 $\pi_{{\rm tail}}  =\pi_1 +  \pi_2 + \pi_3 +\pi_4+\pi_5$, with 
\begin{align*}
 \pi_1 &= -2x_2 \frac{\partial}{\partial x_1} \wedge \frac{\partial}{\partial x_3} -6x_3^3\frac{\partial}{\partial x_2} \wedge \frac{\partial}{\partial x_4}
-2x_4 \frac{\partial}{\partial x_3} \wedge \frac{\partial}{\partial x_5}-6x_5^3 \frac{\partial}{\partial x_4} \wedge \frac{\partial}{\partial x_6},\\
\pi_2 & = -6x_3^2 \frac{\partial}{\partial x_1} \wedge \frac{\partial}{\partial x_4}-6x_3^2 \frac{\partial}{\partial x_2} \wedge \frac{\partial}{\partial x_5}
-6x_5^2 \frac{\partial}{\partial x_3} \wedge \frac{\partial}{\partial x_6},\\
\pi_3 & = -4x_3 \frac{\partial}{\partial x_1} \wedge \frac{\partial}{\partial x_5},\hs
\pi_4  = -18 x_3x_5 \frac{\partial}{\partial x_2} \wedge \frac{\partial}{\partial x_6},\\
\pi_5 & = -6x_5 \frac{\partial}{\partial x_1} \wedge \frac{\partial}{\partial x_6} + 6x_4 \frac{\partial}{\partial x_2} \wedge \frac{\partial}{\partial x_6},
\end{align*}
The four $(\CC^\times)^6$-weights appearing in $\pi_1$ form the set $\calS(\pi_0) = \{\bth^{(1, 3)},
\bth^{(2, 4)}, \bth^{(3, 5)}, \bth^{(4, 6)}\}$ with
\begin{align*}
\bth^{(1, 3)} &= (-1, 1, -1, 0, 0, 0)^t, \hs 
\bth^{(2, 4)} =(0, -1, 3, -1, 0, 0)^t, \\
\bth^{(3, 5)} &=(0, 0, -1, 1, -1, 0)^t, \hs 
\bth^{(4, 6)} =(0, 0, 0, -1, 3, -1)^t.
\end{align*}

These weights are encoded by the smoothing diagram depicted in \autoref{fig:SmDiagG2}.

\begin{figure}[h]
    \centering
\pgfdeclarelayer{background layer}
\pgfdeclarelayer{foreground layer}
\pgfsetlayers{background layer,main,foreground layer}
\begin{tikzpicture}[baseline=-1ex,
rotate=360/2/6,
scale=0.9,line join = round 
    ] 
    
    \begin{pgfonlayer}{main}
\foreach \n in {1,...,6}
{
    \coordinate (v\n) at ({-90+((\n-1)*360/6)}:1.5);
}
\node[below right] at (v1) {$1$};
\node[right] at (v2) {$2$};
\node[above right] at (v3) {$3$};
\node[above left] at (v4) {$4$};
\node[left] at (v5) {$5$};
\node[below left] at (v6) {$6$};

\foreach \n in {1,...,6}
{
	\foreach \m in {1,...,6}
	\draw (v\n) -- (v\m);
}
\end{pgfonlayer}

\draw[thick,red,-] (v2) ++({0.4*cos(90)},{0.4*sin(90)})  arc[start angle=90,end angle=210,radius=0.4];
\draw[thick,red,-] (v3) ++({0.4*cos(150)},{0.4*sin(150)})  arc[start angle=150,end angle=270,radius=0.4];
\draw[thick,red,-] (v3) ++({0.25*cos(150)},{0.25*sin(150)})  arc[start angle=150,end angle=270,radius=0.25];
\draw[thick,red,-] (v3) ++({0.55*cos(150)},{0.55*sin(150)})  arc[start angle=150,end angle=270,radius=0.55];
\draw[thick,red,-] (v4) ++({0.4*cos(210)},{0.4*sin(210)})  arc[start angle=210,end angle=330,radius=0.4];
\draw[thick,red,-] (v5) ++({0.4*cos(270)},{0.4*sin(270)})  arc[start angle=270,end angle=390,radius=0.4];
\draw[thick,red,-] (v5) ++({0.25*cos(270)},{0.25*sin(270)})  arc[start angle=270,end angle=390,radius=0.25];
\draw[thick,red,-] (v5) ++({0.55*cos(270)},{0.55*sin(270)})  arc[start angle=270,end angle=390,radius=0.55];

\draw[very thick, blue] (v1)--(v3);
\draw[very thick, blue] (v2)--(v4);
\draw[very thick, blue] (v3)--(v5);
\draw[very thick, blue] (v4)--(v6);

\begin{pgfonlayer}{foreground layer}
\foreach \n in {1,...,6}
{
	\draw[fill] (v\n) circle (0.03);
}
\end{pgfonlayer}
 \end{tikzpicture}
    \caption{Smoothing diagram appearing in \autoref{ex:G2}}
    \label{fig:SmDiagG2}
\end{figure}

The  $(\CC^\times)^6$-weights of appearing in $\pi_2, \pi_3, \pi_4, \pi_5$ are 
\begin{align*}
&\bth^{(1, 3)}+\bth^{(2, 4)}, \;\;\bth^{(2, 4)}+\bth^{(3, 5)},\;\; \bth^{(3, 5)}+\bth^{(4, 6)},\;\;
\bth^{(1, 3)}+\bth^{(2, 4)}+\bth^{(3, 5)}, \;\; \bth^{(2, 4)} + 2\bth^{(3, 5)}+\bth^{(4, 6)},\\
&\bth^{(1, 3)} + \bth^{(2, 4)} +2\bth^{(3, 5)} + \bth^{(4, 6)},\;\; \bth^{(2, 4)} + 3\bth^{(3, 5)} + \bth^{(4, 6)}.
\end{align*}
\hfill $\diamond$
\end{example}

\bibliographystyle{alpha}
\bibliography{myref-Mykola}

\end{document}